\definecolor{MyDarkBlue}{rgb}{0,0.08,0.50}
\definecolor{BrickRed}{rgb}{0.65,0.08,0}
\theoremstyle{plain}
\newenvironment{customthm}[1]
  {\innercustomthm}
  {\endinnercustomthm}
\newenvironment{customcorollary}[1]{\innercustomcorollary}{\endinnercustomcorollary}
\newtheorem{COND}{Condition}
\newtheorem{LEM}{Lemma}[section]
\newtheorem{PRP}[LEM]{Proposition}
\newtheorem{CON}[LEM]{Conjecture}
\newtheorem{REM}[LEM]{Remark}
\newtheorem{REMS}[LEM]{Remarks}
\newtheorem{COR}[LEM]{Corollary}
\newtheorem{DEF}[LEM]{Definition}
\renewcommand{\P}{\mathbb{P}}
\newcommand{\E}{\mathbb{E}}
\newcommand{\Z}{\mathbb{Z}}
\newcommand{\R}{\mathbb{R}}
\newcommand{\C}{\mathbb{C}}
\newcommand{\T}{\mathbb{T}}
\newcommand{\ra}{\rightarrow}
\newcommand{\nn}{\nonumber}
\newcommand{\mc}[1]{\mathcal {#1}}
\newcommand{\sss}{\scriptscriptstyle}
\newcommand{\sn}{\sss {(n)}}
\newcommand\1{\mathbbm{1}}
\newcommand{\indic}[1]{\1_{\{#1\}}}
\newcommand {\cweak}{\overset{w}{\longrightarrow}}
\newcommand {\cas}{\overset{a.s.}{\longrightarrow}}
\newcommand{\supp}{\mathrm{supp}}
\newcommand{\dd}{\mathrm{d}}
\newcommand{\ee}{\mathrm{e}}
\numberwithin{equation}{section}
\newcommand{\floor}[1]{\lfloor #1 \rfloor}
\newcommand{\ceil}[1]{\lceil #1 \rceil}
\newcommand{\N}{\mathbb{N}}
\newcommand{\blank}[1]{}
\newcommand{\ttt}[2]{\tilde{t}^{\sss\{#1\}}_{#2}}
\newcommand{\ppp}[2]{\tilde{\pi}^{\sss\{#1\}}_{#2}}
\newcommand{\tplus}[2]{\tilde{t}^{\sss\{#1\}}_{#2,\sss{+}}}
\newcommand{\pplus}[2]{\tilde{\pi}^{\sss\{#1\}}_{#2 ,\sss{+}}}
\newcommand{\vep}{\varepsilon}
\newcommand{\st}{\,{\sss \times}\,}
\newcommand{\ms}{\hat{m}}
\newcommand{\Ed}[1]{{\bf \color{MyDarkBlue} #1}}
\newcommand{\bs}[1]{\boldsymbol{#1}}
\newcommand{\bbt}{t^*}
\newcommand{\sumtwo}[2]{\sum_{\substack{#1 \\ #2}}}
\newcommand{\sm}[3]{\frac{#1}{#2^{#3}}}
\newcommand{\Omit}[1]{#1}
\renewcommand{\Omit}[1]{}
\renewcommand\@makefnmark{\hbox{\@textsuperscript{\normalfont\color{magenta}\@thefnmark}}}
\newcommand{\mT}{\mathcal{T}}
\newcommand{\bT}{\bs{\mT}}
\newcommand{\ara}{\overset{\bs{a}}{\ra}}
\newcommand{\aran}{\overset{\bs{a,n}}{\ra}}
\newcommand{\arano}{\overset{\bs{a,2^{n_0}}}{\ra}}
\newcommand{\BB}[2]{\bar{#1}^{#2}_\infty}
\newcommand{\xint}{\BB{X}{}}
\newcommand{\xnint}{\BB{X}{\sn}}
\newcommand{\PS}{\P^{\sss\{s\}}}
\newcommand{\ARXIV}[1]{#1}
\newcommand{\SUBMIT}[1]{#1}
\renewcommand{\SUBMIT}[1]{}
\begin{document}

\author{
Mark Holmes\footnote{School of Mathematics and Statistics,
The University of Melbourne. {\tt holmes.m@unimelb.edu.au}}
\and Edwin Perkins \footnote{Department of Mathematics, The University of British Columbia.  {\tt perkins@math.ubc.ca}}
}

\title{On the range of lattice models\\
 in high dimensions \ARXIV{- extended version}}

\maketitle

\begin{abstract}
In this paper we investigate the scaling limit of the {\em range} (the set of visited vertices) for a class of critical lattice models, starting from a single initial particle at the origin.  We give conditions on the random sets and an associated ``ancestral relation" under which, conditional on longterm survival, the rescaled ranges converge weakly to the range of super-Brownian motion as random sets. These hypotheses also give 
 precise asymptotics for the limiting behaviour of exiting a large ball, that is for the {\em extrinsic one-arm probabililty}.  
  We show that these conditions are satisfied by the voter model in dimensions $d\ge2$ and critical sufficiently spread out lattice trees in dimensions $d>8$.  The latter result also has important consequences for the behaviour of random walks on lattice trees in high dimensions.

We conjecture that our conditions  are also satisfied by other models (at criticality above the critical dimension) such as sufficiently spread out oriented percolation and contact processes  in dimensions $d>4$.   \ARXIV{This version of the paper contains details not present in the submitted version \cite{submitted_version}.}
\end{abstract}
\tableofcontents
\section{Introduction}
\label{sec-setting}
Super-Brownian motion is a measure-valued process arising as a universal scaling limit for a variety of critical lattice models above the critical dimension in statistical physics and mathematical biology.  Examples include oriented percolation (\cite{HofSla03b}), lattice trees (\cite{H08}), models for competing species such as voter models (\cite{CDP00},\cite{BCLG01}), models for spread of disease such as  contact processes (\cite{HofSak10}), and percolation (\cite{HS00}),
where the full result in the latter context is the subject of ongoing research (e.g., \cite{HHHM17}).
The nature of the convergence in all these contexts is that of convergence of associated 
empirical processes to super-Brownian motion. Moreover, here often only convergence of the finite-dimensional distributions is known.  Extending this to convergence on path space for lattice trees was recently carried out in \cite{HHP17} with great effort.  Convergence of the actual random sets of occupied sites to the range of super-Brownian motion is one of the most natural questions but has not been accomplished in any of these settings (convergence at a fixed time was done for the voter model in \cite{BCLG01}, and for the simple setting of branching random walk it is implicit in \cite{DIP89}). 
We provide a unified solution to this problem in the form of quite general conditions under which the rescaled ranges of a single occupancy particle model on the integer lattice (in discrete or continuous time) 
converge to the range of super-Brownian motion.  The conditions include convergence of 
the associated integrated measure-valued processes to integrated super-Brownian motion, but a feature
of our results is that convergence of finite-dimensional distributions suffices (see Lemma~\ref{lem:int_fdd} below).  We verify the conditions for the voter model 
in two or more dimensions, and for critical lattice trees in more than eight dimensions. We conjecture that our general conditions also hold for the critical contact process and critical oriented percolation, both sufficiently spread out, in more than four dimensions. In fact, we \SUBMIT{can easily }verify all but one of the conditions for the latter, thus reducing the problem for oriented percolation  to a natural bound on the sixth spatial moment on the mean measure.   Our general lattice models include a random ``ancestral relation" which in the case of random graphs such as lattice trees or oriented percolation is a fundamental part of the model, but for particle models such as the voter model or contact process, arises naturally from the graphical construction of such models.

We begin by briefly defining the two models to which our results will be applied.  These models depend on a random walk step kernel, $D:\Z^d\ra \R_+$, with finite range and covariance matrix $\sigma^2 I_{d\times d}$ for some $\sigma^2>0$, and such that $D(-x)=D(x)$ (with $D(o)=0$, where $o=(0,\dots,0)$ is the origin).  By finite range we mean there is an $L>0$ such that $D(x)=0$ if $\|x\|>L$ where $\|x\|$ is the $L_\infty$ norm of $x$. 
The nearest neighbour case is where $D(x)=(2d)^{-1}\indic{|x|=1}$, where $|x|$ is the Euclidean norm of $x$.
\subsubsection*{{\em The voter model}}
The {\em voter model} on $\Z^d$ (introduced in \cite{CS73} and \cite{HL75}) is a  spin-flip system, and so in particular, a continuous time Feller process $(\xi_t)_{t\ge 0}$ with state space $\{0,1\}^{\Z^d}$ and flip rates as follows.  With rate one each vertex, say at $x$, imposes its type ($0$ or $1$) on a randomly chosen vertex $y$ with probability $D(y-x)$. Let $\xi_t(x)\in \{0,1\}$ denote the type of $x\in \Z^d$ at time $t\ge 0$, and let $\mT_t:=\{x \in \Z^d:\xi_t(x)=1\}$.  In the notation of \cite{Li85} the flip rate at site $x$ in state $\xi$ is
\[c(x,\xi)=\sum_{y:\xi(y)\neq \xi(x)}D(y-x).\]
   If $\E[|\mT_0|]<\infty$, then $|\mT_t|$ is a non-negative martingale and the extinction time $S^{\sss(1)}=\inf\{t\ge 0:|\mT_t|=0\}$ is almost surely finite (see Lemma~\ref{markovmart}(b) below). Here $|A|$ denotes the cardinality of a finite set $A$.   We will usually assume that the process starts with a single site with type 1 at time 0, located at the origin $o$, i.e.
\begin{equation}
\P(\mT_0=\{o\})=1\label{single1}.\nn
\end{equation}

\subsubsection*{{\em Lattice trees}}
A lattice tree $T$ on $\Z^d$, is a finite connected simple graph in $\Z^d$ with no cycles. In particular, it consists of a  set of lattice bonds, $E(T)$ (unordered pairs of points in $\Z^d$), together with the corresponding set of end-vertices, $V(T)$,  in $\Z^d$. By connected we mean that for any distinct $v_1,v_2\in V(T)$ there is an $m \in \N$ and a function $w:\{0,\dots,m\}\to V(T)$ so that $w(0)=v_1$, $w(m)=v_2$, and for all $1\le k\le m$, $\{w(k-1),w(k)\}\in E(T)$. We call $w$ a path in $T$ of length $m$ from $v_1$ to $v_2$. Given any two vertices $v_1,v_2$ in the tree, the lack of cycles means there is a unique path (length $0$ if $v_1=v_2$) of bonds connecting $v_1$ and $v_2$.  The number of such bonds, $d_T(v_1,v_2)$, is the tree distance between $v_1$ and $v_2$. It is a metric on the set of vertices, called the tree metric.   
For some $L>0$ our lattice trees will be in the countable space $\T_L(o)$ of lattice trees on $\Z^d$ whose vertex set contains the origin $o$ 
and for which every bond has $L_\infty$-norm at most $L$.   More generally we let $\T_L(x)$ denote the space of lattice trees on $\Z^d$ whose vertex set contains  $x\in \Z^d$ (and bonds as above). 
 $L$ will be taken sufficiently large for our main results.  We now describe a way of choosing a ``random'' lattice tree $\mc{T}$ in $\T_L:=\T_L(o)$. 

Let $d>8$ and let $D(\cdot)$ be the uniform distribution on a finite box $[-L,L]^d\setminus o$.  For a lattice tree $T\in\T_L(o)$ define 
\begin{equation}\label{wtdefn}W_{z,D}(T)=z^{|T|}\prod_{e\in E(T)}D(e), 
\end{equation}
where for $e=(y,x)$, $D(e):=D(x-y)$, and $|T|$ is the number of edges in $T$.  For any $z>0$ such that $\rho_z:=\sum_{T\in\T_L(o)}W_{z,D}(T)<\infty$ we can define a probability on $\T_L(o)$ by
$\P_z(\mT=T)=\rho_z^{-1}W_{z,D}(T)$.
It turns out (see e.g.~\cite{H08,HH13}) that there exists a critical value $z_{\sss D}$ such that $\rho_{z_{\sss D}}<\infty$, $\rho_z=\infty$ for $z>z_{\sss D}$,  and $\E_{z_{\sss D}}[|\mT|]=\infty$.  Hereafter we write $W(\cdot)$ for the critical weighting $W_{z_{\sss D},D}(\cdot)$, write $\rho:=\rho_{z_{\sss D}}$ and $\P=\P_{z_{\sss D}}$, and we select a random tree $\mT$ according to this critical weighting. We will also define $W(T)$ for $T\in\T_L(x)$ by \eqref{wtdefn}. 

For  $T\in\T_L(o)$ and $m \in \Z_+$,  let $T_{m}$ denote the set of vertices in $T$ of tree distance $m$ from $o$ and, in particular, $\mT_m$ is the corresponding set of vertices for our random tree $\mT$.   Note that $\P(\mT_0=\{o\})=1$.

\subsubsection*{{\em General models and ancestral relations}}

Although the voter model and lattice trees will be our prototypes in continuous and discrete time, respectively, our goal is to establish general conditions for convergence of the ranges of a wide class of rescaled lattice models (including oriented percolation and the contact process, and perhaps also percolation).  We introduce our general framework in this section. The time index $I$ will either be $\Z_+$ (discrete time) or $[0,\infty)$ (continuous time). We use the notation $I_t=\{s \in I:s\le t\}$.  As we will be dealing with
random compact sets,  we let $\mc{K}$ denote the set of compact subsets of $\R^d$. We
equip it with the Hausdorff metric $d_0$ (and note that $(\mc{K},d_0)$ is Polish) defined by $d_0(\varnothing,K)=d_0(K,\varnothing)=1$ for $K\ne \varnothing$, while for $K,K'\ne \varnothing$
\begin{align}
\nn d_0(K,K')&=d_1(K,K')\wedge 1, \quad \text{ where }\\
\nn d_1(K,K')&:=\Delta_1(K,K')+\Delta_1(K',K), \\
\label{Delta'def}\Delta_1(K,K')&:=\inf\big\{\delta>0:K\subset \{x:d(x,K')\le \delta\}\big\},\quad \text{ and }\\
\nn d(x,K)&:=\inf\{|x-y|:y \in K\}.
\end{align}
Although $d_1$ is the usual Hausdorff metric, it is easy to check $d_0$ is also a complete metric.

As our models of interest will be single occupancy models, we assume throughout that
\begin{align}
\label{cond1}
&\bT=(\mT_t)_{t\in I} \text{ is a stochastic process taking values in the finite}\\
\nn&\text{subsets of }\Z^d\text{ such that }\mT_0=\{o\},
\text{and in continuous time the}\\
\nn&\text{sample paths are cadlag }\mc{K}-\text{valued.}
\end{align}
\noindent {\bf Notation.} For a metric space $M$, $\mc{D}([0,\infty),M)$ will denote the space of cadlag $M$-valued paths with the Skorokhod topology and $C_b(M)$ is the space of bounded $\R$-valued continuous functions on $M$. $C^2_b(\R^d)$ is the set of bounded continuous functions whose first and second order partials are also in $C_b(\R^d)$.  Integration of $f$ with respect to a measure $\mu$ is often denoted by $\mu(f)$.
\medskip

Cadlag paths are bounded on bounded intervals and so this implies
\begin{equation}
\label{finiterange}
\text{for any $t\in I$, }\cup_{s\in I_t}\mT_s\text{ is a finite subset of }\Z^d.
\end{equation}
We will write
\[(t,x)\in\vec \mT\text{ if and only if }x\in \mT_t,\text{ where }(t,x)\in I\times\Z^d.\]
$(\mc{F}_t)_{t\in I}$ will denote a filtration with respect to which $(\mT_t)_{t \in I}$ is adapted. In practice it may be larger than the filtration generated by $\bT$. 

A random ancestral relation, $(s,y)\ara(t,x)$, on $I\times\Z^d$ will be fundamental to our analysis. If it holds we say that $(s,y)$ is an ancestor of $(t,x)$, and it will imply $s\le t$. We write 
\[(s_1,y_1)\ara\dots\ara(s_N,y_N)\text{ iff }(s_i,y_i)\ara(s_{i+1},y_{i+1})\text{ for }i=1,\dots,N-1,\]
and define (for $s,t\in I$, $x,y\in\Z^d$),
\begin{align}\label{edefn}e_{s,t}(y,x)=\begin{cases}\1((s,y)\ara(t,x))&\text{ if }s< t\\
\1(x=y\in\mT_t)&\text{ if }s\ge t, 
\end{cases}
\text{ and }\hat e_t(y,x)(s)=e_{s,t}(y,x).
\end{align}
We will assume $\ara$ satisfies the following conditions where (AR)(i)-(iii) will hold off a single null set which we usually ignore:
\begin{align}
\text{(AR)}(i)\nn&\quad \text{For all $(s,y),(t,x)\in I\times\Z^d$}:\\
\label{eqara}&\quad   (s,y)\ara(s,x)\text{ iff }x=y\in\mT_s,\\ 
\label{arbasica}&\quad(s,y)\ara(t,x) \text{ implies }
s\le t, y\in\mT_s,\text{ and }x\in\mT_t, \\
\label{arbasic0}&\quad (0,o)\ara(t,x)\text{ iff }x\in\mT_t.\\
\nn(ii)&\quad \text{For any }0\le s_1<s_2<s_3\text{ in }I \text{ and }y_1,y_2,y_3\in\Z^d:\\
\label{transitive}& \quad (s_1,y_1)\ara(s_2,y_2)\ara(s_3,y_3)
\text{ implies}\ (s_1,y_1)\ara(s_3,y_3).\\
\label{convtr}&\quad\text{Conversely if }(s_1,y_1)\ara(s_3,y_3)\text{ then }\exists y_{2}\in\mT_{s_2} \text{ s.t. }\\
\nn&\quad (s_1,y_1)\ara(s_2,y_2)\ara(s_3,y_3).\\
\label{cadlage}(iii)&\quad \text{If } I=[0,\infty),\text{ then for every $x,y\in\Z^d$:} \\
\nn&\quad \hat e_t(y,x)\in \mc{D}([0,\infty),\R)=:\mc{D}_{\R}, \text{ for every $t \in I$, and }\\
\nn&\quad t\mapsto\hat e_t(y,x)\in \mc{D}([0,\infty),\mc{D}_{\R}).\\
\label{wadapt}(iv)&\quad e_{s,t}(y,x)\text{ is }\mc{F}_t-\text{measurable for all }s, t\text{ in }I \text{ and }x,y\in \Z^d.
\end{align}
We call $\ara$ an ancestral relation iff (AR) holds. In this case we call $(\bT,\ara)$ an ancestral system.
\begin{REMS}\label{APremark} 
\emph{(1) It is immediate from \eqref{arbasic0} and \eqref{convtr} (the latter with $(s_1,s_2,s_3)=(0,s,t)$) that
\begin{equation}\label{death}
\mT_s=\varnothing\Rightarrow \mT_t=\varnothing\ \forall t\ge s.
\end{equation}
\noindent (2) In practice it is often easiest to verify (AR)(iii) by showing $s\mapsto e_{s,t}(y,x)$ is cadlag for each $t,x,y$, and that 
\begin{align}\nn&\text{For each }t\ge 0,x,y\in\Z^d\text{ there is a }\delta>0\text{ s.t. }\\
\label{estepr}&\hat e_u(y,x)=\hat e_t(y,x),\ \forall u\in[t,t+\delta)\ \text{ and }\\
\label{estepl} &\hat e_u(y,x)=\hat e_{u'}(y,x),\ \forall u,u'\in(t-\delta,t)\cap[0,\infty).
\end{align}
}
\end{REMS}

{\bf We will always assume \eqref{cond1} and (AR) when dealing with our abstract models.}
\medskip

In the discrete time case we can extend $\mT_t$ and $\mc{F}_t$ to $t\in[0,\infty)$ by 
\begin{equation}
\nn
\mT_t=\mT_{\lfloor t\rfloor},\quad \vec \mT=\{(t,x):x\in \mT_t,t\ge 0\},\quad \mc{F}_t=\mc{F}_{\lfloor t\rfloor},
\end{equation}
and define $(s,y)\ara(t,x)$ for all $t\ge s\ge 0$ by 
\begin{equation}\label{winterp}
(s,y)\ara (t,x)\text{ iff }(\floor{s},y)\ara(\floor{t},x).
\end{equation}
It is easy to check then that in the discrete case (AR)(i)-(iv) 
 hold,  where now $s,s_i,t$ are allowed
to take values in $[0,\infty)$. Moreover \eqref{estepr} and \eqref{estepl} hold.  
\medskip

\noindent{\bf Note: We allow $n$ to denote a real parameter in $[1,\infty)$.}

\medskip
\noindent For $A\subset \R^d$ and $a\in \R$ define $aA=\{ax:x \in A\}$.  
To rescale our model for $n\in [1,\infty)$ we set
\[\mT^{\sn}_t=\mT_{nt}/\sqrt n,\text{ for }t\ge 0,\]
and for $s,t\ge 0$, $x,y\in\Z^d/\sqrt n$
\begin{equation}
\nn
\text{we write }(s,y)\aran(t,x)\text{ iff }(ns,\sqrt n y)\ara(nt,\sqrt n x).
\end{equation}
We also define for $n\in [1,\infty)$, $s,t\ge 0$ and $x,y\in\Z^d/\sqrt{n}$,
\begin{equation}
\hat e^{\sn}_t(y,x)(s)=e^{\sn}_{s,t}(y,x)=e_{ns,nt}(\sqrt n y,\sqrt n x),\label{endef}
\end{equation}
and note that $\hat e^{\sn}_t(y,x)(s)=\hat e_{nt}(\sqrt n y,\sqrt n x)(ns)$.

Here is a simple consequence of (AR)(ii) which will be used frequently.
\begin{LEM}\label{anseq}
W.p.1 if $n\in [1,\infty)$, $M\in\N$, $0\le s_0<s_1<\dots<s_M$, and $(s_0,y_0)\aran(s_M,y_M)$, then there are $y_1\in\mT^{\sn}_{s_1},\dots, y_{M-1}\in \mT^{\sn}_{s_{M-1}}$ 
s.t. $(s_{i-1},y_{i-1})\aran(s_i,y_i)$ for $i=1,\dots,M$.
\end{LEM}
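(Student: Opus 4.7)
The plan is to prove this by induction on $M$, with the key tool being the ``converse'' part of (AR)(ii), namely \eqref{convtr}. First, I will reduce to the unrescaled case: by the definition of $\aran$ in \eqref{endef}, $(s_{i-1},y_{i-1})\aran(s_i,y_i)$ is equivalent to $(ns_{i-1},\sqrt n y_{i-1})\ara(ns_i,\sqrt n y_i)$, and similarly for the hypothesis, so it suffices to prove the statement for $n=1$ (and then apply it with times $ns_i$ and positions $\sqrt n y_i$). We work on the single full-measure event on which (AR)(i)--(iii) all hold, so the argument below is deterministic.

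The base case $M=1$ is vacuous, since there is nothing to produce. For the inductive step, assume the claim for chains of length $M-1\ge 1$, and suppose $0\le s_0<s_1<\dots<s_M$ with $(s_0,y_0)\ara(s_M,y_M)$. Applying \eqref{convtr} to the triple $(s_0,s_1,s_M)$ produces some $y_1\in\mT_{s_1}$ with
\[
(s_0,y_0)\ara(s_1,y_1)\ara(s_M,y_M).
\]
Now I apply the inductive hypothesis to the times $s_1<s_2<\dots<s_M$ and the relation $(s_1,y_1)\ara(s_M,y_M)$ to obtain $y_2\in\mT_{s_2},\dots,y_{M-1}\in\mT_{s_{M-1}}$ with $(s_{i-1},y_{i-1})\ara(s_i,y_i)$ for $i=2,\dots,M$. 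Concatenating with the already constructed step $(s_0,y_0)\ara(s_1,y_1)$ yields the required chain.

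There is essentially no serious obstacle: the only thing to watch is the measure-theoretic bookkeeping, i.e.\ making sure a single null set works for the uncountably many initial times and positions involved. This is not an issue because the paper already stipulates that (AR)(i)--(iii) hold off a single null set, and the induction above uses only the deterministic properties \eqref{convtr} (and \eqref{arbasica} to guarantee $y_i\in\mT_{s_i}$). Once one is on the good event, every instance of \eqref{convtr} needed in the induction is available, so the conclusion holds with probability one for all parameters $n,M,s_0,\dots,s_M,y_0,y_M$ simultaneously.
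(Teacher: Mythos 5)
Your proof is correct and follows essentially the same route as the paper's: reduce to $n=1$ by scaling, fix an $\omega$ on the full-measure event where (AR) holds, and then apply \eqref{convtr} repeatedly (you phrase the repetition as a formal induction, the paper just says ``repeat this argument $M-2$ times''). No gaps.
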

\begin{proof} Fix $\omega$ s.t. (AR)(i)-(iii) hold.  By scaling we may assume $n=1$.  By \eqref{convtr} there is a $y_1\in\mT_{s_1}$ s.t. $(s_0,y_0)\ara(s_1,y_1)\ara(s_M,y_M)$.  Repeat this argument $M-2$ times to construct the required sequence.
\end{proof}
\begin{DEF} 
\label{def:ancestral_path}
An {\em ancestral path} to $(t,x)\in \vec{\mT}$ is a cadlag path $w=(w_s)_{s \ge 0}$ for which $(s,w_s)\ara (s',w_{s'})$ for every $0\le s\le s'\le t$, and $w_s=x$ for all $s\ge t$.
The random collection of all ancestral paths to points in $\vec{\mT}$ is denoted by $\mc{W}$ and is called the system of ancestral paths for $(\bT,\ara)$.

If $n\in[1,\infty)$ and $w$ is an ancestral path to $(nt,\sqrt n x)$, we define the rescaled ancestral path $w^{\sn}$ by $w^{\sn}_s=w_{ns}/\sqrt n$, and call $w^{\sn}$ an ancestral path to $(t,x)\in\mT^{\sn}_t$. 
\end{DEF}

\begin{REM} \label{rem:aps}
It is easy to check that if $I=\Z_+$ then \eqref{winterp} and \eqref{eqara} imply that for any ancestral path $w\in \mc{W}$, $w_s=w_{\floor{s}}$ for all $s\ge 0$.
For this reason we will often restrict our ancestral paths to $s\in \Z_+$. 
\end{REM}
\begin{PRP}\label{prop:wexist} With probability, $1$ for any $(t,x)\in\vec{\mT}$, $\mc{W}$ includes at least one ancestral path to $(t,x)$.
\end{PRP}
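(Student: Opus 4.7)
The plan is to treat the discrete-time and continuous-time cases separately, in both cases fixing $(t,x)\in\vec{\mT}$ and exploiting the finiteness of $V:=\bigcup_{s\in I_t}\mT_s$ from \eqref{finiterange}.

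For $I=\Z_+$ with $t=m\in\Z_+$, apply Lemma~\ref{anseq} with the partition $s_i=i$, $0\le i\le m$, to obtain vertices $y_0=o,y_1,\dots,y_m=x$ with $(i-1,y_{i-1})\ara(i,y_i)$ for each $i$. Set $w_s=y_{\lfloor s\rfloor}$ for $s\le m$ and $w_s=x$ for $s\ge m$. Right-continuity is immediate, and the full ancestral property for $0\le s\le s'$ follows by iterating \eqref{transitive} together with \eqref{winterp} (and Remark~\ref{rem:aps}).

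For $I=[0,\infty)$, I would first record that by (AR)(iii) each indicator $s\mapsto e_{s,t}(y,x)=\1(y\in A_s)$ is cadlag, where $A_s:=\{y\in V:(s,y)\ara(t,x)\}$; since $V$ is finite this makes $s\mapsto A_s$ a cadlag $2^V$-valued path with only finitely many jumps on $[0,t]$. Next, introduce the refining dyadic mesh $D=\bigcup_{N\in\N}D_N$ with $D_N=\{kt/2^N:0\le k\le 2^N\}$, and inductively define $w:D\to V$ by assigning values to the new points in $D_N\setminus D_{N-1}$ via \eqref{convtr} applied between their two bracketing neighbours in $D_{N-1}$. Transitivity \eqref{transitive} propagates the ancestral relation to every pair of dyadic points, i.e.\ $(q,w_q)\ara(q',w_{q'})$ for all $q\le q'$ in $D$.

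To extend $w$ to $[0,t]$, I would combine the finiteness of $V$ with the local right-constancy \eqref{estepr}: for each $s\in[0,t]$ there is $\delta(s)>0$ with $\hat e_u(y,x)=\hat e_s(y,x)$ for all $y\in V$ and $u\in[s,s+\delta(s))$, so $A_r$ is constant on $[s,s+\delta(s))$. Using the dyadic ancestral chain and the right-continuity part of (AR)(iii), the values $w_q$ must stabilize in this finite set as $q\downarrow s$ along $D$, and I would set $w_s$ equal to the resulting common value (and $w_s=x$ for $s\ge t$). The ancestral property $(s,w_s)\ara(s',w_{s'})$ for arbitrary $s\le s'$ in $[0,t]$ would then be read off by approximating $(s,s')$ from the right by pairs in $D\times D$ and invoking right-continuity of $\hat e_\cdot(w_s,w_{s'})$ at $s$ and of $\hat e_{s'}(w_s,\cdot)$ at $s'$.

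The main obstacle is the stabilization step: a priori the inductive choices in \eqref{convtr} are arbitrary, so it is conceivable that $w_q$ oscillates among several elements of $A_s$ even after $A_s$ has become right-constant. Overcoming this will likely require either refining the inductive choice (for instance fixing once and for all a tie-breaking order on $V$ and choosing the $\ara$-minimal ancestor at each induction step) or extracting a subsequential limit via compactness of $V^{[0,t]}$ in the product topology together with a diagonal argument, and then upgrading pointwise convergence to cadlag regularity using the finite jump structure of $A_\cdot$.
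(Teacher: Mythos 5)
Your discrete-time argument is essentially the paper's: apply \eqref{convtr} (equivalently Lemma~\ref{anseq}) along the integer time points and interpolate by a step function. That part is fine.

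For $I=[0,\infty)$ you have a genuine gap, and you have correctly diagnosed it yourself: the dyadic construction gives a map $w\colon D\to V$ satisfying $(q,w_q)\ara(q',w_{q'})$ for dyadic $q\le q'$, but because the choices of intermediate vertices via \eqref{convtr} are non-canonical, there is no reason that $w_q$ stabilizes as $q\downarrow s$, and without that stabilization you cannot define $w_s$ or verify the ancestral property at non-dyadic times. Saying it ``will likely require'' a tie-breaking rule or a compactness/diagonal argument is not a proof of either; in particular, a pointwise subsequential limit along a diagonal does not automatically inherit the ancestral relation, and a tie-breaking order must be shown compatible with \eqref{transitive} and \eqref{convtr} across different dyadic levels -- neither is carried out.

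The paper avoids this entirely and is much simpler: Lemma~\ref{lem:stepfn} shows that a.s.\ the path $s\mapsto\mT_s$ is a right-continuous step function with \emph{finitely many} jump times $0=\tau_0<\tau_1<\dots<\tau_M=S^{\sss(1)}$ and is eventually empty. The continuous-time problem is therefore reduced to a finite discrete one by running exactly your discrete-time argument over the finite grid $\{\tau_0,\dots,\tau_M\}\cap[0,t]$ (together with $t$) in place of $\{0,1,\dots,j\}$, and interpolating by constants on each $[\tau_{i-1},\tau_i)$. You actually observed the relevant finiteness (that $s\mapsto A_s$ has only finitely many jumps on $[0,t]$, since $V$ is finite and the indicators are cadlag), but then discarded it in favour of the dyadic mesh, which is what creates the oscillation problem. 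Using the jump times themselves as the mesh eliminates the need for any limiting argument.
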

The elementary proof is given in Section~\ref{sec:onconds} below. 

Let us briefly consider \eqref{cond1} and (AR) for our prototypes, lattice trees and the voter model, 
introduced earlier. For lattice trees \eqref{cond1} is immediate. 
Since a lattice tree $T\in\T_L$ is a tree, for any $x\in T_{m}$ there is a unique ``ancestral" path $w(m,x)=(w_k(m,x))_{k\le m}$ of length $m$ in the tree from $o$ to $x$. Moreover $w_k(m,x)\in T_k$ for all $0\le k\le m$. Define $(k,y)\ara(m,x)$ iff 
$x\in \mT_m$, $0\le k\le m$, and $w_k(m,x)=y$. Here we allow $m=0$. 
AR(i)  and AR(ii) are then elementary to verify.  
 It remains to verify AR(iv) which is deferred to Section~\ref{sec:LT} where the definition of $\mc{F}_t$ is also given.

For the voter model there will also be a unique ancestral path $w(t,x)$ for each $(t,x)\in \vec{\mT}$ (see Lemma~\ref{lem:vw_tree}).  This path is obtained 
by tracing back the opinion $1$ at $x$ at time $t$ to
its source at the origin at time $0$.  Formally the ancestral paths are defined by reversing the dual system of coalescing random walks, obtained from the graphical construction of the voter model.  We then define $(s,y)\ara(t,x)$ iff $0\le s\le t$, $x\in\mT_t$ and $w_s(t,x)=y$. This standard
construction is described in Section~\ref{sec:voter} where (AR) and \eqref{cond1} are then verified (see Lemmas~\ref{markovmart} and \ref{lem:vmara}).

\subsubsection*{{\em Survival probability and measure-valued processes}}
The survival time of our scaled model $\mT^{\sn}$ is
$$S^{\sn}=\inf\{t\ge 0:\mT^{\sn}_t=\varnothing\},$$
so that $\mT^{\sn}_t=\varnothing$ for all $t\ge S^{\sn}$ by \eqref{death} and \eqref{cond1}.
The unscaled survival time is $S^{\sss(1)}$, and for $t>0$, the unscaled survival probability is defined as 
\begin{equation}
\nn
\theta(t):=\P(S^{\sss(1)}>t).\end{equation}
Our main results require a number of conditions on ${\bs{\mT}}$, the first of which concerns the asymptotics of  the survival probability.

\medskip
\noindent{\bf Notation.} Write $f(t)\sim g(t)$ as $t\uparrow \infty$ iff $\lim_{t\to\infty}f(t)/g(t)=1$.  Similarly for $f(t)\sim g(t)$ as $t\downarrow 0$.

\begin{COND}
\label{cond:surv}
There is a constant $s_D>0$ and a non-decreasing function $m:[0,\infty)\to(0,\infty)$ such that  $m(t)\uparrow \infty$, as $t\uparrow\infty$
\begin{align}
\label{survival}&\theta(t)\sim \frac{s_D}{m(t)}\text{ as }t\uparrow\infty,
\end{align}
\begin{equation}\label{mregvar}
\text{for any }s>0,\ \lim_{t\to\infty}m(st)/m(t)=s,
\end{equation}
and a constant $c_{\ref{mdef}}\ge 1$ such that
\begin{equation}
\label{mdef}\frac{m(sn)}{m(n)}\le c_{\ref{mdef}}s, \, \forall s,n\ge 1,\quad
\frac{m(n)}{m(sn)}\le c_{\ref{mdef}}\frac{1}{s}, \, \text{ for }1\le s\le n.
\end{equation}
\end{COND}
The monotonicity properties of $m$ and $\theta$ and \eqref{survival} easily show that 
\begin{equation}\label{survivbnds}
0<\underline s_D=\inf_{t\ge 0}m(t)\theta(t)\le \sup_{t\ge 0}m(t)\theta(t)=\overline s_D<\infty,
\end{equation}
Note also that the first inequality in \eqref{mdef} with $n=1$ implies that 
\begin{equation}\label{mboundi}
m(s)\le c_{\ref{mboundi}}s, \text{ for all } s\ge 1.
\end{equation}

For lattice trees with $d>8$ we set
\begin{equation}\label{mLTs}
m(t)=m^{LT}(t)=A^2V(t\vee 1),
\end{equation}
where $A,V>0$ are constants that depend on $D$; $A$ gives the asymptotic expected number of particles alive at time $n$ and $V$ is called the vertex factor (see \cite{HH13}).  

For the voter model in two or more dimensions we set
\begin{equation}
\nn
m(t)=m^{VM}(t)=\begin{cases} t\vee 1 &\text{ if }d>2\\
\frac{t\vee e}{\log(t\vee e)}&\text{ if }d=2,
\end{cases}\end{equation}
and
\begin{equation}
\nn
0<\beta_d=\begin{cases}P_o(S_n\neq o\ \ \forall n\in\N)&\text{ if }d>2\\
2\pi\sigma^2 &\text{ if } d=2.
\end{cases}
\end{equation}
In the above under $P_o$, $\{S_n\}$ is a discrete-time random walk with step distribution $D$, started at $o$, and we recall that $\sigma^2I_{d\times d}$ is the covariance matrix of $D$.
\begin{PRP}\label{cond1check}
(a) Condition~\ref{cond:surv} holds for critical sufficiently spread out lattice trees in dimension $d>8$ with $s_D=2A$.\\
(b) Condition~\ref{cond:surv} holds for the voter model in dimension $d>1$ with $s_D=\beta_d^{-1}$.
\end{PRP}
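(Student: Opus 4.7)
My plan is to treat parts (a) and (b) separately, and in each case follow the same two-step recipe: (1) quote an already-established asymptotic for the unscaled survival probability $\theta(t)=\P(S^{\sss(1)}>t)$ from the literature, and match constants against the normalization $\theta(t)\sim s_D/m(t)$ demanded by \eqref{survival}; (2) verify the elementary monotonicity, regular-variation \eqref{mregvar}, and quasi-linear bounds \eqref{mdef} for the explicit $m$ stated after Condition~\ref{cond:surv}. Because the proposed $m$ is either linear for large $t$ (parts (a) and (b) with $d>2$) or of the form $t/\log t$ (part (b) with $d=2$), step (2) is routine; the substance lies entirely in (1), and all required asymptotics are available in existing work.

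For (a) I would invoke the lace-expansion result of \cite{H08} which states, for $d>8$ and $L$ large, that critical sufficiently spread-out lattice trees satisfy $\P(\mc{T}_m\neq\varnothing)\sim 2/(AVm)$ as $m\to\infty$, where $A$ is the amplitude controlling the expected size of $\mc{T}_m$ and $V$ is the vertex factor (see also \cite{HH13}). Under the discrete-to-continuous extension \eqref{winterp}, $\mc{T}_t=\mc{T}_{\lfloor t\rfloor}$, so $\theta(t)\sim 2/(AV t)=2A/m^{LT}(t)$, identifying $s_D=2A$. Since $m^{LT}(t)=A^2V(t\vee 1)$ is non-decreasing, tends to infinity, and is exactly linear on $[1,\infty)$, \eqref{mregvar} holds trivially and both inequalities in \eqref{mdef} hold with constant $c_{\ref{mdef}}=1$.

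For (b), the voter-model survival asymptotics from a single $1$ at the origin are classical consequences of the coalescing-random-walk dual. For $d\ge 3$, duality reduces $\theta(t)$ to a non-coalescence probability for two $D$-walks started at the origin, giving $\theta(t)\sim(\beta_d t)^{-1}$ with $\beta_d=P_o(S_n\neq o\ \forall n)$; for $d=2$ the recurrence of the dual walks produces the standard logarithmic correction $\theta(t)\sim \log t/(2\pi\sigma^2 t)$. These are precisely the asymptotics used in the super-Brownian convergence results of \cite{BCLG01} and \cite{CDP00}. In both cases the right-hand side equals $\beta_d^{-1}/m^{VM}(t)$, identifying $s_D=\beta_d^{-1}$. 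For $d>2$ the verification of \eqref{mregvar}--\eqref{mdef} is as in part (a). For $d=2$, writing $m^{VM}(t)=t/\log t$ for $t\ge e$, the identity $m^{VM}(st)/m^{VM}(t)=s\log t/\log(st)\to s$ gives \eqref{mregvar}; the two-sided bound $1\le \log(sn)/\log n\le 2$ for $1\le s\le n$ with $n\ge e$ yields \eqref{mdef} with a harmless absolute constant, the finitely many small values of $n$ being absorbed into $c_{\ref{mdef}}$.

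\emph{Expected obstacle.} There is no genuinely new difficulty in this proposition: the serious analysis sits inside \cite{H08} for lattice trees and inside the voter-model literature cited above. The only thing to be careful about is that the constants $A$, $V$, $\beta_d$, $\sigma^2$ as defined here agree with the normalizations used in the source papers, so that $s_D=2A$ and $s_D=\beta_d^{-1}$ emerge with exactly the claimed values; I do not anticipate any surprises beyond bookkeeping.
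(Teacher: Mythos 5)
Your proposal follows the same two-step recipe as the paper: quote the survival asymptotic \eqref{survival} from the literature and observe that \eqref{mregvar}--\eqref{mdef} are elementary for the given explicit $m$. The only difference is bookkeeping of citations: the paper attributes the lattice-tree asymptotic to Theorem~1.4 of \cite{HH13} (not \cite{H08}, which does not establish the survival probability), and the voter-model asymptotic to Theorem~1$'$ of \cite{BG80} or (1.5) of \cite{BCLG01}, but the substance of your argument is identical.
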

\begin{proof} Conditions \eqref{mregvar} and \eqref{mdef} are obvious in both cases. \\
(a) \eqref{survival} is a special case of Theorem~1.4 of \cite{HH13}.\\
(b) Theorem~1' of \cite{BG80} (or (1.5) of \cite{BCLG01}) gives \eqref{survival} for the voter model.
\end{proof}

We can reinterpret the state of our rescaled models in terms of an empirical measure 
given by 
\begin{equation}
X^{\sn}_t=\frac{1}{m(n)}\sum_{x\in \mT^{\sn}_t}\delta_x=\frac{1}{m(n)}\sum_{x\in \mT_{nt}}\delta_{x/\sqrt n}.\nn
\end{equation}
So $X_t^{\sn}$ takes values in the Polish space $\mc{M}_F(\R^d)$ of finite measures on $\R^d$ equipped
with the topology of weak convergence. It follows from \eqref{cond1} that the measure-valued
process
 $\bs{X}^{\sn}=(X_t^{\sn})_{t\ge 0}$ is in the Polish space $\mc{D}:=\mc{D}([0,\infty),\mc{M}_F(\R^d))$.
 We define the survival map $\mc{S}:\mc{D} \ra [0,\infty)$ for $\bs{\nu}=(\nu_t)_{t\ge 0}\in \mc{D}$ by 
\begin{equation}
\mc{S}(\bs{\nu})=\inf\{t>0:\nu_t(\R^d)=0\},\nn
\end{equation}
so that our survival times satisfy
$S^{\sn}=\mc{S}(\bs{X}^{\sn})$, 
and $X^{\sn}_t=0$ for all $t\ge S^{\sn}$ by \eqref{death}.

\subsubsection*{{\em Weak convergence and super-Brownian motion}}
An adapted a.s.~continuous $\mc{M}_F(\R^d)$-valued process, $\bs{X}=(X_s)_{s\ge 0}$, on a complete filtered probability space $(\Omega,\mc{F},\mc{F}_t,\P_{X_0})$ is said to be a super-Brownian motion (SBM) with branching rate $\gamma>0$ and diffusion parameter $\sigma_0^2>0$ (or a $(\gamma,\sigma_0^2)$-SBM) starting at $X_0\in\mc{M}_F(\R^d)$ iff it solves the following martingale problem:
\begin{align}
\nn
&\forall \phi\in C_b^2(\R^d), M_t(\phi)=X_t(\phi)-X_0(\phi)-\int_0^t X_s(\sigma_0^2\Delta\phi/2)\,ds\\
\nonumber&\text{is a continuous }\mc{F}_t-\text{martingale starting at $0$, and with square} \\
\nonumber&\text{function }\langle M(\phi)\rangle_t=\int_0^tX_s(\gamma\phi^2)ds.
\end{align}
See Section II.5 of \cite{Per02} for the well-posedness of the above martingale problem.  Let $S=\mc{S}(\bs{X})$.
Associated with such a SBM is a $\sigma$-finite measure, $\N_o=\N_o^{\gamma,\sigma_0^2}$, on the space of continuous measure-valued paths satisfying $\nu_0=0$, $0<S<\infty$ and $\nu_s=0$ for all $s\ge S$; let $\Omega^{Ex}_C$ denote the space of such paths. 
$\N_o$ is called the canonical measure for super-Brownian motion. 
The connection between $\N_o$ and super-Brownian motion
is that if $\Xi$ is a Poisson point process on the space $\Omega_C^{Ex}$ with intensity $\N_o$, then
\begin{equation}\label{sbmppp}X_t=\int\nu_t\,d\Xi(\bs{\nu}), t>0;\quad X_0=\delta_0
\end{equation}
defines a SBM starting at $\delta_0$.  
It is known that
\begin{equation}\label{sbmext}
\N_o(S>s)=\frac{2}{\gamma s}<\infty\text{ for all $s>0$}.
\end{equation}
Intuitively $\N_o$ governs the evolution of the descendants of a single ancestor at time zero, starting from the origin.  For the above and more information on the canonical measure of super-Brownian motion see,  e.g., Section II.7 of  \cite{Per02}.  
We will sometimes work with the unconditioned measures ($n\in[1,\infty)$)
\[\mu_n(\cdot)=m(n)\P(\cdot).\]   
 Note that \eqref{survival} and \eqref{mregvar} of Condition~\ref{cond:surv}  together imply 
 \begin{equation}\label{cond1'}
 \text{for each }s>0,\quad 
\lim_{n\to\infty}\frac{2}{ s_D}\mu_n(S^{\sn}>s)=\frac{2}{ s}.
\end{equation}
Combining \eqref{survivbnds} with \eqref{mdef} and taking limits from the left, we arrive at
\begin{equation}\label{sprobbnds}
 \frac{\underline s_D}{c_{\ref{mdef}}(s\vee 1)}\le \mu_n(S^{\sn}>s)\le \mu_n(S^{\sn}\ge s)\le \frac{c_{\ref{mdef}}\overline s_D}{s\wedge n},\quad\forall s\ge 0. 
\end{equation}
 
Suppressing dependence on $\gamma,\sigma_0^2$, for $s>0$ we define probabilities by 
\begin{equation}\label{Pnsdef}\P_n^{s}(\cdot)=\P\big( \cdot \big | S^{\sn}>s)
\end{equation}
 and 
 \begin{equation}\label{Nosdef}
 \N_o^s(\cdot)=\N_o(\cdot \,|\, S>s).
 \end{equation}
 We also let $v>0$ be the spatial scaling parameter arising in Theorem~1.2 of \cite{HHP17} (see especially (1.6) of that reference).   
The definition of $v$ is non-trivial (it is defined in terms of so-called lace expansion coefficients), but it satisfies
 \[v=[1+O(L^{-d/2})]\frac{1}{d}\sum_x |x|^2D(x)=O(L^2).\]
 
 \begin{PRP}\label{prop:wkcvgcevoterLT} Consider either critical lattice trees with $d>8$ and $L$ sufficiently large, or
 the voter model with $d>1$. Then for any $s>0$, 
 \begin{equation}
 \nn
\P_n^{s}(\bs{X}^{\sn}\in\cdot) \cweak \N_o^s, \quad  (\text{weak convergence on }\mc{D}),
\end{equation}
where $\N_o^s$ has parameters $(\gamma,\sigma_0^2)=(1,v)$ for lattice trees, and $(\gamma,\sigma_0^2)=(2\beta_d,\sigma^2)$ for the voter model.
\end{PRP}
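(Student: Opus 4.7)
The plan is to reduce each case to an already-available path-space convergence result for the rescaled measure-valued processes, and then to translate from vague convergence of the $\sigma$-finite measures $\mu_n$ to weak convergence of the conditioned probabilities $\P_n^s$ by a continuous-mapping argument applied to the continuity set $\{S>s\}\subset\mc{D}$.

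For critical sufficiently spread-out lattice trees in $d>8$, the key input is Theorem 1.2 of \cite{HHP17}, which establishes weak convergence of $\bs{X}^{\sn}$ on $\mc{D}$ to a $(1,v)$-SBM cluster; equivalently, away from the zero-path, $\mu_n(\bs{X}^{\sn}\in\cdot)$ converges vaguely on $\mc{D}$ to a constant multiple of $\N_o^{1,v}$.  Since \eqref{sbmext} gives $\N_o(S=s)=0$, each set $\{S>s\}$ is a continuity set for $\N_o^{1,v}$, and for any $F\in C_b(\mc{D})$ we obtain
\[\P_n^s(F(\bs{X}^{\sn}))=\frac{\mu_n(F(\bs{X}^{\sn})\,\1_{\{S^{\sn}>s\}})}{\mu_n(S^{\sn}>s)}\ra\frac{\N_o(F\,\1_{\{S>s\}})}{\N_o(S>s)}=\N_o^s(F),\]
where the denominator convergence is \eqref{cond1'} and the overall constant in the numerator's limit cancels in the ratio. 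This yields the stated weak convergence to $\N_o^s$ at $(\gamma,\sigma_0^2)=(1,v)$.

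For the voter model in $d\ge 2$, the fixed-time convergence of a single voter cluster (conditioned on survival past time $s$) to the $(2\beta_d,\sigma^2)$-canonical SBM measure is established in \cite{BCLG01}, and the corresponding path-space convergence follows by combining this with the generator and moment machinery of \cite{CDP00}. Concretely, one first shows tightness of $\{\P_n^s(\bs{X}^{\sn}\in\cdot):n\ge 1\}$ on $\mc{D}$ via the Aldous criterion applied to $X^{\sn}_t(\phi)$ for $\phi\in C_b^2(\R^d)$, using the semimartingale decomposition of $t\mapsto X^{\sn}_t(\phi)$ coming from the voter model generator together with the second moment bounds in \cite{CDP00}; the branching rate $\gamma=2\beta_d$ and diffusion coefficient $\sigma^2$ are identified via the coalescing random walk dual. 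Once tightness holds, the finite-dimensional convergence of \cite{BCLG01} (plus, in $d=2$, the logarithmic correction already encoded in the definition of $m^{VM}$) pins down $\N_o^{2\beta_d,\sigma^2}$ as the unique subsequential limit, and the ratio argument above then concludes.

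The main obstacle lies in the voter model part, since only fixed-time convergence is explicitly cited in the literature (as noted in the paper's introduction); the remaining tightness and identification-of-limit steps are routine but not trivial, and require careful handling of the $d=2$ logarithmic scaling and the conditioning on $\{S^{\sn}>s\}$. For lattice trees, by contrast, the full path-space convergence of $\bs{X}^{\sn}$ in \cite{HHP17} makes Proposition~\ref{prop:wkcvgcevoterLT} a genuine one-line translation, with the only content being the identification of the correct multiplicative normalisation through \eqref{cond1'} and \eqref{sbmext}.
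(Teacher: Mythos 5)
Your treatment of the lattice-trees case is essentially the paper's: the cited result of \cite{HHP17} plus the survival asymptotics \eqref{cond1'} (and an innocuous constant rescaling) gives the conditional convergence on $\mc{D}$ directly, and your ratio formulation $\P_n^s(\cdot)=\mu_n(\cdot\cap\{S^{\sn}>s\})/\mu_n(S^{\sn}>s)$ with the continuity set $\{S=s\}$ having $\N_o$-measure zero is the right mechanism. That part is fine.

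The voter-model half, however, rests on a misreading of the literature. You write that ``only fixed-time convergence is explicitly cited,'' attribute this to the paper's introduction, and then embark on a tightness-plus-identification programme (Aldous criterion on $X_t^{\sn}(\phi)$, generator estimates from \cite{CDP00}, moment bounds, handling the $d=2$ logarithmic scaling). None of this is needed: the paper's introduction is discussing fixed-time convergence of the \emph{occupied set} (a statement about the range, not about the measure-valued process), whereas the path-space convergence of $\bs{X}^{\sn}$ under $\P_n^s$ is already in \cite{BCLG01} as Theorem 4(b), which is exactly what the paper cites and is the whole proof in the voter case. Re-proving a theorem that is in the cited reference is not a gap in the mathematics, but your argument misidentifies where the actual content lies (you declare the voter side to be the ``main obstacle'' when it is in fact a one-line citation) and, if included, the tightness argument you sketch would need considerable care with the conditioning on $\{S^{\sn}>s\}$ --- the Aldous criterion must be applied to the \emph{conditional} laws, and the compensator/quadratic-variation estimates for $X_t^{\sn}(\phi)$ do not transfer to $\P_n^s$ without uniform control of $1/\P(S^{\sn}>s)$ via \eqref{sprobbnds}. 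The shorter and safer route is to take \cite{BCLG01}, Theorem 4(b), at face value, as the paper does.
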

\begin{proof} For lattice trees this is an immediate consequence of Theorem~1.2 of \cite{HHP17} (and an elementary rescaling) and \eqref{cond1'}. The reader should note, however, that the definition of $\mu_n$ in \cite{HHP17} and that given above differ by a constant factor of $A$.

For the voter model this is Theorem~4(b) of \cite{BCLG01}.
\end{proof}
  It is conjectured that super-Brownian motion is also the scaling limit of critical oriented percolation and the contact process above $4$ dimensions, and critical percolation above 6 dimensions.  For oriented percolation and the contact process, convergence of {\it the finite-dimensional distributions} (f.d.d.'s)  has been established \cite{HofSla03b,HofSak10} (see also \cite{HolPer07,HH13}) but tightness (and hence convergence on path space) remains open.
We stress that the actual weak convergence result we will impose on our lattice models (Condition~\ref{cond:finite_int} below) will in fact follow from this weaker convergence of f.d.d.'s and a moment bound on the total mass.

We slightly abuse the above notation and will denote super-Brownian motion under $\N_o$, or the probabilities $\N_o^s$, by $\bs{X}=(X_t)_{t\ge 0}$. 

Our main objective is to give general conditions for the convergence of the rescaled sets of occupied sites.  This convergence follows neither from the notions  of weak convergence  above, nor from the weak convergence of the so-called historical processes (see e.g.~\cite{DP91,H16,CFHP18}).

\subsubsection*{{\em Range}}
The range of ${\bf \mc{T}}$ is $R^{\sss(1)}=\cup_{t\in I}\mT_t$, which by \eqref{finiterange} and \eqref{death} is a finite
subset of $\Z^d$ on $\{S^{\sss(1)}<\infty\}$, and hence under Condition~\ref{cond:surv} will be finite a.s. The range of $\mT^{\sn}$ is $R^{\sn}=R^{\sss(1)}/\sqrt n=\cup_{t\ge 0}\mT^{\sn}_t$. So by the above we see that
\begin{equation}\label{rangefinite}\text{Condition~\ref{cond:surv} implies $R^{\sn}$ is a.s. a finite subset of $\R^d$.}
\end{equation}  
Let $\mc{R}:\mc{D} \ra \text{closed subsets of $\R^d$ }$ be defined by 
\begin{equation}
\nn
\mc{R}(\bs{\nu})=\text{supp}\left(\int_0^\infty \nu_t \,dt\right),
\end{equation}
where $\supp(\mu)$ is the closed support of a measure $\mu$.
Clearly 
$R^{\sn}=\mc{R}(\bs{X}^{\sn})$ for all $n\ge 1$.  The radius mapping $r_0: \mc{K}\rightarrow [0,\infty)$ on the space of compact subsets of $\R^d$ is given by
\begin{equation}
\nn
r_0(K)=\sup\{|x|:x\in K\}.
\end{equation}
Of particular interest is the {\em extrinsic one-arm probability}
\begin{equation}
\nn
\eta_r=\P\big(R^{\sss(1)}\cap \overline{B(o,r)}^c\ne \varnothing\big)=\P(r_0(R^{\sss(1)})>r).
\end{equation}

In the setting of high-dimensional critical percolation, Kozma and Nachmias \cite{KozNac11} have proved that as $r\to\infty$, $r^2\eta_r$ is bounded above and below by positive constants.  It is believed (see e.g. \cite[Open Problem 11.2]{HeyHofBook} and \cite[Conjecture 1.6]{HHP17}) that in fact $r^2\eta_r\ra C>0$ for various critical models (percolation, voter, lattice trees, oriented percolation, and the contact process) all above their respective critical dimensions.  
To understand this $r^{-2}$ behaviour in terms of the above weak convergence results, consider the one-arm probabilities for the limiting super-Brownian motion.

The range of a $(\gamma,\sigma_0^2)$-SBM ${\bs {X}}$ is denoted by 
\begin{equation}
\nn
R=\mc{R}(\bs{X}).
\end{equation}
The a.e.~continuity of $(X_t)_{t\ge0}$ easily shows that
\begin{equation}
\nn
R=\overline {\cup_{t\ge 0}\text{supp}(X_t)}\quad \N_o-\text{a.e.}
\end{equation}
We note that $R$ is a compact subset $\N_o$-a.e.  This is well-known under $\P_{\delta_0}$ (see, e.g. Corollary III.1.4 of \cite{Per02}) and then follows easily under $\N_o$ using \eqref{sbmppp}.  We now state a quantitative version of this from \cite{Isc88}.  For $d\ge 1$, let $v_d:B_d(0,1)\ra \R_+$ be the unique positive radial solution of 
\begin{equation}\label{pde1}\Delta v_d=v_d^2, \qquad \text{ with }\lim_{|x|\uparrow 1}v_d(x)=+\infty.
\end{equation}
(See Theorem 1 of \cite{Isc88} for existence and uniquenss of $v_d$.)\\

\begin{LEM}\label{lem:radrange} For all $d\ge 1$ and $r>0$, $\N_o^{\gamma,\sigma_0^2}(r_0(R)>r)=\frac{v_d(0)\sigma_0^2}{\gamma}r^{-2}$.
\end{LEM}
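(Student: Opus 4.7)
The plan is to translate Iscoe's hitting--probability estimate under $\P_{\delta_0}$ into the canonical--measure language via the Poisson cluster decomposition \eqref{sbmppp}, and then pin down the dependence on $(\gamma,\sigma_0^2)$ by a simple scaling.

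First, I would reduce to a statement under $\P_{\delta_0}$. By \eqref{sbmppp}, under $\P_{\delta_0}$ the SBM $\bs{X}$ is the sum of the clusters of a Poisson point process on $\Omega_C^{Ex}$ with intensity $\N_o$. Since $R=\mc{R}(\bs{X})$ is the union of the ranges of these clusters, the event $\{R\subset \overline{B(o,r)}\}$ coincides with the event that every cluster is supported inside $\overline{B(o,r)}$, so Poisson thinning gives
\[
\P_{\delta_0}\bigl(R\subset \overline{B(o,r)}\bigr)=\exp\bigl(-\N_o(r_0(R)>r)\bigr).
\]
Thus it suffices to identify $u(0):=-\log \P_{\delta_0}(R\subset \overline{B(o,r)})$ with $v_d(0)\sigma_0^2/(\gamma r^2)$.

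Next, I would invoke the standard log--Laplace analysis for SBM exit measures (Iscoe \cite{Isc88}, or Chapter~V of \cite{Per02}): the function $u(x):=-\log \P_{\delta_x}(R\subset \overline{B(o,r)})$ is the minimal nonnegative solution of the semilinear Dirichlet problem
\[
\tfrac{\sigma_0^2}{2}\Delta u=\tfrac{\gamma}{2}u^{2}\ \text{ on } B(o,r),\qquad \lim_{|x|\uparrow r} u(x)=+\infty.
\]
This is obtained by putting $u_{\lambda,t}(x)=-\log \E_{\delta_x}\bigl[\exp(-\lambda\int_0^t X_s(\overline{B(o,r)}^{c})\,ds)\bigr]$, which solves the parabolic equation $\partial_t u_{\lambda,t}=(\sigma_0^2/2)\Delta u_{\lambda,t}-(\gamma/2)u_{\lambda,t}^{2}+\lambda \mathbf{1}_{\overline{B(o,r)}^{c}}$ with $u_{\lambda,0}\equiv 0$, and then sending $\lambda,t\uparrow\infty$ monotonically.

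Finally, a scaling argument identifies $u$ in terms of $v_d$. Writing $u(x)=c\,v_d(x/r)$ with $c=\sigma_0^{2}/(\gamma r^{2})$, one checks using $\Delta v_d=v_d^{2}$ on $B_d(0,1)$ and the boundary blow-up that $u$ solves the displayed Dirichlet problem; the uniqueness statement in Theorem~1 of \cite{Isc88} then forces equality. Evaluating at $x=0$ yields $u(0)=\sigma_0^{2}v_d(0)/(\gamma r^{2})$, which together with the Poisson identity above gives the claimed formula. The only nontrivial step is the second one: passing rigorously to the limit $\lambda,t\to\infty$ in the semilinear parabolic equation and matching the limit with the elliptic blow-up solution. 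Both limits are monotone and the existence/uniqueness of $v_d$ is precisely the content of \cite{Isc88}, so I would simply cite that work rather than reproduce the PDE analysis.
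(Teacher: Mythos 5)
Your proposal is correct and follows essentially the same route as the paper: reduce via the Poisson cluster decomposition \eqref{sbmppp} to a hitting probability under $\P_{\delta_0}$, then identify it using Iscoe's Theorem~1 and a scaling argument. The only difference is that you unpack the log-Laplace PDE behind Iscoe's result and scale at the PDE level, whereas the paper simply cites Iscoe for the closed-form expression $\P_{\delta_0}(r_0(R)>r)=1-\exp(-v_d(0)\sigma_0^2/(\gamma r^2))$ and rescales that formula directly.
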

\begin{proof} Theorem~1 of \cite{Isc88} and a simple scaling argument  
show that
\[\P_{\delta_0}(r_0(R)>r)=1-\exp\Bigl(-\frac{v_d(0)\sigma_0^2}{\gamma}r^{-2}\Bigr).\]
On the other hand, the left-hand side of the above is $1-\exp(-\N_o(r_0(R)>r))$ by \eqref{sbmppp}. 
Combining these equalities completes the proof.
\end{proof}

\section{Statement of Main results}
\label{sec:main}
Our main results depend on certain conditions which we state below.  They will be verified for
the voter model ($d\ge 2$) and sufficiently spread out critical lattice trees ($d>8$), and most, but not all, are known
to hold for critical oriented percolation, and the critical contact process ($d>4$)--both sufficiently spread out. 

Recall our standing assumptions \eqref{cond1} and (AR), 
the function $m$ from \eqref{mdef}, and
 the unconditioned measures ($n\ge 1$)
$\mu_n(\cdot)=m(n)\P(\cdot)$.   Recall also that $(\mc{F}_t)_{t\in I}$ is the filtration introduced prior to (AR) which will
contain the filtration generated by $(\mT_s)_{s\le t}$ or equivalently by $(X^{\sss(1)}_s)_{s \le t}$.
\subsection{Conditions}
\label{sec:cond}
We now introduce additional conditions on $(\bs{\mT},\ara)$. 
Condition~\ref{cond:L1bound} is simple for the voter model, and is one of the outputs of the inductive approach to the lace expansion \cite{HofSla02,HHS08} for other models, while Condition~\ref{cond:self-repel} will usually follow
from Condition~\ref{cond:surv} and a form of the Markov property or Markov inequality.
\begin{COND}\label{cond:L1bound} 
$\quad \sup_{t\in I}\E\big[|\mT_t|\big]=c_{\ref{cond:L1bound}}<\infty$.
\end{COND}

\begin{COND}
\label{cond:self-repel}
There exists $c_{\ref{cond:self-repel}}>0$ such that for all $s\ge 0,t>0$, on the event $\{y \in \mT_s\}$ we have 
$$\P\big(\exists z : (s,y) \ara (s+t,z) \big |\mc{F}_s\big )\le \frac{c_{\ref{cond:self-repel}}}{m(t)}, \qquad \text{ a.s.}$$
\end{COND}
The next condition is the main input for our uniform modulus of continuity for ancestral paths (e.g. Theorem~\ref{thm:mod_con} below).  In general it also is the most difficult to verify as our arguments
below for lattice trees demonstrate.
\begin{COND}
\label{cond:6moment}
There exists a $p>4$ and $c_{\ref{cond:6moment}}=c_{\ref{cond:6moment}}(p)>0$ such that for every $0<s\le t$, 
\begin{equation}
\ARXIV{\label{cond6momentv1}}
\SUBMIT{\nn}
\E\Bigl[\sum_{x\in\mT_t}\sum_{y\in \mT_{t-s}}\1((t-s,y)\ara(t,x))|x-y|^p\Bigr]\le c_{\ref{cond:6moment}} (s\vee 1)^{p/2}.
\end{equation}
\end{COND}

We will need an additional hypothesis to control the ancestral paths just before
the terminal value.

\begin{COND}\label{cond:smallinc} There are $\kappa>4$ and $c_{\ref{cond:smallinc}}>0$ such that for all $s\ge 0$, $y\in\Z^d$, and $N>0$,
\begin{align}\label{smallincbnd}
\P(\exists\ &(t,x)\text{ s.t. } (s,y)\ara(t,x),\ t\in[s,s+2],\ |y-x|\ge N|\mc{F}_s)\\
\nonumber&\le c_{\ref{cond:smallinc}}N^{-\kappa}\text{ on }\{y\in\mT_s\}.
\end{align}
\end{COND}

\begin{REM}\label{smallincdisc}  In discrete time if for some $L>0$,
\begin{equation}\label{jumpbound} 
\forall k\in\Z_+,\ \forall x,y\in\Z^d\ \ [(k,x)\ara(k+1,y)\Rightarrow \Vert x-y\Vert\le L],
\end{equation}
then Condition~\ref{cond:smallinc} holds for any $\kappa>4$.  This is obvious since the conditional probability 
on the left-hand side of \eqref{smallincbnd} is then zero if $N>2\sqrt d L$. 
\end{REM}

If $\bs{\nu}=(\nu_t)_{t\ge0}\in\mc{D}$  and $r>0$, define $\bar{\nu}_{r}\in M_F(\R^d)$ by $\bar{\nu}_r(\cdot)=\int_0^r \nu_t(\cdot)  \dd t$, and $\BB{\nu}{}\in M_F(\R^d)$ by $\BB{\nu}{}(\cdot)=\1(\mc{S}(\nu)<\infty)\int_0^\infty \nu_t(\cdot)  \dd t$.
\begin{COND}
\label{cond:finite_int} There are parameter values $(\gamma,\sigma_0^2)$ for $\N_o$ so that
for every $s>0$ and $0\le t_0<t_1<\infty$, as $n \to \infty$,
\begin{equation}
\P_n^{s}\left(\bar{X}^{\sn}_{t_1}-\bar{X}^{\sn}_{t_0}\in \cdot\right)\cweak \N_o^{s
}\left(\bar{X}_{t_1}-\bar{X}_{t_0}\in \cdot\right)\text{ on }\mc{M}_F(\R^d). 
\nn
\end{equation}
\end{COND}
For critical lattice trees ($d>8$) with $L$ sufficiently large, and voter models ($d\ge 2$) the above is immediate from Proposition~\ref{prop:wkcvgcevoterLT} with the parameter values given there. For potential applications to other models it is worth noting that convergence
of finite dimensional distributions and boundedness of arbitrary moments of the total mass suffice. 
\begin{LEM}\label{lem:int_fdd}
Fix $s>0$ and let $\P_n^s$ and $\N_o^s$ be as in \eqref{Pnsdef} and \eqref{Nosdef}, respectively.  
Assume that for all $p \in \N$:
\begin{align}\label{fddconv}
&\text{for all }0<u_1,\dots,u_p<\infty,  \text{ as }n\to\infty,\\
\nonumber&\P_n^s((X^{\sn}_{u_1},\dots,X^{\sn}_{u_p})\in \cdot)\cweak\N_0^s((X_{u_1},\dots,X_{u_p})\in\cdot)\text{ on }\mc{M}_F(\R^d)^p,
\label{momentbounds}\\
&\text{and for } \bbt>0,\quad \sup_{n\ge 1}\sup_{t\le \bbt}\E^s_n\left[X_t^{\sn}(1)^p\right]:=C_{\ref{lem:int_fdd}}(s,\bbt,p)<\infty.
\end{align}
Then Condition \ref{cond:finite_int} holds.
\end{LEM}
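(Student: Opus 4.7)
My plan is to promote the single-time f.d.d.\ convergence \eqref{fddconv} to weak convergence of the time-integrated measures on $\mc{M}_F(\R^d)$. Write $Y_n = \bar{X}^{\sn}_{t_1} - \bar{X}^{\sn}_{t_0}$ and $Y = \bar{X}_{t_1} - \bar{X}_{t_0}$. On the Polish space $\mc{M}_F(\R^d)$ it is enough to establish (a) marginal weak convergence $Y_n(\phi) \cweak Y(\phi)$ for each $\phi \in C_b(\R^d)$, and (b) tightness of $\{Y_n\}$ on $\mc{M}_F(\R^d)$; any subsequential weak limit then agrees with $Y$ on every such test integral, hence in law by uniqueness of Laplace functionals.

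For part (a) my approach is Riemann-sum approximation. Fix $\phi \in C_b(\R^d)$ and for each $N \in \N$ take the partition $\pi_N = \{s^{(N)}_i = t_0 + i(t_1-t_0)/N : 0 \le i \le N\}$. Set $R^{\pi_N}_n(\phi) = \sum_{i=1}^N \tfrac{t_1-t_0}{N}\, X^{\sn}_{s^{(N)}_{i-1}}(\phi)$ and define $R^{\pi_N}(\phi)$ analogously under $\N_o^s$. Two observations are immediate: for each fixed $N$, hypothesis \eqref{fddconv} together with the continuous mapping theorem yields $R^{\pi_N}_n(\phi) \cweak R^{\pi_N}(\phi)$ as $n\to\infty$; and as $N \to \infty$, $R^{\pi_N}(\phi) \to Y(\phi)$ almost surely under $\N_o^s$, because $t \mapsto X_t(\phi)$ is continuous on the compact interval $[t_0,t_1]$. (If $t_0 = 0$, the excluded endpoint $u = 0$ in \eqref{fddconv} is handled trivially: $X^{\sn}_0(\phi) = \phi(o)/m(n) \to 0$ and $X_0(\phi) = 0$ under $\N_o^s$.) A standard approximation lemma then reduces the claim to the uniform-in-$n$ vanishing of the Riemann-sum error,
\[\lim_{N \to \infty} \limsup_{n \to \infty} \P^s_n\bigl(|R^{\pi_N}_n(\phi) - Y_n(\phi)| > \epsilon\bigr) = 0 \quad \forall \epsilon > 0.\]

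Establishing this error estimate is the main obstacle, and it is where the moment hypothesis \eqref{momentbounds} becomes essential. I would bound the probability by $\epsilon^{-2} \E^s_n[(R^{\pi_N}_n(\phi) - Y_n(\phi))^2]$ and use Fubini to rewrite the second moment as a double integral over $[t_0,t_1]^2$ of quantities of the form $\E^s_n[X^{\sn}_u(\phi)X^{\sn}_v(\phi)]$ (with some of $u,v$ replaced by the appropriate left endpoint $s^{(N)}_{i-1}$). For each fixed pair $(u,v)$ the joint version of \eqref{fddconv} on $\mc{M}_F(\R^d)^2$ gives $(X^{\sn}_u(\phi), X^{\sn}_v(\phi)) \cweak (X_u(\phi), X_v(\phi))$, and \eqref{momentbounds} with $p = 4$ supplies the uniform integrability needed to upgrade this to convergence $\E^s_n[X^{\sn}_u(\phi)X^{\sn}_v(\phi)] \to \N_o^s(X_u(\phi)X_v(\phi))$. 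Since these quantities are uniformly bounded by $\|\phi\|_\infty^2 C_{\ref{lem:int_fdd}}(s,t_1,2)$, dominated convergence in $(u,v)$ yields $\lim_n \E^s_n[(R^{\pi_N}_n(\phi) - Y_n(\phi))^2] = \N_o^s((R^{\pi_N}(\phi) - Y(\phi))^2)$ for each $N$, and path continuity of SBM together with a further application of dominated convergence drives this to zero as $N \to \infty$.

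For part (b), tightness of the total mass $Y_n(\R^d) = \int_{t_0}^{t_1} X^{\sn}_t(1)\,dt$ is immediate from \eqref{momentbounds} with $p = 1$, since $\sup_n \E^s_n[Y_n(\R^d)] \le (t_1-t_0) C_{\ref{lem:int_fdd}}(s,t_1,1)$. For tightness of mass outside large balls, pick $\phi_R \in C_b(\R^d)$ with $\indic{|x|>R} \le \phi_R(x) \le \indic{|x|>R-1}$, so $Y_n(B(0,R)^c) \le Y_n(\phi_R)$, and apply the same Fubini plus f.d.d.\ plus moment-bound argument (with $p=1$) to conclude $\lim_n \E^s_n[Y_n(\phi_R)] = \N_o^s(Y(\phi_R))$. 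The $\N_o^s$-a.s.\ compactness of the SBM range forces $\N_o^s(Y(\phi_R)) \downarrow 0$ as $R \to \infty$, and combined with the a.s.\ compact support of each individual $Y_n$ (by \eqref{finiterange}), this yields uniform tightness of $\{Y_n\}$. Together with the marginal convergence from part (a), this completes the proof.
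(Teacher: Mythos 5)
Your proof is correct, and it takes a genuinely different route from the paper's for the crucial step. Both arguments share the same outer structure (tightness of $\{\bar X^{\sn}_{t_1}-\bar X^{\sn}_{t_0}\}$ in $\mc{M}_F(\R^d)$, plus identification of subsequential limits via Laplace functionals), and the tightness argument is essentially identical: first moments of $Y_n(1)$ and of $Y_n(\phi_R)$ via \eqref{momentbounds} with $p=1$. The difference is in how you establish $Y_n(\phi)\cweak Y(\phi)$. The paper applies the f.d.d.\ convergence and the $L^2$ bound on $p$-fold products to get convergence of \emph{all} moments of $Y_n(\phi)$, and then must verify that the moment problem for $\int_{t_0}^{t_1}X_u(\phi)\,du$ under $\N_o^s$ is well-posed — this is where the exponential-moment bound Lemma~III.3.6 of \cite{Per02} enters. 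You instead run a Riemann-sum truncation with a quadratic (Chebyshev) error estimate: for fixed mesh $N$, f.d.d.\ convergence and continuous mapping give $R_n^{\pi_N}(\phi)\cweak R^{\pi_N}(\phi)$; the $L^2$ error $\E_n^s[(R_n^{\pi_N}(\phi)-Y_n(\phi))^2]$ is controlled via Fubini, the two-point f.d.d.\ convergence, and the $p=4$ moment bound for uniform integrability; and the standard two-stage approximation lemma (Billingsley Thm.~3.2) closes the argument. Your route only needs \eqref{momentbounds} up to $p=4$ and completely bypasses the moment-determinacy question, at the cost of the slightly heavier Riemann-sum bookkeeping. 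Both are correct; yours is arguably more elementary in that it avoids the Carleman/exponential-moment well-posedness argument, while the paper's is shorter once the exponential moment bound for SBM is taken off the shelf.
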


The proof is an easy Fubini argument and is given in Section~\ref{sec:onconds}. 
Thus, to verify Condition~\ref{cond:finite_int} for lattice trees one does not need to invoke the additional hard work that was required in \cite{HHP17} for convergence on path space.  
The final condition is needed to ensure the rescaled ranges of $\bs{\mT}$ converge weakly to the range of super-Brownian motion. Together with uniform control of the ancestral paths, it will
ensure that any occupied regions will be close to regions of positive integrated mass of the limiting
super-Brownian motion. 
\begin{COND}
\label{cond:self-avoiding}
 There exists $c_{\ref{cond:self-avoiding}}>0$ such that for any $t\ge 0$, $M>0$, and $\Delta\ge 4$,
 \begin{align}\label{condsaineq}
\nonumber& \E\Bigl[ \sum_{x\in \mT_t}\1\Bigl(\exists x' \text{ s.t. }(t,x)\ara (t+\Delta,x'), \int_{t+\Delta}^{t+2\Delta}|\{y:(t,x)\ara(s,y)\}|\,ds\le M\Bigr)\Bigr]\\
&\le c_{\ref{cond:self-avoiding}}\P\Bigl(S^{\sss(1)}>\Delta,\ \int_{\Delta+2}^{2\Delta-2}|\mT_s|ds\le M\Bigr).
\end{align}
\end{COND}

Here is a condition which implies the above and is more user-friendly in discrete time; the odd-looking 
slight alteration in the integration limits on the right side ($\Delta$ will be large in applications) in the above handles the round-off errors arising in the elementary proof, which is given in Section~\ref{sec:onconds}.  
\begin{LEM}\label{disccondsa}
Assume the discrete time setting and there is a $c_{\ref{cond:self-avoiding}}$ such that for all $\ell\in\Z_+$, $m\in\N^{\ge 4}$ and $M>0$,
\begin{align}\label{dtsa}
&\E\Big[\sum_{x\in \mT_\ell}\1(\exists x'\text{ s.t. }(\ell,x)\ara(\ell+m,x')\text{ and }\\
\nonumber&\phantom{\E(\sum_{x\in \mT_\ell}1(}|\{(i,y):(\ell,x)\ara(i,y), \ell+m+2\le i\le \ell+2m-1\}|\le M)\Big]\\
\nonumber&\ \ \le c_{\ref{cond:self-avoiding}}\P\Bigl(S^{\sss(1)}>m,\ \sum_{i=m+2}^{2m-1}|\mT_i|\le M\Bigr).
\end{align}
Then Condition~\ref{cond:self-avoiding} holds.
\end{LEM}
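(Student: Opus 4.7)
The plan is a straightforward discretization. Given continuous parameters $t\ge 0$, $\Delta\ge 4$, and $M>0$, I choose discrete parameters $\ell := \lfloor t\rfloor$ and $m := \lfloor \Delta\rfloor$. Since $\Delta\ge 4$, we have $m\ge 4$ as required by \eqref{dtsa}, and by the extension \eqref{winterp} we have $\mT_t = \mT_\ell$ and the ancestral relation at real arguments agrees with its values at the floors. My goal is to show that the continuous LHS event at $(t,\Delta,M)$ is contained in the discrete LHS event at $(\ell, m, M)$, and conversely that the discrete RHS event at $(m, M)$ is contained in the continuous RHS event at $(\Delta, M)$; the assumed bound \eqref{dtsa} then yields \eqref{condsaineq} with the same constant $c_{\ref{cond:self-avoiding}}$.

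Writing $t = \ell + a$ and $\Delta = m + b'$ with $a,b'\in[0,1)$, the existence clause $\exists x':(t,x)\ara(t+\Delta,x')$ is equivalent to $\exists x':(\ell,x)\ara(\lfloor t+\Delta\rfloor, x')$, and since $\ell+m\le \lfloor t+\Delta\rfloor$ the transit property \eqref{convtr} (or \eqref{eqara} in the degenerate case of equality) yields some $x''\in\mT_{\ell+m}$ with $(\ell, x)\ara(\ell+m, x'')$. For the integral clause, I observe that $t+\Delta = \ell+m+(a+b') < \ell+m+2$ and $t+2\Delta = \ell + 2m + (a+2b')\ge \ell + 2m$, so for every integer $i\in[\ell+m+2,\ell+2m-1]$ the unit interval $[i,i+1)$ is contained in $[t+\Delta, t+2\Delta]$. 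Setting $N_i(x):=|\{y:(\ell,x)\ara(i,y)\}|$, this gives
\begin{equation*}
\sum_{i=\ell+m+2}^{\ell+2m-1}N_i(x) \le \int_{t+\Delta}^{t+2\Delta}N_{\lfloor s\rfloor}(x)\, ds,
\end{equation*}
and since the left-hand side equals $|\{(i,y):(\ell,x)\ara(i,y),\ \ell+m+2\le i\le \ell+2m-1\}|$, the ``integral $\le M$'' event in \eqref{condsaineq} implies the discrete ``sum $\le M$'' event in \eqref{dtsa}.

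For the reverse inclusion on the RHS, $S^{\sss(1)}$ is integer-valued in discrete time and $\lfloor\Delta\rfloor + 1 > \Delta$, so $\{S^{\sss(1)}>m\} = \{S^{\sss(1)}>\Delta\}$. An analogous boundary check shows that any integer $i$ with $[i,i+1)\cap[\Delta+2, 2\Delta-2]\ne\varnothing$ necessarily satisfies $m+2\le i\le 2m-1$, and each contributes Lebesgue measure at most $1$, so $\int_{\Delta+2}^{2\Delta-2}|\mT_s|\, ds \le \sum_{i=m+2}^{2m-1}|\mT_i|$. Hence the ``sum $\le M$'' event is contained in the ``integral $\le M$'' event, which is the desired direction for the probabilities. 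Combining both inclusions with the assumed \eqref{dtsa} yields \eqref{condsaineq}. There is no real obstacle in this argument; the only delicate point is the boundary bookkeeping, which is exactly absorbed by the offsets ``$+2,-1$'' appearing in \eqref{dtsa} and ``$+2,-2$'' appearing in \eqref{condsaineq}, making the inclusions hold simultaneously in both directions.
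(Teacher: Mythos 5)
Your argument is correct and follows essentially the same route as the paper's: choose $\ell=\lfloor t\rfloor$, $m=\lfloor\Delta\rfloor$, use \eqref{transitive}/\eqref{convtr} to produce an ancestor at level $\ell+m$, and compare the continuous integral over $[t+\Delta,t+2\Delta]$ with the discrete sum over $\{\ell+m+2,\dots,\ell+2m-1\}$ on one side and the integral over $[\Delta+2,2\Delta-2]$ with the sum over $\{m+2,\dots,2m-1\}$ on the other, all via the piecewise-constant structure in \eqref{winterp}. The only cosmetic difference is that the paper rewrites the discrete right-hand sum as an exact integral $\int_{m+2}^{2m}|\mT_s|\,ds$ and then shrinks the domain, whereas you compare the two integrals over fractional endpoints directly by counting which unit intervals are covered; both are the same boundary bookkeeping.
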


\subsection{Main results}
\label{sec:sub_main}
We start with a uniform modulus of continuity for the system of ancestral paths in either discrete or continuous time. As was noted above, this modulus plays an important role in the convergence of the rescaled ranges but is also of independent interest. For critical branching Brownian motion such a modulus was first given in Theorem~4.7 of \cite{DIP89}.  Although we assume Condition~\ref{cond:surv} for convenience, in fact the proof only requires the existence of a non-decreasing function $m$ satisfying \eqref{mdef} and Condition~\ref{cond:self-repel}, as well as Conditions~\ref{cond:L1bound}, \ref{cond:6moment}, and \ref{cond:smallinc} but not the exact asymptotics in \eqref{survival} or \eqref{mregvar}. We will often assume
\begin{equation}\label{alphbet}
\alpha\in(0,1/2), \ \beta\in(0,1]\text{ satisfy }\frac{1-2\alpha}{1+\beta}\ge\frac{4}{p},
\end{equation}
where $p$ is as in Condition~\ref{cond:6moment}.  We will also sometimes assume
\begin{equation}\label{alphbet2}
0<\alpha<\frac{1}{2}-\frac{2}{\kappa},
\end{equation}
where $\kappa$ is as in Condition~\ref{cond:smallinc}.
Note that such $\alpha,\beta$ exist since $\kappa,p>4$.

\begin{customthm}{1}
\label{thm:mod_con}
Assume Conditions \ref{cond:surv} to \ref{cond:smallinc} for $I=\Z_+$ or $\R_+$, and $\alpha,\beta$ satisfy \eqref{alphbet} and \eqref{alphbet2}. Set $q=\frac{\kappa(1/2-\alpha)-2}{2}\wedge 1\in(0,1]$. 
There is a constant $C_{\ref{thm:mod_con}}\ge 1$  and 
for all $n\ge 1$ a random variable $\delta_n\in[0,1]$ such that
\begin{equation}\label{deltabndgen}
\mu_n(\delta_n\le \rho)\le C_{\ref{thm:mod_con}}[\rho^\beta+n^{-q}],\quad  \forall \rho\in[0,1),
\end{equation}
and if 
\begin{equation}\label{modcontcondgen}
 |s_2-s_1|\le \delta_n \text{ and }(s_1,y_1)\aran(s_2,y_2)
\end{equation}
then
\begin{equation}\label{modcontgen}
|y_1-y_2|\le C_{\ref{thm:mod_con}}[|s_2-s_1|^\alpha+n^{-\alpha}].
\end{equation}
\end{customthm}
 In the discrete time setting recall that $\Z_+/n=\{j/n:j\in\Z_+\}$ is the natural time line. In this setting we can get a cleaner statement if we also assume the stronger \eqref{jumpbound} in place of Condition~\ref{cond:smallinc}.
\begin{customthm}{1'}
\label{thm:mod_con_disc}
Assume Conditions \ref{cond:surv} to \ref{cond:6moment} and \eqref{jumpbound}, where $I=\Z_+$.  Assume also that $\alpha,\beta$ are as in \eqref{alphbet}. There is a constant $C_{\ref{thm:mod_con_disc}}$ and for all $n\ge 1$ a random variable $\delta_n\in(0,1]$  such that 
\begin{equation}\label{deltabnd}
\mu_n(\delta_n\le \rho)\le C_{\ref{thm:mod_con_disc}}\rho^{\beta},\quad \forall\rho\in[0,1),
\end{equation}
and if
\begin{equation}\label{modcontcond}
 s_1,s_2\in\Z_+/n, \quad |s_2-s_1|\le \delta_n, \quad \text{ and }\quad (s_1,y_1)\aran(s_2,y_2)
\end{equation}
then
\begin{equation}
\label{modcontdisc}|y_1-y_2|\le C_{\ref{thm:mod_con_disc}}|s_2-s_1|^{\alpha}.
\end{equation}
Moreover if $s_1,s_2, y_1,y_2$ are as in \eqref{modcontcond} but with $s_i\in\R_+$, then
\eqref{modcontgen} holds.
\end{customthm}

Theorems \ref{thm:mod_con} and \ref{thm:mod_con_disc} can be reinterpreted as (uniform) moduli of continuity for all ancestral paths.
\begin{DEF}
\label{def:modcon}
For $\alpha,\beta>0$, the system of ancestral paths $\mc{W}$
for $(\bT,\ara)$ is said to satisfy an $(\alpha,\beta)$-modulus of continuity if there exists a random function $\delta:[1,\infty)\to[0,1]$, a function $\vep:[1,\infty)\to [0,\infty)$ such that $\lim_{n\to\infty}\vep(n)=0$ and a constant $c>0$ such that for any $n\in [1,\infty)$:\\
For  every ancestral path $w\in \mc{W}$, and all $0\le s_1<s_2$,
\begin{align}
\nn
(1)&\ |s_2-s_1|\le \delta_n\Rightarrow |w^{\sn}_{s_2}- w^{\sn}_{s_1}|\le
c(|s_2-s_1|^\alpha + n^{-\alpha}).\\
\nn
(2)&\ m(n)\P(\delta_n\le u)\le c u^{\beta} + \vep(n)\text{ for each }u \in [0,1).
\end{align}
\end{DEF}
\begin{customcorollary}{1}\label{cor:wmod} Assume Conditions \ref{cond:surv} to \ref{cond:smallinc} for $I=\Z_+$ or $\R_+$, then for $\alpha,\beta,q$ as in Theorem~\ref{thm:mod_con}, $\mc{W}$ satisfies an $(\alpha,\beta)$-modulus of continuity with $\vep(n)=C_{\ref{thm:mod_con}}n^{-q}$.
\end{customcorollary}
\begin{proof} If $w$ is an ancestral path to $(nt,\sqrt n x)$ and $0\le s_1<s_2$, then for $s_i\le t$, $(s_1,w^{\sn}_{s_1})\aran(s_2,w^{\sn}_{s_2})$, and (1) and (2) of Definition \ref{def:modcon} follow immediately from Theorem~\ref{thm:mod_con}  with $\delta_n$ as in the theorem and $\vep(n)$ as claimed.
In general the result follows because $w^{\sn}_s=w^{\sn}_{s\wedge t}$. 
\end{proof}
\begin{customcorollary}{1'}\label{cor:dwmod} Assume Conditions \ref{cond:surv} to \ref{cond:6moment} and \eqref{jumpbound}, where $(\mT_t)_{t\in\Z_+}$ is in discrete time, and let $\alpha,\beta$ and $\delta_n$ be as in Theorem~\ref{thm:mod_con_disc}.  Then for any $n\ge 1$ and $w\in\mc{W}$,
\begin{equation}
\nn
s_i\in\Z_+/n, |s_2-s_1|\le \delta_n\quad \text{ implies }\quad |w^{\sn}_{s_2}-w^{\sn}_{s_1}|\le  C_{\ref{thm:mod_con}}[|s_2-s_1|^\alpha].
\end{equation}
If $s_i$ are as above but now in $\R_+$, then 
$|w^{\sn}_{s_2}-w^{\sn}_{s_1}|\le  C_{\ref{thm:mod_con}}[|s_2-s_1|^\alpha+n^{-\alpha}]$.
\end{customcorollary}
\begin{proof} As above but now use Theorem~\ref{thm:mod_con_disc} in place of Theorem~\ref{thm:mod_con}.\end{proof}

Our second main result is that, conditional on longterm survival, the rescaled range converges weakly to the range of (conditioned) SBM.  
\begin{customthm}{2}[Convergence of the range]
\label{thm:range}
Assume Conditions \ref{cond:surv}-\ref{cond:self-avoiding}, and let $\N_o$ be the canonical measure with parameters $(\gamma,\sigma_0^2)$ from Condition~\ref{cond:finite_int}. 
Then for every $s>0$, 
\begin{equation}
\nn
\P(R^{\sn}\in \cdot|S^{\sn}>s) \cweak \N_o(R\in\cdot|S>s)\text{ as }n\to\infty,\ \  n\in[1,\infty),
\end{equation}
as probability measures on $\mc{K}$ equipped with the Hausdorff metric.
\end{customthm}
With Lemma~\ref{lem:int_fdd} in mind it is perhaps a bit surprising that such a result could
be proved without a formal tightness condition.  It is Theorem~\ref{thm:mod_con} that effectively
gives tightness of the approximating ranges.

Recall that  $v_d:B_d(0,1)\ra \R_+$ is  the unique solution of \eqref{pde1}.  The next result uses the previous two results to give exact leading asymptotics for the extrinsic one-arm probability.

\begin{customthm}{3}[One-arm probability]
\label{thm:one-arm}
Assume Conditions \ref{cond:surv}-\ref{cond:self-avoiding}.  Then
 \begin{equation}\label{oa1}
\lim_{r\to\infty} \frac{\P(r_0(R^{\sss(1)})>r)}{\P(S^{\sss(1)}>r^2)}=\frac{\sigma_0^2}{2} v_d(0),
 \end{equation}
 and so (Condition~\ref{cond:surv})
 \begin{equation}\label{oa2}
 \lim_{r\to\infty}m(r^2)\P(r_0(R^{\sss(1)})>r)=\frac{\sigma_0^2}{2} s_Dv_d(0).
 \end{equation}
 \end{customthm}
 
 \begin{REM} The proof in Section~\ref{sec:one-arm} only uses Condition~\ref{cond:surv} and the conclusions of Theorems~\ref{thm:mod_con} and \ref{thm:range}.
 \end{REM}

We finally show that the above conditions are satisfied by the voter model ($d\ge 2$) and lattice trees ($d>8$).
\begin{customthm}{4}[Voter model]
\label{thm:voter}
For the voter model,  Conditions \ref{cond:surv}-\ref{cond:self-avoiding} hold  in dimensions $d\ge 2$ for  any $p>4$ in Condition~\ref{cond:6moment}, any $\kappa>4$ in Condition~\ref{cond:smallinc}, and $(\gamma,\sigma_0^2)=(2\beta_d,\sigma^2)$ in Condition~\ref{cond:finite_int}.   Hence for $d\ge 2$, if $\N_o$ is the canonical measure of SBM with parameters $(2\beta_d,\sigma^2)$, then
\begin{itemize}
\item[(v1)] For any $\alpha\in(0,1/2)$, the system of ancestral paths, $\mc{W}$, satisfies an $(\alpha,1)$-modulus of continuity with  $\vep(n)=C_{\ref{thm:mod_con}}n^{-1}$,
\item[(v2)] $\P_n^{s}(R^{\sss(n)}\in\cdot)\cweak \N_o^{s}(R\in\cdot)$ in $\mc{K}$, as $n\to\infty$, for every $s>0$,
\item[(v3)](i) $\lim_{r\to\infty}r^2\P(r_0(R^{\sss(1)})>r)=\frac{\sigma^2v_d(0)}{2\beta_d}$ if $d>2$,\\
(ii) $\lim_{r\to\infty}\frac{r^2}{\log r}\P(r_0(R^{\sss(1)})>r)=v_2(0)(2\pi)^{-1}$ if $d=2$.
\end{itemize}
\end{customthm}
Part (v1) will give a uniform modulus of continuity for all of the rescaled dual coalescing random walks 
between $1$'s in a voter model conditioned on longterm survival. This result, which may be of independent interest, is stated and proved in Section~\ref{sec:voter} (Corollary~\ref{cor:coaldualmod}). 

\begin{customthm}{5}[Lattice trees]
\label{thm:LT}
For critical lattice trees with $L$ sufficiently large,
Conditions  \ref{cond:surv}-\ref{cond:self-avoiding} hold in dimensions $d>8$, for $p=6$ in Condition~\ref{cond:6moment}, any $\kappa>4$ in Condition~\ref{cond:smallinc} because \eqref{jumpbound} holds, and $(\gamma,\sigma_0^2)=(1,v)$ in Condition~\ref{cond:finite_int}.  Hence for $d>8$, if $\N_o$ is the canonical measure of SBM with parameters $(1,v)$, then
\begin{itemize}
\item[(t1)] For $\alpha\in(0,1/6)$ and $\beta\in(0,1]$ satisfying $1-2\alpha>\frac{2}{3}(1+\beta)$, 
the system of ancestral paths, $\mc{W}$,  satisfies an $(\alpha,\beta)$- modulus of continuity with $\vep(n)\equiv 0$,
 \item[(t2)] $\P_n^{s}(R^{\sn}\in\cdot)\cweak \N_o^{s}(R\in\cdot)$ in $\mc{K}$, as $n\to\infty$, for every $s>0$,
\item[(t3)] $\lim_{r\to\infty}r^2\P(r_0(R^{\sss(1)})>r)=\frac{v v_d(0)}{AV}$.
\end{itemize}
\end{customthm}
Recall that for lattice trees $w_k(m,x)$ is the location of the ancestor of $x\in\mT_m$ in generation $k\le m$ of the tree.  We can also apply Corollary~\ref{cor:dwmod} to obtain a modulus of continuity for the large scale behaviour of $k\mapsto w_k(m,x)$ conditional on longterm survival; see Corollary~\ref{cor:LTwmod} in Section~\ref{sec:LT}.

Theorem \ref{thm:LT} also has important consequences for the behaviour of random walk on lattice trees.  It implies Condition $S$ of \cite{CF18}, which (roughly speaking) states that if we choose an integer $K$ sufficiently large then: (uniformly in $n$) conditional on survival
until time $n$,  any point in $\mT$ will be close to $\cup_{i=1}^K w(U_i)$ (where $(U_i)_{i=1}^K$ are uniformly chosen points in $\vec{\mT}$) with overwhelming probability  (see \cite{CFHP18}).

Note that the lower bounds in (v3) and (t3) above follow easily from the weak convergence results in \cite{BCLG01}  and \cite{HHP17}, respectively, and the lower semi-continuity of the support map on $\mc{M}_F(\R^d)$ (see Lemma~\ref{suppusc} below). So the importance of (v3) and (t3) are the matching upper bounds.   
For the voter model, in an interesting article Merle \cite{Mer08} has studied the probability that a distant site $x$ ever holds the opinion 1 (i.e.~$\P(\xi_t(x)=1 \text{ for some }t\ge 0)$ in the limit as $|x| \ra \infty$). But (v3) seems to be a different question.

\subsection{Discussion on Conditions and Extensions}
\label{sec:discussion}
Note that, although the above list of conditions may appear lengthy, we shall see that for voter models ($d\ge 2$) and critical lattice trees (sufficiently spread out for $d>8$) all but Condition~\ref{cond:6moment} for lattice trees are either already known or are fairly easy to establish from known results.  

Since Condition~\ref{cond:6moment} seems to be the crucial one, we turn to this now.  
For lattice trees our verification of Condition~\ref{cond:6moment} for $p=6$ is based on a suggestion of Remco van der Hofstad, and is presented in Section \ref{sec:tree6}.
The type of argument that we give here should also be applicable in the settings of oriented percolation and the contact process.   Condition \ref{cond:6moment} is verified for the voter model ($d\ge 2$) for any $p>4$ and for lattice trees ($d>8$) for $p=6$ in Sections~\ref{sec:voter} and \ref{sec:LT}, respectively.  For both models the statement can be reduced to a bound on the $p$th moment of the two-point function, i.e., to a bound of the form
\begin{equation}
\sum_x |x|^p \P(x \in \mc{T}_t)\le C(t\vee 1)^{(p/2)}.\label{pboundbasic}
\end{equation}
For the voter model such a reduction is implicit in \eqref{C4vm}, while for lattice trees the reduction 
is given by Lemma \ref{lem:reduction} with $f(x)=|x|^p$.  

It is not hard to verify all the conditions except Condition~\ref{cond:6moment} for critical {\em oriented
percolation} and the critical {\em contact process} (both sufficiently spread out) in more than $4$ spatial dimensions, with model-dependent constants $A,V,v$ playing the same role as, but taking different values to, those for lattice trees.  It is also easy to verify that in these contexts Condition~\ref{cond:6moment} follows from \eqref{pboundbasic} for $p>4$.  

 For oriented percolation these claims are verified  in  \ARXIV{Section \ref{sec:OP},}\SUBMIT{the version of this work on the arXiv \cite[Section 10]{Arxiv_version},}  and yield Theorem \ref{thm:OP} below.  Before stating this, let us give a careful definition of the model, which is a version of that studied in e.g. \cite{HofSla03b}.  The ancestral system is defined in terms of a random graph with vertices in $\Z_+\times \Z^d$ and directed bonds of the form $((n,x),(n+1,y))$.   For simplicity we take $D$ (see Section \ref{sec-setting}) to be uniform on $[-L,L]^d\setminus \{o\}$.  The bond $((n,x),(n+1,y))$ is occupied with probability $pD(y-x)$, where $p\in [0,\|D\|_{\infty}^{-1}]$, independent of all other bonds.  We say that there is an occupied path from $(n,x)$ to $(n',x')$, and write $(n,x) \to (n',x')$, if there is a sequence $x=x_0,x_1,\dots, x_{n'-n}=x'$ in $\Z^d$ such that $((n+i-1,x_{i-1}),(n+i,x_{i}))$ is occupied for each $i=1,\dots, n'-n\ge 0$.  We include the convention that $(n,x) \to (n,x')$ if and only if $x=x'$.  We write $(n,x) \ara (n',x')$ if $(0,o) \to (n,x)$ and $(n,x) \to (n',x')$.  Let $\mT_n=\{x\in \Z^d:(0,o) \to (n,x)\}$.  Let $\mc{F}_n$ denote the $\sigma$-field generated by the bond occupation status for all bonds $((m-1,x),(m,x'))$ for $1\le m\le n$ and $x,x'\in \Z^d$.  
Let $\P_p$ denote the law of the model.  Define $C(m,y)=\{(n,x): (m,y) \to (n,x)\}$ and  $p_c=\sup\{p:\E_p[|C(0,o)|]<\infty\}$, and $\P=\P_{p_c}$.  
\begin{customthm}{6}[Oriented Percolation]
\label{thm:OP}
For critical oriented percolation in spatial dimension $d>4$ with $L$ sufficiently large, suppose that \eqref{pboundbasic} holds \Ed{for all $t\in\N$} with $p=6$.  Then for $d>4$, if $\N_o$ is the canonical measure of SBM with parameters $(\gamma,\sigma_0^2)=(1,v)$ then
\begin{itemize}
\item[(op1)] For $\alpha\in(0,1/6)$ and $\beta\in(0,1]$ satisfying $1-2\alpha>\frac{2}{3}(1+\beta)$, 
the system of ancestral paths, $\mc{W}$,  satisfies an $(\alpha,\beta)$-modulus of continuity with $\varepsilon(n)\equiv 0$,
 \item[(op2)] $\P_n^{s}(R^{\sn}\in\cdot)\cweak \N_o^{s}(R\in\cdot)$ in $\mc{K}$, as $n\to\infty$, for every $s>0$,
\item[(op3)] $\lim_{r\to\infty}r^2\P(r_0(R^{\sss(1)})>r)=\frac{v v_d(0)}{AV}$.
\end{itemize}
\end{customthm}
A similar conditional theorem can be established for the critical spread-out contact process with $d>4$; we leave the details here for the interested reader. We conjecture that \eqref{pboundbasic} holds for oriented percolation (and the contact process) with $p=6$, and in particular, that Theorem \ref{thm:OP} can be made into an unconditional theorem.

Recent work \cite{HHHM17} suggests that these results may also be applicable  in the setting of critical sufficiently spread out percolation above $6$ dimensions.  In this setting, for example, Theorem~\ref{thm:one-arm} would  refine a result of Kozma and Nachmias \cite{KozNac11}, by giving a bona fide limit for the one-arm probability. We quickly point out, however, that the important Condition~\ref{cond:finite_int} has yet to be verified in this setting (see \cite{HS00} for partial results). For critical percolation with $d>6$, in a very interesting paper Tim Hulshof \cite{Hu15} has shown that there is a phase transition in the one-arm exponents which corresponds to the condition $p>4$ in Condition~\ref{cond:6moment}.  More precisely he works with kernels $D$ satisfying $D(z)\sim |z|^{-d-\alpha }$ as $|z|$ becomes large, where $f(z)\sim g(z)$ means 
$cg\le f\le Cg$ for some $0<c\le C<\infty$.  He then shows that if $R^{\sss(1)}$ is the open cluster of the origin, then
\[\P(r_0(R^{\sss(1)})>r)\sim r^{-\min(4,\alpha)/2}\text { as }r\to\infty.\]
For $\alpha\ge 4$ this also gives the one-arm exponent found in \cite{KozNac11} in the finite range setting,
but for $\alpha<4$ the one-arm exponent is no longer $2$. It is easy to check that $\alpha>4$ implies that 
$\sum_z|z|^p D(z)<\infty$ for some $p>4$.  This suggests that the restriction $p>4$ in Condition~\ref{cond:6moment} is sharp. We conjecture that Theorems~\ref{thm:mod_con}, \ref{thm:range} and \ref{thm:one-arm} all fail in general if we allow $p<4$ in this condition. In particular this should be the case for both the voter model and lattice trees with such ``long-range" kernels.  

The remainder of this paper is organised as follows.  In Section \ref{sec:mod_con} we establish the moduli of continuity, i.e., Theorems~\ref{thm:mod_con} and \ref{thm:mod_con_disc}.  In Section \ref{sec:range} we prove our general result on convergence of the rescaled ranges, Theorem \ref{thm:range}.  In Section \ref{sec:one-arm} both of the above ingredients 
are used to  prove the one-arm result, Theorem \ref{thm:one-arm}. Lemmas ~\ref{lem:int_fdd} and \ref{disccondsa} (dealing with checking Conditions~\ref{cond:finite_int} and \ref{cond:self-avoiding}, respectively) and Proposition~\ref{prop:wexist} (existence of ancestral paths) are proved in Section~\ref{sec:onconds}.   In Section ~\ref{sec:voter} we verify our Conditions for the 
voter model and prove Theorem~\ref{thm:voter}.  All of the conditions other than Condition~\ref{cond:6moment} for lattice trees are established in Section~\ref{sec:LT} where Theorem~\ref{thm:LT} is proved modulo checking Condition~\ref{cond:6moment} for lattice trees; this last check
is then done in Section~\ref{sec:tree6}. \SUBMIT{As noted above, in a longer version of this paper on the arxiv \cite{Arxiv_version} we also verify all the conditions except Condition~\ref{cond:6moment} for critical spread-out oriented percolation with $d>4$ and so establish Theorem~\ref{thm:OP}.}
\ARXIV{We verify all the conditions except Condition~\ref{cond:6moment} for critical spread-out oriented percolation with $d>4$ in Section \ref{sec:OP}, and hence establish Theorem~\ref{thm:OP}.}  
\section{Modulus of continuity}
\label{sec:mod_con}
In this section we prove Theorems~\ref{thm:mod_con} and \ref{thm:mod_con_disc}.
\medskip

\begin{PRP}\label{prop:prelmod} Assume Conditions \ref{cond:surv}, \ref{cond:self-repel} and \ref{cond:6moment}, and let $\alpha,\beta$ satisfy \eqref{alphbet}.  There is a constant $C_{\ref{prop:prelmod}}$, and for all $n\ge 1$ a random variable $\delta_n\in(0,1]$ such that 
\begin{equation}\label{deltabndp}
\mu_n(\delta_n\le \rho)\le C_{\ref{prop:prelmod}}\rho^\beta, \quad \text{ for every }\rho\in[0,1),
\end{equation}
and if 
\begin{equation}\label{modcontcondp}
t\in\Z_+/n,\ x\in\mT_t^{\sn},\ 0\le s_1<s_2\le t-n^{-1}, \ s_i\in\Z_+/n,\text{ and }|s_2-s_1|\le \delta_n,
\end{equation}
then
\begin{equation}\label{modhyp}
(s_1,y_1)\aran(s_2,y_2)\aran(t,x)
\end{equation}
implies
\begin{equation}\label{modcontp}
|y_2-y_1|\le C_{\ref{prop:prelmod}}|s_2-s_1|^\alpha.
\end{equation}
\end{PRP}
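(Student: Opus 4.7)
My proof plan is a Kolmogorov--Chentsov--style chaining argument along dyadic time scales, combined with truncation at the survival time. First I define $\delta_n$ implicitly: let $\delta_n \in (0,1]$ be the infimum of $|s_2-s_1|$ taken over all tuples $(s_1,y_1,s_2,y_2,t,x)$ that satisfy the hypothesis \eqref{modcontcondp}--\eqref{modhyp} but violate the conclusion \eqref{modcontp} (with $\delta_n := 1$ if no such ``bad tuple'' exists). The implication then holds by construction, so everything reduces to the probability bound \eqref{deltabndp}.

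For the key one-pair estimate, I fix a dyadic scale $2^{-j}\ge 1/n$ and a pair $(s_1,s_2)=(i 2^{-j},(i+1)2^{-j})$ with $i n 2^{-j} \in \Z_+$. Using the tower property with $\mc{F}_{ns_2}$ and Condition~\ref{cond:self-repel} (applied with increment $1$) to bound the conditional probability that any candidate $y_2$ has a descendant at time $s_2+n^{-1}$ by $c/m(1)$, and Markov's inequality on the $p$-th moment supplied by Condition~\ref{cond:6moment}, I get for the threshold $C_0(s_2-s_1)^\alpha=C_0 2^{-j\alpha}$ (after the standard unscaled-to-scaled conversion) a bound
\[
\P\bigl(\exists y_1,y_2,y_3:\,(s_1,y_1)\aran(s_2,y_2)\aran(s_2+n^{-1},y_3),\ |y_2-y_1|>C_0 2^{-j\alpha}\bigr)\le C 2^{-jp(1/2-\alpha)}.
\]
A standard subdivision/chaining argument then shows that any bad tuple at non-dyadic times yields a bad dyadic pair at some scale $2^{-j}\le|s_2-s_1|$ (with a larger but universal constant $C_0$), so $\{\delta_n\le \rho\}$ is contained in the union, over $j\ge -\log_2\rho$, of bad-pair events at scale $2^{-j}$.

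Next I truncate at the survival time. Set $\bbt \ge 1$. By Condition~\ref{cond:surv} and \eqref{sprobbnds}, $\mu_n(S^{\sn}>\bbt)\le C/\bbt$. On $\{S^{\sn}\le \bbt\}$ there are only $O(\bbt 2^j)$ active dyadic pairs at scale $j$, so a union bound plus the one-pair estimate gives
\[
\P\bigl(\exists \text{bad dyadic pair at scale }j,\ S^{\sn}\le \bbt\bigr)\le C\bbt\, 2^{-j(p(1/2-\alpha)-1)},
\]
which is summable in $j\ge -\log_2\rho$ by the constraint $p(1/2-\alpha)>1$ (guaranteed by \eqref{alphbet}). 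The geometric sum yields $\P(\delta_n\le\rho,S^{\sn}\le\bbt)\le C\bbt\,\rho^{p(1/2-\alpha)-1}$, so
\[
\mu_n(\delta_n\le\rho)\le Cm(n)\bbt\,\rho^{p(1/2-\alpha)-1}+C/\bbt.
\]

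The main obstacle is the final balancing of $\bbt$: the factor $m(n)$ (which can be as large as $O(n)$) that enters upon passing from $\P$ to $\mu_n$ must be absorbed. The trick is to refine the one-pair bound by using $\P(\mT_{ns_2}\ne\varnothing)\le c/m(ns_2)$ (from Condition~\ref{cond:surv}) as an additional cap, then take the minimum with the moment bound; summing the $\min$ over $i=1,\ldots,\bbt 2^j$ and applying \eqref{mdef} to compare $m(n)/m(ns_2)$ replaces the naive $m(n)\bbt$ prefactor by a prefactor involving only $\bbt/\bbt\equiv 1$ on the ``active'' part of the grid. Combined with $\mu_n(S^{\sn}>\bbt)\le C/\bbt$ and a choice of $\bbt$ depending on $\rho$ (splitting into the regimes $\rho\lessgtr m(n)^{-1/(p(1/2-\alpha)-1-2\beta)}$), the constraint \eqref{alphbet}---which rewrites as $p(1/2-\alpha)-1-2\beta\ge 1$---is exactly what makes the two terms balance to yield the target $C_{\ref{prop:prelmod}}\rho^\beta$ uniformly in $n$ and $\rho\in[0,1)$.
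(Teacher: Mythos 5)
Your high-level plan—a Kolmogorov--Chentsov chaining argument over dyadic time scales, truncated at the survival time—is the right shape, and your reformulation of \eqref{alphbet} as $p(1/2-\alpha)\ge 2+2\beta$ is exactly the constraint that must be exploited. However, the one-pair estimate at the heart of your argument is too weak, and the ``min trick'' you propose to fix the $m(n)$ bookkeeping does not actually close the gap.

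The problem is where you invoke Condition~\ref{cond:self-repel}. You apply it with a forward increment of one time step, giving only the constant factor $c/m(1)$. This means the one-pair bound
\[
\P\bigl(\text{bad pair at dyadic scale }j\bigr)\le C\,2^{-jp(1/2-\alpha)}
\]
carries no factor that can cancel the $m(n)$ you pick up upon passing from $\P$ to $\mu_n$. Your proposed remedy is to take the minimum with the survival cap $c/m(ns_2)$ and sum over $i$. But running that computation through: for the pair at $(i2^{-j},(i+1)2^{-j})$, \eqref{mdef} gives $m(n)/m(n(i+1)2^{-j})\le c\,2^j/(i+1)$, so
\[
m(n)\sum_{i\le \bbt 2^j}\min\bigl(C\,2^{-jp(1/2-\alpha)},\,c/m(n(i+1)2^{-j})\bigr)
\le \sum_{i\le \bbt 2^j}\min\bigl(C\,m(n)\,2^{-jp(1/2-\alpha)},\,c\,2^j/(i+1)\bigr).
\]
The crossover index is $I^*\approx c\,2^{j(1+p(1/2-\alpha))}/m(n)$, and the sum over $i<I^*$ of the first term already gives $I^*\cdot m(n)\,C\,2^{-jp(1/2-\alpha)}\approx c\,2^j$, \emph{independent} of $m(n)$ but \emph{growing} in $j$. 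The tail over $i\ge I^*$ only adds a logarithm. So each scale contributes $\sim 2^j$, and $\sum_{j\ge -\log_2\rho}2^j$ diverges; the chosen $\bbt$ cannot repair this.

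What is missing is that the self-repel condition must be applied over a forward time window whose length \emph{matches the dyadic scale}, not over a window of length one. This is precisely what the paper does: in the event $A_k(n_0,m)$, the increment over the interval $[m-2^{n_0-k+1},m-2^{n_0-k}]$ (of length $2^{n_0-k}$) is required to be part of an ancestral line that survives to time $m$ (i.e.\ for an additional $2^{n_0-k}$ unscaled steps). Conditioning on $\mc{F}_{m-2^{n_0-k}}$, Condition~\ref{cond:self-repel} then gives a factor $c/m(2^{n_0-k})$, and by \eqref{mdef}, $m(2^{n_0})/m(2^{n_0-k})\le c\,2^k$. Combined with the $p$-th moment bound this yields
\[
\mu_{2^{n_0}}(A_k(n_0,m))\le c\,2^{-k(p/2-p\alpha-1)},
\]
which after summing over the $\sim 2^{\ell(1+\beta)}$ allowed endpoints $m$ and over scales $\ell\ge r$ gives the desired $c\,2^{-r\beta}$, with the $m(n)$ factor fully absorbed. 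Note also that requiring survival over a \emph{scale-dependent} window necessitates the more careful hierarchical event structure ($A_k$, $B_\ell$, $C_r$) and binary-expansion chaining in the paper, since at each step of the chain one must identify a downstream dyadic time to which the ancestral line is known to survive. This coordination is the technical core of the argument and is not accounted for by your subdivision step as described.
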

\begin{proof}
We first note that it suffices to consider $n=2^{n_0}$ for some $n_0\in\N$. Assuming the result for this case, for $n\ge 1$,  choose $n_0\in\N$ so that $2^{n_0-1}\le n<2^{n_0}$ and set $\delta_n(\omega)=\delta_{2^{n_0}}(\omega)\in(0,1]$. The monotonicity of $m(n)$ shows that 
\[\mu_n(\delta_n\le \rho)\le \mu_{2^{n_0}}(\delta_{2^{n_0}}\le \rho)\le C_{\ref{prop:prelmod}}\rho^{\beta}, \quad \text{ for every }\rho\in[0,1).\]
Assume now that the conditions in \eqref{modcontcondp} are satisfied by $t=k/n$, $x=z/\sqrt{n}$ and $s_i=k_i/n$ where $k,k_i\in\Z_+$ and $z\in\Z^d$, and that \eqref{modhyp} is satisfied by $y_i=\frac{x_i}{\sqrt n}$, where $x_i\in\Z^d$ for $i=1,2$.  Then $k_2\le k-1$, which implies $k_2 2^{-n_0}\le k 2^{-n_0}-2^{-n_0}$, and $0<(k_2-k_1)2^{-n_0}<(k_2-k_1)n^{-1}\le \delta_{2^{n_0}}$. By scaling this implies $(\frac{k_1}{2^{n_0}},\frac{x_1}{2^{n_0/2}})\arano(\frac{k_2}{2^{n_0}},\frac{x_2}{2^{n_0/2}})\arano (\frac{k}{2^{n_0}},\frac{z}{2^{n_0/2}})$. So the result for $2^{n_0}$ implies that
\begin{align*}
|y_2-y_1|
&=\frac{2^{n_0/2}}{\sqrt n}\Bigl|\frac{x_2}{2^{n_0/2}}-\frac{x_1}{2^{n_0/2}}\Bigr|\le \sqrt 2C_{\ref{prop:prelmod}}\Bigl(\frac{k_2-k_1}{2^{n_0}}\Bigr)^{\alpha}\le \sqrt 2C_{\ref{prop:prelmod}}|s_2-s_1|^{\alpha}.
\end{align*}
So the result follows for general $n\ge 1$  by increasing $ C_{\ref{prop:prelmod}}$ to $\sqrt 2  C_{\ref{prop:prelmod}}$.

We set $n=2^{n_0}$ for $n_0\in\N$.  For a natural number $m$ define
\[I(n_0,m)=\{k\in\Z_+:k\le n_0, 2^{n_0-k+1}\le m\},\]
and for $k\in I(n_0,m)$ define
\begin{align*}A_k(n_0,m)=\Bigl\{\omega:\max\{|y_2-y_1|:&(m-2^{n_0-k+1},y_1)\ara(m-2^{n_0-k},y_2)\\
&\qquad\ara(m,x)\ \exists \, x\in \mT_m\}\ge 2^{n_0/2}2^{-k\alpha}\Bigr\}.
\end{align*}
Note that the $\max$ exists by \eqref{cond1}.
For $\ell\in\Z_+$,  let
\[B_\ell(n_0,m)=\begin{cases}\displaystyle\Bigg[\bigcup_{\substack{k> \ell:\\k\in I(n_0,m)}}A_k(n_0,m)\Bigg]\cup A_{n_0}(n_0,m+1),&\quad\text{if }\ell\le n_0,\\
\varnothing,&\quad\text{if }\ell>n_0.
\end{cases}
\]
\begin{LEM}\label{dmoduli1}
Assume \eqref{alphbet}.  There is a constant $c_{\ref{dmoduli1}}$ so that for all $m\in\N$,\\
(a) for all $k\in I(n_0,m)$, $\mu_{2^{n_0}}(A_k(n_0,m))\le c_{\ref{dmoduli1}}2^{-k(p/2-p\alpha-1)}$,\\
(b) \begin{equation*}
\mu_{2^{n_0}}(B_\ell(n_0,m))\le\begin{cases} c_{\ref{dmoduli1}}2^{-\ell(p/2-p\alpha-1)},&\qquad\text{if }
0\le \ell\le n_0,\\
0, & \qquad \text{if }\ell> n_0.\end{cases}
\end{equation*}
\end{LEM}
\begin{proof}(a) 
Clearly we have
\begin{align}\label{muAbound}
\mu&_{2^{n_0}}(A_k(n_0,m))\\
\nonumber&\le m(2^{n_0})\E\Bigg[\sum_{y_2\in \mT_{m-2^{n_0-k}}}\sum_{y_1\in \mT_{m-2^{n_0-k+1}}}\! \! \! \!  \1\big((m-2^{n_0-k+1},y_1)\ara (m-2^{n_0-k},y_2),\\\
&\phantom{\le m(2^{n_0})\E\Bigl(\sum_{y_2\in \mT_{m-2^{n_0-k}}}\sum_{y_1\in \mT_{m-2^{n_0-k+1}}}\1(} |y_2-y_1|\ge 2^{n_0/2}2^{-k\alpha}\big)\nn\\
\nonumber&\phantom{\le m_{2^{n_0}}\E\Bigl(\sum_{y\in T_{m-2^{n_0-k}}}     }\times \1\big(\exists x\in \mT_m\text{ s.t. }(m-2^{n_0-k},y_2)\ara(m,x)\big)\Bigg].
\end{align}
By Condition~\ref{cond:self-repel},
\[\P\big(\exists x\in \mT_m\text{ s.t. }(m-2^{n_0-k},y_2)\ara(m,x)\,\big |\,\mc{F}_{m-2^{n_0-k}}\big)\le \frac{c_{\ref{cond:self-repel}}}{m(2^{n_0-k})}.\]
So, recalling that 
\[\1\big((m-2^{n_0-k+1},y_1)\ara(m-2^{n_0-k},y_2)\big)=e_{m-2^{n_0-k+1},m-2^{n_0-k}}(y_1,y_2),\]
and  $\mT_{m-2^{n_0-k}}$ are both $\mc{F}_{m-2^{n_0-k}}$-measurable, we can use the above in \eqref{muAbound} to conclude
\begin{align*} \mu&_{2^{n_0}}(A_k(n_0,m))\\
&\le \frac{c_{\ref{cond:self-repel}} m(2^{n_0})}{m(2^{n_0-k})}\E\Bigg[\sum_{y_2\in \mT_{m-2^{n_0-k}}}\sum_{y_1\in \mT_{m-2^{n_0-k+1}}}\!\!\!\!\1\big((m-2^{n_0-k+1},y_1)\ara (m-2^{n_0-k},y_2)\big)\\
&\phantom{\le c_{\ref{cond:self-repel}}\frac{m(2^{n_0})}{m(2^{n_0-k})}\E\Bigl(\sum_{y_2\in \mT_{m-2^{n_0-k}}}\sum_{y_1\in \mT_{m-2^{n_0-k+1}}}\1(}\times\frac{|y_2-y_1|^p}{2^{pn_0/2}2^{-pk\alpha}}\Bigg]\\
&\le  c_{\ref{cond:self-repel}}c_{\ref{mdef}}2^kc_{\ref{cond:6moment}}\frac{2^{(n_0-k)p/2}}{2^{pn_0/2}2^{-p\alpha k}}=:c2^{-k(p/2-p\alpha-1)},
\end{align*}
where Condition~\ref{cond:6moment} and \eqref{mdef} are used in the second inequality. 

\noindent(b) Note first that \eqref{alphbet} implies $p/2-p\alpha>2$.   Sum the bound in (a) over $k>\ell$, note that $n_0\in I(n_0,m+1)$ (to apply (a) to $A_{n_0}(n_0,m+1)$), and use $B_\ell(n_0,m)=\varnothing$ if $\ell> n_0$ to derive (b) (where we can adjust the constants after the fact). 
\end{proof}

\begin{LEM}\label{dmoduli2}
Assume \eqref{alphbet}.  There is a $c_{\ref{dmoduli2}}=c_{\ref{dmoduli2}}(\alpha,L)$ such that if $m\in\N$ and $\ell\in\{0,\dots,n_0\}$ satisfies $2^{n_0-\ell}\le m$, then 
\begin{align*}&B_\ell(n_0,m)^c\subset\Bigl\{\omega:\max\{|x'-y|: (m-2^{n_0-\ell},y)\ara(m,x')\ara(m+1,x)\\
&\phantom{B_\ell(n_0,m)^c\subset\Bigl\{\omega:\sup\{|x'-y|:}\text{ for some }x\in\mT_{m+1}\}\le c_{\ref{dmoduli2}}2^{(n_0/2)-\ell \alpha}\Bigr\}.
\end{align*}
\end{LEM}
\begin{proof} The conditions on $\ell$ and $m$ show that $\{\ell+1,\dots,n_0\}\subset I(n_0,m)$.
This implies that 
\[B_\ell(n_0,m)^c\subset \cap_{k={\ell+1}}^{n_0}A_k(n_0,m)^c\cap A_{n_0}(n_0,m+1)^c.\]
Choose $\omega\in B_\ell(n_0,m)^c$ and assume $(m-2^{n_0-\ell},y)\ara(m,x')\ara(m+1,x)$ for some $x\in\mT_{m+1}$. By \eqref{convtr} we may choose $x''$ so that $(m-2^{n_0-\ell},y)\ara(m-1,x'')\ara(m,x')$ (set $x''=y$ if $\ell=n_0$).  Then
by $\omega\not\in A_{n_0}(n_0,m+1)$, we have
\begin{equation}\label{lastinc}
|x'-x''|<2^{n_0/2-n_0\alpha}.
\end{equation}
Let $y_\ell=y$. By \eqref{convtr} for $k=\ell+1,\dots,n_0$ we may choose $y_k\in\mT_{m-2^{n_0-k}}$ such that $y_{n_0}=x''$, and 
\[(m-2^{n_0-k+1},y_{k-1})\ara(m-2^{n_0-k},y_k)\text{ for }k=\ell+1,\dots,n_0.\]
 By $\omega\notin A_k(n_0,m)$ and the triangle inequality, we have
 \begin{align*}
 |x''-y|=|y_{n_0}-y_\ell|&\le \sum_{k=\ell+1}^{n_0}|y_k-y_{k-1}|\le 2^{n_0/2}\sum_{k=\ell+1}^{n_0}2^{-k\alpha}\le C2^{n_0/2}2^{-\ell\alpha}.
 \end{align*}
This and \eqref{lastinc} imply $|x'-y|\le C2^{(n_0/2)-\ell \alpha}$ and so completes the proof.
\end{proof}
Returning now to the proof of Proposition~\ref{prop:prelmod}, we define
\begin{align}
\nonumber&C_r(n_0)=\cup_{\ell=r}^{n_0}\cup_{i=1}^{\lceil2^{\ell(1+\beta)}\rceil}B_\ell(n_0,i2^{n_0-\ell})\text{ for }r\in\Z_+\ \text{ ($C_r(n_0)=\varnothing$ if }r>n_0),\\
\nonumber&K^1_{n_0}=\min\{r\in\{0,\dots,n_0+1\}:\omega\notin C_r(n_0)\}\le n_0+1,\\
\nonumber&K^2_{n_0}=\min\{r\in\Z_+:2^{r\beta+n_0}\ge S^{\sss(1)}\},\\
&\nn
K_{n_0}=(K^1_{n_0}\vee K^2_{n_0})\wedge (n_0+1)\in\Z_+.
\end{align}
Note that $C_r(n_0)$ is non-increasing in $r$.  
Set $\delta_{2^{n_0}}(\omega)=2^{-K_{n_0}(\omega)}\in(0,1]$. Then for $r\in\{0,\dots,n_0\}$,
\begin{align*}
\mu_{2^{n_0}}(K_{n_0}>r)&\le \mu_{2^{n_0}}(C_r(n_0))
+\mu_{2^{n_0}}(S^{\sss(1)}>2^{r\beta+n_0})\\
&\le \Bigl[\sum_{\ell=r}^{n_0}\sum_{i=1}^{\lceil2^{\ell(1+\beta)}\rceil} c_{\ref{dmoduli1}}2^{-\ell(p/2-p\alpha-1)}\Bigr]+\frac{m(2^{n_0})c_{\ref{cond:self-repel}}}{m(2^{r\beta}2^{n_0})},
\end{align*}
where we have used Lemma~\ref{dmoduli1}(b) and Condition~\ref{cond:self-repel} 
with $(s,y))=(0,o)$ and $t=2^{r\beta}2^{n_0}$.  Since $2^{r\beta}\le 2^{n_0}$ (recall $\beta\le 1$) we may use \eqref{mdef} to see that
\[\frac{m(2^{n_0})}{m(2^{r\beta}2^{n_0})}\le  c_{\ref{mdef}}2^{-r\beta},\]
and so from the above,
\begin{equation}\label{Kbound}
\mu_{2^{n_0}}(K_{n_0}>r)\le c_{\ref{dmoduli1}}\sum_{\ell=r}^{n_0}2^{1-\ell(p/2-2-p\alpha-\beta)}+c_{\ref{cond:self-repel}}c_{\ref{mdef}}2^{-r\beta}\le C2^{-r\beta},
\end{equation}
where \eqref{alphbet} is used in the last inequality.  If $r\in \{n_0+1,n_0+2,\dots\}$ then \eqref{Kbound} is trivial since the left-hand side is zero. If $\rho\in(0,1]$ choose $r\in\Z_+$ so that $2^{-r-1}<\rho\le 2^{-r}$.  Then by \eqref{Kbound}, 
\[\mu_{2^{n_0}}(\delta_{2^{n_0}}<\rho)\le\mu_{2^{n_0}}(K_{n_0}>r)\le C2^{-r\beta}\le C2^\beta\rho^\beta.\]
Take limits from the right in $\rho<1$ in the above to derive \eqref{deltabndp}.

Turning now to \eqref{modcontp}, we can rescale and restate our objective as (note that $t=(m+1)/n$ for some $m\in\N$ or the conclusion is vacuously true):\\ If
\begin{align} 
\nn
&k_1,k_2\in \Z_+, m\in \N, x\in \mT_{m+1}, 0\le k_1<k_2\le m\text{ satisfies }\\
\nn
&|k_2-k_1|\le 2^{n_0-K_{n_0}},\text{ and }(k_1,y_1)\ara(k_2,y_2)\ara(m+1,x),
\end{align}
then
\begin{equation}
\nn
|y_2-y_1|\le C_{\ref{prop:prelmod}}|k_2-k_1|^{\alpha}2^{(n_0/2)-(n_0\alpha)}.
\end{equation}
It suffices to consider $k_2=m$ in the above because if we choose (by \eqref{convtr}) $x'$ s.t. 
$(k_2,y_2)\ara(k_2+1,x')\ara(m+1,x)$, then
we can work with $(k_2+1,x')$ in place of $(m+1,x)$. 
So let us assume
\begin{align}\label{contcond2}
&k\in\Z_+, \,m\in\N, \,x\in \mT_{m+1}, \,0\le k< m, \, |m-k|\le 2^{n_0-K_{n_0}},\\
&\qquad\text{and }(k,y)\ara(m,x')\ara(m+1,x).
\nn
\end{align}
We must show that 
\begin{equation}\label{modcont2}
|x'-y|\le C_{\ref{prop:prelmod}}|m-k|^{\alpha}2^{(n_0/2)-(n_0\alpha)}.
\end{equation}
If $K_{n_0}=n_0+1$, then \eqref{contcond2} leads to a contradiction and so we have 
\begin{equation}\label{Knice}
K_{n_0}\le n_0,\text{ and so }K_{n_0}=K_{n_0}^1\vee K_{n_0}^2\le n_0.
\end{equation}
By \eqref{contcond2}, $2^{-n_0}\le (m-k)2^{-n_0}\le 2^{-K_{n_0}}$, and so we may choose $r\in\Z_+$ so that 
\begin{equation}\label{rdefn}
2^{-r-1}<\frac{m-k}{2^{n_0}}\le 2^{-r},\quad K_{n_0}\le r\le n_0.
\end{equation}
Using the fact that $0<(m-k)2^{-n_0}\le 2^{-r}$, we can choose $i_r,j_r\in\Z_+$ with $j_r-i_r\in \{0,1\}$ and  $i_\ell,j_\ell\in\{0,1\}$ for $\ell \in (r,n_0]\cap \N$ so that 
\begin{equation}
\nn
m2^{-n_0}=j_r2^{-r}+\sum_{\ell=r+1}^{n_0}j_\ell 2^{-\ell},\quad k2^{-n_0}=i_r 2^{-r}+\sum_{\ell=r+1}^{n_0}i_\ell 2^{-\ell}.
\end{equation}
For $q\in (r,n_0]\cap \N$ define
\begin{align*}
&m_r=j_r2^{n_0-r},\quad m_q=m_r+\sum_{\ell=r+1}^qj_\ell2^{n_0-\ell},\\
&k_r=i_r2^{n_0-r}, \quad k_q=k_r+\sum_{\ell=r+1}^qi_\ell 2^{n_0-\ell},
\end{align*}
so that $k_{n_0}=k$ and $m_{n_0}=m$.  
By Lemma~\ref{anseq} 
 there are $(y_q)_{q=r}^{n_0}$ and $(z_q)_{q=r}^{n_0}$ s.t. $y_{n_0}=y$, $z_{n_0}=x'$ and
\begin{equation}\label{yzanc}
(k_{q-1},y_{q-1})\ara(k_q,y_q),\quad (m_{q-1},z_{q-1})\ara(m_q,z_q)\quad \text{for }q \in (r,n_0]\cap \N
\end{equation}
(if $k_{q-1}=k_q$ set $y_q=y_{q-1}$ and similarly if $m_{q-1}=m_q$). 
In addition if $i_r=j_r$ so that $k_r=m_r$ we may assume
\begin{equation}\label{yzic}
y_r=z_r\quad \text{ if }\quad i_r=j_r.
\end{equation}
On the other hand if $j_r=i_r+1$, then 
\[k_{n_0}\le k_r+\sum_{\ell=r+1}^{n_0}2^{n_0-\ell}<k_r+2^{n_0-r}=(i_r+1)2^{n_0-r}=m_r,\]
and so (by Lemma~\ref{anseq}) we may also choose $z_r$ in the above s.t. $(k_{n_0},y_{n_0})\ara (m_r,z_r)$. Hence we have
\begin{align}\label{yzic2}
\nn\text{if }j_r=i_r+1,\text{ then }(k_r,y_r)\ara\dots&\ara(k_{n_0},y_{n_0})\ara(m_r,z_r)\\
&\ara(m,x')\ara(m+1,x).
\end{align}
Recalling from \eqref{Knice}, and \eqref{rdefn} that $r\ge K_{n_0}\ge K^1_{n_0}$, we have from the definition of $K^1_{n_0}$ that 
$\omega\notin C_r(n_0)$ 
and so for all $\ell \in [r,n_0]\cap \Z_+$, 
\[\omega\in\cap_{i=1}^{\lceil2^{\ell(1+\beta)}\rceil}B_\ell(n_0,i2^{n_0-\ell})^c,\]
which by Lemma~\ref{dmoduli2} (and $2^{n_0-\ell}\le i2^{n_0-\ell}$) implies
\begin{align}\label{incboundI}
\nn\forall&\, 1\le i\le \ceil{2^{\ell(1+\beta)}}\text{ if }((i-1)2^{n_0-\ell},y)\ara(i2^{n_0-\ell},y')\ara(i2^{n_0-\ell}+1,z)\\
&\text{for some }z\in\mT_{i2^{n_0-\ell}+1},\text{ then }|y-y'|\le c_{\ref{dmoduli2}}2^{n_0/2}2^{-\ell \alpha}.
\end{align}
As $r\ge K_{n_0}\ge K^2_{n_0}$, we also have $S^{\sss(1)}\le2^{r\beta+n_0}\le 2^{\ell\beta+n_0}$ for all $\ell\ge r$ and so if $i>\lceil 2^{\ell(1+\beta)}\rceil$, then
\[i2^{n_0-\ell}>2^{\ell(1+\beta)}2^{n_0-\ell}=2^{\ell\beta+n_0}\ge S^{\sss(1)},\]
and so $\mT_{i2^{n_0-\ell}+1}=\varnothing$. This implies \eqref{incboundI} holds vacuously and we may conclude
\begin{align}\label{incboundII}\nn\forall\, i&\in\N\text{ if }((i-1)2^{n_0-\ell},y)\ara(i2^{n_0-\ell},y')\ara(i2^{n_0-\ell}+1,z)\\
&\text{for some }z\in\mT_{i2^{n_0-\ell}+1},\text{ then }|y-y'|\le c_{\ref{dmoduli2}}2^{n_0/2}2^{-\ell \alpha}.
\end{align}

By $y_{n_0}=y$, $z_{n_0}=x'$ and the triangle inequality, we have
\begin{align}\label{binaryexpansionI}|x'-y|=|z_{n_0}-y_{n_0}|\le
 \sum_{q=r+1}^{n_0}\Bigl[|z_q-z_{q-1}|+|y_{q}-y_{q-1}|\Bigr]+|z_r-y_r|.
\end{align}
Note that for $q\in (r,n_0]\cap \N$, $m_q=\bar j_q2^{n_0-q}$ for some $\bar j_q\in\Z_+$ and $m_{q-1}=(\bar j_q-j_q)2^{n_0-q}$, where $j_q=0$ or $1$.  Therefore \eqref{yzanc} implies
\[((\bar j_q-j_q)2^{n_0-q},z_{q-1})\ara(\bar j_q 2^{n_0-q},z_q)\ara(m+1,x),\]
and so by \eqref{convtr} there is a $z\in\mT_{\bar j_q 2^{n_0-q}+1}$ s.t. 
\[((\bar j_q-j_q)2^{n_0-q},z_{q-1})\ara(\bar j_q 2^{n_0-q},z_q)\ara(\bar{j}_q 2^{n_0-q}+1,z).\]
Therefore \eqref{incboundII} implies 
\begin{equation}
\label{incboundIII}
|z_q-z_{q-1}|\le c_{\ref{dmoduli2}}2^{n_0/2}2^{-q \alpha}\quad\text{for }q\in (r,n_0]\cap \N
\end{equation}
(note here that if $\bar j_q=0$, then $j_q=0$ and so $z_q=z_{q-1}$ and the above inequality is trivial).
Similar reasoning shows
\begin{equation}
\nn
|y_q-y_{q-1}|\le c_{\ref{dmoduli2}}2^{n_0/2}2^{-q\alpha}\quad\text{for }(r,n_0]\cap \N.
\end{equation}

To handle the last term in \eqref{binaryexpansionI}, first observe that if $j_r-i_r=1$, then $k_r=(j_r-1)2^{n_0-r}$ and so (by \eqref{yzic2}) $(k_r,y_r)\ara(m_r,z_r)\ara(m+1,x)$ implies
\[(( j_r-1)2^{n_0-r},y_r)\ara(j_r 2^{n_0-r},z_r)\ara((j_r 2^{n_0-r}+1,z)\]
for some $z\in\mT_{j_r2^{n_0-r}+1}$ (by \eqref{convtr}).  It follows from \eqref{incboundII} with $\ell=r$ that
\begin{equation}\label{incboundV}
\text{if }j_r-i_r=1,\text{ then }|z_r-y_r|\le c_{\ref{dmoduli2}}2^{n_0/2}2^{-r \alpha}.
\end{equation}
Use \eqref{incboundIII}-\eqref{incboundV} and \eqref{yzic} in \eqref{binaryexpansionI} and so conclude that
\begin{align*}
|x'-y|&\le \Bigl[2c_{\ref{dmoduli2}}2^{n_0/2}\sum_{q=r+1}^{n_0}2^{-q\alpha}\Bigr]+c_{\ref{dmoduli2}}2^{n_0/2}2^{-r\alpha}\\
&\le C2^{n_0/2}2^{-r\alpha}\\
&\le C2^\alpha\Bigl(\frac{m-k}{2^{n_0}}\Bigr)^\alpha2^{n_0/2}\quad\text{(by \eqref{rdefn})},
\end{align*}
which gives \eqref{modcont2}, as required.
\end{proof}

\noindent{\it Proof of Theorem~\ref{thm:mod_con_disc}.}  Let $n\in [1,\infty)$ and take $\delta_n$ as in Proposition~\ref{prop:prelmod}, so that $\eqref{deltabnd}$ and $\delta_n\in(0,1]$ are immediate from that proposition. Assume $s_1,s_2,y_1,y_2$ are as in \eqref{modcontcond}.   If $s_1=s_2$ then $y_1=y_2$ by (AR)(i), and the result is trivial.  Otherwise $s_1<s_2$, and by \eqref{convtr} we may choose $x'$ s.t. 
 \[(s_1,y_1)\aran(s_2-n^{-1},x')\aran(s_2,y_2).\]
 We also have $|s_2-n^{-1}-s_1|\le |s_2-s_1|\le \delta_n$. Proposition~\ref{prop:prelmod} and \eqref{jumpbound} imply
 \begin{align*}
|y_2-y_1|&\le |y_2-x'|+|x'-y_1|\\
&\le\frac{\sqrt d L}{\sqrt n}+C_{\ref{prop:prelmod}}|s_2-n^{-1}-s_1|^\alpha\\
&\le(\sqrt d L+C_{\ref{prop:prelmod}})|s_2-s_1|^\alpha,
\end{align*}
the last since $|s_2-s_1|\ge 1/n$ and $\alpha<1/2$.  This proves \eqref{modcontdisc} with $C_{\ref{thm:mod_con_disc}}=\sqrt d L+C_{\ref{prop:prelmod}}$.   

Now suppose instead that $s_1,s_2\in \R_+$ (otherwise as above).  
\SUBMIT{The validity of \eqref{modcontgen} 
now follows by an elementary argument using \eqref{jumpbound} and the fact that for $0\le s_1<s_2$, $(s_1,y_1)\aran(s_2,y_2)$ iff $(\floor{s_1n}/n,y_1)\aran(\floor{s_2n}/n,y_2)$. \qed} 
\ARXIV{Set $[s]_n=\floor{sn}/n$, and note that for $0\le s_1<s_2$, $(s_1,y_1)\aran(s_2,y_2)$ iff $([s_1]_n,y_1)\aran([s_2]_n,y_2)$.  We may assume that $[s_1]_n<[s_2]_n$, otherwise $y_2=y_1$ and the claim is trivial.  Therefore $0\le [s_2]_n-n^{-1}-[s_1]_n\le |s_2-s_1|\le\delta_n$ and there exists $z\in \mT^{\sn}_{[s_1]_n+n^{-1}}$ such that 
\[([s_1]_n,y_1)\aran ([s_1]_n+n^{-1},z)     \aran ([s_2]_n,y_2).\]
Using \eqref{jumpbound} and the above result for times in $\Z_+/n$ we have that 
\begin{align*}
|y_2-y_1|&\le |z-y_1|+|y_2-z|\\
&\le \frac{\sqrt d L}{\sqrt n}+C_{\ref{thm:mod_con_disc}}|[s_2]_n-n^{-1}-[s_1]_n|^\alpha\le \frac{\sqrt d L}{n^\alpha}+C_{\ref{thm:mod_con_disc}} |s_2-s_1|^\alpha.\qed
\end{align*}
}

\medskip

Consider next the proof of Theorem~\ref{thm:mod_con} and assume the hypotheses of that theorem. We first use Condition~\ref{cond:smallinc} to handle the small increments of $w^{\sn}(t,x)$ near $t$.

\begin{LEM}\label{endpointinc} There is  a $C_{\ref{endpointinc}}$ such that for any $\alpha\in(0,1/2)$, $t^*\ge 1$ and $n\ge 1$,
\begin{align}
\nn
\mu_n\Bigl(&\max\{|y-x|:(s,y)\aran (t,x), t\in[s,s+(2/n)], s\in\Z_+/n,s\le t^*\} >n^{-\alpha}\Bigr)\\
\nonumber&\le C_{\ref{endpointinc}}t^*n^{2-\kappa((1/2)-\alpha)}.
\end{align}
\end{LEM}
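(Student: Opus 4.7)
The plan is to rescale back to the unscaled model, union-bound over integer starting times $s'=ns\in\Z_+$, and apply Condition~\ref{cond:smallinc} at each of them with the single displacement threshold $N=n^{1/2-\alpha}$. By the definition \eqref{endef} of $\aran$, a quadruple $(s,y,t,x)$ contributing to the event inside $\mu_n$ corresponds bijectively, via $(s',y',t',x'):=(ns,\sqrt n y,nt,\sqrt n x)$, to an unscaled quadruple with $s'\in\Z_+$, $s'\le nt^*$, $t'\in[s',s'+2]$, $(s',y')\ara(t',x')$, and $|y'-x'|>n^{1/2-\alpha}$. Writing $A_n$ for the event inside $\mu_n$, and $E_{s'}$ for the subevent whose starting integer time is $s'$, a union bound yields $\P(A_n)\le\sum_{s'=0}^{\lfloor nt^*\rfloor}\P(E_{s'})$, with at most $\lfloor nt^*\rfloor+1\le 2nt^*$ terms (since $n,t^*\ge 1$ implies $nt^*\ge 1$).

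For each fixed integer $s'$, I would bound the indicator of $E_{s'}$ by
\[
\sum_{y'\in\mT_{s'}}\1\bigl(\exists (t',x'):\ (s',y')\ara(t',x'),\ t'\in[s',s'+2],\ |y'-x'|>n^{1/2-\alpha}\bigr),
\]
which is legitimate because any $y'$ appearing in such an ancestral pair lies in $\mT_{s'}$ by \eqref{arbasica}. Taking expectations and conditioning on $\mc{F}_{s'}$, and noting that $\mT_{s'}$ is $\mc{F}_{s'}$-measurable, Condition~\ref{cond:smallinc} with $N=n^{1/2-\alpha}$ (and the trivial containment $\{>N\}\subset\{\ge N\}$) gives
\[
\P(E_{s'})\le c_{\ref{cond:smallinc}}\,n^{-\kappa(1/2-\alpha)}\,\E[|\mT_{s'}|]\le c_{\ref{cond:smallinc}}\,c_{\ref{cond:L1bound}}\,n^{-\kappa(1/2-\alpha)},
\]
the last inequality being Condition~\ref{cond:L1bound}.

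Combining, $\P(A_n)\le 2c_{\ref{cond:L1bound}}c_{\ref{cond:smallinc}}\,t^*\,n^{1-\kappa(1/2-\alpha)}$, and multiplying by $m(n)\le c_{\ref{mboundi}}n$ (from \eqref{mboundi}) produces
\[
\mu_n(A_n)\le 2c_{\ref{mboundi}}c_{\ref{cond:L1bound}}c_{\ref{cond:smallinc}}\,t^*\,n^{2-\kappa(1/2-\alpha)},
\]
which is the claim with $C_{\ref{endpointinc}}:=2c_{\ref{mboundi}}c_{\ref{cond:L1bound}}c_{\ref{cond:smallinc}}$. I do not anticipate a genuine obstacle; the only bookkeeping to watch is (i)~the conversion of the scaled displacement threshold $n^{-\alpha}$ into the unscaled threshold $n^{1/2-\alpha}$, and (ii)~the extra factor of $n$ extracted from $m(n)$ via \eqref{mboundi}, which is exactly what produces the exponent $2-\kappa(1/2-\alpha)$ rather than $1-\kappa(1/2-\alpha)$.
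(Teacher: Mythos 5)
Your proof is correct and follows essentially the same route as the paper's: rescale to the unscaled model so the threshold becomes $n^{1/2-\alpha}$, union-bound over the roughly $nt^*$ integer starting times, dominate each event by a sum over $y'\in\mT_{s'}$ of indicators, apply Condition~\ref{cond:smallinc} after conditioning on $\mc{F}_{s'}$, control $\E[|\mT_{s'}|]$ by Condition~\ref{cond:L1bound}, and extract the extra factor of $n$ from $m(n)$ via \eqref{mboundi}. The only cosmetic difference is that the paper carries the $m(n)$ factor inside each term of the sum before summing, whereas you sum the probabilities first and multiply by $m(n)$ at the end; the arithmetic is identical.
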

\begin{proof} If $s=j/n\in\Z_+/n$ is fixed, then by scaling,
\begin{align*}
\mu_n&\Bigl(\max\{|y-x|:(s,y)\aran (t,x), t\in[s,s+(2/n)]\}>n^{-\alpha}\Bigr)\\
&=\mu_n\Bigl(\max\{|y-x|:(j,y)\ara(t,x), t\in[j,j+2]\}>n^{(1/2)-\alpha}\Bigr)\\
&\le m(n)\sum_{y\in \Z^d}\E\Big[\1(y\in\mT_j)\P(\exists\ t\in[j,j+2]\text{ and }x\in\mT_t\text{ s.t. }\\
&\phantom{\le m_n\sum_{y\in \Z^d}\E[\1(y\in\mT_j)\P(}|x-y|>n^{(1/2)-\alpha}\text{ and }(j,y)\ara(t,x)|\mc{F}_j)\Big].
\end{align*}
Now use Conditions~\ref{cond:smallinc} and \ref{cond:L1bound}, and \eqref{mboundi} to see the above is at most
\[c_{\ref{mboundi}}nc_{\ref{cond:L1bound}}c_{\ref{cond:smallinc}}n^{-\kappa((1/2)-\alpha)}.\]
Finally sum the above over $s\in(\Z_+/n)\cap[0,t^*]$ to obtain the desired upper bound.
\end{proof}

\noindent{\it Proof of Theorem~\ref{thm:mod_con}}. For $n\ge 1$ and $q\in(0,1]$ as in the Theorem statement and define
\begin{align*}
\Omega_n=&\{S^{\sn}\le n^q\}\\
&\cap\Bigl\{\max\{|y-x|:(s,y)\aran(t,x), t\in[s,s+2],s\in\Z_+/n,s\le n^q\}\le n^{-\alpha}\Bigr\}.
\end{align*}
By Lemma~\ref{endpointinc} and \eqref{sprobbnds}  and the fact that $n^q\le n$ 
\begin{align}\label{omeganbnd}
\mu_n(\Omega_n^c)\le \frac{C}{n^q}+C_{\ref{endpointinc}}n^{q+2 -\kappa((1/2)-\alpha)}
&\le\frac{C}{n^q},
\end{align}
where the definition of $q$ is used in the last line.  To avoid confusion we denote the $\delta_n$ arising in
Proposition~\ref{prop:prelmod} by $\delta_n^{\ref{prop:prelmod}}$ and then define
\[\delta_n(\omega)=\begin{cases} \delta_n^{\ref{prop:prelmod}}(\omega),&\text{ if }\omega\in\Omega_n,\\
0,&\text{ if }\omega\in\Omega_n^c.
\end{cases}\]
Then for $\rho\in[0,1)$,
\[\mu_n(\delta_n\le \rho)\le\mu_n(\Omega_n^c)+\mu_n(\delta_n^{\ref{prop:prelmod}}\le\rho)\le\frac{C}{n^q}+C_{\ref{prop:prelmod}}\rho^\beta,\]
the last by \eqref{omeganbnd} and Proposition~\ref{prop:prelmod}. This proves \eqref{deltabndgen}.

In proving \eqref{modcontcondgen} implies \eqref{modcontgen} to reduce subscripts we assume
\begin{equation}
\nn
0\le s<t, \quad |t-s|\le\delta_n, \quad \text{ and }(s,y)\aran(t,x).
\end{equation}
This implies $\delta_n>0$ and so $\omega\in\Omega_n$,
 which in turn implies $S^{\sn}\le n^q$ and so (as $\mT_t^{\sn}$ is non-empty),
\begin{equation}\label{tlen}
t\le S^{\sn}\le n^q.
\end{equation}
We must show that, for an appropriate $C_{\ref{thm:mod_con}}$,
\begin{equation}\label{genmod}
|x-y|\le C_{\ref{thm:mod_con}}[|t-s|^\alpha+n^{-\alpha}].
\end{equation}
If $t\le 2/n$ this follows easily from $\omega\in\Omega_n$ and the triangle inequality, so assume without
loss of generality that $t>2/n$.  Write $[u]_n=\floor{nu}/n$, and set $t_n=[t-n^{-1}]_n, t'_n=t_n+n^{-1}, s_n=[s]_n\wedge t_n\le t'_n$, all in $\in\Z_+/n$.   Therefore 
\begin{equation}\label{tnbounds}0\le t_n-s_n\le t-n^{-1}-(s-n^{-1})=t-s\le\delta_n
\end{equation}
(we can assume $s_n=[s]_n$ in the above derivation, since otherwise $s_n=t_n$ and the desired upper bound in \eqref{tnbounds} is trivial), 
and 
\begin{equation}\label{tsn}
0\le t-t_n\le2/n,\ 0\le s-s_n\le 2/n.
\end{equation}

Since $s_n\le s$ and $y\in \mT^{\sn}_s$ by Lemma~\ref{anseq} there exists $y_n$ s.t. 
\[(s_n,y_n)\aran(s,y).\]
We also have
$s_n\le t_n<t_n+n^{-1}\le t\le n^q$ (by \eqref{tlen}) and $x\in\mT^{\sn}_t$, so by Lemma~\ref{anseq} there are $x_n,x_n'$ s.t.
\begin{equation}\label{chain}(s_n,y_n)\aran(t_n,x_n)\aran(t_n+n^{-1},x_n')\aran(t,x).\end{equation}
 Therefore by  $\omega\in\Omega_n$ and \eqref{tsn} we may conclude,
\begin{align*}
|x-y|&\le |x-x_n|+|x_n-y_n|+|y_n-y|\\
&\le n^{-\alpha}+|x_n-y_n|+n^{-\alpha}\\
&\le 2n^{-\alpha}+C_{\ref{prop:prelmod}}|t_n-s_n|^\alpha\quad\text{(by \eqref{tnbounds}, \eqref{chain} and Proposition~\ref{prop:prelmod})}\\
&\le 2n^{-\alpha}+C_{\ref{prop:prelmod}}|t-s|^\alpha,
\end{align*}
the last by \eqref{tnbounds}.
This proves \eqref{genmod} and the proof is complete.\qed

\section{Convergence of the range}
\label{sec:range}
In this section we prove Theorem \ref{thm:range}.  Recall that $\N_0^s(\cdot)=\N_o(\cdot\,|S>s)$.

\begin{LEM}\label{N0intdistn}
\begin{align}
\nn
\N_o^1\left(\int_1^2X_s(1)ds\le a \right)&\sim \frac{4}{ \sqrt{2\pi\gamma}} \sqrt{a}, \qquad \text{ as }a \downarrow 0\\
\label{N0intbnd}\N^1_o\left(\int_1^2X_s(1)ds\le a\right)&\le 2  \sqrt{\frac{2}{\gamma}}\ee\sqrt{a}, \qquad \text{ for every }a>0.
\end{align}
\end{LEM}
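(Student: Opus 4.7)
The plan is to identify the Laplace transform of $Y := \int_1^2 X_s(1)\,ds$ under $\N_o^1$ explicitly, and then extract both the small-$a$ asymptotic (by a Tauberian argument) and the uniform bound (by a Chebyshev argument). Write $Z_t := X_t(1)$, which under $\N_o$ is a Feller diffusion with $dZ_t = \sqrt{\gamma Z_t}\,dB_t$.

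Step 1 (law of $Z_1$ under $\N_o^1$). From the standard Laplace functional of SBM one has
\[
\N_o\big[1-e^{-\lambda Z_1}\big]=\frac{2\lambda}{2+\gamma\lambda},
\]
which together with $\N_o(S>1)=\N_o(Z_1>0)=2/\gamma$ (from \eqref{sbmext}) gives $\N_o^1[e^{-\lambda Z_1}] = (1+\gamma\lambda/2)^{-1}$. Hence under $\N_o^1$, $Z_1$ is exponential with mean $\gamma/2$.

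Step 2 (Laplace transform of $\int_0^1 Z_s\,ds$ for a Feller diffusion). A Feynman--Kac computation, writing $v(t,z)=\E_z[\exp(-\mu\int_0^t Z_s\,ds)]=e^{-zu(t)}$, leads to the Riccati ODE $u'=\mu-(\gamma/2)u^2$, $u(0)=0$, whose explicit solution at $t=1$ is
\[
u(\mu)=\sqrt{2\mu/\gamma}\,\tanh\!\big(\sqrt{\gamma\mu/2}\big).
\]

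Step 3 (combining via the Markov property). By the Markov property of SBM at time $1$ under $\N_o^1$, conditional on $X_1$ the process $(X_{1+s})_{s\ge0}$ is a $(\gamma,\sigma_0^2)$-SBM starting from $X_1$, so that $(Z_{1+s})_{s\ge 0}$ is a Feller diffusion started at $Z_1$. Combining Steps 1 and 2, with $\xi:=\sqrt{\gamma\mu/2}$,
\[
\N_o^1\!\big[e^{-\mu Y}\big]=\N_o^1\!\big[e^{-Z_1 u(\mu)}\big]=\frac{1}{1+\tfrac{\gamma}{2}u(\mu)}=\frac{1}{1+\xi\tanh\xi}.
\]

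Step 4 (asymptotics). As $\mu\to\infty$, $\xi\to\infty$ and $\tanh\xi\to 1$, so $\N_o^1[e^{-\mu Y}]\sim \xi^{-1}=\sqrt{2/(\gamma\mu)}$. Karamata's Tauberian theorem with $\rho=1/2$ and $C=\sqrt{2/\gamma}$ then yields
\[
\N_o^1(Y\le a)\sim\frac{C}{\Gamma(3/2)}\sqrt{a}=\frac{2\sqrt{2/\gamma}}{\sqrt{\pi}}\sqrt{a}=\frac{4}{\sqrt{2\pi\gamma}}\sqrt{a},\qquad a\downarrow 0.
\]

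Step 5 (uniform bound). Use Chebyshev: for any $\mu>0$,
\[
\N_o^1(Y\le a)\le e^{\mu a}\,\N_o^1[e^{-\mu Y}]=\frac{e^{\mu a}}{1+\xi\tanh\xi}.
\]
Take $\mu=1/a$, so $\xi=\sqrt{\gamma/(2a)}$. An elementary inequality $\tanh\xi\ge \xi/(1+\xi)$ for $\xi\ge 0$ gives $1+\xi\tanh\xi\ge (1+\xi+\xi^2)/(1+\xi)$, and hence for $\xi\ge 1$ (equivalently $a\le\gamma/2$),
\[
\frac{1}{1+\xi\tanh\xi}\le \frac{1+\xi}{1+\xi+\xi^2}\le \frac{2\xi}{\xi^2}=\frac{2}{\xi},
\]
so that $\N_o^1(Y\le a)\le 2e/\xi=2\sqrt{2/\gamma}\,e\sqrt{a}$. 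For $a>\gamma/2$ the stated bound exceeds $2e>1$ and hence holds trivially.

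The main technical step is Step 1---identifying $Z_1$ under $\N_o^1$ as exponential with mean $\gamma/2$---which requires care with the $\sigma$-finite nature of $\N_o$; the remainder consists of the Riccati computation, an application of the Markov property, and the (classical) Karamata Tauberian theorem together with an elementary Chebyshev-type estimate.
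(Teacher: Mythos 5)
Your proposal is correct and takes essentially the same route as the paper: identify $X_1(1)$ under $\N_o^1$ as exponential with mean $\gamma/2$, apply the Markov property together with exponential duality (the Riccati ODE for $v_t^{(\lambda)}$, which you write as $\sqrt{2\lambda/\gamma}\tanh(\sqrt{\gamma\lambda/2})$) to obtain the Laplace transform $\N_o^1[e^{-\lambda Y}]=2/(2+\gamma v_1^{(\lambda)})$, then invoke a Tauberian theorem for the asymptotic and an exponential Markov (Chebyshev) inequality for the uniform bound. The only cosmetic difference is in the last step, where you use $\tanh\xi\ge\xi/(1+\xi)$ for all $\xi\ge 0$ (cutoff $a\le\gamma/2$), while the paper uses the cruder bound $v_1^{(\lambda)}\ge\tfrac12\sqrt{2\lambda/\gamma}$ valid for $\lambda\ge 1/\gamma$ (cutoff $a\le\gamma$); both give the stated constant $2\sqrt{2/\gamma}\,\ee$.
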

\begin{proof}

First recall from \eqref{sbmext} that $\N_o(S>1)=2/\gamma$, and by Theorem~II.7.2(iii) of \cite{Per02}
\begin{equation}
\N_o^1\big(X_1(1) \in \dd x \big)=(2/\gamma)\ee ^{-(2/\gamma)x}\dd x.\label{density1}
\end{equation}
Let $v^{(\lambda)}_t=\sqrt{\frac{2\lambda}{\gamma}}\frac{[\ee ^{t \sqrt{2\gamma\lambda}}-1]}{[\ee^{t \sqrt{2\gamma\lambda}}+1]}$
be the unique solution of 
\begin{equation}
\nn
\frac{\dd v}{\dd t}=\frac{-\gamma v_t^2}{2}+\lambda, \qquad v(0)=0.
\end{equation}
Then the Markov property under $\N_o$ and exponential duality (see Theorem~II.5.11(c) of \cite{Per02}), together with \eqref{density1} gives
\begin{align*}
\N_o^1&\left[\exp\Big\{-\lambda \int_1^2X_s(1)ds\Big\}\right]\\
&=\N_o\left[ \E_{X_1}\Big[\exp\Big\{-\lambda \int_0^1X_s(1)ds\Big\}\indic{S>1}\Big]\right]/\N_o(S>1)\\
&=\int_0^\infty \exp\{-x v_1^{(\lambda)}\}\frac{2}{\gamma} \ee^{-2x/\gamma} \dd x,\qquad  \\
&=\frac{2}{2+\gamma v_1^{(\lambda)}}.
\end{align*}
Since $v_1^{(\lambda)}\sqrt{\gamma/(2 \lambda)} \ra 1$ as $\lambda \ra \infty$, 
\begin{equation*}
\N_o^1\left[\exp\Big\{-\lambda \int_1^2X_s(1)ds\Big\} \right]\sim \sqrt{\frac{2}{\gamma \lambda}}, \qquad \text{ as } \lambda \ra \infty.
\end{equation*}
A Tauberian theorem (e.g.~Theorems 2,3 in Section XIII.5 of \cite{Feller2}) now gives
\begin{equation*}
\N_o^1\left(\int_1^2X_s(1)ds\le a\right)\sim \frac{4}{ \sqrt{2\pi\gamma}} \sqrt{a}, \qquad \text{ as }a \downarrow 0.
\end{equation*}

Now 
\begin{align*}
v_1^{(\lambda)}&=\sqrt{\frac{2\lambda}{\gamma}}\frac{[\ee ^{ \sqrt{2\gamma\lambda}}-1]}{[\ee^{ \sqrt{2\gamma\lambda}}+1]}=\sqrt{\frac{2\lambda}{\gamma}}\left[1-\frac{2}{\ee^{ \sqrt{2\gamma\lambda}}+1}\right]\ge  \frac{1}{2}\sqrt{\frac{2\lambda}{\gamma}}, \quad \text{ if }\lambda \ge \gamma^{-1}.
\end{align*}
If $a\le \gamma$ and so $\lambda:=1/a\ge\gamma^{-1}$, then using $\1_{Y\le a}\le \ee^{\lambda(a-Y)}$ we have
\begin{align*}
\N_o^1\left(\int_1^2X_s(1)ds\le a \right)&\le \ee^{\lambda a} \N^1_o\left[\exp\Big\{-\lambda \int_1^2X_s(1) \dd s\Big\} \right]\\
&\le \ee^{\lambda a} \frac{2}{2+\gamma v_1^{(\lambda)}}\le \dfrac{ 2\ee}{2+\sqrt{\frac{\lambda\gamma}{2}}},\qquad \\
&\le \frac{2\sqrt{2} \ee}{\sqrt\gamma} \sqrt{a}.
\end{align*}
For $a>\gamma$ the bound is trivial.
\end{proof}

\begin{LEM}\label{zerorange}
$0\in R\ \  \N_o-a.e.$
\end{LEM}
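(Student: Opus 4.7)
The statement is the standard fact that under the canonical measure $\N_o$, the range of super-Brownian motion contains the origin, which reflects the heuristic interpretation of $\N_o$ as the excursion law for the descendants of a single infinitesimal ancestor at the origin.

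The cleanest route is via Le Gall's Brownian snake representation of $\N_o$. Under this representation one realizes $\bs{X}$ using a path-valued Markov process $(W_s)_{s\ge 0}$ with random lifetime function $\zeta_s\ge 0$: each $W_s\colon[0,\zeta_s]\to\R^d$ is a continuous Brownian trajectory of variance $\sigma_0^2$ satisfying $W_s(0)=0$, and $X_t$ is the push-forward under the endpoint map $W_s\mapsto W_s(t)$ of the local time at level $t$ of the height process $(\zeta_s)$. The range is therefore
\[
R=\overline{\bigl\{W_s(r):0\le r\le\zeta_s,\;s\in[0,\sigma]\bigr\}},
\]
where the excursion duration $\sigma\in(0,\infty)$ equals $S$ in the notation of the excerpt. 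Since $W_s(0)=0$ for every $s$ and $\sigma>0$ holds $\N_o$-a.e.\ (as $\N_o$ is supported on $\Omega^{Ex}_C$), we conclude $0=W_s(0)\in R$ $\N_o$-a.e.

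An alternative, more elementary route that stays closer to the tools of the excerpt proceeds as follows. The time-mass and spatial scalings $\Phi_a(\nu)_t(A):=\nu_{a^2 t}(aA)$ and $\Psi_b(\nu)_t(A):=\nu_t(bA)$ push $\N_o^{\gamma,\sigma_0^2}$ forward to $\N_o^{a^2\gamma,\sigma_0^2}$ and $\N_o^{\gamma,\sigma_0^2/b^2}$ respectively, and scale $R$ by $a^{-1}$ (resp.\ $b^{-1}$); hence $\N_o^{\gamma,\sigma_0^2}(d(0,R)>\epsilon)$ depends on $(\epsilon,\gamma,\sigma_0^2)$ only through the dimensionless combination $\epsilon^2\gamma/\sigma_0^2$, and so $\N_o(0\notin R)=\lim_{\epsilon\downarrow 0}\N_o(d(0,R)>\epsilon)$ is a scale-invariant constant. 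Applying the Markov property under $\N_o$ at a small time $t_0>0$ together with Iscoe's formula $\P_\mu(R\cap\overline{B(0,\epsilon)}=\varnothing)=\ee^{-\mu(u_\epsilon)}$ (valid for $\mu$ supported in $\overline{B(0,\epsilon)}^c$, with $u_\epsilon$ the positive solution of $\tfrac{\sigma_0^2}{2}\Delta u_\epsilon=\tfrac{\gamma}{2}u_\epsilon^2$ on $\overline{B(0,\epsilon)}^c$ blowing up on $\partial B(0,\epsilon)$ and vanishing at infinity) gives
\[
\N_o\bigl(R\cap B(0,\epsilon)=\varnothing,\,S>t_0\bigr)=\N_o\!\left[\1\bigl(X_s(B(0,\epsilon))=0\ \forall s\le t_0,\,X_{t_0}\ne 0\bigr)\ee^{-X_{t_0}(u_\epsilon)}\right].
\]
As $t_0\downarrow 0$ the mean measure $\N_o[X_{t_0}(\cdot)]=P^{\sigma_0^2}_{t_0}(0,\cdot)$ concentrates inside $B(0,\epsilon)$, so the indicator event becomes rare; when it does hold, $X_{t_0}$ is forced to sit near $\partial B(0,\epsilon)$ where $u_\epsilon$ blows up, making $\ee^{-X_{t_0}(u_\epsilon)}$ small. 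Together these force the right-hand side to vanish as $t_0\downarrow 0$, and sending $\epsilon\downarrow 0$ afterwards yields $\N_o(0\notin R)=0$.

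The main obstacle in the elementary route is making the final small-time estimate quantitative, which requires Iscoe's boundary asymptotics for $u_\epsilon$ combined with careful control of the $\N_o$-entrance law of $X_{t_0}$. The Brownian snake route sidesteps this entirely at the cost of invoking machinery outside the excerpt.
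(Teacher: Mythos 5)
Your first (Brownian snake) argument is precisely the paper's proof: the snake trajectories all start at the origin, so $0\in R$ $\N_o$-a.e.\ follows immediately from the representation of integrated SBM under $\N_o$ (the paper cites Proposition 5 and Section 5 of Chapter IV of Le Gall). Your second, "elementary" route is an interesting alternative sketch, but you yourself flag that the final small-$t_0$ estimate is not carried out, so only the first route constitutes a complete proof.
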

\begin{proof} This is immediate from the description of the integral of SBM in terms of  the 
Brownian snake, and the continuity of the snake under $\N_o$. See Proposition~5 and Section 5
of Chapter IV of \cite{Legall}. 
\end{proof}
\bigskip

Recall that $\mc{K}$ is the Polish space of compact subsets of $\R^d$, equipped with the Hausdorff metric $d_0$ and $\Delta_1$ is as in \eqref{Delta'def}.  \SUBMIT{The elementary proof of the following lower semicontinuity result may be found in \cite{Arxiv_version}.}
\begin{LEM}\label{suppusc}
If $\nu_n \rightarrow \nu$ in $M_F(\R^d)$, and $\supp(\nu)\in \mc{K}$, then 
$$\Delta_1(\supp(\nu),\supp(\nu_n)) \ra 0.$$  
\end{LEM}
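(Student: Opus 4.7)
The plan is to reduce the claim to a standard Portmanteau-plus-compactness argument. First I would dispose of the trivial case $\supp(\nu)=\varnothing$: by the definition of $\Delta_1$ (see \eqref{Delta'def}), $\Delta_1(\varnothing,K')=\inf\{\delta>0:\varnothing\subset\{x:d(x,K')\le\delta\}\}=0$ for every $K'$, so there is nothing to prove. Thus assume $\supp(\nu)\neq\varnothing$.

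Fix $\vep>0$. The key input is that weak convergence $\nu_n\to\nu$ in $M_F(\R^d)$ implies, via the Portmanteau theorem for finite measures (which follows from approximating $\1_U$ by an increasing sequence in $C_b(\R^d)$ and invoking monotone convergence), that
\[\liminf_{n\to\infty}\nu_n(U)\ge \nu(U)\]
for every open $U\subset\R^d$. In particular, for any $x\in\supp(\nu)$, the definition of the closed support gives $\nu(B(x,\vep/2))>0$, hence $\liminf_n\nu_n(B(x,\vep/2))>0$, so $B(x,\vep/2)\cap\supp(\nu_n)\ne\varnothing$ for all sufficiently large $n$.

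Now I would use compactness of $\supp(\nu)$ to make this uniform in $x$. Choose a finite cover $\{B(x_i,\vep/2)\}_{i=1}^N$ of $\supp(\nu)$ with each $x_i\in\supp(\nu)$. By the previous step, there exists $n_0=n_0(\vep)$ such that for every $n\ge n_0$ and every $i\in\{1,\dots,N\}$, one can pick some $z_{i,n}\in B(x_i,\vep/2)\cap\supp(\nu_n)$. Given an arbitrary $y\in\supp(\nu)$, select $i$ with $y\in B(x_i,\vep/2)$, so that
\[d(y,\supp(\nu_n))\le |y-z_{i,n}|\le |y-x_i|+|x_i-z_{i,n}|<\vep.\]
This shows $\supp(\nu)\subset\{x:d(x,\supp(\nu_n))\le\vep\}$ for all $n\ge n_0$, i.e. $\Delta_1(\supp(\nu),\supp(\nu_n))\le\vep$. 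Since $\vep>0$ was arbitrary, $\Delta_1(\supp(\nu),\supp(\nu_n))\to 0$, as required.

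There is no real obstacle here; the only point worth highlighting is that the asymmetric direction is all we get (the reverse $\Delta_1(\supp(\nu_n),\supp(\nu))\to 0$ fails in general because mass can concentrate on a small set in the limit), which is exactly why the lemma is stated for $\Delta_1$ rather than the full Hausdorff metric $d_1$, and why it corresponds to lower semicontinuity of the support map on $\mc{M}_F(\R^d)$.
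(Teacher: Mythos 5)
Your proof is correct and follows essentially the same route as the paper's: apply the Portmanteau lower bound $\liminf_n\nu_n(B(x,\vep/2))\ge\nu(B(x,\vep/2))>0$ at each $x\in\supp(\nu)$, use compactness to extract a finite $\vep/2$-cover, and conclude uniformity from the maximum of finitely many thresholds. The explicit handling of $\supp(\nu)=\varnothing$ is a minor addition (and in fact vacuous here since $\nu_n\to\nu$ forces $\nu_n\ne 0$ eventually once $\nu\ne 0$); otherwise there is no meaningful difference.
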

\ARXIV{
\begin{proof}
Fix $\vep>0$.  We must show that $\supp(\nu)\subset \supp(\nu_n)^\vep$ for all $n$ sufficiently large. 
Let $x\in \supp(\nu)$.  Then $\liminf_{n \ra \infty}\nu_n(B(x,\vep/2))\ge \nu(B(x,\vep/2))>0$.  Therefore there exists $n(x)$ such that for every $n\ge n(x)$, 
\begin{align}
\nu_n(B(x,\vep/2))\ge \frac{1}{2} \nu(B(x,\vep/2))>0,\nn
\end{align}
and therefore $x \in \supp(\nu_n)^{\vep/2}$.
As $\supp(\nu)$ is compact there exist $x_1,\dots, x_k \in \supp(\nu)$ such that $\supp(\nu)\subset \cup_{i=1}^k B(x_i,\vep/2)$.  Thus if $n\ge n_0=\max_{i\le k}n(x_i)$ then
\[\supp(\nu)\subset \cup_{i=1}^kB(x_i,\vep/2)\subset \supp(\nu_n)^\vep,\]
as required.
\end{proof}
}
In the rest of the Section we will assume Conditions~\ref{cond:surv}-\ref{cond:self-avoiding}, let $\alpha,\beta$ satisfy \eqref{alphbet} and \eqref{alphbet2}, and assume $(\gamma,\sigma_0^2)$ are as in Condition~\ref{cond:finite_int}. The parameter $q$ is as in Theorem~\ref{thm:mod_con}. We start with some elementary consequences of Theorem~\ref{thm:mod_con}. Recall from \eqref{rangefinite} that $R^{\sn}$ is a.s. finite.  For $t\ge 0$, define
\[R^{\sn}_t=\cup_{s\le t}\mc{T}^{\sn}_s\subset R^{\sn}.\]

\begin{LEM}\label{mcc} (a) There is a $c_{\ref{mcc}}>0$ such that on $\{\delta_n\ge n^{-1}\}$ we have
\begin{equation}\label{r0bnd}
r_0(R^{\sn})\le c_{\ref{mcc}}(S^{\sn}\delta_n^{\alpha-1}+1).
\end{equation}
(b) $\eta_{\ref{mcc}}(u):=\sup_{n\ge 1}\mu_n(r_0(R^{\sn})\ge u)\to 0\text{ as }u\to\infty$.\\
(c) For any $\tau_0,\vep,s>0$ there is a $\tau=\tau(\tau_0,\vep,s)>0$ and $n_0(\tau_0,\vep,s)\ge 1$ so that 
\[\P(R_\tau^{\sn}\not\subset B(0,\tau_0)|S^{\sn}>s)\le \vep\quad \forall n\ge n_0.\]
\end{LEM}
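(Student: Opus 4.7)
The plan is to derive all three parts from Theorem~\ref{thm:mod_con} applied along an ancestral path (supplied by Proposition~\ref{prop:wexist}), together with the survival-probability bounds \eqref{sprobbnds}.

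For (a), I will fix $x\in R^{\sn}$, pick $t\le S^{\sn}$ with $x\in\mT^{\sn}_t$, and an ancestral path $w^{\sn}$ with $w^{\sn}_0=o$, $w^{\sn}_t=x$. Partitioning $[0,t]$ as $0=s_0<\dots<s_K=t$ with $s_i-s_{i-1}\le \delta_n$ and $K\le\lceil t/\delta_n\rceil$, each increment satisfies $(s_{i-1},w^{\sn}_{s_{i-1}})\aran(s_i,w^{\sn}_{s_i})$, so Theorem~\ref{thm:mod_con} yields
\[
|w^{\sn}_{s_i}-w^{\sn}_{s_{i-1}}|\le C_{\ref{thm:mod_con}}\bigl[\delta_n^\alpha+n^{-\alpha}\bigr]\le 2C_{\ref{thm:mod_con}}\delta_n^\alpha
\]
on $\{\delta_n\ge n^{-1}\}$. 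Summing via the triangle inequality and using $t\le S^{\sn}$, $\delta_n\le 1$ then gives $|x|\le 2C_{\ref{thm:mod_con}}(S^{\sn}\delta_n^{\alpha-1}+1)$, which is \eqref{r0bnd}.

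For (b), combining (a), \eqref{deltabndgen} and \eqref{sprobbnds}, for any $\rho\in(0,1)$ and $n\ge 1$,
\begin{align*}
\mu_n(r_0(R^{\sn})\ge u)&\le \mu_n(\delta_n\le \rho\vee n^{-1})+\mu_n\bigl(S^{\sn}\ge (u/c_{\ref{mcc}}-1)\rho^{1-\alpha}\bigr)\\
&\le C\bigl[\rho^\beta+n^{-\beta}+n^{-q}\bigr]+\frac{C}{((u/c_{\ref{mcc}}-1)\rho^{1-\alpha})\wedge n}.
\end{align*}
I will take $\rho=u^{-1/(\beta+1-\alpha)}$ to send the $\rho$-dependent contributions to $0$ with $u$. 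Given $\vep>0$, I first fix $M=M(\vep)$ large so that $C(n^{-\beta}+n^{-q}+n^{-1})<\vep/2$ for $n\ge M$, and then pick $u=u(\vep)$ large to kill the remaining $u$-dependent terms; this handles $\sup_{n\ge M}\mu_n(r_0(R^{\sn})\ge u)$. For $n\in[1,M]$, the inclusion $\{r_0(R^{\sn})\ge u\}\subset\{r_0(R^{\sss(1)})\ge u\}$ (valid because $n\ge 1$) gives $\mu_n(r_0(R^{\sn})\ge u)\le m(M)\P(r_0(R^{\sss(1)})\ge u)$, which tends to $0$ as $u\to\infty$ by \eqref{rangefinite}.

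For (c), every $x\in R^{\sn}_\tau$ admits an ancestral path from $(0,o)$ to $(t,x)$ with $t\le\tau$, and on $\{\tau\le \delta_n\}$ Theorem~\ref{thm:mod_con} applied with $s_1=0,s_2=t$ gives $|x|\le C_{\ref{thm:mod_con}}[\tau^\alpha+n^{-\alpha}]$. I will choose $\tau$ so small that $C_{\ref{thm:mod_con}}\tau^\alpha\le\tau_0/2$ and $n_1$ so large that $C_{\ref{thm:mod_con}}n^{-\alpha}\le\tau_0/2$ for $n\ge n_1$; then $\{R^{\sn}_\tau\not\subset B(o,\tau_0)\}\subset\{\delta_n<\tau\}$, so by \eqref{deltabndgen} and \eqref{sprobbnds},
\[
\P(R^{\sn}_\tau\not\subset B(o,\tau_0)\mid S^{\sn}>s)\le \frac{\mu_n(\delta_n<\tau)}{\mu_n(S^{\sn}>s)}\le \frac{c_{\ref{mdef}}(s\vee 1)C_{\ref{thm:mod_con}}}{\underline s_D}\bigl[\tau^\beta+n^{-q}\bigr].
\]
Shrinking $\tau$ (while preserving the previous size condition) and then enlarging $n_0\ge n_1$ drives this below $\vep$.

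The main obstacle will be the uniformity in $n$ in (b): the (a)-based bound naturally controls the large-$n$ regime, but the $n^{-\beta}+n^{-q}$ errors from \eqref{deltabndgen} and the $1/n$ piece in \eqref{sprobbnds} are not uniformly small, which forces the separate bounded-$n$ analysis leveraging the a.s.\ finiteness of $R^{\sss(1)}$ from \eqref{rangefinite}.
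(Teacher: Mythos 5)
Your proposal is correct and follows essentially the same route as the paper: part (a) via chaining increments of an ancestral path through a $\delta_n$-mesh partition and invoking Theorem~\ref{thm:mod_con} (the paper phrases this via Lemma~\ref{anseq} rather than Proposition~\ref{prop:wexist}, but these are interchangeable here), part (b) by splitting on $\{\delta_n\le\rho\vee n^{-1}\}$ and its complement, applying (a), \eqref{deltabndgen} and \eqref{sprobbnds}, and handling small $n$ separately through the a.s.\ finiteness of $R^{\sss(1)}$, and part (c) by noting $R^{\sn}_\tau\subset B(o,C_{\ref{thm:mod_con}}(\tau^\alpha+n^{-\alpha}))$ on $\{\tau\le\delta_n\}$. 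The only cosmetic difference is your choice $\rho=u^{-1/(\beta+1-\alpha)}$ versus the paper's $\rho=u^{-1}\vee n^{-1}$ in (b), which leads to the same conclusion.
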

\begin{proof} (a) Assume $\delta_n(\omega)\ge 1/n$. Assume also $x\in\mT_s^{\sn}$ for some $s> 0$.  Choose $M\in\N$ so that $(M-1)\delta_n<s\le M\delta_n$, and set $s_i=i\delta_n$ for $0\le i<M$ and $s_M=s$.  Clearly $s<S^{\sn}$ (since $\mc{T}^{\sn}_s$ is non-empty) and so
\begin{equation}\label{Mbound1}
M\le s\delta_n^{-1}+1\le S^{\sn}\delta_n^{-1}+1.
\end{equation}
By Lemma~\ref{anseq} there are $y_i\in\mT_{s_i}$ for $0\le i\le M$ s.t. $y_0=o$, $y_M=x$ and $(s_{i-1},y_{i-1})\aran(s_i,y_i)$ for $1\le i\le M$.  
Theorem~\ref{thm:mod_con} implies that for all $1\le i\le M$, $|y_i-y_{i-1}|\le 2C_{\ref{thm:mod_con}}\delta_n^{\alpha}$, and so by the triangle inequality, \eqref{Mbound1}, and $\delta_n\le 1$,
\[|x|=|y_M|\le M 2C_{\ref{thm:mod_con}}\delta_n^{\alpha}\le 2C_{\ref{thm:mod_con}}[S^{\sn}\delta_n^{\alpha-1}+1].\]
This gives (a) with $c_{\ref{mcc}}=2C_{\ref{thm:mod_con}}$.\\
(b) Use (a) to see that for $u,n\ge 1$ and $u> 2c_{\ref{mcc}}$,
\begin{align}\label{Rbnd1}\mu_n&(r_0(R^{\sn})\ge u)\\
\nonumber&\le \mu_n(\delta_n\le u^{-1}\vee n^{-1})+\mu_n(S^{\sn}\delta_n^{\alpha-1}\ge (u/c_{\ref{mcc}})-1,\delta_n\ge 1/u)\\
\nonumber&\le C_{\ref{thm:mod_con}}(u^{-\beta}+n^{-\beta}+n^{-q})+\mu_n(S^{\sn}\ge u^{\alpha-1}[(u/c_{\ref{mcc}})-1]> u^\alpha/(2c_{\ref{mcc}})),
\end{align}
where in the last line we have used Theorem~\ref{thm:mod_con} and the lower bound on $u$.  Now \eqref{sprobbnds}  implies that
\begin{align*}\mu_n(S^{\sn}>u^\alpha/(2c_{\ref{mcc}}))&\le \overline s_Dc_{\ref{mdef}}/((u^\alpha/2c_{\ref{mcc}})\wedge n).
\end{align*}
Using this in the bound \eqref{Rbnd1} we see that for any $\vep>0$ there is an $n_0\ge 1$ such that 
\begin{equation}\label{largenbnd}
\mu_n(r_0(R^{\sn})\ge u)<\vep\qquad\text{for all } n,u\ge n_0.
\end{equation}
But for $n \in [1,n_0)\cap \N$ we have 
\[\mu_n(r_0(R^{\sn})\ge u)\le m(n_0)\P(r_0(R^{\sss(1)})\ge u)<\vep,\]
for $u>u_0(\vep)$. The result follows from this last inequality and \eqref{largenbnd}.\\
(c) Fix $\tau_0,\vep,s>0$ and then choose $n_1(\tau_0)\ge 1$ and $\tau_1(\tau_0)>0$ so that 
\begin{equation}
\nn
n>n_1(\tau_0)\text{ and }\tau\in [0,\tau_1(\tau_0))\text{ imply } C_{\ref{thm:mod_con}}(\tau^\alpha+n^{-\alpha})<\tau_0.
\end{equation}
So for $n>n_1$ and $ \tau\in (0,\tau_1)$, by Theorem~\ref{thm:mod_con} (and the fact that $y\in\mT^{\sn}_s$ for some $s\le \tau$ implies $(0,o)\aran(s,y)$) we have
\[\tau\in (0,\delta_n]\text{ implies }R_\tau^{\sn}\subset B(o,C_{\ref{thm:mod_con}}(\tau^\alpha+n^{-\alpha}))\subset B(o,\tau_0).\]
Therefore using \eqref{sprobbnds} and \eqref{deltabndgen} we see that for $n>n_1$ and $\tau\in (0,\tau_1)$,
\begin{align*}
\P(R^{\sn}_\tau\not\subset B(o,\tau_0)|S^{\sn}>s)&\le\P(\delta_n<\tau,S^{\sn}>s)/\P(S^{\sn}>s)\\
&\le \mu_n(\delta_n<\tau)/\mu_n(S^{\sn}>s)\\
&\le C_{\ref{thm:mod_con}}(\tau^\beta+n^{-q})c_{\ref{mdef}}(s\vee 1)(\underline s_D)^{-1}\\
&<\vep,
\end{align*}
where the last inequality holds for $\tau$ sufficiently small and $n$ sufficiently large, depending only 
on $\vep$ and $s$.  The result follows.
\end{proof}
\begin{LEM}\label{intcvgce}
For every $s>0$, 
\[\P_n^{s}\left(\xnint\in \cdot\right) \cweak \N_o^s\left(\xint \in \cdot\right) \text{ on }\mc{M}_F(\R^d).\]
\end{LEM}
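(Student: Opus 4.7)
The strategy is to truncate the time integral at some $T>0$, apply Condition~\ref{cond:finite_int} to the truncated integrals, and then pass $T\to\infty$ via a standard converging-together argument.

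First, I would apply Condition~\ref{cond:finite_int} with $t_0=0$, $t_1=T$: for every fixed $T>0$,
\[
\P_n^s\bigl(\bar X^{\sn}_T\in\cdot\bigr)\cweak\N_o^s\bigl(\bar X_T\in\cdot\bigr)\quad\text{on }\mc{M}_F(\R^d)\text{ as }n\to\infty.
\]
Next, I would observe that $\bar X_T\to\xint$ $\N_o^s$-a.s.\ (hence in distribution) in $\mc{M}_F(\R^d)$ as $T\to\infty$. This is immediate: by \eqref{sbmext}, $\N_o(S=\infty)\le \N_o(S>T)=2/(\gamma T)\to 0$, so $S<\infty$ $\N_o^s$-a.s., and on this full-measure event $\bar X_T=\xint$ for every $T\ge S$.

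The central step is a uniform-in-$n$ tail bound on the prelimit side. Because $X^{\sn}_t\equiv 0$ for $t\ge S^{\sn}$ by \eqref{death}, we have the inclusion $\{\bar X^{\sn}_T\neq\xnint\}\subset\{S^{\sn}>T\}$. Hence, for any metric $d_{M_F}$ on $\mc{M}_F(\R^d)$ inducing the weak topology and any $\varepsilon>0$,
\[
\P_n^s\bigl(d_{M_F}(\bar X^{\sn}_T,\xnint)>\varepsilon\bigr)\le\P_n^s(S^{\sn}>T)=\frac{\mu_n(S^{\sn}>T)}{\mu_n(S^{\sn}>s)}\le\frac{c_{\ref{mdef}}^2\,\overline s_D\,(s\vee 1)}{\underline s_D\,(T\wedge n)},
\]
using the two-sided bounds in \eqref{sprobbnds}. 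Sending $n\to\infty$ first (so $T\wedge n = T$) and then $T\to\infty$ makes the right-hand side vanish.

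These three ingredients supply precisely the hypotheses of the standard converging-together lemma (e.g.\ Theorem~3.2 of Billingsley, applied with approximants indexed by $T$), yielding the conclusion. I do not anticipate any substantial obstacle: the only real observation is that any discrepancy between $\bar X^{\sn}_T$ and $\xnint$ forces the process to survive past $T$, and Condition~\ref{cond:surv} already encodes the necessary uniform decay of the survival probability, so no separate tightness argument is needed for the time-tail.
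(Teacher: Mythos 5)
Your proposal is correct and follows essentially the same route as the paper's proof: both truncate the time integral, apply Condition~\ref{cond:finite_int} to the truncated integral $\bar X^{\sn}_T$, and control the error term $\{S^{\sn}>T\}$ via Condition~\ref{cond:surv}. The only cosmetic difference is that you package the final step as the standard converging-together lemma, whereas the paper carries out the triangle-inequality estimate by hand for a fixed $\varphi\in C_b(\mc{M}_F(\R^d))$.
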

\begin{proof} 
\SUBMIT{If $t_\vep=s\vee \vep^{-1}$, then on $\{S^{\sn}<t_\vep\}$ we have $\xnint=\bar X^{\sn}_{t_\vep}$ which, by Condition~\ref{cond:finite_int}, converges weakly under $\P^s_n$ to $\N_o^s(\bar X_{t_\vep}\in\cdot)$. Condition~\ref{cond:surv} shows that the contribution from $\{S^{\sn}>t_\vep\}$ is small uniformly in $n$ as $\vep$ becomes small.  The details are standard and appear in \cite{Arxiv_version}.}
\ARXIV{
Fix $s>0$ and $\vep>0$.  Choose $t_\vep>\max\{1/\vep,s\} $ and let $\varphi\in C_b(M_F(\R^d))$.  Then $\big|\P_n^{s}[\varphi(\xnint)]-\N_0^{s}[\varphi(\bar X_\infty)]\big|$ is at most 
\begin{align}
&\big|\P_n^{s}[\varphi(\xnint)\indic{S^{\sn}\le t_\vep}]-\N_0^{s}[\varphi(\xint)\indic{S\le t_\vep}]\big|\nn\\
&\quad+\big|\P_n^{s}[\varphi(\xnint)\indic{S^{\sn}>t_\vep}]-\N_0^{s}[\varphi(\xint)\indic{S>t_\vep}]\big|\nn\\
\le &  \big|\P_n^{s}[\varphi(\bar{X}_{t_\vep}^{\sn})]-\N_0^{s}[\varphi(\bar{X}_{t_\vep})]\big|\label{l1}\\
&\quad +\big|\P_n^{s}[\varphi(\bar{X}_{t_\vep}^{\sn})\indic{S^{\sn}> t_\vep}]-\N_0^{s}[\varphi(\bar{X}_{t_\vep})\indic{S>t_\vep}]\big|\label{l2}\\
&\quad + \|\varphi\|_\infty (\P_n^{s}(S^{\sn}>t_\vep)+\N_0^{s}(S>t_\vep)).\label{l3}
\end{align}
For $n $ sufficiently large (depending on $s,t_\vep,\vep$) the term \eqref{l1} is less than $\vep$ by Condition \ref{cond:finite_int}.  The quantity \eqref{l2} is equal to
\begin{align}
\big|\P_n^{t_\vep}[\varphi(\bar{X}_{t_\vep}^{\sn})]\frac{\P(S^{\sn}>t_\vep)}{\P(S^{\sn}>s)}-\N_0^{t_\vep}[\varphi(\bar{X}_{t_\vep})]\frac{\N_0(S>t_\vep)}{\N_0(S>s)}\big|.\nn
\end{align}
This is less than $\vep$ for $n$ sufficiently large depending on $s,t_\vep,\vep$ by Condition \ref{cond:finite_int} and Condition \ref{cond:surv}.

Similarly, \eqref{l3} is equal to 
\[ \|\varphi\|_\infty  \Big(\frac{\P(S^{\sn}>t_\vep)}{\P(S^{\sn}>s)}+\frac{\N_0(S>t_\vep)}{\N_0(S>s)}\Big),\]
which is less than or equal to $c \vep $ for $n$ sufficiently large (where $c$ depends on $s$ and $\|\varphi\|_\infty $) due to Condition \ref{cond:surv}. 
}
\end{proof}

We set $K=1024$ and for $i\in\Z_+$, $\ell\in\N$ and $n\in[1,\infty)$ define
\begin{align*}
\Omega^n_{i,\ell}=\{\omega:&\exists x\in\mT^{\sn}_{i2^{-\ell}}, \exists x'\text{ s.t. }(i2^{-\ell},x)\aran((i+1)2^{-\ell},x')\text{ and }\\
& \int_{(i+1)2^{-\ell}}^{(i+2)2^{-\ell}}|\{y:(i2^{-\ell},x)\aran (s,y)\}|ds/m(n)\le K^{-\ell},
\end{align*}
and
\[\hat\Omega_\ell^n=\cup_{i=0}^{2^{2\ell}-1}\Omega^n_{i,\ell}.\]
The following result together with the modulus of continuity will ensure that, with high probability, points 
in the discrete range are near areas of significant integrated mass.\\

\begin{LEM}\label{notinymass} There is a $c_{\ref{notinymass}}>0$ and for any $n\ge 1$ there is an $M_n\in\N$ so that $M_n\uparrow \infty$ as $n\to\infty$, and if $\Omega_m^n=\cup_{\ell=m}^{M_n}\hat\Omega_\ell^n$ for $m
\in\N$, then 
\begin{equation}
\nn
\mu_n(\Omega_m^n)\le c_{\ref{notinymass}}2^{-m}\quad\text{for all }n\in[1,\infty),\ m\in\N.
\end{equation}
\end{LEM}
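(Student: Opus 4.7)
The goal is to prove $\mu_n(\hat\Omega_\ell^n) \le c\cdot 2^{-\ell}$ uniformly for $\ell \le M_n$, with $M_n \to \infty$ chosen by a diagonal argument, after which a geometric sum over $\ell\in[m,M_n]$ gives the lemma. The three ingredients are Condition~\ref{cond:self-avoiding} (to reduce to a survival/integrated-mass event), Condition~\ref{cond:finite_int} (to pass that event to super-Brownian motion), and a rerun of the exponential-duality computation in the proof of Lemma~\ref{N0intdistn} at the time scale $t=2^{-\ell}$ in place of $t=1$. The specific choice $K=1024=2^{10}$ is engineered so that the SBM tail of order $2^{-4\ell}$ beats the prefactor $2^{3\ell}$ coming from the union over $i\in\{0,\dots,2^{2\ell}-1\}$ combined with the survival bound $\mu_n(S^{\sn}>2^{-\ell})\le C\cdot 2^{\ell}$ from \eqref{sprobbnds}.

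Fix $i$ and $\ell$ with $n\,2^{-\ell}\ge 4$. Since $\Omega^n_{i,\ell}$ is the existence of a bad particle, Markov's inequality together with Condition~\ref{cond:self-avoiding} applied at $t=ni\,2^{-\ell}$, $\Delta=n\,2^{-\ell}$, $M=n\,m(n)\,K^{-\ell}$, followed by the change of variables $u=s/n$ in the integral in \eqref{condsaineq}, gives
\[
\mu_n(\Omega^n_{i,\ell}) \le c_{\ref{cond:self-avoiding}}\, \mu_n\Bigl(S^{\sn} > 2^{-\ell},\ \bar X^{\sn}_{2\cdot 2^{-\ell}-2/n}(1) - \bar X^{\sn}_{2^{-\ell}+2/n}(1) \le K^{-\ell}\Bigr).
\]
Summing over the $2^{2\ell}$ values of $i$ and bounding $\mu_n(S^{\sn}>2^{-\ell})\le C\cdot 2^\ell$ via \eqref{sprobbnds} produces
\[
\mu_n(\hat\Omega_\ell^n) \le C\cdot 2^{3\ell}\; \P_n^{2^{-\ell}}\!\Bigl(\bar X^{\sn}_{2\cdot 2^{-\ell}-2/n}(1) - \bar X^{\sn}_{2^{-\ell}+2/n}(1) \le K^{-\ell}\Bigr).
\]

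Now fix $\ell$ and send $n\to\infty$. Portmanteau applied to the closed sublevel set, combined with Condition~\ref{cond:finite_int} and a sandwich argument absorbing the $2/n$-shrinkage of the integration window (using continuity in the window endpoints, an easy extension of Lemma~\ref{N0intdistn}), yields
\[
\limsup_{n\to\infty} \P_n^{2^{-\ell}}(\cdots \le K^{-\ell}) \le \N_o^{2^{-\ell}}\Bigl(\bar X_{2\cdot 2^{-\ell}}(1) - \bar X_{2^{-\ell}}(1) \le K^{-\ell}\Bigr).
\]
Repeating the exponential-duality derivation in the proof of Lemma~\ref{N0intdistn}, but with base time $t$ in place of $1$, gives $\N_o^t[\exp(-\lambda(\bar X_{2t}(1)-\bar X_t(1)))] = 2/(2 + \gamma t\, v_t^{(\lambda)})$. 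Taking $t=2^{-\ell}$ and $\lambda=K^\ell$, so that $t\sqrt{\gamma\lambda/2}=2^{4\ell}\sqrt{\gamma/2}\ge 1$ for $\ell\ge 1$, one has $v_t^{(\lambda)}\ge \tfrac12\sqrt{2\lambda/\gamma}$; then the exponential Markov inequality $\mathbf{1}_{Y\le u}\le e^{\lambda(u-Y)}$ produces $\N_o^{2^{-\ell}}(\cdots\le K^{-\ell})\le C'\cdot 2^{-4\ell}$. Combining with the previous display gives $\limsup_n \mu_n(\hat\Omega_\ell^n)\le C''\cdot 2^{-\ell}$ for each fixed $\ell$.

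Finally a diagonal argument produces $n_\ell\uparrow\infty$ with $n_\ell\ge 2^{\ell+2}$ and $\mu_n(\hat\Omega_{\ell'}^n)\le 2C''\cdot 2^{-\ell'}$ for every $\ell'\le\ell$ and every $n\ge n_\ell$. Setting $M_n:=\sup\{\ell\in\N : n\ge n_\ell\}$ gives $M_n\to\infty$; for $n$ with $M_n\ge m$, a geometric summation yields
\[
\mu_n(\Omega_m^n) \le \sum_{\ell=m}^{M_n} \mu_n(\hat\Omega_\ell^n) \le \sum_{\ell=m}^{\infty} 2C''\cdot 2^{-\ell} \le 4C''\cdot 2^{-m},
\]
while for $n$ with $M_n<m$ the union is empty. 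The main technical hurdle is the scale-$2^{-\ell}$ tail bound in Step~2: the particular value $K=2^{10}$ is what makes the SBM estimate $2^{\ell}\cdot K^{-\ell/2}=2^{-4\ell}$ combine with the $2^{3\ell}$ prefactor to yield the summable geometric ratio $2^{-1}$.
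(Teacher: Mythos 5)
Your proof is correct, and it takes a genuinely different route from the paper's at the key SBM-tail step. Both arguments begin identically: apply the first-moment/Markov bound to $\Omega^n_{i,\ell}$, invoke Condition~\ref{cond:self-avoiding} with $t=ni2^{-\ell}$, $\Delta=n2^{-\ell}$, $M=nm(n)K^{-\ell}$, and change variables $u=s/n$. The divergence is in how the resulting survival-and-small-integrated-mass probability is turned into a geometrically decaying bound. The paper \emph{changes the rescaling parameter}: it rewrites the event in terms of $X^{(n2^{-\ell})}$ so that the integration window is the fixed interval $[1,2]$, allowing Lemma~\ref{N0intdistn} to be applied verbatim. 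The price is the comparison $m(n)/m(n2^{-\ell})\le c_{\ref{mdef}}2^\ell$ from \eqref{mdef} and the need to split off a boundary term $T_2(\ell,n)$ (covering $[1,1+2^{\ell+1}/n]\cup[2-2^{\ell+1}/n,2]$) handled by Markov's inequality and \eqref{Inlbound}, which in turn forces the constraint $n\ge K^{3\ell/2}$. You instead \emph{keep the rescaling parameter fixed at $n$}, convert $\mu_n$ to $\P_n^{2^{-\ell}}$ via \eqref{sprobbnds}, and then need an upper bound for $\N_o^{t}\bigl(\int_t^{2t}X_s(1)\,ds\le a\bigr)$ with $t=2^{-\ell}$, $a=K^{-\ell}$, which you obtain by re-running the exponential-duality computation at general $t$ rather than $t=1$. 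This is a genuine generalization of Lemma~\ref{N0intdistn} (your formula $\N_o^t[\exp(-\lambda\int_t^{2t}X_s(1)ds)]=2/(2+\gamma t v_t^{(\lambda)})$ is correct), and your boundary handling by monotone inclusion of the shrinking window is a clean alternative to the paper's $T_2$ split. Your bookkeeping $2^{3\ell}\cdot 2^{-4\ell}=2^{-\ell}$ matches the paper's $2^{2\ell}\cdot 2^{-3\ell}=2^{-\ell}$; the two decompositions apportion the $2^\ell$ factor from survival differently ($\mu_n(S^{\sn}>2^{-\ell})$ vs.\ $m(n)/m(n2^{-\ell})$) but the geometric ratio is the same. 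Both approaches then conclude with the same diagonal argument defining $M_n$.

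One small caveat to tighten in your Step 2: the inequality $v_t^{(\lambda)}\ge\frac12\sqrt{2\lambda/\gamma}$ requires $t\sqrt{2\gamma\lambda}\ge\ln 3$, and your check $2^{4\ell}\sqrt{\gamma/2}\ge 1$ for $\ell\ge 1$ implicitly assumes $\gamma$ is not too small. Since $\gamma$ is a fixed model constant, this holds for all $\ell\ge\ell_0(\gamma)$, and the finitely many remaining $\ell$ are absorbed into the constant $C''$ (using the trivial bound $\le 1$), but this should be stated.
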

\begin{proof} Let $\ell\in\N$, $i\in\{0,1,\dots,2^{2\ell}-1\}$ and assume
\begin{equation}\label{nlowerbound}
n\ge K^{3\ell/2}.
\end{equation}
A simple change of variables ($u=ns$) in the time integral in the definition of $\Omega^n_{i,\ell}$ shows
that 
\begin{align*}
\mu_n(\Omega^n_{i,\ell})\le m(n)\E&\Bigg[\sum_{x\in\mT_{ni2^{-\ell}}}\1(\exists\,x'\text{ s.t. }(ni2^{-\ell},x)\ara(n(i+1)2^{-\ell},x'))\\
&\times \1\Bigl(\int_{ni2^{-\ell}+n2^{-\ell}}^{ni2^{-\ell}+2n2^{-\ell}}|\{y:(ni2^{-\ell},x)\ara(u,y)\}|du\le nm(n)K^{-\ell}\Bigr)\Bigg].
\end{align*}
So using Condition~\ref{cond:self-avoiding} with $\Delta=n 2^{-\ell}\ge 4$ (by \eqref{nlowerbound}) and $t=ni2^{-\ell}$ we see that
\begin{align}\label{munbound1}
&\mu_n(\Omega^n_{i,\ell})\\
\nonumber&\ \le c_{\ref{cond:self-avoiding}}m(n)\P\Bigl(S^{\sss(1)}>n2^{-\ell},\, \int_{n2^{-\ell}+2}^{2n2^{-\ell}-2}\frac{|\mT_s|}{m(n2^{-\ell})n2^{-\ell}}ds\le \frac{m(n) K^{-\ell}}{m(n 2^{-\ell})2^{-\ell}}\Bigr)\\
\nonumber&\ \le c_{\ref{cond:self-avoiding}}m(n)\P\Bigl( S^{\sss(n2^{-\ell})}>1,\, \int_{1+(2/(n2^{-\ell}))}^{2-(2/(n2^{-\ell}))}X_u^{(n2^{-\ell})}(1)du\le (K/2)^{-\ell}c_{\ref{mdef}}2^{\ell}\Bigr),
\end{align}
where in the last inequality we used  \eqref{mdef}, which applies because $n2^{-\ell}\ge 1$ by \eqref{nlowerbound}. 

If $I_{n,\ell}=[1,1+\frac{2^{\ell+1}}{n}]\cup[2-\frac{2^{\ell+1}}{n},2]$, then
\begin{align}
\nonumber\E\Bigg[&\int\1(u\in I_{n,\ell})X_u^{(n2^{-\ell})}(1)du\,\Big|\,S^{\sss(n2^{-\ell})}>1\Bigg]\\
\nonumber&=\int_{I_{n,\ell}}\E\big[X_u^{(n2^{-\ell})}(1)\big]du/\P(S^{\sss(1)}>n2^{-\ell})\\
\label{Inlbound}&=\int_{I_{n,\ell}}\frac{\E[|\mT_{un2^{-\ell}}|]}{m(n2^{-\ell})\P(S^{\sss(1)}>n2^{-\ell})}du\le C_{\ref{Inlbound}}\frac{2^\ell}{n},
\end{align}
where Condition~\ref{cond:L1bound} and \eqref{survivbnds} are used in the last inequality.

Use the upper bound from \eqref{munbound1}, \eqref{mdef}, and then \eqref{survivbnds} to conclude that
\begin{align}
\nonumber \mu_n&(\Omega^n_{i,\ell})\\
\nonumber&\le c_{\ref{cond:self-avoiding}}\frac{m(n)}{m(n2^{-\ell})}m(n2^{-\ell})\P(S^{\sss(1)}>n2^{-\ell})\\
&\qquad \times \P\Big(\int_{1+(2/(n2^{-\ell}))}^{2-(2/(n2^{-\ell}))}X_u^{(n2^{-\ell})}(1)du\le c_{\ref{mdef}}(K/4)^{-\ell}\Bigr|S^{\sss(n2^{-\ell})}>1\Big)\nn\\
\nonumber&\le c_{\ref{cond:self-avoiding}}c_{\ref{mdef}}2^\ell \bar s_D\P\Bigl(\int_{1+(2^{\ell+1}/n)}^{2-(2^{\ell+1}/n)} X_u^{(n2^{-\ell})}(1)du\le c_{\ref{mdef}}(K/4)^{-\ell}\Bigr|S^{\sss(n2^{-\ell})}>1\Bigr)\\
\nonumber&\le C\Bigl[2^{\ell}\P\Bigl(\int_1^2 X_u^{(n2^{-\ell})}(1)\,du\le 2c_{\ref{mdef}}(K/4)^{-\ell}\Bigr|S^{\sss(n2^{-\ell})}>1\Bigr)\\
\nonumber&\phantom{\le C2^{\ell}\Bigl[}+2^{\ell}\P\Bigl(\int_{I_{n,\ell}}X_u^{(n2^{-\ell})}(1)du>c_{\ref{mdef}}(K/4)^{-\ell}\Bigr|S^{\sss(n2^{-\ell})}>1\Bigr)\Bigr]\\
\label{Tsdefn}&:=C[T_1(\ell,n)+T_2(\ell,n)].
\end{align}

Markov's inequality and \eqref{Inlbound} imply that
\begin{equation}\label{T2bound}
T_2(\ell,n)\le \frac{C_{\ref{Inlbound}}2^{2\ell}}{n}c_{\ref{mdef}}^{-1}(K/4)^{\ell}:=C\frac{K^\ell}{n}\le C\,K^{-\ell/2},\ \forall n\ge K^{3\ell/2}.
\end{equation}
By Condition~\ref{cond:finite_int} and the upper bound in \eqref{N0intbnd}, 
\begin{align*}
\limsup_{n\to\infty}&\ 2^\ell\P\Bigl(\int_1^2 X_u^{(n2^{-\ell})}(1)du\le 2c_{\ref{mdef}}(K/4)^{-\ell}\Bigr|S^{(n2^{-\ell})}>1\Bigr)\\
&\le 2^\ell\N_o\Bigl(\int_1^2X_u(1)du\le 2 c_{\ref{mdef}}(K/4)^{-\ell}\Bigr|S>1\Bigr)\\
&\le (C/2)(\sqrt K/4)^{-\ell}.
\end{align*}
Therefore there is an increasing sequence $\{n(\ell):\ell\in\N\}$ such that $n(\ell)\ge K^{3\ell/2}$ and 
\begin{equation}\label{T1bound}
T_1(\ell,n)\le C(\sqrt K/4)^{-\ell}\quad\text{ for all }n\ge n(\ell).
\end{equation}
Use \eqref{T2bound} and \eqref{T1bound} in \eqref{Tsdefn} and then sum over $i=0,\dots, 2^{2\ell}-1$ and deduce that 
\begin{equation}\label{omegahatbound1}
\mu_n(\hat\Omega_\ell^n)\le C(\sqrt K/16)^{-\ell}=C2^{-\ell}\quad\text{for all }n\ge n(\ell).
\end{equation}
For $n\ge 1$, define $M_n=\max\{\ell:n(\ell)\le n\}\uparrow\infty$ as $n\to\infty$ ($\max\varnothing=0$).  Then by \eqref{omegahatbound1} for any $m\in\N$,
\[\mu_n(\Omega_m^n)\le \sum_{\ell=m}^{M_n}\mu_n(\hat\Omega_\ell^n)\le \sum_{\ell=m}^{M_n}C2^{-\ell}\le 2C2^{-m},\]
because $\ell\le M_n$ implies that $n(\ell)\le n$.
\end{proof}

\noindent{\it Proof of Theorem~\ref{thm:range}.} Recall \eqref{edefn}.  Let 
$$\tilde e^{\sn}=\big(({\hat e}^{\sn}_t(y/\sqrt n,x/\sqrt n))_{t \ge 0}:y,x\in\Z^d)\in\big(\mc{D}([0,\infty),\mc{D}_{\R})\big)^{\Z^d\times\Z^d}:=\tilde{\mc{D}},$$  
where \eqref{cadlage} is used to show that, using the usual countable product metric,
\begin{align}
\nn
\tilde e^{\sn}&\text{ belongs to the complete separable metric space $\tilde{\mc{D}}$.}
\end{align}
For $s>0$, the joint probability law of $(\bs{X}^{\sn},\tilde e^{\sn})$ (on $\mc{D}\times\tilde{\mc{D}}$) conditional on $S^{\sn}>s$ is written as
\begin{align*}
&\mu^s_n((\bs{X}^{\sn},\tilde e^{\sn})\in\cdot)\\
&=\mu_n((\bs{X}^{\sn},\tilde e^{\sn})\in\cdot|S^{\sn}>s).
\end{align*}
Although $\mu^s_n=\P^s_n$, we will soon be trading in our familiar $\P$ to apply Skorokhod's Theorem and so to avoid confusion it will be useful to work with $\mu^s_n$ on our original probability space.

Fix $s>0$. It suffices to show weak convergence along any sequence $n_k\to\infty$ and to ease the notation
we will simply assume $n\in\N$ (the proof being the same in the general case).  By Lemma~\ref{intcvgce} and the Skorokhod Representation Theorem we may work on some $(\Omega,\mc{F},\PS)$ on which there are random measures $(\xnint)_{n\in\N}$ and $\xint$ such that $\xnint$ has law $\mu_n^s(\xnint\in\cdot)$ for each $n$, $\xint$ has law $\N_o^s(\xint\in\cdot)$, and 
\begin{equation}\label{asconv}
\xnint \cas\xint\quad\text{ as }n\to\infty.
\end{equation}
For now we may assume $\mc{F}=\sigma((\xnint)_{n\in\N}, \xint)$.  We claim that we may assume that for all $n\in\N$ there are processes $(\bs{X}^{\sn},\tilde e^{\sn})\in\mc{D}\times\tilde{\mc{D}}$ defined on $(\Omega,\mc{F},\PS)$ such that
\begin{align}
\nn
\forall n\in\N\ \ &\text{ the law of }(\bs{X}^{\sn},\tilde e^{\sn})\text{ on }\mc{D}\times\tilde{\mc{D}}\text{ is }\mu_n^s((\bs{X}^{\sn},\tilde e^{\sn})\in\cdot)\\
\nn&\text{ and }\xnint=\int_0^\infty X_u^{\sn}\,du\ \text{ a.s.}
\end{align}
To see this, work on $\bar\Omega=\Omega\times\mc{D}\times\tilde{\mc{D}}$ with the product $\sigma$-field $\bar{\mc{F}}$, and define $\bar\P^{\{s\}}$ on $\bar{\mc{F}}$ by
\begin{align*}
\bar\P^{\{s\}}&\Bigl((\xint,(\xnint)_{n\in\N})\in A, (\bs{X}^{\sn},\tilde e^{\sn})_{n\le N}\in\prod_{n=1}^N B_n\Bigr)\\
&=\int \1_A(\xint,(\xnint)_{n\in\N})\prod_{n=1}^N\mu_n^s((\bs{X}^{\sn},\tilde e^{\sn})\in B_n|\xnint)d\PS,
\end{align*}
where the above conditional probabilities are taken to be a regular conditional probabilities. 
In short, given $(\xint,(\xnint)_{n\in\N})$, we take $(\bs{X}^{\sn},\tilde e^{\sn})_{n\in\N}$ to be conditionally independent and with laws $\mu_n^s((\bs{X}^{\sn},\tilde e^{\sn})\in\cdot|\xnint)$.  Clearly the resulting enlarged space
satisfies the above claim. We now relabel the enlarged space as $(\Omega,\mc{F},\PS)$ to ease the notation.

Note that for each fixed $n$, $\bs{\mT}^{\sn}=(\mT^{\sn}_t)_{t\ge 0}:=(\text{supp}(X^{\sn}_t))_{t\ge0}$ is a copy of our rescaled set-valued process so we can define $S^{\sn}, R^{\sn}$ and $\aran$ just as before using $\tilde e^{\sn}$ and $\bs{\mT}^{\sn}$. For the latter, set $e^{\sn}_{s,t}(y,x)=\tilde e^{\sn}_t(y,x)(s)$ and write $(s,u)\aran(t,x)$ iff $s\le t$ and $e^{\sn}_{s,t}(x,y)=1$ (recall \eqref{edefn} and \eqref{endef}).  
In particular $\Omega^n_{i,\ell}$ and $\Omega^n_m$, as in Lemma~\ref{notinymass} can be defined as subsets of $\Omega$, and that result and \eqref{sprobbnds} imply
\begin{align}\label{omegamnbound}
\nonumber\PS(\Omega^n_m)=\mu_n^s(\Omega_m^n) \le \frac{\mu_n(\Omega_m^n)}{\mu_n(S^{\sn}>s)}&\le c_{\ref{notinymass}}2^{-m}/\mu_n(S^{\sn}>s)\\
&\le \frac{c_{\ref{notinymass}}c_{\ref{mdef}}}{\underline s_D}(s\vee 1)2^{-m}:=c_{\ref{omegamnbound}}(s)2^{-m}.
\end{align}
To reinterpret Theorem~\ref{thm:mod_con} we introduce
\begin{align}\label{Deltandefn}\Delta^{\sn}(\rho)=\sup\{|y_2-y_1|:\  |s_2-s_1|\le\rho,\ (s_1,y_1)\aran(s_2,y_2)\},
\end{align}
and note that the law of $\Delta^{\sn}(\rho)$ is the same under $\PS$ and $\mu_n^s$.  Therefore
\begin{equation}
\PS(\Delta^{\sn}(\rho)>C_{\ref{thm:mod_con}}(\rho^\alpha+n^{-\alpha}))\le \frac{\mu_n^s(\Delta^{\sn}(\rho)>C_{\ref{thm:mod_con}}(\rho^\alpha+n^{-\alpha}))}{\mu_n(S^{\sn}>s)}.\nn
\end{equation}
On the original space $\Delta^{\sn}(\rho)>C_{\ref{thm:mod_con}}(\rho^\alpha+n^{-\alpha})$ implies $\delta_n<\rho$ and so by Theorem~\ref{thm:mod_con} and \eqref{sprobbnds} (as used in \eqref{omegamnbound})
\begin{align}\label{Deltanbnd}
\nonumber\forall\rho\in[0,1],\ \ \PS(\Delta^{\sn}(\rho)>C_{\ref{thm:mod_con}}(\rho^\alpha+n^{-\alpha}))&\le \mu_n(\delta_n<\rho)/\mu_n(S^{\sn}>s)\\
&\le c_{\ref{Deltanbnd}}(s) [\rho^\beta+n^{-q}].
\end{align}
 Next note that \eqref{sprobbnds} implies for $m\in\N$,
\begin{equation}\label{SunderP}
\PS(S^{\sn}>2^{m-1})\le \frac{\mu_n(S^{\sn}>2^{m-1})}{\mu_n(S^{\sn}>s)}\le \frac{c_{\ref{SunderP}}(s)}{2^{m-1}\wedge n}.
\end{equation}
Finally use  \eqref{sprobbnds} and recall Lemma~\ref{mcc}(b) to see that for $m\in\N$,
\begin{equation}\label{r0underP}
\PS(r_0(R^{\sn})\ge 2^{m-1})\le\frac{\mu_n(r_0(R^{\sn})\ge 2^{m-1})}{\mu_n(S^{\sn}>s)}\le c_{\ref{r0underP}}(s)\eta_{\ref{mcc}}(2^{m-1}).
\end{equation}
We have $R^{\sn}=\mc{R}(\bs{X}^{\sn})=\supp(\xnint)$, and $R=\supp(\xint)$ defines the range of a SBM. It suffices to show
\[d_0(R^{\sn},R)\overset{\PS}{\longrightarrow} 0 \quad\text{ as }n\to\infty.\]
Lemma~\ref{suppusc} and \eqref{asconv} show that $\Delta_1(R,R^{\sn})\cas 0$ and so with $R^\delta:=\{x:d(x,R)\le \delta\}$  it suffices to fix $\tau_0>0$ and prove
\begin{equation}\label{rgoal}
\lim_{n\to\infty}\PS(R^{\sn}\not\subset R^{\tau_0})=0.
\end{equation}
Fix $\vep>0$ and let $\tau(\tau_0,\vep,s)$, $n_0(\tau_0,\vep,s)$ be as in Lemma~\ref{mcc}(c), and choose $0<\tau<\tau(\tau_0,\vep,s)$.  Recalling $R^{\sn}_\tau=\cup_{s\le \tau}\mT_s^{\sn}$, we see from Lemmas~\ref{zerorange} and \ref{mcc}(c) that
\begin{align}\label{Rdelta}
&\PS(R_\tau^{\sn}\not\subset R^{\tau_0})\le\PS(R_\tau^{\sn}\not\subset B(0,\tau_0))=\mu^s_n(R_\tau^{\sn}\not\subset B(0,\tau_0))\le\vep\\
\nn&\phantom{\P(R_\delta^{\sn}\not\subset R^{\delta_0})\le\P(R_\delta^{\sn}\not\subset B(0,\delta_0))\le \mu^s_n(R}\text{ for }n\ge n_0(\tau_0,\vep,s).
\end{align}
For $m\in\N$, define the finite grid of points $$G_m= \Bigl\{\frac{\vec{i}2^{-m}}{K_0}:\vec{i}\in\Z^d\Bigr\}\cap[-2^m,2^m]^d,$$ 
where $K_0\in\N$ is chosen so that 
\begin{equation}\label{Gapprox}
\forall x\in B(o,2^{-m})\ \exists q\in G_m\text{ so that }|q-x|\le 2^{-m}.
\end{equation}
Define the finite collection 
\[\mc{B}_m=\{B(x,\tau_0/2):\,x\in G_m\}.\]
Fix $m\in\N$ sufficiently large so that
\begin{align}\label{mcond1}
&(i)\ 2^{-m}+2C_{\ref{thm:mod_con}}2^{(1-m)\alpha}<\tau_0/10, \\
\label{mcond2}&(ii)\ 2^{1-m}<\tau, \\
\label{mcond3}&(iii)\ c_{\ref{omegamnbound}}(s)2^{-m}+c_{\ref{Deltanbnd}}(s)2^{(1-m)\beta}
+\frac{c_{\ref{SunderP}}(s)}{2^{m-1}}\\
\nn&\phantom{(iii)\ c_{\ref{omegamnbound}}}\qquad\qquad\qquad+c_{\ref{r0underP}}(s)\eta_{\ref{r0bnd}}(2^{m-1})<\vep.
\end{align}

If $\bs{W}$ denotes a $d$-dimensional Brownian motion with variance parameter $\sigma_0^2$ starting at $o$ under a probability measure $\P_o$, then for any open ball $B$, 
\begin{align*}
\E^{\sss\{s\}}[\xint(\partial B)]=\N_o^s[\xint(\partial B)]&\le\int_0^\infty\N_o[X_u(\partial B)]\,du/\N_o(S>s)\\
&=\int_0^\infty \P_o(W_u\in\partial B)\,du/\N_o(S>s)\\
&=0,
\end{align*}
where the third equality is standard (e.g. Theorem II.7.2(iii) of \cite{Per02}). 
Therefore by \eqref{asconv}, the above equality, and standard properties of the weak topology we have  (recall $K=1024$)
\begin{equation*}
\lim_{n\to\infty}\PS\Bigl(\sup_{B\in\mc{B}_m}|\xnint(B)-\xint(B)|\ge \frac{1}{2}K^{-m}\Bigr)=0.
\end{equation*}
So there is an $n_1=n_1(m,\vep,\tau_0,s)$ so that for $n\ge n_1$ we have
\begin{equation}\label{convballs}
\PS\Big(\sup_{B\in\mc{B}_m}|\xnint(B)-\xint(B)|\ge \frac{1}{2}K^{-m}\Big)<\vep,
\end{equation}
and, if $M_n$ is as in Lemma~\ref{notinymass}, then for $n\ge n_1$
\begin{align}\label{ncond}
&(i)\ m\le M_n,\\
\label{ncond2}&(ii)\ 2C_{\ref{thm:mod_con}}n^{-\alpha}<\tau_0/10 ,\\
\label{ncond3}&(iii)\ n>2^{m-1}\text{ and } [c_{\ref{Deltanbnd}}(s)+c_{\ref{SunderP}}(s)]n^{-q}<\vep.
\end{align}
Assume $n\ge n_1$, and then choose $\omega$ so that ($\Delta^{\sss(n)}$ is as in \eqref{Deltandefn})
\begin{align}\label{omegacond}
&(i)\ \omega\in(\Omega^n_m)^c,\quad (ii)\ \Delta^{\sn}(2^{1-m})\le C_{\ref{thm:mod_con}}(2^{(1-m)\alpha}+n^{-\alpha}),\\
\label{omegacond3}&(iii)\ S^{\sn}\le 2^{m-1}, \quad (iv)\ r_0(R^{\sn})<2^{m-1},\\
\label{omegacond5}&(v)\  \sup_{B\in\mc{B}_m}|\xnint(B)-\xint(B)|< \frac{1}{2}K^{-m}.
\end{align}
Let $x\in R^{\sn}\setminus R_\tau^{\sn}$, and then choose $s>\tau$ so that $x\in\mT_s^{\sn}$. By \eqref{mcond2}, (iii) in \eqref{omegacond3}, and $s<S^{\sn}$ (since $\mT_s^{\sn}$ is non-empty), we have 
\[2^{1-m}<\tau<s<S^{\sn}\le 2^{m-1},\]
and so we can choose $i\in\{0,\dots,2^{2m-1}\}$ so that $s\in[(i+1)2^{-m},(i+2)2^{-m})$.
Since $|x|\le r_0(R^{\sn})<2^{m-1}$ (by (iv) of \eqref{omegacond3}), \eqref{Gapprox} shows there is a $q\in G_m$ so that
\begin{equation}\label{xq}
|x-q|\le2^{-m}<\tau_0/10,
\end{equation}
the last by \eqref{mcond1}. By \eqref{convtr} $\exists\, x_0\in\mT^{\sn}_{i2^{-m}}$ such that $(i2^{-m},x_0)\aran(s,x)$.  Assume $u\in[(i+1)2^{-m},(i+2)2^{-m}]$ and $(i2^{-m},x_0)\aran (u,y)$ for some $y\in\mT^{\sn}_u$.  Then use \eqref{xq}, \eqref{Deltandefn}, and (ii) in \eqref{omegacond} to see that 
\begin{align}
\nonumber |y-q|&\le |y-x_0|+|x_0-x|+|x-q|\\
\nonumber&\le 2\Delta^{\sn}(2^{1-m})+\tau_0/10\\
\nonumber&\le 2C_{\ref{thm:mod_con}}(2^{(1-m)\alpha}+n^{-\alpha})+\tau_0/10\\
&\le 3\tau_0/10<\tau_0/2,\nn
\end{align}
where we have used \eqref{mcond1} and \eqref{ncond2} in the last line. This implies that
\begin{align}
\nonumber \xnint(B(q,\tau_0/2))&\ge\int_{(i+1)2^{-m}}^{(i+2)2^{-m}} X_u^{\sn}(B(q,\tau_0/2))\,du\\
\label{intlb}&\ge \int_{(i+1)2^{-m}}^{(i+2)2^{-m}} X_u^{\sn}(\{y\in \mT^{\sn}_u:\,(i 2^{-m},x_0)\aran (u,y)\})\,du.
\end{align}
By \eqref{convtr} and $(i2^{-m},x_0)\aran (u,y)$, there is an $x'\in\mT_{(i+1)2^{-m}}$ s.t. 
\begin{equation}
\nn
(i2^{-m},x_0)\aran((i+1)2^{-m},x')\aran (u,y).
\end{equation}
The fact that $\omega\notin \Omega_m^n$ (by (i) of \eqref{omegacond}), $m\le M_n$ (by \eqref{ncond}), $i\in\{0\dots,2^{2m-1}\}$, $x_0\in\mT_{i2^{-m}}^{\sn}$, and $(i2^{-m},x_0)\aran((i+1)2^{-m},x')$ shows that the right-hand side of \eqref{intlb} is at least $K^{-m}$.
  This inequality and 
\eqref{omegacond5} imply
\[\xint(B(q,\tau_0/2))>K^{-m}/2.\]
Use this and \eqref{xq} to conclude $\xint(B(x,\tau_0))>K^{-m}/2$ and hence that $x\in R^{\tau_0}$. We have shown that
\begin{equation}\label{omegaconc}
\text{For $n\ge n_1$, conditions }(\ref{omegacond})-(\ref{omegacond5})\text{ imply that }(R^{\sn}\setminus R^{\sn}_\tau)\subset R^{\tau_0}.
\end{equation}
Now use \eqref{omegamnbound}, \eqref{Deltanbnd}, \eqref{SunderP}, \eqref{r0underP}, and \eqref{convballs} to see that the 
probability, $P(m,n)$ that one of the $5$ conditions listed in \eqref{omegacond}-\eqref{omegacond5} fails
is at most
\begin{align*}
\PS&(\Omega_m^n)+\PS(\Delta^{\sn}(2^{1-m})>C_{\ref{thm:mod_con}}(2^{(1-m)\alpha}+n^{-\alpha}))\\
&\phantom{(\Omega_m^n)}+\PS(S^{\sn}>2^{m-1})+\PS(r_0(R^{\sn})\ge 2^{m-1})+\vep\\
&\le c_{\ref{omegamnbound}}(s)2^{-m}+c_{\ref{Deltanbnd}}(s)[2^{(1-m)\beta}+n^{-q}]+\frac{c_{\ref{SunderP}}(s)}{2^{m-1}\wedge n} \\
&\phantom{\le c_{\ref{omegamnbound}}(s)2^{-m}}+c_{\ref{r0underP}}(s)\eta_{\ref{mcc}}(2^{m-1})+\vep.
\end{align*}
Our lower bounds on $m$ and $n$ (in particular use \eqref{mcond3} and \eqref{ncond3}) shows
that the above is at most $3\vep$, and so we have shown
\[P(m,n)<3\vep\text{ for }n>n_1(m,\vep,\tau_0) \text{ and where $m$ is chosen as above}.\]
Recalling \eqref{omegaconc} we conclude that 
\[\PS((R^{\sn}\setminus R_\tau^{\sn})\not\subset  R^{\tau_0})<3\vep\text{ for }n>n_1.\]
This, together with \eqref{Rdelta}, proves \eqref{rgoal} and so completes the proof.\qed

\section{The extrinsic one-arm probability}
\label{sec:one-arm}
In this section we prove Theorem \ref{thm:one-arm}.  Recall that $r_0(G)=\sup\{|x|: x\in G\}$.  
\begin{LEM}\label{r0cont}
The map $r_0: \mc{K} \ra [0,\infty)$ is continuous.
\end{LEM}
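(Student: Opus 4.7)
The plan is to show that $r_0$ is essentially $1$-Lipschitz with respect to the (untruncated) Hausdorff distance on nonempty compact sets, and then to dispense with the empty-set case separately using the stipulation that $d_0(\varnothing,K)=1$ for $K\ne\varnothing$.

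First I would handle $K=\varnothing$. If $K_n\to K=\varnothing$ in $d_0$, then by the very definition of $d_0$ we must have $K_n=\varnothing$ for all $n$ large enough (otherwise $d_0(K_n,K)=1$). In that regime $r_0(K_n)=0=r_0(K)$, so continuity at $\varnothing$ is automatic.

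Next, suppose $K_n\to K$ with $K\ne\varnothing$. Then $d_0(K_n,K)<1$ eventually, forcing $K_n\ne\varnothing$ and $d_0(K_n,K)=d_1(K_n,K)$. Compactness of $K$ and $K_n$, together with continuity of $x\mapsto d(x,K_n)$, shows that the infimum defining $\Delta_1(K,K_n)$ is attained, so $\Delta_1(K,K_n)=\max_{x\in K}d(x,K_n)$, and symmetrically for $\Delta_1(K_n,K)$. Picking $x\in K$ achieving $|x|=r_0(K)$, I can then find $x_n\in K_n$ with $|x-x_n|\le\Delta_1(K,K_n)$, whence $r_0(K_n)\ge |x_n|\ge r_0(K)-\Delta_1(K,K_n)$. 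Running the symmetric argument from $y_n\in K_n$ attaining $r_0(K_n)$ yields $r_0(K_n)\le r_0(K)+\Delta_1(K_n,K)$.

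Combining the two estimates, $|r_0(K_n)-r_0(K)|\le \Delta_1(K,K_n)\vee\Delta_1(K_n,K)\le d_1(K_n,K)\to 0$, which proves continuity at $K$. There is no real obstacle: the content amounts to observing that $r_0$ is the sup-norm on the set, which is automatically $1$-Lipschitz under Hausdorff distance, while the empty-set issue is resolved by the isolation of $\varnothing$ under $d_0$.
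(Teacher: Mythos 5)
Your proof is correct and follows essentially the same route as the paper's: both bound $|r_0(K_n)-r_0(K)|$ by the Hausdorff discrepancy using the eventual inclusions $K_n\subset K^\vep$ and $K\subset K_n^\vep$ (your $\Delta_1$ estimates are just this phrased as $1$-Lipschitz continuity). Your explicit treatment of the $K=\varnothing$ case, which the paper glosses over, is a small but harmless refinement rather than a different argument.
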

\begin{proof}
Let $K_n \ra K$, and let $\vep>0$.  Choose $n_0$ sufficiently large so that $K_n \subset K^\vep$ and $K\subset K_n^\vep$ for all $n\ge n_0$.
Then for such $n$, $r_0(K_n)\le r_0(K)+\vep$ and $r_0(K)\le r_0(K_n)+\vep$.
\end{proof}

\noindent{\it Proof of Theorem \ref{thm:one-arm}.} Let $\vep\in(0,1)$, $\alpha,\beta$ be as in \eqref{alphbet} and \eqref{alphbet2}, and let $\delta_n$, $C_{\ref{thm:mod_con}}$ be as in Theorem~\ref{thm:mod_con}. $\N_o$ is the canonical measure for the $(\gamma,\sigma_0^2)$-SBM arising in Theorem~\ref{thm:range}.  By Lemma~\ref{lem:radrange} we have $\N_o(r_0(R)>1)<\infty$, and as $S>0$ $\N_0$-a.e., we may choose $s\in(0,1)$ small enough so that 
\begin{equation}\label{r0tiny}
\N_o(r_0(R)>1,S\le s)<\vep,
\end{equation}
and
\begin{equation}\label{ssmall}
C_{\ref{thm:mod_con}}(s^\alpha+s^\beta)<\vep/2.
\end{equation}
Assume $n_1\ge 1$ is large enough so that
\begin{equation}\label{n1a}
C_{\ref{thm:mod_con}}(n_1^{-\alpha}+n_1^{-q})<\vep/2,
\end{equation}
and
\begin{equation}\label{Sncvgt}
|\mu_n(S^{\sn}>s)-(s_D/s)|<\vep\quad\forall n\ge n_1,
\end{equation}
where \eqref{cond1'} is used for the last.

Assume that  $n\ge n_1$ and $S^{\sn}\le s\le\delta_n$.  Then for any $x\in\mT^{\sn}_t$ we must have $t\le S^{\sn}\le s\le \delta_n$, and so by Theorem~\ref{thm:mod_con} and $(0,o)\aran(t,x)$,
\[|x|=|x-o|\le C_{\ref{thm:mod_con}}(t^\alpha+n^{-\alpha})\le C_{\ref{thm:mod_con}}(s^\alpha+n^{-\alpha})<\vep,\]
the last inequality by \eqref{ssmall} and \eqref{n1a}. This proves that for all $n\ge n_1$,
\begin{equation}
\nn
S^{\sn}\le s\le \delta_n\Rightarrow r_0(R^{\sn})\le\vep<1.
\end{equation}
Therefore for $n\ge n_1$, we have by Theorem~\ref{thm:mod_con},
\begin{equation}\label{Sr0b}
\mu_n(r_0(R^{\sn})>1,S^{\sn}\le s)\le\mu_n(\delta_n<s)\le C_{\ref{thm:mod_con}}(s^\beta+n^{-q})<\vep,
\end{equation}
the last by \eqref{ssmall} and \eqref{n1a} again.  

If $D_c(h)$ denotes the set of discontinuity points of $h:\mc{K}\to\R$, then by Lemma~\ref{r0cont},
\[\N_o(R\in D_c(\1_{\{r_0>1\}}))\le \N_o(r_0(R)=1)=0,\]
where we have used Lemma~\ref{lem:radrange} in the last equality. So Theorem~\ref{thm:range} shows
that we may also assume that $n_1$ is large enough so that
\begin{equation}\label{n1b}
|\P_n^s(r_0(R^{\sn})>1)-\N_o^s(r_0(R)>1)|<\vep\quad\text{ for }n\ge n_1.
\end{equation}

Write $O(\vep)$ for any quantity bounded in absolute value by $C\vep$ for some constant $C$ independent of $n$.  Then for $n\ge n_1$ we have
\begin{align}\label{TPa}
\nonumber m(n)\P(r_0(R^{\sss(1)})>\sqrt n)&=\mu_n(r_0(R^{\sn})>1)\\
\nonumber&=\mu_n(r_0(R^{\sn})>1,S^{\sn}>s)+O(\vep)\quad\text{(by (\ref{Sr0b}))}\\
\nonumber&=\P_n^s(r_0(R^{\sn})>1)\mu_n(S^{\sn}>s)+O(\vep)\\
\nonumber&=\N_o^s(r_0(R)>1)\mu_n(S^{\sn}>s)+O(\vep)\ \ \text{(by (\ref{n1b}), (\ref{sprobbnds}))}\\
&=\N_o^s(r_0(R)>1)\frac{s_D}{s}+O(\vep)\quad\text{(by (\ref{Sncvgt}))}.
\end{align}
Next use \eqref{r0tiny} and $\N_o(S>s)=\frac{2}{\gamma s}$ to see that
\begin{align}\label{TPb}
\nonumber\N_o^s(r_0(R)>1)\frac{s_D}{s}&=\frac{\N_o(r_0(R)>1,S>s)}{2/(\gamma s)}\frac{s_D}{s}\\
\nonumber&=\N_o(r_0(R)>1)\frac{\gamma s_D}{2}+O(\vep)\\ 
&=\frac{\sigma_0^2}{2}s_Dv_d(0)+O(\vep),
\end{align}
where Lemma~\ref{lem:radrange} is used in the final equality. Combining \eqref{TPa} with \eqref{TPb}, we see that 
\[\lim_{n\to\infty}m(n)\P(r_0(R^{\sss(1)})>\sqrt n)=\frac{\sigma_0^2}{2}s_Dv_d(0),\]
which gives \eqref{oa2} in Theorem~\ref{thm:one-arm}.  \eqref{oa1} then follows from this and \eqref{cond1'}.\qed
\ARXIV{\begin{REM}\label{Sles}
Theorem~\ref{thm:range} gives weak convergence of $R^{\sn}$ under \break
$\P_n^s$ for all $s>0$ and so the above
proof uses Theorem~\ref{thm:mod_con} to control 
$$\mu_n(r_0(R^{\sn})>1,S^{\sn}\le s)$$ in \eqref{Sr0b}.
As this kind of bound nicely complements the information in Theorem~\ref{thm:range}, we note that the same reasoning shows, more generally, that 
\begin{equation}
\nn
\mu_n(r_0(R^{\sn})>C_{\ref{thm:mod_con}}(s^\alpha+n^{-\alpha}),\ S^{\sn}\le s)\le C_{\ref{thm:mod_con}}(s^\beta+n^{-q})\ \forall\, 0<s<1, \ n\ge 1,
\end{equation}
where $C_{\ref{thm:mod_con}}$, $\alpha$,  $\beta$, and $q$ are as in Theorem~\ref{thm:mod_con}. A corresponding bound for the limiting super-Brownian motion which quantifies \eqref{r0tiny} is
\begin{align}\label{smallSsbm}
\nn\N_0(r_0(R)>(C_{\ref{thm:mod_con}}\vee 2)s^\alpha,\ \mc{S}\le s)\le C_{\ref{smallSsbm}}&s^{-d((1/2)-\alpha)-1} \exp(-2s^{2\alpha-1}),\\
&\forall \, 0<s<1,\ 0<\alpha<1/2.
\end{align}
This is an easy consequence of Theorem~3.3(b) of \cite{DIP89}, its proof and \eqref{sbmppp} (see Theorem~2.3(i) of \cite{HP18} for the $d=1$ case). 
\end{REM}}

\section{On checking conditions \ref{cond:finite_int}-\ref{cond:self-avoiding} and the existence of ancestral paths}\label{sec:onconds}
Here we prove Lemmas~\ref{lem:int_fdd} and \ref{disccondsa} as well as Proposition~\ref{prop:wexist}.

\medskip

\noindent{\it Proof of Lemma~\ref{lem:int_fdd}.} To prove Condition~\ref{cond:finite_int}, it suffices to prove convergence along any sequence.   To simplify notation we 
assume $n\in\N$ and that the branching and diffusion parameters of the limiting super-Brownian motion are both one.  Fix $0\le t_0<t_1$.  Let $p\in\N$ and $u_1,\dots,u_p\in[t_0,t_1]$.  
Let $\phi\ge 0$ be in $C_b(\R^d)$. Then by \eqref{momentbounds},
\begin{equation}\label{L2pbnd}
\P^s_n\Big[\Bigl(\prod_{i=1}^pX^{\sn}_{u_i}(\phi)\Bigr)^2\Big]\le \Vert\phi\Vert_\infty^{2p}C(s,t_1,p)\quad\forall n\in\N.
\end{equation}
It follows easily from the finite-dimensional convergence \eqref{fddconv} and the above $L^2$ bound (e.g. use Skorokhod's representation to get a.s. convergence in \eqref{fddconv})  that 
\begin{equation}\label{momentcvgce}
\lim_{n\to\infty}\P_n^s\Big[\prod_{i=1}^pX^{\sn}_{u_i}(\phi)\Big]=\N_0^s\Big[\prod_{i=1}^p X_{u_i}(\phi)\Big].
\end{equation}
Using Fubini's theorem we have
\begin{align}\label{intmomentcvgce}
\nonumber\lim_{n\to\infty}&\P_n^s\Big[\Bigl(\int_{t_0}^{t_1}X^{\sn}_u(\phi)du\Bigr)^p\Big]\\
\nonumber&=\lim_{n\to\infty}\int_{t_0}^{t_1}\dots\int_{t_0}^{t_1}\P_n^s\Big[\prod_{i=1}^pX^{\sn}_{u_i}(\phi)\Big]du_1\dots du_p\\
\nonumber&=\int_{t_0}^{t_1}\dots\int_{t_0}^{t_1}\N_o^s\Big[\prod_{i=1}^p X_{u_i}(\phi)\Big]du_1\dots du_p\\
&=\N_o^s\Big[\Bigl(\int_{t_0}^{t_1} X_u(\phi)du\Bigr)^p\Big]<\infty,
\end{align}
where  \eqref{momentcvgce}, \eqref{L2pbnd} and dominated convergence are used in the second equality.
If $W$ denotes a standard Brownian motion starting at $o$ under $P_o$, then take $p=1$ in the above to see that
\begin{align}
\nn
\nonumber\lim_{n\to\infty}\P_n^s\big[(\bar X_{t_1}^{\sn}-\bar X_{t_0}^{\sn})(\phi)\big]&=\N_o^s\Big[\int_{t_0}^{t_1}X_u(\phi)du\Big]\\
&\le E_o\Big[\int_{t_0}^{t_1}\phi(W_u)du\Big]/\N_o(\mc{S}>s),\nn%
\end{align}
where the last is because the mean measure of $X_t$ under $\N_o$ is $P_o(B_t\in\cdot)$ (e.g. Theorem~II.7.2(iii) in \cite{Per02}). It now follows easily from the above that sequence of laws  $\{\P^s_n(\bar X^{\sn}_{t_1}-\bar X^{\sn}_{t_0}\in\cdot):n\in\N\}$ on $\mc{M}_F(\R^d)$ are tight.  

To show the limit points are unique, assume 
\begin{equation}\label{wklimpt}
\bar X^{\sss(n_k)}_{t_1}-\bar X^{\sss(n_k)}_{t_0}\cweak\tilde X_{t_0,t_1}\text{ in }\mc{M}_F(\R^d).
\end{equation}
It remains to show that
\begin{equation}\label{lpuniq}
\P(\tilde X_{t_0,t_1}\in\cdot)=\N_o^s\Bigl(\int_{t_0}^{t_1}X_udu\in\cdot\Bigr).
\end{equation}
\eqref{wklimpt} and the convergence, hence boundedness, in \eqref{intmomentcvgce} with $2p$ in place of $p$,
imply that (again one can use Skorokhod's representation theorem)
\begin{equation}
\nn
\E[\tilde X_{t_0,t_1}(\phi)^p]=\lim_{k\to\infty} \E^s_{n_k}\Big[\Big[\int_{t_0}^{t_1}X_u^{(n_k)}(\phi)du\Big]^p\Big].
\end{equation}
This together with \eqref{intmomentcvgce}, implies that 
\begin{equation}\label{momentid}
\E[\tilde X_{t_0,t_1}(\phi)^p]=\N_o^s\Big[\Big(\int_{t_0}^{t_1}X_u(\phi)du\Big)^p\Big]\text{ for all }\phi\in C_b(\R^d)\text{ and }p\in\N.
\end{equation}
So to conclude \eqref{lpuniq} we must show the moment problem is well-posed.
Assume $\Vert\phi\Vert_\infty\le 1$.  Recall that $\P_{\delta_o}$  is the probability law of a SBM started from a unit mass at the origin (with $(\gamma,\sigma_0^2)=(1,1)$).  Then for $0\le \theta<2/t_1^2$,
\begin{align}
\nn
\nonumber\E_{\delta_0}&\Big[\exp\Bigl(\theta\int_{t_0}^{t_1}X_u(\phi)du\Bigr)\Big]\\
&\le \nonumber\E_{\delta_0}\Big[\exp\Bigl(\theta\int_{0}^{t_1}X_u(1)du\Bigr)\Big]\\
\nonumber&\le\E_{\delta_0}\Big[\int_0^{t_1}e^{\theta t_1X_u(1)}du/t_1\Big]\text{ (Jensen applied to the integrand)}\\
&\le\exp(t_1\theta[1-(t_1^2\theta/2)]^{-1})<\infty,\nn
\end{align}
where the last line uses the exponential bound in Lemma~III.3.6 of \cite{Per02}.   
By \eqref{sbmppp}, the left-hand side of the above equals 
\begin{align}
\nn&\exp\Big[\int \Bigl(\exp\Bigl(\theta\int_{t_0}^{t_1}\nu_u(\phi)du\Bigr)-1\Bigr)\,d\N_o(\nu)\Big]\\
&\ \ge\exp\Bigl(\int\Bigl( \exp\Bigl(\theta\int_{t_0}^{t_1}\nu_u(\phi)du\Bigr)-1\Bigl)\1(S>s)\,d\N_o(\nu)\Bigr).\nn
\end{align}
Noting that $\N_o(S>s)<\infty$, the above implies that 
\[\int\exp\Bigl(\theta\int_{t_0}^{t_1}X_u(\phi)du\Bigr)\,d\N_o^s<\infty\text{ for }0\le\theta<2t_1^{-2},\]
which in turn implies that the moment problem for the random variable $\int_{t_0}^{t_1}X_u(\phi)du$ under $\N_o^s$ is well-posed (see, e.g. Theorem 3.3.11 in \cite{Du10}).
Therefore \eqref{momentid} implies that for  non-negative $\phi\in\C_b(\R^d)$ satisfying $\Vert\phi\Vert_\infty\le 1$,
\[\P(\tilde X_{t_0,t_1}(\phi)\in\cdot)=\N_o^s\Bigl(\int_{t_0}^{t_1} X_u(\phi)du\in\cdot\Bigr).\]
This clearly then follows for all non-negative $\phi\in C_b(\R^d)$ by linearity.  This shows
the Laplace functionals of the above two measures are identical and so \eqref{lpuniq} holds
(e.g. by Lemma~II.5.9 of \cite{Per02}) and the proof is complete.
\qed
\medskip

\noindent{\it Proof of Lemma~\ref{disccondsa}.} Let $t,M,\Delta$ be as in Condition~\ref{cond:self-avoiding} and set $\ell=\lfloor t\rfloor\in\Z_+$, $m=\floor{\Delta}\in\N^{\ge 4}$. Assume $x\in \mT_t$, $\exists x'$ s.t. $(t,x)\ara (t+\Delta,x')$ and 
\begin{equation}\label{contcond}
\int_{t+\Delta}^{t+2\Delta}|\{y:(t,x)\ara(s,y)\}|\,ds\le M.
\end{equation}
Clearly $x\in\mT_\ell$, and $(\ell,x)\ara (t,x)\ara(t+\Delta,x')$.  So by \eqref{transitive} and \eqref{convtr} $\exists x''\in\mT_{\ell+m}$ s.t.
\begin{equation}\label{disccond1}
(\ell,x)\ara(\ell+m,x'')\ara(t+\Delta,x').
\end{equation}
Next by \eqref{contcond} and \eqref{winterp},
\begin{align}
\label{disccond2}\nonumber M&\ge\int_{t+\Delta}^{t+2\Delta}|\{y:(t,x)\ara(s,y)\}|\,ds\\
\nonumber&\ge \int_{\ell+m+2}^{\ell+2m}|\{y:(t,x)\ara(s,y)\}|\,ds\\
&=|\{(i,y):(\ell,x)\ara(i,y),\ \ell+m+2\le i\le \ell+2m-1, i\in\N\}|.
\end{align}
From \eqref{disccond1} and \eqref{disccond2} we see that the left-hand side of \eqref{condsaineq} 
(in Condition~\ref{cond:self-avoiding}) is at most
\begin{align*}
\E&\Big[\sum_{x\in\mT_\ell}\1\Big(\exists x''\text{ s.t. }(\ell,x)\ara(\ell+m,x''),\\
&\phantom{\Bigl(\sum_{x\in\mT_\ell}\1\Big(\exists x''}\,|\{(i,y):(\ell,x)\ara(i,y), \ell+m+2\le i\le \ell+2m-1\}|\le M\Big)\Big]\\
&\le c_{\ref{cond:self-avoiding}}\P\Bigl(S^{\sss(1)}>m,\, \sum_{i=m+2}^{2m-1}|\mT_i|\le M\Bigr)\quad\text{(by \eqref{dtsa}})\\
&=c_{\ref{cond:self-avoiding}}\P\Bigl(S^{\sss(1)}>\Delta,\,\int_{m+2}^{2m}|\mT_s|ds\le M\Bigr)\\
&\le c_{\ref{cond:self-avoiding}}\P\Bigl(S^{\sss(1)}>\Delta,\int_{\Delta+2}^{2\Delta-2}|\mT_s|ds\le M\Bigr).
\end{align*}
This proves \eqref{condsaineq}, as required.
\qed

\begin{LEM}\label{lem:stepfn} W.p.$1$ there is a random variable $M\in\N$ and $(\mc{F}_t)$-stopping times $0=\tau_0<\tau_1<\dots<\tau_M=S^{\sss(1)}$ such that 
\begin{align*}\mT_t=\begin{cases} \mT_{\tau_{i-1}}&\text{on }[\tau_{i-1},\tau_i)\text{ for }1\le i\le M\\
\varnothing&\text{on }[\tau_M,\infty)=[S^{\sss(1)},\infty).\end{cases}
\end{align*}
\end{LEM}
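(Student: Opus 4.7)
The plan is to define $\tau_i$ as the successive jump times of the cadlag path $t \mapsto \mT_t$. Setting $\tau_0 = 0$ and recursively
$$\tau_{i+1} = \inf\{t > \tau_i : \mT_t \neq \mT_{\tau_i}\} \quad (\inf \varnothing := \infty),$$
the asserted piecewise constancy of $\mT$ on each $[\tau_{i-1},\tau_i)$ follows at once from the definition. It thus remains to verify (i) that each $\tau_i$ is an $(\mc{F}_t)$-stopping time, and (ii) that on $\{S^{\sss(1)} < \infty\}$ only finitely many $\tau_i$ are finite.

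For (ii), fix $T > 0$. By \eqref{finiterange}, $F_T := \cup_{s \in I_T}\mT_s$ is a finite subset of $\Z^d$, so $(\mT_s)_{s \le T}$ takes values in the finite collection $\mc{A}_T := 2^{F_T}$. In $(\mc{K},d_0)$ any two distinct elements of $\mc{A}_T$ are at Hausdorff distance at least $1$, since distinct subsets of $\Z^d$ differ by at least one lattice point. If the path had infinitely many jumps on $[0,T]$, compactness would yield a sequence of jump times $t_n \uparrow t^* \in [0,T]$; since each jump is of size at least $1$, the left limit $\mT_{t^*-}$ could not exist, contradicting the cadlag hypothesis \eqref{cond1}. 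Hence only finitely many $\tau_i$ lie in $[0,T]$. Choosing $T = S^{\sss(1)}$ on $\{S^{\sss(1)}<\infty\}$ yields a finite $M$ with $\tau_M = S^{\sss(1)}$; by \eqref{death}, $\mT_t = \varnothing$ for all $t \ge \tau_M$, so no further jumps occur.

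For (i) I induct on $i$: $\tau_0 = 0$ is trivial, and assuming $\tau_i$ is a stopping time (so $\mT_{\tau_i}$ is $\mc{F}_{\tau_i}$-measurable), I write, for each $t \ge 0$,
$$\{\tau_{i+1} \le t\} = \{\tau_i < t,\ \mT_t \neq \mT_{\tau_i}\} \cup \bigcup_{q \in \mathbb{Q} \cap (0,t)}\{\tau_i < q,\ \mT_q \neq \mT_{\tau_i}\},$$
which lies in $\mc{F}_t$. The inclusion $\subseteq$ uses right continuity together with the isolation of elements of $\mc{A}_T$, which guarantees that any jump strictly before $t$ is witnessed on a positive-length interval, hence at a rational time; the reverse inclusion is immediate from the definition of $\tau_{i+1}$.

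The main obstacle is really (ii): the passage from ``cadlag in the Hausdorff topology'' to ``finitely many jumps on bounded intervals.'' This ultimately rests on \eqref{finiterange} forcing the path into a finite discrete subset of $(\mc{K}, d_0)$ on every bounded time window, so that cadlag paths become step functions. Once this is in hand, (i) is essentially bookkeeping.
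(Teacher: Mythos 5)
Your proof takes essentially the same approach as the paper's: both hinge on the observation that distinct finite subsets of $\Z^d$ are at Hausdorff distance at least $1$, so a cadlag path into this discrete subspace of $\mc{K}$ cannot have accumulating jumps and is therefore a step function on bounded intervals, with $S^{\sss(1)}$ as the last jump time by \eqref{death}. The paper treats the stopping-time claim as standard while you spell it out; the only trivial imprecision in your part (ii) is that an accumulation point of jump times might be approached from the right rather than the left, but right-continuity rules that out just as readily as the cadlag left-limit does.
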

\begin{proof} Choose $\omega$ so that $S^{\sss(1)}<\infty$ and (AR)(i)-(iii) hold.  In the discrete case the 
result is clear. Just set $M=S^{\sss(1)}$ and $\tau_i=i$, and recall \eqref{death}.

Consider next $I=[0,\infty)$.  Recall (\eqref{cond1}, \eqref{finiterange}) that $(\mT_t,t\ge0)$ is a cadlag $\mc{K}$-valued process taking values in the finite subsets of $\Z^d$, $P_F$. It follows that $d_0(\mT_{t-},\mT_t)\ge 1$ whenever $\mT_{t-}\neq\mT_t$ and $\mT$ is constant between jumps (distinct points in $P_F$ are distance $1$ apart). Therefore the jump times cannot accumulate and so can be listed as an increasing sequence of stopping times $0<\tau_1<\tau_2<\dots$, where $\tau_m$ is the $m$th jump time and $\tau_m=\infty$ if there are fewer than $m$ jumps. By \eqref{death} $\mT_t=\varnothing$ for all $t\ge S^{\sss(1)}<\infty$ and as $S^{\sss(1)}$ is a jump time, clearly $S^{\sss(1)}$ is the last jump time.  
Moreover the above implies that the number of jumps $M$ is an $\N$-valued random variable and
$\tau_M=S^{\sss(1)}$.  The proof is complete.
\end{proof}

\medskip

\noindent  {\it {Proof of Proposition~\ref{prop:wexist}.}}  Consider first $I=\Z_+$.
 Choose $\omega$ s.t.  (AR)(i)-(iii) hold. Let $(t,x)\in\vec{\mT}$. If $t<1$, then $x=o$, and $w\equiv o$ is the required ancestral path, so assume $j=\floor{t}\in\N$.  We have $(0,o)\ara(j,x)$ and so applying \eqref{convtr} $j-1$ times we can find $y_0,y_1,\dots,y_j$ such that $(i-1,y_{i-1})\ara(i,y_i)$ for all $1\le i\le j$, $y_0=o$ and $y_j=x$.  Now define
 \begin{align*}
 w_s=\begin{cases}y_{i-1}&\text{ if }s\in[i-1,i)\text{ for }1\le i\le j\\
 x&\text{ if }s\ge j.
 \end{cases}\end{align*}
Then clearly $w$ is an ancestral path to $(t,x)$.  
 
 If $I=[0,\infty)$ one proceeds as above, choosing $\omega$ so that the conclusion of the previous lemma also holds, and now working with $\{\tau_0,\tau_1,\dots,\tau_M\}\cap[0,t]$ in place of $\{0,1,\dots,j\}$.\qed

\section{Verifying the conditions for the voter model}
\label{sec:voter}
Here we verify that \eqref{cond1}, (AR) and Conditions \ref{cond:surv}-\ref{cond:self-avoiding} hold for the voter model in dimensions $d\ge 2$ (and hence prove Theorem \ref{thm:voter}). 
We first briefly describe the graphical construction of the voter model.  This is a standard construction
 so we refer the reader to Section~2 of \cite{BCLG01} for most justifications and further details (or alternatively Example 3.2 of \cite{Du95}). Let $\{\Lambda(x,y):x,y\in\Z^d\}$ be a collection of independent Poisson point processes (ppp's) on $\R_+$ where $\Lambda(x,y)$ has intensity $D(x-y)ds$. The points in $\Lambda(x,y)([t_1,t_2])$ are the times in $[t_1,t_2]$ when a voter at $y$ imposes its opinion at site $x$.  At such times an arrow
is drawn from $y$ to $x$.  Let $\mc{F}^0_t=\sigma(\{\Lambda(x,y)([0,s]):s\le t, x,y\in\Z^d\})$ and $\mc{F}_t=\mc{F}^0_{t+}$.  We assume below that the points in these point processes are all mutually disjoint and strictly positive, thus omitting a set of measure $0$.  

Recall that the voter model $(\xi_t)_{t\ge 0}$ is a $\{0,1\}^{\Z^d}$-valued Feller process. For each $t\ge 0$ and $x\in\Z^d$ we use the above ppp's to trace the opinion $\xi_t(x)$ back to its source at time $0$ by defining a ``dual" random walk $(W^{t,x}_s,0\le s\le t)$.  For this, set $s_0=0$, $W^{t,x}_0=x=y_0$ and from here we assume $t>0$.  
Let $t-s_1$ be the largest time in $(0,t]$ when there is an arrow from some $y_1$ to $y_0$ if such a time exists (so $s_1=0$ is possible if there is an arrow at $t$).  If no such time exists, set $s_1=t$ and $n=0$.  In general assume we are given
$0<t-s_k<t-s_{k-1}<\dots<t-s_1\le t$ and points $y_0,\dots, y_k$ so that there is an arrow from $
y_i$ to $y_{i-1}$ at time $t-s_i$ for $i=1,\dots,k$. Let $t-s_{k+1}\in(0,t-s_k)$ be the largest time
in this interval when there is an arrow from some $y_{k+1}$ to $y_k$. If no such time exists set $s_{k+1}=t$ and $n=k$. It is easy to see this process stops after a finite number of steps for all $t>0$, $x\in\Z^d$ a.s. (for fixed $(t,x)$ it is clear as the arrows are arising with rate $1$, and if it is finite
for all rational $t>0$ and  $x\in\Z^d$, it will be finite for all $(t,x)$ because for some rational $q>t$ there
will be no arrows into $x$ in $(t,q]$). Note that $n\in\Z_+$ gives the number of steps in the walk and $s_{n+1}=t$.  Define $W^{t,x}_s$ for $0<s\le t$ by
\begin{equation}
\nn
W^{t,x}_s=y_k\text{ if }s_k<s\le s_{k+1}\text{ for }0\le k\le n.
\end{equation}
Then
\begin{align}\label{Corw}
&\{W^{t,x}_s:s\le t\}_{x\in\Z^d}\text{ is a system of left-continuous, right-limited,} \\
\nonumber&\qquad\text{rate one coalescing random walks with step distribution $D$.}
\end{align}
The above definition easily implies
\begin{equation}\label{WFt}
W^{x,t} \text{ is $\mc{F}_t-$measurable,}
\end{equation}
\begin{align}\label{Windce}
&\forall\ 0\le s\le t\ \ W^{t,x}_{t-s}\text{ is }\sigma\Bigl(\Lambda(x',y')([s,t']):t'\ge s,\ x',y'\in\Z^d\Bigr)-\text{measurable}\\
\nonumber&\qquad\text{and, in particular, is independent of }\mc{F}_s,
\end{align}
\begin{equation}
\nn
W^{t',x'}_{t'-s}=W^{t,x}_{t-s}\text{ for some }s\in[0,t\wedge t')\Rightarrow W^{t',x'}_{t'-u}=W^{t,x}_{t-u}\ \ \forall u\in[s,t\wedge t'],
\end{equation}
and
\begin{equation}\label{coalW2}
W^{t,x}_{t-u}=W_{s-u}^{s,W_{t-s}^{t,x}}\quad\text{for }0\le u\le s\le t,\ x\in\Z^d.
\end{equation}

If $\xi_0\in\{0,1\}^{\Z^d}$, then 
\begin{equation}\label{voterdef}
\xi_t(x)=\xi_0(W^{t,x}_t)\quad\text{for }x\in\Z^d,t\ge 0
\end{equation}
defines an $\mc{F}_t$-adapted voter model starting at $\xi_0$ with cadlag paths in $\{0,1\}^{\Z^d}$ and
law $P_{\xi_0}$ on $\mc{D}([0,\infty),\{0,1\}^{\Z^d})$.  
Right-continuity follows from the fact that we include arrows at $t$ in our definition of $W^{t,x}$ so
for some $\delta>0$, there are no arrows to $x$ in $(t,t+\delta]$ and so $W^{t+\delta,x}_{t+\delta}=W^{t,x}_t$.  Note that \eqref{coalW2} with $u=0$ and \eqref{voterdef} imply
\begin{equation}\label{genvoterdef}
\xi_t(x)=\xi_s(W^{t,x}_{t-s})\quad\forall\,0\le s\le t,\ x\in\Z^d.
\end{equation}

If $\xi_0=\1_{\{o\}}$ we write $\xi^o_t(x)=\1(W^{t,x}_t=o)$ and define
\begin{equation}\label{Tdef}
\mT_t=\{x\in\Z^d:\xi^o_t(x)=1\}=\{x:W^{t,x}_t=o\}.
\end{equation}

\begin{LEM}\label{markovmart}
(a) If $\xi_0\in\{0,1\}^{\Z^d}$ and $A$ is a Borel subset of $\{0,1\}^{\Z^d}$, then
\begin{equation}\label{VMP}
\P(\xi_t\in A|\mc{F}_s)=P_{\xi_s}(\xi_{t-s}\in A) \text{ a.s.  for all }0\le s\le t.
\end{equation}

\noindent(b) $t\to |\mc{T}_t|$ is a cadlag $(\mc{F}_t)$-martingale s.t. $S^{\sss(1)}<\infty$ a.s. and in particular
$\sup_{t\ge 0}|\mc{T}_t|<\infty$ a.s.

\noindent(c) $(\mc{T}_t)_{t\ge 0}$ satisfies \eqref{cond1}.
\end{LEM}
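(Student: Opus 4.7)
The plan is to treat the three parts in order, leaning on the graphical construction and the ingredients \eqref{Corw}--\eqref{genvoterdef} already set up.

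For part (a), I would prove the Markov property directly from \eqref{genvoterdef}, which says $\xi_t(x)=\xi_s(W^{t,x}_{t-s})$ for all $x$. By \eqref{Windce}, the family $\{W^{t,x}_{t-s}:x\in\Z^d\}$ is measurable with respect to the Poisson point processes on $[s,t]$, hence independent of $\mc{F}_s$. Shifting the PPP time by $s$ and using stationarity of Poisson processes, the joint law of $\{W^{t,x}_{t-s}:x\}$ equals that of $\{W^{t-s,x}_{t-s}:x\}$ built from independent shifted ppp's. So conditionally on $\mc{F}_s$, $\xi_t(\cdot)=\xi_s(W^{t,\cdot}_{t-s})$ has the law of $\xi_{t-s}$ built from $\xi_s$ via an independent voter-model graphical construction, i.e.\ $P_{\xi_s}(\xi_{t-s}\in A)$, which is \eqref{VMP}.

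For part (b) I would first apply (a) with $\xi_0=\1_{\{o\}}$ and compute, writing $q_u$ for the continuous-time random walk kernel with step distribution $D$:
\begin{align*}
\E[|\mc{T}_t|\mid \mc{F}_s]
&=\sum_x P_{\xi^o_s}(\xi_{t-s}(x)=1)
=\sum_x\sum_y \1(y\in\mc{T}_s)\,q_{t-s}(y-x)
=|\mc{T}_s|,
\end{align*}
so $(|\mc{T}_t|)$ is a non-negative $(\mc{F}_t)$-martingale with $\E[|\mc{T}_t|]=1$. Cadlag-ness of $t\mapsto |\mc{T}_t|$ will follow from the cadlag set-valued paths established in (c), since the cardinality map on finite subsets of $\Z^d$ is continuous for the Hausdorff metric at points where no two sites collide, and here $\mc{T}_t$ only jumps by single insertions/deletions (one flip at a time). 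Martingale convergence then gives $|\mc{T}_t|\to L\in\Z_+$ a.s., and since the process is $\Z_+$-valued and cadlag it is eventually constant. The main obstacle is ruling out $L\ge 1$: from any nonempty finite configuration $\mc{T}$, the total flip rate on the boundary $\sum_{x\notin\mc{T}}\sum_{y\in\mc{T}}D(y-x)+\sum_{x\in\mc{T}}\sum_{y\notin\mc{T}}D(y-x)$ is strictly positive, so $|\mc{T}_t|$ cannot remain constant at a value $\ge 1$ on any interval of positive length; hence $L=0$ a.s., i.e.\ $S^{\sss(1)}<\infty$ a.s. Finally $\sup_{t\ge 0}|\mc{T}_t|<\infty$ a.s.\ follows since $|\mc{T}_t|$ is cadlag and eventually $0$.

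For part (c), $\mc{T}_0=\{o\}$ is immediate from $\xi_0=\1_{\{o\}}$ and the definition \eqref{Tdef}. To see $\mc{T}_t$ is a.s.\ finite for all $t$, I note that from any finite configuration the total flip rate is bounded by $2|\mc{T}|$ (each $x\in\mc{T}$ flips at rate $\le 1$, and $\sum_{x\notin\mc{T}}\sum_{y\in\mc{T}}D(y-x)\le|\mc{T}|$); since jumps change $|\mc{T}_t|$ by $\pm 1$, a standard inductive-waiting-time argument ($\tau_{k+1}-\tau_k\gtrsim E_k/(1+k)$ with $E_k$ i.i.d.\ exponential) shows there is no explosion, so $\mc{T}_t$ stays finite for all $t\ge 0$ a.s. Cadlag-ness in $(\mc{K},d_0)$ is then automatic: the jump times form a locally finite set (all flips come from the ppp's and only finitely many ppp's affect $\mc{T}_t$ on bounded intervals by finiteness), between jumps $\mc{T}_t$ is constant, and at a jump time $t$ we have $\mc{T}_t$ equal to the post-flip configuration while $\mc{T}_{t-}$ exists and equals the pre-flip configuration. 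This verifies \eqref{cond1}.
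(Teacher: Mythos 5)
Your proof is correct in outline and takes a genuinely more self-contained route than the paper for parts (b) and (c): the paper disposes of (b) by citing Proposition V.4.1 of Liggett (for the martingale property and a.s.\ extinction), and then derives (c) from (b) via the finiteness $|\mc{T}_t|<\infty$ plus cadlag-ness of $\xi_t$; you instead prove (c) first via a direct non-explosion/pure-birth domination argument, use it for cadlag-ness of $|\mc{T}_t|$, and then give a direct extinction argument. This avoids an external citation, and your reordering (c) before (b) is necessary for your logical structure to avoid circularity---you should say so explicitly rather than leaving the reader to notice it.

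Two places need tightening. First, in (b) the sentence ``$|\mc{T}_t|$ cannot remain constant at a value $\ge 1$ on any interval of positive length'' is literally false---the configuration is a pure jump process and \emph{is} constant between jumps. What you mean, and what the martingale convergence argument actually requires, is that for each $\ell\ge 1$ and each $n\in\N$ the event $\{|\mc{T}_t|=\ell\ \forall t\ge n\}$ has probability zero: on this event no ppp event affecting $\mc{T}_n$ occurs after time $n$, and conditionally on $\mc{F}_n$ with $|\mc{T}_n|=\ell\ge 1$ the waiting time to the next such event is exponential with rate bounded below by $\max_{x\in\mc{T}_n}c(x,\xi_n)>0$ (positivity uses the nondegeneracy of $D$, e.g.\ take $x\in\mc{T}_n$ extremal in a coordinate direction in which $D$ has mass). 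Taking the union over $n\in\N$ then kills $\{L=\ell\}$. Second, in (c) your non-explosion phrasing ($\tau_{k+1}-\tau_k\gtrsim E_k/(1+k)$) conflates the number of jumps with the current state; a cleaner statement is the stochastic domination $|R^{\sss(1)}_t|\le M_t$ by a rate-$1$ pure birth process, which the paper itself uses in Section 6 when verifying Condition~\ref{cond:smallinc}, or equivalently that at the $m$-th jump the total rate is at most $2(m+1)$ because $|\mc{T}_t|\le m+1$ after $m$ jumps from $\{o\}$.
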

\begin{proof}
(a) Use \eqref{genvoterdef} to see that the left-hand side of \eqref{VMP} is
\[\P(\xi_s(W^{t,\cdot}_{t-s})\in A|\mc{F}_s)=P_{\xi_s}(\xi_{t-s}\in A)\ \text{ a.s.}.\]
In the above equality we used the fact that \[\Lambda^s(x,y)([0,u])=\Lambda(x,y)([s,s+u])\] defines
a collection of ppp's equal in law to $\{\Lambda(x,y):x,y\in\Z^d\}$ and independent of $\mc{F}_s$, which
implies that $\{W^{t,x}_{t-s}:x\in\Z^d\}$ are equal in law to $\{W^{t-s,x}_{t-s}:x\in\Z^d\}$ and are independent of $\mc{F}_s$. We also used the fact that $\xi_s$ is $\mc{F}_s$-measurable.

\noindent(b) See Proposition~V.4.1 of \cite{Li85} and its proof for this, except for the martingale
property with respect to the larger filtration $(\mc{F}_t)$. This, however, then follows immediately from (a) and Proposition V.4.1(a) of \cite{Li85}.

\noindent(c) The fact that $\xi_t$ is cadlag in $\{0,1\}^{\Z^d}$ and $|\mT_t|<\infty$ for all $t\ge 0$ a.s. (by (b)) 
shows $t\to \mT_t$ is cadlag in $\mc{K}$.  This establishes \eqref{cond1}.
\end{proof}

Define $(s,y)\ara(t,x)$ iff $s\le t$, $x\in\mT_t$ and $y=W^{t,x}_{t-s}$.  It follows that
\begin{equation}\label{votere}
e_{s,t}(y,x)=\1(W^{t,x}_{(t-s)^+}=y,x\in\mT_t)\text{ for all }s,t\ge 0,x,y\in\Z^d,
\end{equation}
where $(t-s)^+$ is the positive part of $t-s$.
\begin{LEM}
\label{lem:vmara}
$\ara$ defines an ancestral relation for the voter model.
\end{LEM}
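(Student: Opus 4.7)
The plan is to verify each clause of (AR) for the relation $(s,y)\ara(t,x)\iff s\le t,\ x\in\mT_t,\ y=W^{t,x}_{t-s}$, using as workhorses the coalescing identity \eqref{coalW2} and the characterization $x\in\mT_t\iff W^{t,x}_t=o$ from \eqref{Tdef}. Parts (AR)(i), (ii), (iv) will be essentially algebraic manipulations with these two facts, while (AR)(iii) requires a small additional input from the discreteness of arrival times in the graphical construction.

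For (AR)(i), the definition immediately supplies $s\le t$ and $x\in\mT_t$, and applying \eqref{coalW2} with $u=0$ yields $o=W^{t,x}_t=W^{s,y}_s$, so $y\in\mT_s$; this gives \eqref{arbasica}, and restricting to $s=t$ gives \eqref{eqara}. For \eqref{arbasic0}, $x\in\mT_t$ is just $W^{t,x}_{t-0}=o$, i.e., $(0,o)\ara(t,x)$. For (AR)(ii), if $(s_1,y_1)\ara(s_2,y_2)\ara(s_3,y_3)$ then \eqref{coalW2} with $(u,s,t)=(s_1,s_2,s_3)$ gives $W^{s_3,y_3}_{s_3-s_1}=W^{s_2,y_2}_{s_2-s_1}=y_1$, proving \eqref{transitive}; conversely, given $(s_1,y_1)\ara(s_3,y_3)$, the candidate $y_2:=W^{s_3,y_3}_{s_3-s_2}$ lies in $\mT_{s_2}$ by part (i) applied to $(s_2,y_2)\ara(s_3,y_3)$, and \eqref{coalW2} shows $W^{s_2,y_2}_{s_2-s_1}=y_1$, which is \eqref{convtr}. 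Clause (AR)(iv) is immediate, since by \eqref{votere} $e_{s,t}(y,x)$ is a Borel function of $W^{t,x}$ and $\mT_t$, both $\mc{F}_t$-measurable via \eqref{WFt} and the adaptedness of $\xi_t$.

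For (AR)(iii) I will invoke Remark \ref{APremark}(2). Cadlag-ness in $s$ of $e_{s,t}(y,x)=\1(W^{t,x}_{(t-s)^+}=y,\, x\in\mT_t)$ is inherited from \eqref{Corw}: $r\mapsto W^{t,x}_r$ is left-continuous with right limits, and one composes it with the continuous non-increasing reparametrization $s\mapsto(t-s)^+$, which reverses the role of the two one-sided limits to yield a right-continuous left-limited function of $s$. To verify \eqref{estepr} and \eqref{estepl}, note that the arrows into $x$ come from a superposition of ppp's with finite total rate $\sum_y D(y-x)=1$, so a.s.\ they form a locally finite set in $[0,\infty)$; hence for each $t$ there is $\delta>0$ with no arrow into $x$ in $(t-\delta,t)\cup(t,t+\delta]$. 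On such an interval, two applications of \eqref{coalW2} produce $W^{u,x}_{u-r}=W^{t,x}_{(t-r)^+}$ for $r\le t$ and $W^{u,x}_{u-r}=x$ for $r$ between $t$ and $u$, while \eqref{Tdef} forces $x\in\mT_u\iff x\in\mT_t$; a short case split on the ordering of $r$ against $u,u',t$ then delivers both identities. The main obstacle is arranging the bookkeeping in (AR)(iii) so that a single $\delta$ handles \eqref{estepr} and \eqref{estepl} across the three regimes $r\le u\wedge u'$, $u\wedge u'<r\le u\vee u'$, and $r>u\vee u'$; once the no-arrow condition is in force, however, each regime collapses to a direct application of \eqref{coalW2} and \eqref{Tdef}.
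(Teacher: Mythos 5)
Your proof is correct and follows essentially the same route as the paper's: the same uses of the coalescing identity \eqref{coalW2} and the characterization $x\in\mT_t\iff W^{t,x}_t=o$ for (AR)(i)--(ii), the same appeal to \eqref{WFt} for (AR)(iv), and the same strategy for (AR)(iii) via Remark~\ref{APremark}(2) and local finiteness of the Poisson arrows into $x$. The only cosmetic difference is that you obtain $y\in\mT_s$ in \eqref{arbasica} directly from \eqref{coalW2} with $u=0$ whereas the paper invokes \eqref{genvoterdef} (which is derived from the same identity), and you phrase the one-sided stability \eqref{estepr}--\eqref{estepl} with a single $\delta$ covering both sides of $t$; just be aware that on the left of $t$ the correct conclusion is $x\in\mT_u\iff x\in\mT_{u'}$ for $u,u'\in(t-\delta,t)$ rather than $\iff x\in\mT_t$, since an arrow may sit exactly at $t$.
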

\begin{proof} Starting with AR(i), note that \eqref{eqara} is immediate. Assume $(s,y)\ara(t,x)$. By definition $x\in\mT_t$ and $s\le t$.  \eqref{genvoterdef} and \eqref{Tdef} imply that $\xi_s(W^{t,x}_{t-s})=\xi_t(x)=1$ and so $y=W^{t,x}_{t-s}\in\mT_s$, proving \eqref{arbasica}. \eqref{arbasic0} follows from \eqref{Tdef}.

Turning to (ii), \eqref{transitive} is a consequence of \eqref{coalW2}. Assume now that $(s_1,y_1)\ara(s_3,y_3)$.  Then $y_1=W^{s_3,y_3}_{s_3-s_1}$ and if $y_2=W^{s_3,y_3}_{s_3-s_2}$, then $(s_2,y_2)\ara(s_3,y_3)$ by definition. By \eqref{coalW2} we have $W^{s_2,y_2}_{s_2-s_1}=W^{s_3,y_3}_{s_3-s_1}=y_1$ and so $(s_1,y_1)\ara(s_2,y_2)$. This gives (ii).

We will use Remark~\ref{APremark}(2) to verify (iii). The fact that $s\to e_s(t,x)$ is cadlag on $[0,\infty)$ is immediate from \eqref{votere} and the fact that $s\to W_s^{t,x}\in\Z^d$ is left-continuous with right limits on $[0,t]$ (by \eqref{Corw}).  There is a $\delta>0$ such that there is no arrow towards $x$ in $(t,t+\delta]$.
Let $r\in(t,t+\delta]$. Then by definition
\begin{equation}\label{noearljsa}
W^{r,x}_s=x\quad\forall s\in[0,r-t],
\end{equation}
which by \eqref{genvoterdef} implies
\[\xi_r(x)=\xi_t(W^{r,x}_{r-t})=\xi_t(x),\]
and therefore 
\begin{equation}\label{xTt}
x\in \mT_t\ \text{ iff }\ x\in\mT_r.
\end{equation}
Next, use \eqref{coalW2} with $(u,s,t)$ replaced by $(s,t,r)$ to see that,
\begin{equation}\label{Wequal}W_{r-s}^{r,x}=W_{t-s}^{t,W^{r,x}_{r-t}}=W_{t-s}^{t,x}\ \text{ for all $s\le t$},
\end{equation}
where \eqref{noearljsa} is used in the last equality. We also have 
\begin{equation}\label{Wequalb}
W^{r,x}_{(r-s)^+}=x=W^{t,x}_{(t-s)^+}\quad\text{for all }s\ge t,
\end{equation}
where we use \eqref{noearljsa} for the first equality when $r\ge s\ge t$. Now use \eqref{xTt}, \eqref{Wequal} and \eqref{Wequalb} in \eqref{votere} to conclude that $\hat e_t(y,x)=\hat e_r(y,x)$ for all $r\in[t,t+\delta]$. 
This proves the first condition in Remark~\ref{APremark}(2).  For the second condition, \eqref{estepl}, if $x\in\mT_{t-}$ choose $\delta>0$ such that there are no arrows to $x$ in $[t-\delta,t)$ and proceed in a similar manner. This completes the proof of AR(iii).

If $s< t$, \eqref{votere} shows that $e_{s,t}(y,x)$ is $\mc{F}_t$-measurable by \eqref{WFt} and the $\mc{F}_t$-adaptedness of ${\bT}$. This gives (AR)(iv) and the proof is complete.
\end{proof}

 For $t\ge 0$ and $x\in\Z^d$ we define our candidate for an  ancestral path to $(t,x)\in\vec{\mT}$ by \begin{align}\label{voterwdef} w_s(t,x)= W^{t,x}_{(t-s)^+}.
\end{align}

\begin{LEM}
\label{lem:vw_tree} For any $(t,x)\in\vec{\mT}$, 
$w(t,x)$ is the unique ancestral path to $(t,x)$, and therefore $\mc{W}=\{w(t,x):(t,x)\in\vec{\mT}\}$.\end{LEM}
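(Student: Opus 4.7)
The plan is to exhibit $w(t,x)$ as a candidate ancestral path, check the three defining requirements directly from the construction of the dual coalescing walks and from Lemma~\ref{lem:vmara}, and then read off uniqueness from the definition of $\ara$ for the voter model. The upshot is that for the voter model the ancestral relation was \emph{defined} via the dual walks $W^{t,x}$, so once we fix $(t,x)$ there is simply no other candidate for a path tracing $x$ back to its source.

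\textbf{Step 1 (ancestral path property of $w(t,x)$).} Fix $(t,x)\in\vec{\mT}$. First, for $s\ge t$, $w_s(t,x)=W^{t,x}_0=x$, so the terminal condition holds. For $0\le s\le s'\le t$, I will apply \eqref{coalW2} with $(u,s,t)$ replaced by $(s,s',t)$ to get
\[
W^{t,x}_{t-s}=W^{s',W^{t,x}_{t-s'}}_{s'-s},
\]
i.e. $w_s(t,x)=W^{s',w_{s'}(t,x)}_{s'-s}$. Combined with the fact that $w_{s'}(t,x)=W^{t,x}_{t-s'}\in\mT_{s'}$ (which follows from \eqref{genvoterdef} and \eqref{Tdef} applied with $(s',t)$ in place of $(s,t)$, since $\xi_t(x)=1$ forces $\xi_{s'}(W^{t,x}_{t-s'})=1$), the definition of $\ara$ for the voter model yields $(s,w_s(t,x))\ara(s',w_{s'}(t,x))$.

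\textbf{Step 2 (cadlag regularity).} The remaining point for Definition~\ref{def:ancestral_path} is cadlag-ness of $s\mapsto w_s(t,x)$. By \eqref{Corw}, $u\mapsto W^{t,x}_u$ is left-continuous with right limits on $[0,t]$. Under the time reversal $s\mapsto (t-s)^+$, left-continuity in $u$ becomes right-continuity in $s$ and right limits in $u$ become left limits in $s$; for $s\ge t$ the path is constantly $x$. Hence $w(t,x)$ is cadlag. This, together with Step~1, shows $w(t,x)\in\mc{W}$.

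\textbf{Step 3 (uniqueness).} Suppose $\tilde w$ is any ancestral path to $(t,x)$. For $0\le s\le t$, applying the defining property at $s'=t$ gives $(s,\tilde w_s)\ara(t,x)$, which by \eqref{votere} (equivalently, by the definition of $\ara$ given just before Lemma~\ref{lem:vmara}) forces
\[
\tilde w_s=W^{t,x}_{(t-s)^+}=w_s(t,x).
\]
For $s\ge t$, $\tilde w_s=x=w_s(t,x)$ by definition of an ancestral path. Hence $\tilde w=w(t,x)$, proving uniqueness, and therefore $\mc{W}=\{w(t,x):(t,x)\in\vec{\mT}\}$.

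There is no genuine obstacle here; the only point requiring a moment of care is the cadlag check in Step~2, where one must remember that time reversal swaps the roles of left- and right-continuity so that the càglàd behaviour of $u\mapsto W^{t,x}_u$ yields the required cadlag property of $s\mapsto w_s(t,x)$.
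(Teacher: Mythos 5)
Your proof is correct and follows the same route as the paper: verify that $w(t,x)$ satisfies Definition~\ref{def:ancestral_path} using \eqref{genvoterdef}, \eqref{Tdef}, \eqref{coalW2}, and the left-continuity/right-limits of $W^{t,x}$, then deduce uniqueness from the fact that the voter-model ancestral relation pins $\tilde w_s$ down as $W^{t,x}_{(t-s)^+}$. Your Step~3 is cleaner than the paper's, which invokes $(0,o)\ara(s,\tilde w_s)$ where it clearly means to apply $(s,\tilde w_s)\ara(t,x)$ exactly as you do.
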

\begin{proof} 
Assume that $(t,x)\in\vec{\bf \mT}$.  Then $s\mapsto w_s(t,x)$ is cadlag by definition and the fact that $W_s^{t,x}$ is left-continuous with right limits in $s$.
 \eqref{genvoterdef} implies that if $0\le s\le t$, then $1=\xi_t(x)=\xi_s(w_s(t,x))$ and so 
\begin{equation}\label{inT}w_s(t,x)\in\mT_s\text{ for all }s\le t. 
\end{equation}
Let $0\le s\le s'\le t$. Then, using \eqref{inT}, we see that $(s,W^{t,x}_{t-s})\ara(s',W^{t,x}_{t-s'})$ iff $W_{t-s}^{t,x}=W_{s'-s}^{s', W^{t,x}_{t-s'}}$, which holds by \eqref{coalW2}.
As $w_s(t,x)=W^{t,x}_0=x$ for all $s\ge t$, we see that $w(t,x)$ is an ancestral path to $(t,x)$.  

Turning to uniqueness, let $\tilde w_s(t,x)$ be any ancestral path to $(t,x)$. Then $(0,o)\ara(s,\tilde w_s(t,x)$ implies $\tilde w_s(t,x)=W^{t,x}_{t-s}$, and so $\tilde w(t,x)=w(t,x)$ is unique.  The last assertion is then immediate.
\end{proof}
Before proving Theorem~\ref{thm:voter} we note that the above definition of $w(t,x)$ and part (v1) of the Theorem give
a uniform modulus of continuity for the rescaled dual coalescing random walks connecting one-valued sites in the voter model conditioned on longterm survival.
\begin{COR}\label{cor:coaldualmod}
Assume $\{W^{t,x}_s:0\le s\le t, x\in\Z^d\}$ ($d\ge 2$) is the coalescing dual of a voter model $(\mT_t)_{t\ge 0}$ starting with a single one at the origin, with bounded range kernel $D$ and survival time $S^{\sss(1)}$.  Let $\alpha\in(0,1/2)$. There is a constant $C_{\ref{cor:coaldualmod}}$ and for all $n\ge 1$ a random variable $\delta_n\in[0,1]$ so that 
\begin{equation}\label{deltabndv}
\P(\delta_n\le \rho|S^{\sss(1)}>nt^*)\le C_{\ref{cor:coaldualmod}}(t^*\vee 1)[\rho+n^{-1}],\ \forall \rho\in[0,1),\ t^*>0,
\end{equation}
and if $(t,x)\in\vec{\mT}$, $0\le s_1<s_2\le t$, and $|s_2-s_1|\le n\delta_n$,
\begin{equation}\label{modconvo}
|W^{t, x}_{s_1}-W^{t, x}_{s_2}|\le C_{\ref{cor:coaldualmod}}n^{(1/2)-\alpha}[|s_2-s_1|^\alpha+1].
\end{equation}
\end{COR}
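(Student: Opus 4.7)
The plan is to deduce Corollary~\ref{cor:coaldualmod} directly from Theorem~\ref{thm:voter}(v1) together with the identification of ancestral paths for the voter model given in Lemma~\ref{lem:vw_tree}. The key point is that for the voter model the (unique) ancestral path to $(t,x)\in\vec{\mT}$ is exactly $w_s(t,x)=W^{t,x}_{(t-s)^+}$, so a modulus of continuity on $\mc{W}$ translates directly into a modulus of continuity on the dual coalescing random walks, once one keeps track of the time-reversal $u\mapsto t-u$ and the rescaling conventions.

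First, by Theorem~\ref{thm:voter}(v1), for any $\alpha\in(0,1/2)$ the system $\mc{W}$ satisfies an $(\alpha,1)$-modulus of continuity with $\vep(n)=C_{\ref{thm:mod_con}} n^{-1}$. Thus there exists a random $\delta_n\in[0,1]$ and a constant $c$ such that, for every $w\in\mc{W}$ and $0\le \tilde s_1<\tilde s_2$ with $|\tilde s_2-\tilde s_1|\le\delta_n$, we have $|w^{\sn}_{\tilde s_2}-w^{\sn}_{\tilde s_1}|\le c(|\tilde s_2-\tilde s_1|^\alpha+n^{-\alpha})$, together with $m(n)\P(\delta_n\le \rho)\le c\rho+C_{\ref{thm:mod_con}}n^{-1}$ for $\rho\in[0,1)$. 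I will take this $\delta_n$ as the one in the statement.

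Next, I would establish \eqref{deltabndv}. Since $\{S^{\sss(1)}>nt^*\}=\{S^{\sn}>t^*\}$ and \eqref{sprobbnds} gives $\P(S^{\sn}>t^*)\ge \underline s_D\bigl[c_{\ref{mdef}}(t^*\vee 1)m(n)\bigr]^{-1}$, a one-line Bayes computation yields
\[
\P(\delta_n\le\rho\,|\,S^{\sss(1)}>nt^*)\le\frac{c\rho+C_{\ref{thm:mod_con}}n^{-1}}{m(n)}\cdot\frac{c_{\ref{mdef}}(t^*\vee 1)m(n)}{\underline s_D}\le C_{\ref{cor:coaldualmod}}(t^*\vee 1)[\rho+n^{-1}],
\]
which is \eqref{deltabndv}.

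For \eqref{modconvo} I fix $(t,x)\in\vec{\mT}$ and let $w=w(t,x)\in\mc{W}$, which by Lemma~\ref{lem:vw_tree} satisfies $w_u=W^{t,x}_{(t-u)^+}$. Given $0\le s_1<s_2\le t$ with $|s_2-s_1|\le n\delta_n$, I set $u_i=t-s_i\in[0,t]$ and $\tilde s_i=u_i/n$, so that $|\tilde s_2-\tilde s_1|=|s_2-s_1|/n\le\delta_n$. Applying the uniform modulus on $\mc{W}$ to $w$ and using the rescaling $w^{\sn}_{\tilde s_i}=w_{u_i}/\sqrt n=W^{t,x}_{s_i}/\sqrt n$,
\[
\frac{|W^{t,x}_{s_2}-W^{t,x}_{s_1}|}{\sqrt n}=|w^{\sn}_{\tilde s_2}-w^{\sn}_{\tilde s_1}|\le c\bigl(|\tilde s_2-\tilde s_1|^\alpha+n^{-\alpha}\bigr)=c n^{-\alpha}\bigl(|s_2-s_1|^\alpha+1\bigr),
\]
which rearranges to \eqref{modconvo} with $C_{\ref{cor:coaldualmod}}=c$. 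There is no hard step here: the entire proof is a translation through the bijection $w(t,x)\leftrightarrow W^{t,x}$ combined with the scaling identities and the standard survival lower bound \eqref{sprobbnds}. The only subtlety to watch is making sure the same $\delta_n$ works simultaneously for \emph{all} dual walks (which is exactly what the uniform modulus on $\mc{W}$ provides) and that the pieces $(t-s_i)^+$ simplify to $t-s_i$ under the hypothesis $s_i\le t$.
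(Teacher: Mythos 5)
Your proposal is correct and follows essentially the same route as the paper's own proof: take $\delta_n$ from Theorem~\ref{thm:voter}(v1), derive \eqref{deltabndv} by a Bayes computation combined with \eqref{sprobbnds}, and read off \eqref{modconvo} by translating the modulus on $\mc{W}$ into a statement about $W^{t,x}$ via the identity $w_u(t,x)=W^{t,x}_{(t-u)^+}$ from Lemma~\ref{lem:vw_tree} together with the rescaling conventions. The only difference is that the paper compresses the translation step to ``immediate from Corollary~\ref{cor:wmod}, \eqref{voterwdef} and Lemma~\ref{lem:vw_tree}'' while you spell it out (including the time reversal $u\mapsto t-u$ and the factor $n^{(1/2)-\alpha}$); this is the same argument, just written more explicitly.
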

\begin{proof} Let $\delta_n$ be as in Theorem~\ref{thm:voter}(v1)  (see Definition \ref{def:modcon}). Then for $n\ge 1$, $t^*>0$, and $\rho\in[0,1)$, that Theorem gives
\begin{align*} \P(\delta_n\le \rho|S^{\sss(1)}>nt^*)&\le m(n) \P(\delta_n\le \rho)/(m(n)\P(S^{\sss(1)}>nt^*))\\
&\le C_{\ref{thm:mod_con}}[\rho+n^{-1}]\underline s_D^{-1}c_{\ref{mdef}}(t^*\vee 1),
\end{align*}
where \eqref{sprobbnds} is used in the last line.  This gives \eqref{deltabndv}, and \eqref{modconvo} is then immediate from Corollary~\ref{cor:wmod}, \eqref{voterwdef} and Lemma~\ref{lem:vw_tree}.
\end{proof}

\noindent{\it Proof of Theorem~\ref{thm:voter}.} Parts (v1), (v2) and (v3) will follow from Theorems~\ref{thm:mod_con}, \ref{thm:range} and \ref{thm:one-arm}, respectively, once we verify Conditions~\ref{cond:surv}-\ref{cond:self-avoiding} for the parameter values given in Theorem~\ref{thm:voter}.  Here we need to recall that $s_D=\beta_d^{-1}$ for the voter model (Proposition~\ref{cond1check}),  and carry out a bit of arithmetic (especially for (v3)).  We have already noted that Conditions~\ref{cond:surv} and \ref{cond:finite_int} follow from Propositions~\ref{cond1check}(b) and \ref{prop:wkcvgcevoterLT}, respectively.  Condition~\ref{cond:L1bound} follows immediately from the martingale problem of $|\mT_t|$ (Lemma~\ref{markovmart}(b)). So it remains to check Conditions~\ref{cond:self-repel}, \ref{cond:6moment}, \ref{cond:smallinc} and \ref{cond:self-avoiding} for the voter model.

\medskip
\noindent {\bf Condition~\ref{cond:self-repel}.} On $\{y\in\mT_s\}$ we have 
\begin{align*}
\P(\exists z\text{ s.t. }(s,y)\ara(s+t,z)|\mc{F}_s)&=\P(\exists z\text{ s.t. }W^{s+t,z}_t=y|\mc{F}_s)\\
&=\P(\exists z\text{ s.t. }W_t^{s+t,z}=y)\quad(\text{by }\eqref{Windce})\\
&=\P(\exists z\text{ s.t. }W_s^{s,z}=0),
\end{align*}
where in the last line we used translation invariance in both space and time of the system of 
Poisson point processes $\{\Lambda(x,y)\}$. More specifically we use the fact that
$\{\Lambda(x'-y,y'-y)([t,t+u]):x',y'\in\Z^d,u\ge0\}$ has the same law as $\{\Lambda(x',y')([0,u]):x',y'\in\Z^d,u\ge0\}$. Recalling \eqref{Tdef} we see that the right-hand side of the above equals
\[\P(\mT_s\text{ is non-empty})\le\frac{\overline s_D}{m(s)},\]
by \eqref{survivbnds} (which applies because Condition~\ref{cond:surv} holds). 

\medskip
\noindent {\bf Condition~\ref{cond:self-avoiding}.} On $\{x\in\mT_t\}$ we can argue as above to see that
\begin{align*}
\P\Bigl(&\exists x'\text{ s.t. }(t,x)\ara(t+\Delta,x'),\ \int_{t+\Delta}^{t+2\Delta}|\{y:(t,x)\ara(s,y)\}|ds\le M\Bigr|\mc{F}_t\Bigr)\\
&=\P\Bigl(\exists x'\text{ s.t. }W_\Delta^{t+\Delta,x'}=x,\ \int_{t+\Delta}^{t+2\Delta}|\{y:W^{s,y}_{s-t}=x\}|ds\le M\Bigr|\mc{F}_t\Bigr)\\
&=\P\Bigl(\exists x'\text{ s.t. }W_\Delta^{t+\Delta,x'}=x,\ \int_{t+\Delta}^{t+2\Delta}|\{y:W^{s,y}_{s-t}=x\}|ds\le M\Bigr)\ \text{(by \eqref{Windce})}\\
&=\P\Bigl(\exists x'\text{ s.t. }W_\Delta^{t+\Delta,x'}=x,\ \int_{\Delta}^{2\Delta}|\{y:W^{s'+t,y}_{s'}=x\}|ds'\le M\Bigr)\ (s'=s-t)\\
&=\P\Bigl(\exists x'\text{ s.t. }W_\Delta^{\Delta,x'}=o,\ \int_\Delta^{2\Delta}|\{y:W^{s,y}_{s}=o\}|ds\le M\Bigr),
\end{align*}
where in the last line we use the the translation invariance in space
and time of the system of ppp's as above.  Now use \eqref{Tdef} to see that the above equals
\[\P\Big(S^{\sss(1)}>\Delta,\ \int_\Delta^{2\Delta}|\mT_s|\,ds\le M\Big).\]
Using this equality, the left-hand side of \eqref{condsaineq} (in Condition~\ref{cond:self-avoiding}) is equal to
\begin{align*}
\E\Big[&\sum_{x\in\mT_t}\P\Bigl(S^{\sss(1)}>\Delta,\ \int_\Delta^{2\Delta}|\mT_s|\,ds\le M\Bigr)\Big]\\
&=\E[|\mT_t|]\P\Bigl(S^{\sss(1)}>\Delta,\ \int_\Delta^{2\Delta}|\mT_s|\,ds\le M\Bigr).
\end{align*}
Recall that $\E[|\mT_t|]=1$ by Lemma~\ref{markovmart}(b), and so if $\Delta\ge 4$ the above is trivially bounded above by the right-hand side of \eqref{condsaineq} with $c_{\ref{cond:self-avoiding}}=1$, and
so Condition~\ref{cond:self-avoiding} is established.

\medskip
\noindent{\bf Condition~\ref{cond:6moment}.} Recall that for $0\le s\le t$, $(t-s,y)\ara(t,x)$ iff $x\in \mT_t$ and $y=W^{t,x}_{s}$.  Therefore by translation
invariance of $W^{t,x}_s$, we have for any $p>4$,
\begin{align}
\nn\E\Big[&\sum_{x\in\mT_t}\sum_{y\in\mT_{t-s}}\1((t-s,y)\ara(t,x))|x-y|^p\Big]\\
\nn&=\E\Big[\sum_{x\in\mT_t}\sum_{y\in\Z^d}\1(W_s^{t,x}=y)|x-W^{t,x}_s|^p\Big]\\
\nn&=\E\Big[\sum_{x\in\Z^d}\1(W^{t,x}_t=o)|x-W^{t,x}_s|^p\Big]\ \text{\quad(by \eqref{Tdef})}\\
\nn&=\E\Big[\sum_{x\in\Z^d}\1(W^{t,o}_t=-x)|W^{t,o}_s|^p\Big]\\
\label{C4vm}&=\E[|W^{t,o}_s|^p].
\end{align}
Recall (see \eqref{Corw}) $s\mapsto W^{t,o}_s$ is a rate one continuous time rw with step distribution $D$ and 
so has steps bounded in Euclidean norm by $L$.  If $S_n=\sum_{i=1}^n Z_i$ denotes the corresponding discrete time
rw and $N_s$ is an independent rate one Poisson process, then we can use Burkholder's predictable
square function inequality (Theorem 21.1 in \cite{B}) to see that
\begin{align*}
\E[|W^{t,o}_s|^p]&=\E[|S_{N_s}|^p]\\
&\le c_p\E[N_s^{p/2}+\max_{i\le N_s}|Z_i|^p]\\
&\le c_p\E\Big[N_s^{p/2}+N_s\Bigr[\sum_x |x|^pD(x)\Bigr]\Big]\\
&\le c(p,L)(s\vee 1)^{p/2},
\end{align*}
and we arrive at Condition~\ref{cond:6moment} for any $p>4$.

\medskip
\noindent{\bf Condition~\ref{cond:smallinc}.}
To verify Condition~\ref{cond:smallinc} for all $\kappa>4$ we will dominate the range of the voter model by a pure birth process. The following result is standard (e.g. see Theorem~11 in Sec. 6.11 of \cite{GS} and use a stopping time argument to add values of $s=e^\lambda>1$ to those considered there).

\begin{LEM}\label{pbirth}
Let $M_t$ denote a rate one pure birth process with $M_0=1$.  Then for all $t,\lambda\ge 0$
satisfying $\lambda<-\ln(1-e^{-t})$,
\begin{equation}
\nn
\E[e^{\lambda M_t}]=(1+e^{t-\lambda}-e^t)^{-1},
\end{equation}
and so if $\lambda_{\ref{pbirth}}=-\ln(1-e^{-2})/2$, there is a $C_{\ref{pbirth}}$ such that
\begin{equation}\label{bptp}
 \P(M_2\ge N)\le C_{\ref{pbirth}} e^{-\lambda_{\ref{pbirth}}N}\quad \forall N>0.
\end{equation}
\end{LEM}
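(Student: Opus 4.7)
The plan is to exploit the classical fact that the rate-one pure birth process started from a single particle (the Yule process) has a geometric distribution at each fixed time. Specifically, writing $p_n(t)=\P(M_t=n)$, the forward Kolmogorov equations $p_n'(t)=-n p_n(t)+(n-1)p_{n-1}(t)$ with $p_1(0)=1$ are easily solved inductively (or via the generating function) to yield $p_n(t)=e^{-t}(1-e^{-t})^{n-1}$ for $n\ge1$, so $M_t$ is geometric with success probability $e^{-t}$.

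With the distribution in hand, the moment generating function becomes a geometric series. I would write
\[
\E[e^{\lambda M_t}]=e^{-t}\sum_{n=1}^\infty e^{\lambda n}(1-e^{-t})^{n-1}=\frac{e^{\lambda-t}}{1-e^{\lambda}(1-e^{-t})},
\]
which converges exactly when $e^{\lambda}(1-e^{-t})<1$, i.e.\ $\lambda<-\ln(1-e^{-t})$. Multiplying numerator and denominator by $e^{t-\lambda}$ rearranges this into the stated form $(1+e^{t-\lambda}-e^t)^{-1}$.

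For the tail bound \eqref{bptp}, I would set $t=2$ and note that $\lambda_{\ref{pbirth}}=-\tfrac12\ln(1-e^{-2})$ is strictly less than the critical value $-\ln(1-e^{-2})$, so the MGF formula applies and $\E[e^{\lambda_{\ref{pbirth}} M_2}]=(1+e^{2-\lambda_{\ref{pbirth}}}-e^2)^{-1}$ is a finite positive constant, which I would take as $C_{\ref{pbirth}}$. A single application of Markov's inequality then gives $\P(M_2\ge N)\le e^{-\lambda_{\ref{pbirth}} N}\E[e^{\lambda_{\ref{pbirth}} M_2}]=C_{\ref{pbirth}}e^{-\lambda_{\ref{pbirth}} N}$. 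There is no genuine obstacle: the only small point requiring care is choosing $\lambda_{\ref{pbirth}}$ strictly inside the convergence region, which is precisely why the halving factor appears in its definition.
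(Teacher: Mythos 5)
Your proof is correct. The paper itself does not give a proof: it cites Theorem 11 of Section 6.11 in Grimmett--Stirzaker, which derives the generating function $\E[s^{M_t}]$ of the Yule process for $s$ in the unit disc, and remarks that one should extend this to $s=e^{\lambda}>1$ by "a stopping time argument." Your route is more self-contained and arguably cleaner: you identify $M_t$ as geometric with success probability $e^{-t}$ by solving the forward Kolmogorov equations, and then the MGF is an explicit geometric series whose radius of convergence immediately yields the stated constraint $\lambda<-\ln(1-e^{-t})$, with no need to pass from $s\le 1$ to $s>1$ by any additional argument. The algebra (multiplying through by $e^{t-\lambda}$) and the final Markov/Chernoff step with $\lambda_{\ref{pbirth}}$ strictly inside the convergence region are all correct. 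What your approach buys is a direct, elementary, fully explicit derivation; what the paper's approach buys is brevity, at the cost of leaving the analytic continuation past $s=1$ implicit.
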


\noindent To verify Condition~\ref{cond:smallinc} we couple the voter model $\xi_t(x)=\1(x\in\mT_t)$ with a rate one branching random walk
$Z_t(x)\in\Z_+,\ t\ge 0, x\in\Z^d$, so that $Z_0(x)=\xi_0(x)=\1(x=o)$ and $\xi_t(x)\le Z_t(x)$ for all $t, x$. This is standard so we only sketch the construction. 

We extend the system of Poisson point processes used to construct the dual coalescing rw's $\{W^{t,x}_s\}$ by
considering an i.i.d.~system of such processes $\Lambda_i,i\ge 1$, where $\Lambda_1=\Lambda$.
Then every time $\Lambda_i(x,y)$ jumps at time $t$, and $Z_t(y)\ge i$, particle $i$ at $y$ will
produce an offspring at $x$.  In this way one can easily check that $Z_t$ is a rate one branching random walk with offspring law $D$. Moreover since $M_t=\sum_xZ_t(x)$ is a rate one pure birth process, and so is finite for all times, we can
order the jumps of $Z$ and $\xi$ as $0<T_1<T_2<\dots$ (recall that the range of the voter model is finite a.s.). It is then easy to induct on $n$ to check that $\xi_{T_n}\le Z_{T_n}$ (coordinatewise). (Here one really only needs check times at which a new one appears in $\xi_{T_n}$ at location $x$.) Since $Z_t$ is monotone increasing, this implies that 
\begin{equation}\label{pbpbnd}|R^{\sss(1)}_t|\le M_t\quad\text{for all }t\ge 0,
\end{equation}
where we recall that $R^{\sss(1)}_t$ is the range of the voter model up to time $t$.

Recall that $w_s(t,x)=W^{t,x}_{(t-s)^+}$ is the unique ancestral path to $(t,x)\in \vec{\mT}$.  The independence in \eqref{Windce} and translation invariance of the system of Poisson point processes in 
the graphical construction of $\xi$, imply that for $s\ge 0, y\in\Z^d$ fixed, and on $\{y\in\mT_s\}$,
\begin{align*}
\P&(\exists\ (t,x)\text{ s.t. } (s,y)\ara(t,x),\ t\in[s,s+2],\ |y-x|\ge N|\mc{F}_s)\\
&=\P(\exists\ (t,x)\text{ s.t. } (0,o)\ara(t,x),\ t\in[0,2],\ |x|\ge N)\\
&\le \P(|\{w_s(t,x):s\in [0,t]\}|\ge N/(\sqrt d L)\text{ for some }t\in[0,2]\text{ and }x\in\mT_t)\\
&\le\P(|R^{\sss(1)}_2|\ge N/(\sqrt d L)).
\end{align*}
The first inequality holds since $s\to w_s(t,x)=W^{t,x}_{t-s}$ is a step function from $o$ to $x$ taking steps
of (Euclidean) length at most $\sqrt d L$, and the second holds since for $x\in\mT_t$, the range of $w_\cdot(t,x)$ is in $R^{\sss(1)}_t\subset R^{\sss(1)}_2$ for $t\le 2$.

Now use \eqref{bptp} and \eqref{pbpbnd} to see the above upper bound is at most
\[\P(M_2\ge N/(\sqrt d L))\le C_{\ref{pbirth}}\exp{\Bigl(-\frac{\lambda_{\ref{pbirth}}}{\sqrt d L}N}\Bigr).\]
This implies Condition~\ref{cond:smallinc} for each $\kappa>4$, and so completes the proof of Theorem~\ref{thm:voter}.\qed

\begin{REM} A very similar argument would confirm Condition~\ref{cond:smallinc} for the critical bounded range contact process $\zeta_t(x)$.
Again a standard argument will couple $\zeta$ with a dominating constant rate branching random walk--one 
ignores deaths and allows multiple occupancies. The rest of the reasoning will be the same
once the infection relation $(s,x)\ara(t,y)$ is defined for $t>s$. We leave the details for the interested reader.
\end{REM}

\section{Verifying the conditions for lattice trees}
\label{sec:LT}

Recall that we defined $\ara$ by 
\[(k,y)\ara(m,x) \quad \iff \quad x\in \mT_m, \, 0\le k\le m, \text{ and }w_k(m,x)=y,\]
where $w(m,x)=(w_k(m,x))_{k\le m}$ is the unique path in the tree $\bT$ from $o$ to $x$.  Recall also that we had verified (AR) except for (AR)(iv) after Definition~\ref{def:modcon}.
  Given $T\in\T_L(o)=:\T$, we let $T_{\le n}$ denote the subtree consisting of vertices in $\cup_{m\le n}T_m$ and all the bonds in $E(T)$ between these vertices.  Clearly $T_{\le n}$ is connected because for any $n'\le n$ and $x\in T_{n'}$, $(w_m(n',x))_{m\le n'}$ is a path in $T_{\le n}$ from $o$ to $x$. It follows that $T_{\le n}$ is a tree and clearly the set $\T_{\le n}=\{T_{\le n}:T\in\T\}\subset\T$ is a  finite set of trees.  
It also follows that for any $x\in\Z^d$ 
\begin{equation}
 \1(x\in T_n)\1(w(n,x)\in A)\text{ is a function of }T_{\le n} \text{ for any} A\subset \T_{\le n}.\label{wFn}
\end{equation}
Choosing a random tree $\bT$ according to $\P$, we see that $\mT_{\le n}$ is a random tree.  We define
\begin{equation}\label{LTFn}
\mc{F}_n=\{\{\mc{T}_{\le n}\in A\}: A\text{ is a subset of }\T_{\le n}\},\ n\in\Z_+,
\end{equation}
that is, $\mc{F}_n$ is just the $\sigma$-field generated by $\mT_{\le n}$. Since $\mT_{\le n}$ is a function of $\mT_{\le n+1}$, $(\mc{F}_n)_{n\in\Z_+}$ is a filtration and clearly 
\begin{equation}
\nn
\mT_n=V(\mT_{\le n})\setminus V(\mT_{\le n-1})\text{ is }\mc{F}_n\text{-measurable}.
\end{equation}

We can now verify (AR)(iv).  Let $m,n\in \Z_+$ and $x,y\in \Z^d$.  If $m< n$ then $e_{m,n}(y,x)=\1((m,y)\ara(n,x))=\1(x \in \mc{T}_n,w_m(n,x)=y)$, which is $\mc{F}_n$-measurable by \eqref{wFn}.  If $m\ge n$ then $e_{m,n}(y,x)=\1(x=y \in \mT_n)$, which is also $\mc{F}_n$-measurable by\eqref{LTFn}.  This verifies (AR)(iv) as required.

For $x \in \mT_n$ define the extended path $w'(n,x)$ by 
\[w'_m(n,x)=w_m(n,x)\1(m<n)+x\1(m\ge  n).\]  
It is then immediate from the definition of $w(n,x)$ that  $w_{m_1}(n,x)\in \mT_{m_1}$ for $m_1\le n$ and that $w_{m_0}(m_1,w_{m_1}(n,x))=w_{m_0}(n,x)$ for $m_0\le m_1\le n$.  Thus $w'(n,x)$ is an ancestral path to $(n,x)\in \vec{\mc{T}}$. Moreover it is easy to see $w'(n,x)$ is the only ancestral path to $(n,x)$ and hence 
\begin{equation}\label{LTaps}
\mc{W}:=\{w'(n,x):(n,x)\in \vec{\mc{T}}\}
\end{equation}
 is the system of ancestral paths for $(\bT,\ara)$.
Before verifying Conditions \ref{cond:surv}-\ref{cond:self-avoiding} with parameters as in Theorem~\ref{thm:LT}, and hence verifying the conclusion of Corollary~\ref{cor:dwmod}, we can use the above characterization of $\mc{W}$ in \eqref{LTaps} to give an explicit interpretation of this corollary. It is a large scale modulus of continuity for $w_k(m,x), k\le m$ conditional on longterm survival of the tree.
\begin{COR}\label{cor:LTwmod} Let $\bs{\mT}$ be the critical lattice tree with $d>8$, $L$ sufficiently large so that the hypotheses of Theorem~\ref{thm:LT} hold, and survival time $S^{\sss(1)}$.  Assume $\alpha\in(0,1/6)$, $\beta\in(0,1]$ satisfy $1-2\alpha>\frac{2}{3}(1+\beta)$. Then there is a constant $C_{\ref{cor:LTwmod}}$, and for any $n\ge 1$ a random variable $\delta_n\in(0,1]$ so that 
\begin{equation}\label{deltabndlt}
\P(\delta_n\le \rho|S^{\sss(1)}>nt^*)\le C_{\ref{cor:LTwmod}}(t^*\vee 1)\rho^\beta\ \ \forall\rho\in[0,1),\ t^*>0,
\end{equation}
and if $(m,x)\in\vec{\mT}$, $k_1,k_2\in \Z_+$, $k_i\le m$, and $|k_2-k_1|\le n\delta_n$, then 
\begin{equation*}
|w_{k_2}(m,x)-w_{k_1}(m,x)|\le C_{\ref{cor:LTwmod}}|k_2-k_1|^\alpha n^{(1/2)-\alpha}.
\end{equation*}
\end{COR}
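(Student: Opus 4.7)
The plan is to deduce this corollary from Theorem~\ref{thm:LT}(t1), which supplies an $(\alpha,\beta)$-modulus of continuity for the entire system of ancestral paths $\mc{W}$ with $\vep(n)\equiv 0$. Combined with the survival lower bound from Condition~\ref{cond:surv}, this will yield both \eqref{deltabndlt} and the displayed inequality. The argument closely parallels that given for Corollary~\ref{cor:coaldualmod} in the voter model.

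First I would take $\delta_n\in(0,1]$ to be exactly the random variable supplied by Theorem~\ref{thm:LT}(t1) via Definition~\ref{def:modcon}. Part (2) of that definition, with $\vep(n)\equiv 0$, gives $\mu_n(\delta_n\le\rho)=m(n)\P(\delta_n\le\rho)\le c\rho^\beta$ for all $\rho\in[0,1)$. Noting $\{S^{\sss(1)}>nt^*\}=\{S^{\sn}>t^*\}$, the lower bound in \eqref{sprobbnds} gives $\mu_n(S^{\sn}>t^*)\ge \underline s_D/(c_{\ref{mdef}}(t^*\vee 1))$. Dividing then produces
\[
\P(\delta_n\le\rho\mid S^{\sss(1)}>nt^*)=\frac{\mu_n(\delta_n\le\rho)}{\mu_n(S^{\sn}>t^*)}\le \frac{cc_{\ref{mdef}}}{\underline s_D}(t^*\vee 1)\rho^\beta,
\]
which is \eqref{deltabndlt}.

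For the modulus inequality, I would fix $(m,x)\in\vec{\mT}$ and use \eqref{LTaps} to regard the extended path $w'(m,x)$ as an element of $\mc{W}$. Given integers $0\le k_1<k_2\le m$ with $|k_2-k_1|\le n\delta_n$, set $s_i=k_i/n$, so that $|s_2-s_1|\le\delta_n$. Part (1) of Definition~\ref{def:modcon} applied to $w'(m,x)$, together with $w'_k(m,x)=w_k(m,x)$ for $k\le m$, gives
\[
\frac{|w_{k_2}(m,x)-w_{k_1}(m,x)|}{\sqrt n}\le c\bigl(|s_2-s_1|^\alpha+n^{-\alpha}\bigr).
\]
Multiplying by $\sqrt n$ and, for $k_1\ne k_2$, using $|k_2-k_1|\ge 1$ to absorb the $n^{1/2-\alpha}$ term into a constant multiple of $|k_2-k_1|^\alpha n^{1/2-\alpha}$ yields the desired inequality. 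The case $k_1=k_2$ is trivial.

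No serious obstacle arises: all of the analytic content is already contained in Theorem~\ref{thm:LT}(t1) (and hence in Theorem~\ref{thm:mod_con_disc} together with the verification of Conditions~\ref{cond:surv}--\ref{cond:6moment} and \eqref{jumpbound} for sufficiently spread out critical lattice trees in $d>8$). The only remaining work is to translate the abstract statement about ancestral paths $w\in\mc{W}$ into the tree-theoretic language of paths $w_k(m,x)$ by undoing the rescaling at integer times, precisely as in the voter-model argument.
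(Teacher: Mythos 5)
Your proof is correct and follows essentially the same route as the paper's: both rest on the $(\alpha,\beta)$-modulus for $\mc{W}$ (the paper cites Corollary~\ref{cor:dwmod}, whose conclusion is what (t1) packages) together with the survival lower bound from \eqref{sprobbnds}. The only minor difference is that the paper invokes the grid-time case of Corollary~\ref{cor:dwmod}, which gives $C|s_2-s_1|^\alpha$ with no additive $n^{-\alpha}$, whereas you use Definition~\ref{def:modcon}(1) and then absorb the $n^{-\alpha}$ term via $|k_2-k_1|\ge 1$; both work.
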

\begin{proof} This follows immediately from Corollary~\ref{cor:dwmod}, \eqref{LTaps} and a short calculation to derive \eqref{deltabndlt}.  The latter is similar to the derivation of \eqref{deltabndv} in the proof of Corollary~\ref{cor:coaldualmod}.
\end{proof}

In the remainder of this section we verify Conditions \ref{cond:surv}-\ref{cond:self-avoiding} for critical sufficiently spread-out lattice trees in dimensions $d>8$.   Condition \ref{cond:6moment} is verified subject to a bound on the 6th moment of the two-point function:
\begin{LEM}
\label{lem:LT6}
For $d>8$ and $L$ sufficiently large there exists $C>0$ such that for all $n\in \Z_+$,
\begin{align}
\nn
\sum_x |x|^6 \P(x \in \mc{T}_n)\le CL^6 n^3.
\end{align}
\end{LEM}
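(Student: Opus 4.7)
Set $t_n(x) := \P(x \in \mc{T}_n)$ and $M_p(n) := \sum_x |x|^p t_n(x)$; the target is $M_6(n) \le C L^6 n^3$. The approach will be to combine the lace expansion for critical lattice trees of Hara--Slade (see \cite{HS90,HH13}) with an induction on even $p \in \{0,2,4,6\}$ to prove the diffusive moment bound $M_p(n) \le C_p L^p n^{p/2}$. The lace expansion provides a convolution identity schematically of the form
\[
t_n(x) = \pi_n(x) + \sum_{m=1}^{n-1} \sum_{y,z} \pi_m(y)\, D(z)\, t_{n-m-1}(x-y-z),
\]
together with diagrammatic bounds on the lace-expansion coefficients $\pi_m(y)$ that are strong enough for $d > 8$ and $L$ sufficiently large.

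The base cases of the induction are $p = 0$ (which is Condition~\ref{cond:L1bound}) and $p = 2$ (a standard lace-expansion output in \cite{HH13}). For the inductive step, apply $\sum_x |x|^p$ to the convolution identity, decompose the convolution variable as $x = y + z + x'$, and expand $|y + z + x'|^p$ multinomially. The spatial symmetries $\pi_m(-y) = \pi_m(y)$ and $D(-z) = D(z)$ eliminate all monomials containing an odd power of a single coordinate of $y$ or $z$, so after symmetrization the bound reduces to
\[
M_p(n) \le C \sum_{m=1}^n \sum_{\substack{p_1 + p_2 + p_3 = p \\ p_i \text{ even}}} \tilde M_{p_1}(\pi_m)\, \tilde M_{p_2}(D)\, M_{p_3}(n-m),
\]
where $\tilde M_q(f) := \sum_y |y|^q |f(y)|$. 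Using $\tilde M_q(D) \le (\sqrt d L)^q$, the (to-be-established) diagrammatic estimate $\tilde M_q(\pi_m) \le \beta(L)\, L^q m^{q/2} \varphi(m)$ with $\sum_m \varphi(m) < \infty$ and $\beta(L) \to 0$ as $L \to \infty$, and the inductive hypothesis for $M_{p_3}(n-m)$ with $p_3 < p$, each off-diagonal term is seen to contribute $O(L^p n^{p/2})$. The diagonal term $p_1 = p_2 = 0,\, p_3 = p$ contributes $\beta(L) \sum_m \varphi(m)\, M_p(n-m)$, which for $L$ large can be absorbed via a Gronwall-type induction on $n$.

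The main obstacle is establishing the weighted diagrammatic estimates $\tilde M_q(\pi_m) \le \beta(L) L^q m^{q/2} \varphi(m)$ for $q \in \{0, 2, 4, 6\}$, uniformly in $m$, with the precise $L$-dependence. This requires inserting $|y|^q$ weights into the standard Hara--Slade lace diagrams for lattice trees and verifying convergence of the resulting weighted bubble and square integrals. The hypothesis $d > 8$ is exactly what makes the unweighted diagrams summable, and a spatial weight of $|y|^q$ on a leg of diameter $\sim \sqrt m$ contributes the factor $m^{q/2}$; the case $q = 6$ sits at the edge of the method's convergent range for lattice trees, making the bookkeeping of which legs of each diagram carry the external spatial weights the technically demanding ingredient. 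Combined with the already-verified conditions from Section~\ref{sec:LT}, this will complete the verification of Condition~\ref{cond:6moment} for critical lattice trees and hence the proof of Theorem~\ref{thm:LT}.
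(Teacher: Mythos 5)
Your overall route — lace expansion for lattice trees, multinomial expansion of $|x|^6$, use of the spatial symmetry of $D$ and $\pi_m$ to eliminate odd-moment monomials, and reduction to sixth-moment estimates on the lace coefficients — is exactly the one the paper follows, so you have correctly located where the work lives. Where you iterate the one-step convolution identity and then induct on $n$, the paper works directly with the fully iterated form (the sum over the number $N$ of connected components, organized via the matrix-indexed quantities $\Xi^{[r]}_n[A]$), distributes the $|x|^6$-weight onto at most three $\pi$-pieces using Cauchy--Schwarz, and bounds the remaining time-sums using only Condition~\ref{cond:L1bound}; no induction on $n$ is then needed.

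There are, however, two genuine gaps. First, your ``Gronwall-type'' absorption of the diagonal term is mis-calibrated: at criticality the diagonal kernel $(c_m)_{m\ge 0}$, $c_m = z_D\tilde M_0(\pi_m)$, has total mass exactly one — this is the criticality condition, and the reason is that $\pi_0$ is $\rho$ times a Kronecker delta, not an $O(\beta(L))$ quantity — so the diagonal is not $\beta(L)\sum_m\varphi(m)M_p(n-m)$ and cannot be made small by taking $L$ large. An induction on $n$ can still close, but only via the discrete-renewal estimate $\sum_m c_m(n-m-1)^3 \le n^3 - 3\mu_1 n^2 + o(n^2)$ with $\mu_1=\sum_m(m+1)c_m<\infty$ (finiteness of $\mu_1$ is where $d>8$ enters), which is a genuine renewal argument and not a contraction. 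Second, and more seriously, you state the needed estimate $\tilde M_6(\pi_m)\le CL^6 m^{(10-d)/2}$ (your $\varphi(m)=m^{-(d-4)/2}$) as ``to be established'' with no argument, and that is the hard content of the lemma. For $d\ge 10$ the trivial finite-range bound $\tilde M_6(\pi_m)\le d\,(mL)^2\tilde M_4(\pi_m)=O(L^6 m^{(12-d)/2})$ already suffices, but for $8<d<10$ it gives only $O(n^{(16-d)/2})>n^3$. The paper's remedy is a two-stage bootstrap: first establish the fourth-moment bound $\sum_x|x|^4\,t_n(x)\le CL^4n^2$ for all $d>8$ (Lemma~\ref{lem:second}, which needs only the $r\le 2$ $\pi$-bounds from \cite{H08}), then insert it into the weighted diagrammatic estimates on $\pi^{(N)}_m$ (Lemma~\ref{lem:piNbound}), using the structural observation that when $|x|^6$ is split over the top, bottom and backbone paths of a lace diagram no single backbone leg ever receives more than a fourth power — so the input needed from $t$ is $M_4$, not $M_6$. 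Your proposal does not mention the dimension split, the intermediate $M_4$ bound, or this structural point; without them the deferred ``bookkeeping'' is precisely where the proof would stall in dimensions $8<d<10$.
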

Section \ref{sec:tree6} is devoted to the proof of Lemma \ref{lem:LT6}.

\medskip

\noindent{\bf Condition \ref{cond:surv}:} This is immediate from \cite[Theorem 1.4]{HH13} with $m(t)=A^2V(t \vee 1)$ and $s_D=2A$.\qed

\medskip

\noindent{\bf Condition \ref{cond:L1bound}:}  This is immediate from \cite[Theorem 1.12]{H08} with $k=0$.\qed

\medskip

\noindent{\bf Condition \ref{cond:smallinc}:} For lattice trees \eqref{jumpbound} holds and hence so does Condition~\ref{cond:smallinc} for any $\kappa>4$ (Remark \ref{smallincdisc}).\qed

\medskip

\noindent{\bf Condition \ref{cond:finite_int}:}  This is immediate with $(\gamma,\sigma_0^2)=(1,v)$ by Proposition \ref{prop:wkcvgcevoterLT}. \qed \\
It is worth noting however that we can also invoke Lemma~\ref{lem:int_fdd} by checking its simpler hypotheses.  The first hypothesis of Lemma~\ref{lem:int_fdd} was verified for lattice trees with $d>8$ in \cite[Theorem 1.5]{HH13} as a consequence of the survival asymptotics proved therein and \cite[Theorem 1.15]{H08} and \cite[Proposition 2.4]{HolPer07}.  The second hypothesis of Lemma~\ref{lem:int_fdd} is easily obtained from the identity 
\[\E^s_n[X_t^{\sn}(1)^p]=\frac{C_s n}{n^p}\sum_{x_1,\dots ,x_p}\sum_{\substack{T\in \T_L(o):\\  x_1,\dots, x_p \in T_{\floor{nt}}}}W(T).\]
This identity gives rise to the bound 
\begin{align}
\nn
\sup_{t\le t^*}\E^s_n[X_t^{\sn}(1)^p]\le \sup_{t\le t^*}\frac{C_{s,p}n}{n^p} \floor{nt}^{p-1}\le C_{s,p}{ t^*}^p,
\end{align}
where the factor $\floor{nt}^{p-1}$ comes from the possible temporal locations of $p-1$ branch points in the minimal subtree connecting $o$ to the points $x_1,\dots, x_p\in T_{\floor{nt}}$ (see e.g.~\cite[(4.4)-(4.5)]{H08}).

\medskip

In preparation for proving the remaining conditions, we introduce a bit of notation:

\noindent For any tree $T\in\T_L$ and any $x\in T$, let $R_x(T)$ denote the lattice tree consisting of $x$ and the descendants of $x$ in $T$, together with the edges in $T$ connecting them.  So in particular if $x\in T_n$, then
\[V(R_x(T))=\{y\in \Z^d: \exists m\ge n \text{ s.t. }x=w_n(m,y)\}.\]

Let $T_{\ngtr x}=(T\setminus R_x(T))\cup \{x\}$ denote the tree consisting of all vertices in $T$ that are not descendants of $x$. It is connected, and hence a tree, since for any such vertex $y$, the path from $o$ to $y$ cannot contain
any descendants of $x$ or else $y$ would also be a descendant. For any $B\subset\T_L$, let $B_x$ denote the event $B$ shifted by $x$ (i.e.~for $T\ni o$, $T \in B \iff T+x \in B_x$, where $+$ is addition in $\Z^d$). 

\medskip

\noindent {\bf Notation.} We will write $\vec{\omega}_n=(\omega_0,\omega_1,\dots,\omega_n)$ to denote an $n$-step random walk path, that is, a sequence of points $\omega_i\in\Z^d$ so that 
$\Vert \omega_i-\omega_{i-1}\Vert_\infty\le L$ for all $1\le i\le n$.  We write $\vec{\omega}_n:y\to z$ if, in addition, $\omega_0=y$ and $\omega_n=z$, in which case $\vec{\omega}_n$ is a random walk path from $y$ to $z$. If $\vec{\omega}$ is an $n$-step random walk path we write $\vec{\omega}\in\T_L(\omega_0)$ iff $\vec{\omega}$ is also self-avoiding (i.e., $\omega_0,\dots,\omega_n$ are distinct),
where it is understood that the edge set is precisely the set of $n$ edges $\{\{\omega_{i-1},\omega_i\}\}_{i=1}^n$.

If $(T_i)_{i \in I}$ are lattice trees, we define the union of these trees as the lattice subgraph with vertex set equal to the union of the vertex sets of the $T_i$, and edge set equal to the union of the edge sets of the $T_i$.

If $\vec{\omega}_n$ is an $n$-step random walk path and $\vec{R}_n=(R_0,\dots,R_n)$ where $R_i\in \T_L(\omega_i)$ for each $i=0,\dots, n$ then we write $\vec{R}_{_n}\ni \vec{\omega}_n$.

\begin{REM}\label{remark:ribs} Here we describe a bijection between $T \in \T_L(o)$ such that $x\in T_n$ and collections $(\vec{\omega}_n,\vec{R}_n)$, where $\vec{R}_{_n}\ni \vec{\omega}_n$, $\omega_0=o$ and $\omega_n=x$,
  and the $(R_i)_{i=0}^n$ are mutually avoiding (i.e.~vertex disjoint, which implies that  $\vec{\omega}_n\in \T_L(o)$).

Firstly note that any lattice tree  $T\in \T_L(o)$ such that $x\in T_n$ has a unique $n$-step random walk path $\vec{\omega}_n:=w(n,x)\in \T_L(o)$ of vertices and edges in $T$ from $o$ to $x$.  Define $R_i$ to be the connected component of $\omega_i$ in the tree after removing the edges of $\vec{\omega}_n$ (but not the vertices) from $T$.  Then trivially each $R_i\in \T_L(\omega_i)$, and the $(R_i)_{i=0}^n$ are mutually avoiding (i.e.~vertex disjoint).  Moreover $T$ is the union of the trees $\vec{\omega}_n$ and $(R_i)_{i=0}^n$. 

On the other hand, given an $n$-step random walk path $\vec{\omega}_n\in \T_L(o)$ from $o$ to $x$, and $\vec{R}_n \ni \vec{\omega}_n$, the (edge and vertex) union of these trees is a tree if (and only if) the $(R_i)_{i=0}^n$ are mutually avoiding.
\end{REM}

It is immediate from Remark \ref{remark:ribs} (and the product form of $W(T)$)   
that the two-point function, $\P(x \in \mc{T}_n)$, can be written as 
\begin{align}
\P(x \in \mc{T}_n)&:=\rho^{-1}\sum_{T \in \T_L(o)}W(T) \indic{x \in T_n}\nn\\
&\, =\rho^{-1}\sum_{ \vec{\omega}_n: o \to x}W(\vec{\omega}_n)\sum_{\vec{R}_{_n}\ni \vec{\omega}_n}\left(\prod_{i=0}^nW(R_i)\right)\indic{R_0,\dots, R_n \text{ avoid each other}}.\label{2point0}
\end{align} 
We henceforth write $W(\vec{R}_n):=\prod_{i=0}^nW(R_i)$ when $\vec{R}_n=(R_0,\dots, R_n)$.

Obviously we obtain an upper bound for \eqref{2point0} by replacing the indicator therein with a less restrictive one.  This observation and generalisations of it will play a crucial role in our verification of the conditions for lattice trees.

Conditions \ref{cond:self-repel} and \ref{cond:self-avoiding} will be simple consequences of the following Lemma.
\begin{LEM}
\label{lem:basictreebounds}
For all $A,B\subset\T_L$, and every $n\in \N$,
\begin{equation}
\P(x \in \mc{T}_n,\mc{T}_{\ngtr x}\in A, R_x(\mc{T})\in B_x)\le \rho \P(x \in \mc{T}_n,\mc{T}_{\ngtr x}\in A)\P(\mc{T}\in B).\label{boo}
\end{equation}
\end{LEM}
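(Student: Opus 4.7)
The strategy is the standard \emph{tree decomposition at $x$}: for any $T \in \T_L(o)$ with $x \in T_n$, the subtrees $T' := T_{\ngtr x}$ and $T'' := R_x(T)$ share exactly the vertex $x$ and no edges, and their union recovers $T$. Conversely, any pair $(T', T'')$ with $T' \in \T_L(o)$, $x \in T'_n$, $x$ having no descendants in $T'$, $T'' \in \T_L(x)$, and $V(T') \cap V(T'') = \{x\}$ determines a unique such $T$. Since $|T|=|T'|+|T''|$ and $E(T) = E(T') \sqcup E(T'')$, formula \eqref{wtdefn} gives the factorization $W(T) = W(T')W(T'')$.

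Applying this bijection and writing $\mathcal{A}' := \{T' \in A : x \in T'_n,\ T'_{\ngtr x}=T'\}$, the left-hand side of \eqref{boo} becomes
\begin{equation*}
\rho^{-1}\sum_{T' \in \mathcal{A}'} W(T') \sum_{\substack{T'' \in \T_L(x)\\ T'' \in B_x}} W(T'')\,\1(V(T'')\cap V(T') = \{x\}).
\end{equation*}
First I would upper-bound this by dropping the avoidance indicator in the inner sum. Then, since $D$ (and hence $W$) is translation-invariant, the shift $\tilde T := T'' - x$ identifies $\sum_{T'' \in \T_L(x),\,T'' \in B_x} W(T'') = \sum_{\tilde T \in B} W(\tilde T) = \rho\,\P(\mc{T}\in B)$, so that the left-hand side of \eqref{boo} is at most $\P(\mc{T}\in B)\sum_{T' \in \mathcal{A}'} W(T')$.

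To close the argument, I would apply the same decomposition in the other direction to the quantity $\rho\,\P(x\in\mc{T}_n,\mc{T}_{\ngtr x}\in A)$:
\begin{equation*}
\rho\,\P(x\in\mc{T}_n,\mc{T}_{\ngtr x}\in A) = \sum_{T' \in \mathcal{A}'} W(T') \sum_{T''\in\T_L(x)} W(T'')\,\1(V(T'')\cap V(T') = \{x\}).
\end{equation*}
The trivial one-vertex tree $T'' = \{x\}$ has weight $W(\{x\})=1$ and satisfies the avoidance constraint automatically, so the inner sum is bounded below by $1$; thus $\sum_{T' \in \mathcal{A}'} W(T') \le \rho\,\P(x\in\mc{T}_n,\mc{T}_{\ngtr x}\in A)$, which combined with the previous display yields \eqref{boo}.

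There is no real obstacle here beyond careful bookkeeping: the whole proof rests on the bijective decomposition and the multiplicativity of $W$, after which one side of \eqref{boo} is obtained by dropping a single avoidance indicator and the other by retaining only the trivial term $T'' = \{x\}$. The factor of $\rho$ on the right arises precisely because the two applications of the decomposition, one on each side of the inequality, contribute asymmetrically to the normalization.
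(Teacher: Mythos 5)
Your proof is correct but takes a somewhat different and cleaner route than the paper's. The paper invokes the full ``ribs'' decomposition of Remark~\ref{remark:ribs}, writing $T$ as the union of the self-avoiding backbone $\vec{\omega}_n$ from $o$ to $x$ and the ribs $R_0,\dots,R_n$; its key step is to \emph{weaken} the mutual-avoidance constraint to ``$R_0,\dots,R_{n-1}$ avoid each other and $x$,'' which simultaneously detaches the sum over $R_n\in B_x$ (giving $\rho\P(\mc{T}\in B)$ by translation invariance) and exhibits the remaining sum as exactly the $R_n=\{x\}$ slice of the ribs-expansion of $\P(x\in\mc{T}_n,\mc{T}_{\ngtr x}\in A)$. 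You instead recombine the backbone and the first $n$ ribs into a single tree $T'=T_{\ngtr x}$ and work with the coarse two-piece split $T=T_{\ngtr x}\cup R_x(T)$, which separates the two estimating moves completely: drop the avoidance indicator outright to factor out $\rho\P(\mc{T}\in B)$, then recover $\rho\P(x\in\mc{T}_n,\mc{T}_{\ngtr x}\in A)$ as an upper bound on $\sum_{T'\in\mathcal{A}'}W(T')$ by retaining only the weight-one term $T''=\{x\}$. Both arguments rest on precisely the same ingredients (multiplicativity of $W$, translation invariance, and the single-vertex tree with weight one); yours avoids carrying the backbone-and-ribs bookkeeping, which the paper keeps because Remark~\ref{remark:ribs} is already set up and reused at finer granularity in the proof of Lemma~\ref{lem:reduction}.
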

\begin{proof} 
Using Remark~\ref{remark:ribs} we see that
the left hand side of \eqref{boo} is equal to
\begin{align}
\nn&\frac{1}{\rho}\sum_{T \in\T_L}W(T) \indic{x\in T_n}\indic{T_{\ngtr x}\in A}\indic{R_x(T)\in B_x}\\
\nn&=\frac{1}{\rho}\sum_{\vec{\omega}_n:o \ra x}W(\vec{\omega}_n)\sum_{\vec{R}_{_n}\ni \vec{\omega}_n}W(\vec{R}_n)\indic{R_0,\dots, R_n \text{ avoid each other}}\\
\label{ribs}&\phantom{=\frac{1}{\rho}\sum_{\vec{\omega}_n:o \ra x}W(\vec{\omega}_n)\sum_{\vec{R}_{_n}\ni \vec{\omega}_n}W(\vec{R}_n)}\times\indic{\vec{R}_{n-1}\cup \vec{\omega}_{n}\in A}\indic{R_n\in B_x},
\end{align}
where $\vec{R}_{n}\cup \vec{\omega}_{n}=:T'$ is a lattice tree (containing $o$, and $x$ at generation $n$) due to the indicator of avoidance, and $\vec{R}_{n-1}\cup \vec{\omega}_{n}=(T'\setminus R_n)\cup \{x\}$ is a tree as well.

Now $R_n$ is a tree containing $x=\omega_n$, so by weakening the avoidance constraint this is at most
\begin{align}
\nn&\frac{1}{\rho}\sum_{\vec{\omega}_n:o \ra x}W(\vec{\omega}_n)\sum_{\vec{R}_{n}\ni \vec{\omega}_n}W(\vec{R}_n)\indic{R_0,\dots, R_{n-1} \text{ avoid each other and }x}\\
\nn&\phantom{\frac{1}{\rho}\sum_{\vec{\omega}_n:o \ra x}W(\vec{\omega}_n)\sum_{\vec{R}_{n}\ni \vec{\omega}_n}W(\vec{R}_n)}\times\indic{\vec{R}_{n-1}\cup \vec{\omega}_{n}\in A}\indic{R_n\in B_x}\\
\nn&=\frac{1}{\rho}\sum_{\vec{\omega}_n:o \ra x}W(\vec{\omega}_n)\sum_{\vec{R}_{n-1}\ni \vec{\omega}_{n-1}}W(\vec{R}_{n-1})\indic{R_0,\dots, R_{n-1} \text{ avoid each other and }x}\\
&\phantom{=\frac{1}{\rho}\sum_{\vec{\omega}_n:o \ra x}W(\vec{\omega}_n)\sum_{\vec{R}_{n-1}\ni \vec{\omega}_{n-1}}\left(\prod_{i=0}^{n-1}W(R_i)\right)}\times\indic{\vec{R}_{n-1}\cup \vec{\omega}_{n}\in A}\label{platypus0}\\
&\qquad \times \sum_{R_n \in\T_L(x)}W(R_n)\indic{R_n\in B_x},\label{platypus1}
\end{align}
where we have used the fact that $\indic{\vec{R}_{n-1}\cup \vec{\omega}_{n}\in A}$ does not depend on $R_n\setminus \{x\}$.  Now note that \eqref{platypus1} is equal to 
\[\rho \frac{1}{\rho}\sum_{R \in\T_L(x)}W(R)\indic{R \in B_x}=\rho \P(\mc{T}+x\in B_x)=\rho\P(\mc{T}\in B).\]
Next note that the weight of a lattice tree consisting of a single vertex $\{x\}$ is 1, so \eqref{platypus0} is at most
\begin{align}
\frac{1}{\rho}\sum_{\vec{\omega}_n:o \ra x}W(\vec{\omega}_n)\sum_{\vec{R}_{n}\ni \vec{\omega}_{n}}W(\vec{R}_n)\indic{R_0,\dots, R_{n} \text{ avoid each other}}\indic{\vec{R}_{n-1}\cup \vec{\omega}_{n}\in A},\label{platypus2}
\end{align}
since \eqref{platypus2} contains the case where $R_n=\{x\}$.  But \eqref{platypus2} is equal to 
\begin{align*}
\P(x \in \mc{T}_n,\mc{T}_{\ngtr x}\in A),
\end{align*}
and the result follows.
\end{proof}

\medskip

\noindent{\bf Condition \ref{cond:self-repel}:}  By \eqref{LTFn}, to verify Condition \ref{cond:self-repel} for lattice trees, it is sufficient to show that there exists $c_{\ref{cond:self-repel}}>0$ such that for all $n\in \Z_+,k\in \N$, any $T'\in\T_{\le n}$ such that $\P(\mc{T}_{\le n}=T')>0$ and any $x \in T'_n$,
\begin{equation}\P(\exists z : (n,x) \ara (n+k,z) \big | \mc{T}_{\le n}=T')\le \frac{c_{\ref{cond:self-repel}}}{k}.\label{banana0}\end{equation}

Let $B$ denote the set of lattice trees containing $o$ that survive until at least generation $k$, so $B_x$ is the set of lattice trees rooted at $x$ for which there is at least one vertex in the tree of tree distance $k$ from $x$.  Then 
\begin{align}
\nn\P(\exists z : (n,x) \ara (n+k,z) \big | \mc{T}_{\le n}=T')&=\frac{\P(\exists z : (n,x) \ara (n+k,z) \, , \,  \mc{T}_{\le n}=T')}{\P(\mc{T}_{\le n}=T')}\\
&=\frac{\P(R_x(\mc{T})\in B_x \, , \,  \mc{T}_{\le n}=T')}{\P(\mc{T}_{\le n}=T')}.\label{banana1}
\end{align}
Note that for any $T\in\T_L$, if $x \in T_n$ then $T_{\le n}=(T_{\ngtr x})_{\le n}$.  Therefore  the numerator in \eqref{banana1} can be written as
\begin{align*}
\P\big(x \in \mc{T}_n,  (\mc{T}_{\ngtr x})_{\le n}=T', R_x(\mc{T})\in B_x\big)\le \rho \P\big(x \in \mc{T}_n,  (\mc{T}_{\ngtr x})_{\le n}=T'\big)\P(\mc{T}\in B\big),
\end{align*}
where we have used Lemma \ref{lem:basictreebounds}.  But (since $x \in T'_n$), 
\[\P\big(x \in \mc{T}_n,  (\mc{T}_{\ngtr x})_{\le n}=T'\big)=\P(\mc{T}_{\le n}=T'),\]
so  for all $k\in \N$, \eqref{banana1} is bounded above by $\rho \P(\mc{T}\in B\big)=\rho\theta(k)\le c\rho/k$, by \eqref{survivbnds} and Condition \ref{cond:surv}. By \eqref{mLTs} we have proved \eqref{banana0}, as needed. \qed

\medskip

\noindent{\bf Condition \ref{cond:self-avoiding}:}
 Let $(R_x(\mc{T}))_m$ denote the set of vertices in the tree $R_x(\mc{T})$ of tree distance $m$ from $x$ (e.g.~$(R_x(\mc{T}))_0=\{x\}$).
By Lemma~\ref{disccondsa} we need to show that there exists $c_{\ref{cond:self-avoiding}}>0$ such that for any $\ell\in\Z_+,m\in\N^{\ge 4}$, and $M>0$,
\begin{align}
\nn&\E\left[\sum_{x \in \mc{T}_\ell}\1\Big(\exists y \in (R_x(\mc{T}))_m\, , \, \sum_{j=m+2}^{2m-1} \#(R_x(\mc{T}))_j\le M\Big)\right]\\
 &\le c_{\ref{cond:self-avoiding}}  \P\Big(S^{\sss(1)}\ge m\, ,\, \sum_{j=m+2}^{2m-1} \#\mc{T}_j\le M\Big). \label{mmm0}
 \end{align}
The left hand side can be written as 
\begin{align}
&\sum_{x\in \Z^d}\P\Big(x \in \mc{T}_\ell\, , \, \exists y \in (R_x(\mc{T}))_m\, , \,\sum_{j=m+2}^{2m-1} \#(R_x(\mc{T}))_j\le M\Big).\label{mmm1}
\end{align}

Let $B_x$ denote the set of lattice trees $T$ rooted at $x$ (i.e.~the unique particle of generation 0 is $x$) that survive until time $m$ such that the total number of particles of generation between $m+2$ and $2m-1$ is at most $M$, and let $B=B_o$.  Then \eqref{mmm1} is 
\begin{equation*}
\sum_{x\in \Z^d}\P\Big(x \in \mc{T}_\ell, R_x(\mc{T})\in B_x\Big).
\end{equation*}
Applying Lemma \ref{lem:basictreebounds} this is at most
\begin{equation*}
\rho \sum_{x\in \Z^d}\P(x \in \mc{T}_\ell)\P( \mc{T}\in B)=\rho \E\left[\sum_{x \in \mc{T}_\ell}1\right]\P\Big(\exists y \in \mc{T}_m,\sum_{j=m+2}^{2m-1} \#\mc{T}_j\le M\Big),
\end{equation*}
which, by Condition~\ref{cond:L1bound}, verifies \eqref{mmm0} with $c_{\ref{cond:self-avoiding}}=\rho c_{\ref{cond:L1bound}}$.\qed

\medskip

For $T\in \T_L(o)$, $0\le m< n$, and $x \in T_{n}$, let $x_m(T)=w_{m}(n,x)$ denote the unique ancestor of $x$ in $T$ of generation $m$.  
In preparation for verifying Condition \ref{cond:6moment} subject to Lemma \ref{lem:LT6} we prove the following Lemma.
\begin{LEM}
\label{lem:reduction} If $c_{\ref{cond:L1bound}}$ is the constant in Condition~\ref{cond:L1bound} for lattice trees, then for any $f:\Z^d \ra \R_+$ such that $f(-x)=f(x)$ and any $m<n\in \N$,
\begin{equation}
\nn
\E\left[\sum_{x \in \mc{T}_{n}}f(x-x_m(\mc{T}))\right]\le c_{\ref{cond:L1bound}} \sum_{y\in \Z^d}f(y)\P(y \in \mc{T}_{n-m}).\end{equation}
\end{LEM}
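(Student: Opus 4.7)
The plan is to group the sum over descendants $x \in \mT_n$ according to their ancestor $y = x_m(\mT)$ at generation $m$, and then invoke Lemma~\ref{lem:basictreebounds} to split off the subtree $R_y(\mT)$ rooted at $y$.

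First, I would interchange summations. Since for $x \in \mT_n$ the event $\{x_m(\mT) = y\}$ is precisely $\{y \in \mT_m\} \cap \{x \in (R_y(\mT))_{n-m}\}$, we may write
\begin{equation*}
\E\left[\sum_{x \in \mT_n} f(x - x_m(\mT))\right] = \sum_{y,z \in \Z^d} f(z-y)\,\P\bigl(y \in \mT_m,\, z \in (R_y(\mT))_{n-m}\bigr).
\end{equation*}

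Next, for each fixed $(y,z)$ I would apply Lemma~\ref{lem:basictreebounds} with $A = \T_L$ (no constraint on $\mT_{\ngtr y}$) and
\begin{equation*}
B = \{T \in \T_L : z - y \in T_{n-m}\},
\end{equation*}
so that the shifted set $B_y$ consists of trees rooted at $y$ in which $z$ lies at tree distance $n-m$ from $y$; this gives $\{R_y(\mT) \in B_y\} = \{z \in (R_y(\mT))_{n-m}\}$. The lemma then yields
\begin{equation*}
\P\bigl(y \in \mT_m,\, z \in (R_y(\mT))_{n-m}\bigr) \le \rho\,\P(y \in \mT_m)\,\P(z - y \in \mT_{n-m}).
\end{equation*}

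Substituting this bound and changing variables $w = z - y$ separates the double sum:
\begin{equation*}
\E\left[\sum_{x \in \mT_n} f(x - x_m(\mT))\right] \le \rho\,\E[|\mT_m|]\sum_{w \in \Z^d} f(w)\,\P(w \in \mT_{n-m}),
\end{equation*}
and Condition~\ref{cond:L1bound} then bounds $\E[|\mT_m|] \le c_{\ref{cond:L1bound}}$, with the factor $\rho$ absorbed into the constant named in the lemma statement.

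Since Lemma~\ref{lem:basictreebounds} has already done the combinatorial heavy lifting, there is no real obstacle here: the argument is essentially an algebraic rearrangement followed by a one-line application of the tree-decomposition bound. Note that the hypothesis $f(-x)=f(x)$ is not actually needed for this estimate---the symmetry $D(-x)=D(x)$ already implies $\P(w\in\mT_{n-m})=\P(-w\in\mT_{n-m})$---but it is natural in the intended applications (for instance $f(x) = |x|^p$).
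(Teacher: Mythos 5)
Your proof is correct and follows a genuinely different route from the paper's. The paper re-roots the tree at the descendant $x$ and translates by $-x$, which turns the generation-$m$ ancestor into a generation-$(n-m)$ vertex carrying the weight $f$; this re-rooting step is exactly where the hypothesis $f(-x)=f(x)$ enters. It then applies the ribbon decomposition of Remark~\ref{remark:ribs} directly to the re-rooted sum. You instead condition on the ancestor $y=x_m(\mT)$ at generation $m$ and invoke Lemma~\ref{lem:basictreebounds} to split off the subtree $R_y(\mT)$, which, as you observe, makes the symmetry of $f$ unnecessary. Your route is arguably more economical: it reuses the combinatorial work already packaged in Lemma~\ref{lem:basictreebounds} rather than redoing a ribbon decomposition from scratch, though the underlying mechanism (dropping the avoidance constraint between backbone pieces and recombining into two-point functions) is the same.

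One small imprecision, which you acknowledge: your argument yields $\le \rho\,c_{\ref{cond:L1bound}}\sum_y f(y)\P(y\in\mT_{n-m})$, not the stated $c_{\ref{cond:L1bound}}\sum_y f(y)\P(y\in\mT_{n-m})$. Since $c_{\ref{cond:L1bound}}$ is the specific constant $\sup_{t}\E[|\mT_t|]$ from Condition~\ref{cond:L1bound}, the phrase "absorbed into the constant named in the lemma statement" is not quite right; strictly speaking you prove a weaker inequality. It is worth flagging, though, that a careful tally of the $\rho$'s in the paper's own proof gives the same extra factor: the sum over the inserted tree at $y$ produces a $\rho$, so the expression after "Collecting terms" equals $\rho\sum_y f(y)\P(y\in\mT_{n-m})\sum_u\P(u\in\mT_m)$ rather than the displayed quantity, and the paper's stated constant appears to absorb that $\rho$ silently. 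Since $\rho\ge 1$ is a fixed constant of the model, this discrepancy has no effect on the only application of the lemma (verifying Condition~\ref{cond:6moment}).
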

\proof The left hand side is equal to 
\begin{align}
&\rho^{-1}\sum_{x,y \in \Z^d}\sum_{T \in\T_L}W(T) \indic{x \in T_{n}}\indic{x_m(T)=y}f(x-y).\label{hippo1}
\end{align}
Now every 

{\em tree $T$ rooted at $o$ and containing $x$ at tree distance $n$ from $o$, such that the unique path in the tree from $o$ to $x$ passes through $y$ at tree distance $m$ from $o$}

is also 

{\em a tree (with the same weight) rooted at $x$ containing $o$ at tree distance $n$ from $o$ such that the unique path in the tree from $x$ to $o$ passes through $y$ at tree distance $n-m$ from $x$},

and vice versa.  The above are actually the same tree, but since we are also specifying the root, we will refer to the latter as $T_x$.

Translating this tree by $-x$, we obtain a tree $T'=T_x-x$ (with the same weight as $T$) rooted at $o=x-x$, containing $x':=-x$ at tree distance $n$ from $o$ and such that the unique path in $T'$ from $o$ to $x'$ passes through $y':=y-x$ at tree distance $n-m$ from $o$.  Since $x-y=-y'$, and $f(-y')=f(y')$, \eqref{hippo1} is equal to 
\begin{align*}
\rho^{-1}\sum_{x',y' \in \Z^d}\sum_{T' \in\T_L}W(T') \indic{x' \in T'_{n}}\indic{x'_{n-m}(T')=y'}f(y').
\end{align*}
Now we can simply drop the $'$  to get that \eqref{hippo1} is equal to 
\begin{align*}
\rho^{-1}\sum_{x,y \in \Z^d}\sum_{T \in \T_L}W(T) \indic{x \in T_{n}}\indic{x_{n-m}(T)=y}f(y).
\end{align*}
Now as in \eqref{ribs} we can write this as
\begin{align}
\rho^{-1}\sum_{x,y \in \Z^d}\sum_{\vec{\omega}_n:o \overset{n-m}\ra y \overset{m}\ra x}W(\vec{\omega}_n)\sum_{\vec{R}_{n}\ni \vec{\omega}_n}W(\vec{R}_n)\indic{(R_i)_{0\le i\le n} \text{ avoid each other}}f(y),\label{hippo2}
\end{align}
where the sum over $\vec{\omega}_n$ is a sum over random walk paths of length $n$ from $o$ to $x$ that are at $y$ at time $n-m$.

Now since $R_{n-m}\ni y$, we have that 
\begin{align*}
&\indic{(R_i)_{0\le i\le n} \text{ avoid each other}}\\
&\le \indic{(R_i)_{0\le i\le n-m} \text{ avoid each other}}\indic{(R_i)_{n-m+1\le  i\le n} \text{ avoid each other and }y}.
\end{align*}
Thus, using the fact that also the weight of a tree containing a single vertex $y$ is 1, we see that \eqref{hippo2} is at most
\begin{align*}
\rho^{-1}\sum_{x,y \in \Z^d}\sum_{\vec{\omega}^{\sss(1)}_{n-m}:o \to y }&\sum_{\vec{\omega}^{\sss(2)}_m:y  \to x }W(\vec{\omega}^{\sss(1)}_{n-m})W(\vec{\omega}^{\sss(2)}_m)\\
&\sum_{\vec{R}^{\sss(1)}_{n-m}\ni \vec{\omega}_{n-m}^{\sss(1)}}W(\vec{R}^{\sss(1)}_{n-m})\indic{(R^{\sss(2)}_i)_{0\le i\le n-m} \text{ avoid each other}}f(y)\\
&\sum_{\vec{R}^{\sss(2)}_{m}\ni \vec{\omega}_m^{\sss(2)}}W(\vec{R}^{\sss(2)}_{m})\indic{(R^{\sss(2)}_{i'})_{0\le i'\le m} \text{ avoid each other}}.
\end{align*}
Collecting terms, this is equal to 
\begin{align*}
&\rho^{-1}\sum_{y \in \Z^d}\sum_{\vec{\omega}^{\sss(1)}_{n-m}:o \to y }W(\vec{\omega}^{\sss(1)}_{n-m})\sum_{\vec{R}^{\sss(1)}_{n-m}\ni \vec{\omega}^{\sss(1)}_{n-m}}W(\vec{R}^{\sss(1)}_{n-m})\indic{(R^{\sss(1)}_i)_{0\le i\le n-m} \text{ avoid each other}}f(y)\\
&\times \Bigg(\sum_{x \in \Z^d}\sum_{\vec{\omega}^{\sss(2)}_m:y  \to  x }W(\vec{\omega}^{\sss(2)}_m)\sum_{\vec{R}^{\sss(2)}_{m}\ni \vec{\omega}^{\sss(2)}_m}W(\vec{R}^{\sss(2)}_{m})\indic{(R^{\sss(2)}_{i'})_{0\le i\le m} \text{ avoid each other}}\Bigg).
\end{align*}
Changing variables from $x$ to $u=x-y$ in the last summation, we see the above is equal to 
\[\sum_{y\in \Z^d}f(y)\P(y \in \mc{T}_{n-m})\sum_{u \in\Z^d}\P(u\in \mT_m)\le c_{\ref{cond:L1bound}}\sum_{y\in \Z^d}f(y)\P(y \in \mc{T}_{n-m}),\]
by Condition~\ref{cond:L1bound} for lattice trees, as required.\qed
\medskip

{\noindent \bf Condition \ref{cond:6moment}:} Apply Lemma \ref{lem:reduction} with $f(x)=|x|^6$, $n=t$ and $m=t-s$ to see that in order to verify Condition \ref{cond:6moment}, it is sufficient to prove that $\sum_{y\in \Z^d}|y|^6\P(y \in \mc{T}_{s})\le c(s\vee 1)^{6/2}$.  Together with Lemma \ref{lem:LT6} this verifies Condition \ref{cond:6moment}.\qed

Assuming Lemma \ref{lem:LT6}, we can now prove Theorem \ref{thm:LT}.
\begin{proof}[Proof of Theorem \ref{thm:LT}]
Conditions \ref{cond:surv}-\ref{cond:self-avoiding} all hold, with $s_D=2A$ in Condition~\ref{cond:surv}, $p=6$ in Condition \ref{cond:6moment}, any $\kappa>4$ in Condition~\ref{cond:smallinc} and $(\gamma,\sigma_0^2)=(1,v)$ in Condition~\ref{cond:finite_int}.  Hence, with a bit of elementary arithmetic, Theorem  \ref{thm:mod_con_disc} implies (t1), Theorem \ref{thm:range} implies (t2) and Theorem \ref{thm:one-arm} implies (t3).
\end{proof}

\section{Proof of Lemma \ref{lem:LT6}}
\label{sec:tree6}

We will prove the required bound for $\rho t_n(x)$, which is of course equivalent.  
Before we prove Lemma \ref{lem:LT6}, note that
\begin{align}\label{tnformula}
\rho t_n(x)&=\sum_{\vec{\omega}_n:o \ra x}W(\vec{\omega}_n)\sum_{\vec{R}_n \ni \vec{\omega}_n}W(\vec{R}_{n})K_{[0,n]}(\vec{R}_n)
\end{align}
where for $0\le a<b\le n$ we have 
\begin{align*}
K_{[a,b]}(\vec{R}_n)=\prod_{a\le s<t\le b}\indic{R_s\cap R_t=\varnothing}=\prod_{a\le s<t\le b}\left[1+U_{s,t}(\vec{R}_n)\right],
\end{align*}
where $U_{s,t}(\vec{R}_n)=-\indic{R_s \cap R_t \ne \varnothing}$.

The first step of the lace expansion analysis (see \cite{Sla06} for an introduction to the lace expansion in various settings) is to rewrite $K_{[0,n]}$ as 
\begin{equation*}
K_{[0,n]}=\sum_{\Gamma \in \mc{G}[0,n]}\prod_{(s,t)\in \Gamma}U_{s,t},
\end{equation*}
where $\mc{G}[0,n]$ is the set of graphs (ordered pairs of vertices) on $[0,n]$.  Every such graph can be decomposed into its connected components (here a graph $\Gamma$ on $[a,b]$ is connected if $\cup_{(s,t)\in \Gamma}[s,t]=[a,b]$.  If $\Gamma$ is a graph on $[0,n]$ and $a\in [0,n]$ satisfies $a \notin \cup_{(s,t)\in \Gamma}[s,t]$ then the connected component of $a$ is simply $[a,a]=\{a\}$.   Let $\mc{G}^{\text{conn}}[a,b]$ denote the set of connected graphs on $[a,b]$ (here the empty graph is considered to be a connected graph on $[a,a]$), and 
\[J_{[a,b]}:=\sum_{\Gamma \in \mc{G}^{\text{conn}}[a,b]}\prod_{(s,t)\in \Gamma}U_{s,t}.\]
Let $\pi_n(x)=\sum_{\vec{\omega}_n:o \ra x}W(\omega)\sum_{\vec{R}_n \ni \vec{\omega}_n}W(\vec{R}_{n})J_{[0,n]}$, and note that 
\[\pi_0(x)=\delta_{o,x}\sum_{R_0\in\T_L}W(R_0)=\delta_{o,x}\rho.\]

Write $n-\vec{M}$ for $n-\vec{M}\cdot \vec{1}$ and $\vec{M}_r$ for $(M_1,\dots ,M_r)$.  Writing $\vec{m}_{N}\ge 0$ for the set of $(m_1,\dots, m_N)\in (\Z_+)^N$, note that (with $\ms_0=0$ and $\ms_i=\sum_{\ell=1}^i m_\ell$), 
\begin{align}
K_{[0,n]}=\sum_{N=1}^{n+1}   \sum_{\substack{\vec{m}_{N}\ge 0:\\  \vec{m}_N+N-1=n}}   \prod_{j=1}^N J_{[\ms_{j-1}+{j-1},\ms_j+{j-1}]},\label{KJ1}
\end{align}
where $N$ denotes the number of connected components. (See Figure~\ref{fig:expand}.)  The case that $N=n+1$ corresponds to the empty graph where every vertex $0,1,\dots, n$ is its own connected component.
Thus,
\begin{align}
\rho t_n(x)&=\sum_{N=1}^{n+1}   \sum_{\substack{\vec{m}_{N}\ge 0:\\  \vec{m}_N+N-1=n}} \sum_{\vec{\omega}_n:o \ra x}W(\vec{\omega}_n)\sum_{\vec{R}_n \ni \vec{\omega}_n}W(\vec{R}_{n}) \prod_{j=1}^N J_{[\ms_{j-1}+{j-1},\ms_j+{j-1}]}.\label{tnexpanded}
\end{align}
\begin{figure}
\begin{center}
\begin{tikzpicture}
\node[circle,fill=black,scale=.5,label=below:{$0$}] (A) at (0,0) {};
\node[circle,fill=black,scale=.5,label=below:{$$}] (B) at (1,0) {};
\node[circle,fill=black,scale=.5,label=below:{$$}] (C) at (2,0) {};
\node[circle,fill=black,scale=.5,label=below:{$$}] (D) at (3,0) {};
\node[circle,fill=black,scale=.5,label=below:{$$}] (E) at (4,0) {};
\node[circle,fill=black,scale=.5,label=below:{$$}] (F) at (5,0) {};
\node[circle,fill=black,scale=.5,label=below:{$$}] (G) at (6,0) {};
\node[circle,fill=black,scale=.5,label=below:{$$}] (H) at (7,0) {};
\node[circle,fill=black,scale=.5,label=below:{$$}] (I) at (8,0) {};
\node[circle,fill=black,scale=.5,label=below:{$n$}] (J) at (9,0) {};
\draw (A)--(B)--(C)--(D)--(E)--(F)--(G)--(H)--(I)--(J);
\path (B) edge  [bend left=40] node[above] {} (D);
\path (F) edge  [bend left=40] node[above] {} (I);
\end{tikzpicture}
\end{center}
\caption{Here $n=9$, $N=5$, the lengths of the 5 ``connected graphs'' are $m_1=0$, $m_2=2$, $m_3=0$, $m_4=3$ and $m_5=0$.  Note that $0+1+2+1+0+1+3+1+0=9$. }
\label{fig:expand}

\end{figure}
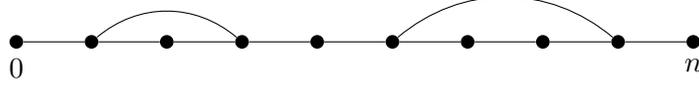

Note that the case $N=1$ (for which $\ms_1=m_1=n$) gives the contribution
\begin{align}\label{pindefn}
\sum_{\vec{\omega}_n:o \ra x}W(\vec{\omega}_n)\sum_{\vec{R}_n \ni \vec{\omega}_n}W(\vec{R}_{n})   J_{[0,n]}:=\pi_n(x).
\end{align}
For $N>1$, letting $y_0=o$ and $y_N=x$ and $y_i=\omega_{\ms_i+i}$ for $i=1,\dots, N-1$ we write $\sum_{\vec{\omega}_n:o \overset{\vec{\ms}_N,\vec{y}_N}{\ra} x}$ to denote the sum over random walk paths from the origin to $x$ in $n$ steps that pass through $y_i$ at $\ms_i+i$ steps for each $i=1,\dots, N-1$.  Then we can write \eqref{tnexpanded}  as 
\begin{align}
\nn\rho t_n(x)&=\pi_n(x)+\sum_{N=2}^{n+1}   \sum_{\substack{\vec{m}_{N}\ge 0:\\  \vec{m}_N+N-1=n}} \sum_{\vec{y}_{N-1}} \sum_{\vec{\omega}_n:o \overset{\vec{\ms}_N,\vec{y}_N}{\ra} x}W(\omega)\\&\phantom{=\pi_n(x)+\sum_{N=2}^{n+1}    }\times\sum_{\vec{R}_n \ni \vec{\omega}_n}W(\vec{R}_{n})   \prod_{\ell=1}^N J_{[\ms_{\ell-1}+{\ell-1},\ms_\ell+{\ell-1}]}.\label{tn2}
\end{align}

Let $\pi_{m,+}(y)=(\pi_m*z_DD)(y)$. 
We will be interested in the terms~($r\in\Z_+$)
\begin{align*}
\ttt{r}{n}&:=\sum_y \rho|y|^{2r} t_n(y), \qquad n\ge 0\\
\tplus{r}{n}&:=\sum_y |y|^{2r}(\rho t_{n-1}*z_{\sss D}D)(y), \quad 
\text{ for }n\in\N, \text{ and }\tplus{r}{0}:=1,\\
\ppp{r}{n}&:=\sum_y |y|^{2r}\pi_n(y), \qquad n\ge 0\\
\pplus{r}{n}&:=\sum_y |y|^{2r}(\pi_{n}*z_{\sss D}D)(y), \qquad n\ge 0.
\end{align*}
Note that $\rho t_0(y)=\rho \delta_{o,y}=\pi_0(y)$.

We know from {Condition \ref{cond:L1bound}} that there is a $K$ so that for all $n\in\Z_+$,
\begin{align}
\ttt{0}{n}\le K, \qquad \tplus{0}{n}\le K.\label{b1}
\end{align}
Moreover we know e.g. from \cite[Proposition 5.1]{H08} that 
\begin{align}
\sum_y |y|^{2r}|\pi_n(y)|\le \frac{CL^{2r}}{(n\vee 1)^{\frac{d-4}{2}-r}}, \qquad \textrm{for }r=0,1,2. \label{b2}
\end{align}
This implies that also 
\begin{align}
\sum_y |y|^{2r}(|\pi_n|*z_{\sss D}D)(y)\le \frac{CL^{2r}}{(n\vee 1)^{\frac{d-4}{2}-r}}, \qquad \textrm{for }r=0,1,2.\label{b3}
\end{align}
Note that we allow the constant $C$ to vary from line to line.  
Since $\pi_n(y)=0$ if $|y|>nL$  we get also that 
\begin{align}
&\sum_y |y|^{6}|\pi_n(y)|\le n^2L^2\frac{CL^{4}}{(n\vee 1)^{\frac{d-8}{2}}}\le CL^6(n\vee 1)^{(12-d)/2}\label{b4}\\
&\sum_y |y|^{6}(|\pi_n|*z_{\sss D}D)(y)\le n^2L^2\frac{CL^{4}}{(n\vee 1)^{\frac{d-8}{2}}}\le CL^6(n\vee 1)^{(12-d)/2}.\label{b5}
\end{align}

For $r\in\Z_+$, let $\mc{A}_r$ denote the set of square upper triangular matrices $A=(a_{k_1,k_2})_{k_1,k_2\in [r]}\in (\Z_+)^{r \times r}$ such that $a_{k_1,k_2}=0$ if $k_2<k_1$.  Here if $r=0$, $\mc{A}_0$ consists only of the empty matrix $\emptyset$.
\begin{DEF}
\label{def:null}
A matrix $A\in \mc{A}_r$ is said to be {\em null} if $B:=A+A^T$ has at least one row $i$ such that $\sum_{j\ne i}b_{i,j}$ is odd.
\end{DEF}
For $r$ as above and $A\in \mc{A}_r$ and $n \in \N$ define $\Xi^{[0]}_n[\varnothing]=\ttt{0}{n}$, and 
\begin{align}
\nn&\Xi^{[r]}_n[A]\\
\nn&:=\sum_{\vec{M}_r\ge 0}\sum_{\vec{L}_r\ge 0}\left(\prod_{s=1}^r \tplus{0}{M_s}\right)\sum_{\vec{v}_r} \left(\prod_{k_1=1}^r \prod_{k_2=1}^r (v_{k_1}\cdot v_{k_2})^{a_{k_1,k_2}}\right)\\
&\phantom{:=\sum_{\vec{M}_r\ge 0}\sum_{\vec{L}_r\ge 0}(}\times \left(\prod_{\ell=1}^r \pi_{L_\ell,+}(v_\ell)\right)\ttt{0}{n-\vec{M}_r -\vec{L}_r-r }\indic{n\ge \vec{M}_r +\vec{L}_r+r}\label{Xi1}\\
\nn&\quad +\sum_{\vec{M}_r\ge 0}\sum_{\vec{L}_r\ge 0}\left(\prod_{s=1}^r \tplus{0}{M_s}\right)\sum_{\vec{v}_r} \left(\prod_{k_1=1}^r \prod_{k_2=1}^r (v_{k_1}\cdot v_{k_2})^{a_{k_1,k_2}}\right)\\
&\phantom{:=\sum_{\vec{M}_r\ge 0}\sum_{\vec{L}_r\ge 0}(}\times \left(\prod_{\ell=2}^{r} \pi_{L_\ell,+}(v_\ell)\right)  \pi_{L_1}(v_1)\indic{n=\vec{M}_r +\vec{L}_r+(r-1)}.\label{Xi2}
\end{align}
Note that the $r=0$, $A=\varnothing$ case can be interpreted as \eqref{Xi1} with no sums or products, leaving only $\ttt{0}{n-0-0-0}$.

\begin{LEM}
\label{lem:null1}
Fix $r$, $A\in \mc{A}_r$, and $n \in \N$.  If $A$ is null then $\Xi^{[r]}_n[A]=0$.
\end{LEM}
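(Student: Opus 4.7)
The plan is to exploit the symmetry $v \mapsto -v$ to show that the inner $v_i$-sum vanishes identically whenever some row of $B := A + A^T$ has odd off-diagonal sum.

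First I would record the key symmetry: since the step kernel $D$ satisfies $D(-x) = D(x)$ and lattice trees are invariant under reflection through the origin, one has $\pi_m(-y) = \pi_m(y)$ for every $m \ge 0$, and consequently $\pi_{m,+}(-y) = \pi_{m,+}(y)$ as well. Thus in both of the sums \eqref{Xi1} and \eqref{Xi2} defining $\Xi^{[r]}_n[A]$, each factor carrying a $v_\ell$ (namely $\pi_{L_\ell,+}(v_\ell)$, or $\pi_{L_1}(v_1)$ in the boundary term of \eqref{Xi2}) is invariant under $v_\ell \mapsto -v_\ell$.

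Next I would analyse the polynomial
\[
P(\vec{v}) := \prod_{k_1=1}^r\prod_{k_2=1}^r (v_{k_1}\cdot v_{k_2})^{a_{k_1,k_2}}.
\]
Since $(v_a \cdot v_b)$ is linear in the components of $v_a$ (and of $v_b$) when $a \ne b$, and quadratic in the components of $v_i$ when $a = b = i$, every monomial in the expansion of $P$ has the same total degree in $v_i$, namely
\[
\deg_{v_i} P \;=\; \sum_{k_2 \ne i} a_{i,k_2} + \sum_{k_1 \ne i} a_{k_1,i} + 2a_{i,i}
\;=\; \sum_{j=1}^r b_{i,j},
\]
where $b_{i,j} = (A+A^T)_{i,j}$ and we used that $b_{i,i} = 2a_{i,i}$.

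Now assume $A$ is null and fix a row $i$ of $B$ with $\sum_{j \ne i} b_{i,j}$ odd. Because $b_{i,i} = 2a_{i,i}$ is even, $\sum_{j} b_{i,j}$ is also odd, so $P$, viewed as a polynomial in $v_i$ with the other $v_k$'s held fixed, has odd total $v_i$-degree and is a homogeneous polynomial in the coordinates of $v_i$ of that degree. Consequently $P(v_1,\dots,-v_i,\dots,v_r) = -P(v_1,\dots,v_i,\dots,v_r)$. Pulling the $v_i$-sum inside, the inner sum in each of \eqref{Xi1} and \eqref{Xi2} takes the shape
\[
\sum_{v_i \in \Z^d} P(v_1,\dots,v_i,\dots,v_r)\, \Pi_i(v_i),
\]
where $\Pi_i(v_i)$ equals either $\pi_{L_i,+}(v_i)$ or (in the $i=1$ boundary term of \eqref{Xi2}) $\pi_{L_1}(v_1)$, and in either case $\Pi_i$ is even in $v_i$ by the first paragraph. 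The change of variables $v_i \mapsto -v_i$ therefore shows that this inner sum is its own negative, hence vanishes. Since this holds for every choice of the remaining parameters $\vec{M}_r, \vec{L}_r$ and the other $v_k$'s, both \eqref{Xi1} and \eqref{Xi2} vanish, giving $\Xi^{[r]}_n[A] = 0$.

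The only non-routine step is the degree-counting identity $\deg_{v_i} P = \sum_j b_{i,j}$, which is immediate once one notices that $P$ is homogeneous in $v_i$ (so odd total $v_i$-degree is equivalent to the sign-flip relation $P(-v_i)=-P(v_i)$ needed above); everything else reduces to the reflection-symmetry of $\pi_m$ inherited from that of $D$.
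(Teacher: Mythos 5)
Your proof is correct and takes essentially the same route as the paper: the paper also observes that nullity of $A$ forces some $v_i$ to appear with odd total degree in the dot-product polynomial, and then uses the evenness of $\pi_{L_i}$ and $\pi_{L_i,+}$ to kill the $v_i$-sum by the substitution $v_i\mapsto -v_i$. Your write-up merely makes explicit the degree-counting identity $\deg_{v_i}P=\sum_j b_{i,j}$ and the homogeneity of $P$ in $v_i$, which the paper leaves implicit in the phrase ``$v_i$ appears an odd number of times.''
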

\proof Since $A$ is null, there exists some $i\le r$ such that $\sum_{j\ne i}b_{i,j}$ is odd.  This means that the term $v_i$ appears an odd number of times in $\Xi^{[r]}_n[A]$.  Since $\pi_{L_i}$ and $\pi_{L_i,+}$ are symmetric ($\pi_{L_i}(-v)=\pi_{L_i}(v)$ and  $\pi_{L_i,+}(-v)=\pi_{L_i,+}(v)$),  for each fixed $(v_j)_{j \ne i}$, the sum over $v_i$ gives 0, hence $\Xi^{[r]}_n[A]=0$.   \qed

\begin{LEM}
\label{lem:swap} Let $j_1,j_2$ be distinct indices in $\{2,\dots,r\}$ and 
 $A,A'\in \mc{A}_r$ be such that $A'$ is obtained by swapping columns $j_1,j_2$ of $A$ and then swapping rows $j_1,j_2$ of the resulting matrix, where $j_1\ne 1, j_2\ne 1$.  That is, $a'_{i,j}=a_{\pi_i,\pi_j}$, where $\pi:\{1,\dots,r\}\to\{1,\dots, r\}$ swaps $j_1$ and $j_2$.  Then $\Xi^{[r]}_n[A]=\Xi^{[r]}_n[A']$ for all $n\in\N$.
\end{LEM}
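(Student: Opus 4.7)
The plan is a relabeling of dummy summation indices in the multiple sums defining $\Xi^{[r]}_n[A]$. Let $\pi$ denote the transposition of $\{j_1,j_2\}$ in $\{1,\dots,r\}$, and note that because $j_1,j_2\in\{2,\dots,r\}$, $\pi$ fixes the distinguished index $1$.

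First I would perform the simultaneous relabeling $(v_\ell,L_\ell,M_\ell)\leftrightarrow(v_{\pi(\ell)},L_{\pi(\ell)},M_{\pi(\ell)})$ in the nested sums in both \eqref{Xi1} and \eqref{Xi2}. Since each of $\sum_{\vec{v}_r}$, $\sum_{\vec{L}_r\ge 0}$, $\sum_{\vec{M}_r\ge 0}$ has identical ranges in each coordinate, this is merely a bijective renaming of dummies and does not change the value of the sum.

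Next I would verify that every factor other than the dot-product factor is invariant under this relabeling. The products $\prod_{s=1}^r \tplus{0}{M_s}$ and, in \eqref{Xi1}, $\prod_{\ell=1}^r \pi_{L_\ell,+}(v_\ell)$ are unchanged because they are symmetric in all their indices. In \eqref{Xi2}, the isolated factor $\pi_{L_1}(v_1)$ is fixed because $\pi(1)=1$, while $\prod_{\ell=2}^r \pi_{L_\ell,+}(v_\ell)$ is symmetric under permutations of $\{2,\dots,r\}$, a set preserved by $\pi$. The indicator constraints and the factor $\ttt{0}{n-\vec{M}_r-\vec{L}_r-r}$ depend only on $\sum_s M_s+\sum_\ell L_\ell$, so they too are preserved.

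The only factor actually affected is $\prod_{k_1,k_2}(v_{k_1}\cdot v_{k_2})^{a_{k_1,k_2}}$: after the relabeling it becomes $\prod_{k_1,k_2}(v_{\pi(k_1)}\cdot v_{\pi(k_2)})^{a_{k_1,k_2}}$, and reindexing the product by $(k_1',k_2')=(\pi(k_1),\pi(k_2))$ rewrites this as $\prod_{k_1',k_2'}(v_{k_1'}\cdot v_{k_2'})^{a_{\pi(k_1'),\pi(k_2')}}=\prod_{k_1',k_2'}(v_{k_1'}\cdot v_{k_2'})^{a'_{k_1',k_2'}}$, which is precisely the dot-product factor in $\Xi^{[r]}_n[A']$. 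The only minor subtlety is that the matrix $(a_{\pi(i),\pi(j)})_{i,j}$ need not be upper triangular; however, symmetry of the dot product implies that $\prod_{k_1,k_2}(v_{k_1}\cdot v_{k_2})^{c_{k_1,k_2}}$ depends only on the diagonal entries $c_{k,k}$ and on the pairwise sums $c_{k_1,k_2}+c_{k_2,k_1}$ for $k_1<k_2$, so the expression equals the one obtained by replacing the permuted matrix with its unique upper-triangular representative in $\mc{A}_r$, and this representative is exactly the $A'$ appearing in the statement. I do not expect any substantive obstacle here; the argument is a direct bookkeeping check once the correct permutation of dummy indices has been identified.
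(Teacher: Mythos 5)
Your proof is correct and takes essentially the same approach as the paper's one-line argument: relabel the dummy summation variables by $\pi$ and observe that every factor is invariant because $\pi$ fixes $1$. The final caveat about the permuted matrix possibly failing to be upper triangular is harmless but unnecessary here, since the hypothesis $A'\in\mc{A}_r$ with $a'_{i,j}=a_{\pi_i,\pi_j}$ already forces $(a_{\pi(i),\pi(j)})_{i,j}$ to be upper triangular, so no re-symmetrization step is needed.
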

\proof We must show that the above sum defining $\Xi_n^{[r]}[A]$ is invariant under the permutation of the indices of $A$ by $\pi$. To  see this we may sum over $\vec{v}^\pi_r=(v_{\pi_1},\dots,v_{\pi_r})$ and $\vec{L}^\pi_r$ instead of $\vec{v}_r$ and $\vec{L}_r$. The invariance is then easy to recognize.\qed

\medskip

Note that the above lemma does not hold if either $j_1=1$ or $j_2=1$, due to the asymmetry in \eqref{Xi2}.

\medskip

The majority of the work remaining in the paper is to prove the following. 
\begin{LEM}
\label{lem:third}
There exist positive integers $k_2,\dots, k_9$ such that for all $n\in\N$,
\begin{align}
\nn\ttt{3}{n}&=\Xi^{[1]}_n[(3)]+
k_2\Xi^{[2]}_n\left[\begin{pmatrix}0& 2\\0 & 1\end{pmatrix}\right]+
k_3\Xi^{[2]}_n\left[\begin{pmatrix}1& 2\\0 & 0\end{pmatrix}\right] +k_4\Xi^{[2]}_n\left[\begin{pmatrix}1& 0\\0 & 2\end{pmatrix}\right]\\
\nn&\qquad  +
k_5\Xi^{[2]}_n\left[\begin{pmatrix}2& 0\\0 & 1\end{pmatrix}\right]+k_6\Xi^{[3]}_n\left[\begin{pmatrix}0& 2 & 0\\0 & 0 & 0\\ 0&0&1\end{pmatrix}\right]+ 
k_7 \Xi^{[3]}_n\left[\begin{pmatrix}1& 0 & 0\\0 & 0 & 2\\ 0&0&0\end{pmatrix}\right]\\
\nn&\qquad +
k_8 \Xi^{[3]}_n\left[\begin{pmatrix}1& 0 & 0\\0 & 1 & 0\\ 0&0&1\end{pmatrix}\right]+
k_9 \Xi^{[3]}_n\left[\begin{pmatrix}0& 1 & 1\\0 & 0 & 1\\ 0&0&0\end{pmatrix}\right].
\end{align}
\end{LEM}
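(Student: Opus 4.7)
The plan is to combine two expansions: (i) a marked-chain decomposition of $\rho t_n(x)$ coming from the connected-component expansion \eqref{KJ1}--\eqref{tnexpanded}, and (ii) the multinomial expansion of $|x|^6$ as a polynomial in the displacements between marked components.

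First I would derive a decomposition of $\rho t_n(x)$ in which $r$ of the connected-graph pieces appearing in \eqref{tn2} are declared ``special'' and carry displacement information, while the remaining pieces are collapsed back into $\rho t$ factors. Starting from the iterated identity $\rho t_n(x) = \pi_n(x) + \sum_m (\pi_{m,+}*\rho t_{n-m-1})(x)$ (which is just the $N=1$ versus $N\ge 2$ split of \eqref{tn2}), each maximal run of non-special pieces together with its trailing bridge edge collapses into a factor $\tplus{0}{M_s} = \sum_y (\rho t_{M_s-1}*z_{\sss D}D)(y)$ (taken equal to $1$ when the run is empty), and a trailing run after the last special piece becomes $\ttt{0}{n-\vec M_r-\vec L_r-r}$. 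When there is no such trailing run, the last special piece has no attached outgoing bridge and contributes $\pi_{L_1}(v_1)$ rather than $\pi_{L_1,+}(v_1)$; we reserve the label $\ell=1$ for this ``boundary'' special index so that the labels $\ell=2,\dots,r$ become freely permutable (cf.\ Lemma~\ref{lem:swap}). This produces exactly the two-term structure \eqref{Xi1}--\eqref{Xi2} defining $\Xi^{[r]}_n[A]$.

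Next, writing $x=\sum_\ell v_\ell$ as the sum of inter-component displacements, I would expand
\[|x|^6 = \Bigl|\sum_\ell v_\ell\Bigr|^6 = \sum_{(k_i,k_i')_{i=1}^3}\prod_{i=1}^3 (v_{k_i}\cdot v_{k_i'})\]
via the multinomial theorem. Each monomial on the right promotes its participating piece-indices to ``special'' status, and the remaining pieces get collapsed as above. Collecting exponents of the factors $(v_{k_1}\cdot v_{k_2})$ with $k_1\le k_2$ assembles an upper-triangular matrix $A\in\mc{A}_r$ with $\sum_{k_1\le k_2}a_{k_1,k_2}=3$, and summing over $x$ yields
\[\ttt{3}{n} = \sum_{r\ge 1}\sum_{A\in\mc{A}_r} c_{r,A}\,\Xi^{[r]}_n[A],\]
for explicit non-negative integer multiplicities $c_{r,A}$ read off from the multinomial coefficients.

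Finally I would simplify the sum using Lemmas~\ref{lem:null1} and~\ref{lem:swap}. The former discards every $A$ whose symmetrization $A+A^T$ has any row of odd off-diagonal sum. Since the sum of all row sums of $A+A^T$ equals $2\sum_{k_1\le k_2}a_{k_1,k_2}=6$, a short parity count eliminates any $r\ge 4$ with all indices activated: each activated row sum would have to be even and $\ge 2$, which is impossible with total $6$ and $r\ge 4$. Lemma~\ref{lem:swap} then collapses matrices equivalent under permuting the indices $2,\dots,r$, and a direct case analysis identifies exactly the nine classes listed in the statement, with $k_2,\dots,k_9$ the aggregated multiplicities. The main obstacle will be the bookkeeping in this last step: tracking the ordering convention that singles out the special index $1$ (which is what produces the two distinct terms of $\Xi^{[r]}_n[A]$), and extracting the correct positive integers $k_2,\dots,k_9$ from the joint contributions of the multinomial expansion of $|x|^6$ and the orbit sizes under Lemma~\ref{lem:swap}.
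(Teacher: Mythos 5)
Your proposal follows essentially the same route as the paper: expand $\rho t_n(x)$ via the connected-component identity \eqref{tnexpanded}, expand $|x|^6$ as a sum over ordered triples of dot products of inter-component displacements, resum the non-participating components into $\tplus{0}{\cdot}$ factors (and a trailing $\ttt{0}{\cdot}$, or none if the last activated piece is the final one — hence the two-term structure \eqref{Xi1}--\eqref{Xi2}), and then prune the resulting $\Xi^{[r]}_n[A]$ terms with Lemmas~\ref{lem:null1} and~\ref{lem:swap}. Your parity argument that no non-null $A$ can have $r\ge 4$ activated indices is correct, interpreted as: each activated index must have even nonzero degree $d_i$ in the symmetrized matrix $B=A+A^T$, and $\sum_i d_i=6$, forcing $r\le 3$. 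One small notational imprecision to be careful with: writing $x=\sum_\ell v_\ell$ conflates the labels $v_1,\dots,v_r$ used for the special displacements in $\Xi^{[r]}$ with the full list $z_1,\dots,z_N$ of all component displacements that actually sum to $x$ — you mean the latter, with the $v_\ell$ emerging after relabelling the distinct indices appearing in a given monomial, and your surrounding discussion makes this clear. The paper groups the multinomial terms first by how many of the three $|x|^2$ factors are "diagonal" (\eqref{6_1}--\eqref{6_4}) and then refines by distinct-index count, whereas you pass directly to the distinct-index refinement; this is a cosmetic difference, not a conceptual one. The remaining hard part — the careful resummation via \eqref{KJ1} turning each maximal run of unmarked components (with its bridging edge) into $\tplus{0}{\cdot}$, and the case analysis keeping track of when the special index meets the boundary (giving \eqref{Xi2} versus \eqref{Xi1}) — is accurately flagged in your last paragraph as the bookkeeping burden, and is exactly what the paper's proof carries out in detail.
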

Lemma \ref{lem:third} requires some explanation.  Recall \eqref{tn2} and note that for fixed $N$ and with $y_N=x$ and $y_0=o$ and letting $z_i=y_i-y_{i-1}$ for $i=1,\dots, N$ we can write $x=\sum_{i=1}^Nz_i$.  Expanding each of the $p\in \N$ factors of $|x|^2$  enables us to write
\begin{align}
|x|^{2p}=\prod_{j=1}^p\left(\sum_{i_j=1}^N|z_{i_j}|^2+\sum_{i_j=1}^N\sum_{i_{j+p}\ne i_j}z_{i_j}\cdot z_{i_{j+p}}\right),\nn
\end{align}
where the choice of summation index $i_{j+p}$ may not be the most natural at this point.
Expanding the above in the case $p=3$, and relabelling indices of summation gives
\begin{align}
|x|^6=&\sum_{i_1=1}^N\sum_{i_2=1}^N\sum_{i_3=1}^N |z_{i_1}|^2|z_{i_2}|^2|z_{i_3}|^2\label{6_1}\\
&+3\sum_{i_1=1}^N\sum_{i_2=1}^N\sum_{i_3=1}^N \sum_{i_4=1}^N \indic{i_3\ne i_4}|z_{i_1}|^2|z_{i_2}|^2(z_{i_3}\cdot z_{i_4})\label{6_2}\\
&+3\sum_{i_1=1}^N	\sum_{i_2=1}^N\sum_{i_3=1}^N \sum_{i_4=1}^N\sum_{i_5=1}^N \indic{i_2\ne i_3}\indic{i_4\ne i_5}|z_{i_1}|^2(z_{i_2}\cdot z_{i_3})(z_{i_4}\cdot z_{i_5})\label{6_3}\\
\nn &+\sum_{i_1=1}^N	\sum_{i_2=1}^N\sum_{i_3=1}^N \sum_{i_4=1}^N\sum_{i_5=1}^N \sum_{i_6=1}^N \indic{i_1\ne i_2}\indic{i_3\ne i_4}\indic{i_5\ne i_6}(z_{i_1}\cdot z_{i_2})\\
&\phantom{+\sum_{i_1=1}^N	\sum_{i_2=1}^N\sum_{i_3=1}^N \sum_{i_4=1}^N\sum_{i_5=1}^N \sum_{i_6=1}^N \indic{i_1\ne i_2}}\times(z_{i_3}\cdot z_{i_4})(z_{i_5}\cdot z_{i_6}).\label{6_4}
\end{align}
Now one can split these sums according to the number $r$ of {\em distinct indices} (without loss of generality they are $i_1,\dots, i_r$).  The number of distinct indices determines the dimensions $r$ of a matrix $A$.  The $j,k$-th entry of the matrix is precisely the number of times that $z_{i_j}\cdot z_{i_k}$ appears in a given sum.  For example \eqref{6_1} includes contributions where $i_1,i_2,i_3$ are all distinct (so $r=3$), and in this case $|z_{i_j}|^2=z_{i_j}\cdot z_{i_j}$ appears exactly once for each $j=1,2,3$, which corresponds to the matrix $\begin{pmatrix}1& 0 & 0\\0 & 1 & 0\\ 0&0&1\end{pmatrix}$ in the lemma.  But  \eqref{6_1} also includes (contributions where exactly two indices are distinct and) a contribution where $i_1=i_2=i_3$ (so $r=1$), in which case $|z_{i_1}|^2=z_{i_1}\cdot z_{i_1}$ appears 3 times, corresponding to the matrix $(3)$.  

The proof will proceed by expressing $\ttt{3}{n}$ as a large sum involving integral multiples of terms $\Xi^{[r]}_n[A]$, arising in this way, of which many of the $A$ for $r=2,\dots, 6$ are null.  These latter terms are then dropped thanks to Lemma \ref{lem:null1}.  Lemma \ref{lem:third} shows all the non-null matrices that arise.

A similar but simpler argument applies to the case $p=2$, for which we obtain the following.
\begin{LEM}
\label{lem:second}
\begin{align*}
\ttt{2}{n}&=
\Xi^{[1]}_n[(2)]+4\Xi^{[2]}_n\left[\begin{pmatrix}0& 2\\0 & 0\end{pmatrix}\right]+2\Xi^{[2]}_n\left[\begin{pmatrix}1& 0\\0 & 1\end{pmatrix}\right].
\end{align*}
\end{LEM}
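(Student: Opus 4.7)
The plan is to specialise the enumeration sketched after Lemma~\ref{lem:third} to the simpler case $p=2$. I would start from the iterated lace expansion
\[\rho t_n(x)=\sum_{N\ge 1}\sum_{\vec m_N:\sum m_\ell+N-1=n}\bigl(\pi_{m_1,+}*\cdots*\pi_{m_{N-1},+}*\pi_{m_N}\bigr)(x),\]
multiply by $|x|^4$, sum over $x$, and write $x=\sum_{\ell=1}^N z_\ell$, where $z_\ell$ is the displacement of the $\ell$-th segment (carrying a factor $\pi_{m_\ell,+}(z_\ell)$ for $\ell<N$ and $\pi_{m_N}(z_N)$ for $\ell=N$). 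Expanding
\[|x|^4=\sum_{i_1,i_2,i_3,i_4\in\{1,\dots,N\}}(z_{i_1}\cdot z_{i_2})(z_{i_3}\cdot z_{i_4}),\]
I would, for each quadruple, let $r$ be the number of distinct indices (listed as $k_1<\cdots<k_r$, the ``marked'' segments) and read off the matrix $A\in\mc A_r$ whose entry $a_{j,k}$ (for $j\le k$) counts the occurrences of $z_{k_j}\cdot z_{k_k}$ in the product. Summing over the displacement of each run of unmarked segments collapses that run to a scalar $\tplus{0}{M_s}$, via the zero-wavenumber identity $\tplus{0}{M}=\sum_{K\ge 1}\sum_{\vec m:\sum m_i+K=M}\prod_i\pplus{0}{m_i}$ (itself obtained by iterating $\rho t_n=\pi_n+\sum_m\pi_{m,+}*\rho t_{n-m-1}$ at wavenumber zero). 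The surviving sums over marked displacements then match~\eqref{Xi1} when $k_r<N$ (with trailing factor $\ttt{0}{\cdot}$) and~\eqref{Xi2} when $k_r=N$ (the last marked segment then carries $\pi_{L_1}$ not $\pi_{L_1,+}$, consistent with the asymmetric indexing in~\eqref{Xi2}).

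The remaining work is a straightforward enumeration. For $r=1$, only the diagonal quadruple $(k,k,k,k)$ occurs, producing matrix $(2)$ with coefficient $1$. For $r=2$, fix $k_1<k_2$: among the $14$ bicoloured quadruples, the $8$ with a three-to-one split give matrices $\bigl(\begin{smallmatrix}1&1\\0&0\end{smallmatrix}\bigr)$ or $\bigl(\begin{smallmatrix}0&1\\0&1\end{smallmatrix}\bigr)$, both null by Definition~\ref{def:null}; the $6$ with an even split give $|z_{k_1}|^2|z_{k_2}|^2$ twice (matrix $\bigl(\begin{smallmatrix}1&0\\0&1\end{smallmatrix}\bigr)$) and $(z_{k_1}\cdot z_{k_2})^2$ four times (matrix $\bigl(\begin{smallmatrix}0&2\\0&0\end{smallmatrix}\bigr)$). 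For $r\in\{3,4\}$, a direct check shows that every matrix one obtains contains some row of $B=A+A^T$ with odd off-diagonal sum, and is therefore null. Applying Lemma~\ref{lem:null1} to discard the null contributions then leaves exactly the three claimed terms, with coefficients $1$, $4$, $2$.

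The main obstacle, as in the proof of Lemma~\ref{lem:third}, is the bookkeeping that identifies the sum over unmarked blocks with the $\tplus{0}{M_s}$ (and terminal $\ttt{0}{\cdot}$ or $\pi_{L_1}$) factors in exactly the form prescribed by~\eqref{Xi1}--\eqref{Xi2}; beyond this structural reduction, nothing new is needed, as the combinatorial enumeration is much shorter here because two-to-two splits are the only nontrivial pattern for $r=2$ and higher values of $r$ contribute nothing.
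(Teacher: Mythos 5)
Your proof is correct, and it takes essentially the same route as the paper's omitted argument (which the paper describes as ``similar to, but simpler than, the proof of Lemma~\ref{lem:third}''). You start from the factored (convolution) form of the iterated lace expansion rather than from the unfactored rib representation in \eqref{tn2} as the paper does for Lemma~\ref{lem:third}; the two are equivalent --- one obtains your form by summing out the ribs of each connected component and the $\vec{y}_{N-1}$ in \eqref{tn2} --- but yours is a slightly cleaner entry point and avoids re-deriving the $\tilde t, \pi$ factorization from the rib-level sums. Your direct expansion $|x|^4=\sum_{i_1,\dots,i_4}(z_{i_1}\cdot z_{i_2})(z_{i_3}\cdot z_{i_4})$ also bypasses the paper's diagonal/off-diagonal split in the expansion of $|x|^{2p}$, which makes the enumeration by the number of distinct indices a little more transparent. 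The combinatorics check out: the $14$ surjective bicoloured quadruples split into $8$ null (three-to-one) and $6$ even-split, the even-split giving $\bigl(\begin{smallmatrix}1&0\\0&1\end{smallmatrix}\bigr)$ twice and $\bigl(\begin{smallmatrix}0&2\\0&0\end{smallmatrix}\bigr)$ four times, and all $r\in\{3,4\}$ configurations have a singly-occurring $z_{k_j}$, forcing an odd row in $B=A+A^T$. Two small points you should tidy in a careful write-up: (i) your zero-wavenumber identity for $\tplus{0}{M}$ is stated with $K\ge 1$, so the empty-run case must invoke the convention $\tplus{0}{0}:=1$ separately; and (ii) you index the marked segments increasingly ($k_1<\cdots<k_r$) whereas the paper (see its proof of Lemma~\ref{lem:third} and the asymmetry between \eqref{Xi1}--\eqref{Xi2}) indexes them decreasingly so that the paper's index $1$ is the terminal segment that may carry $\pi_{L_1}$ rather than $\pi_{L_1,+}$. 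This reversal of convention is harmless here because $(2)$, $\bigl(\begin{smallmatrix}1&0\\0&1\end{smallmatrix}\bigr)$ and $\bigl(\begin{smallmatrix}0&2\\0&0\end{smallmatrix}\bigr)$ are all fixed by the index reversal, but it should be acknowledged since it would matter for other matrices (as it does in Lemma~\ref{lem:third}).
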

We will be using Lemma \ref{lem:second} in the extension to $d=9$ given in Section~\ref{sec:upgrade} below.  A careful proof of Lemma~\ref{lem:second} would be similar to, but simpler than, the proof of Lemma~\ref{lem:third} given below, and so we will omit it.

\begin{proof}[Proof of Lemma \ref{lem:LT6} for $d\ge 10$, assuming Lemma \ref{lem:third}.]
We may assume $n\in\N$ because $t_0(x)=\1(x=0)$.
It is sufficient to show that each of the terms $ \Xi^{[r]}_n[A]$ (for non-null $A$) appearing in Lemma \ref{lem:third} are bounded by $Cn^3$.

Recall our definition of $\Xi^{[r]}_n[A]$ from \eqref{Xi1}-\eqref{Xi2} gives 
\begin{align}
\nn&\Xi^{[r]}_n[A]\\
\nn&\le \sum_{\vec{M}_r\ge 0}\sum_{\vec{L}_r\ge 0}\left(\prod_{s=1}^r \tplus{0}{M_s}\right)\sum_{\vec{v}_r} \left(\prod_{k_1=1}^r \prod_{k_2=1}^r |v_{k_1}\cdot v_{k_2}|^{a_{k_1,k_2}}\right)\\
&\phantom{\le \sum_{\vec{M}_r\ge 0}\sum_{\vec{L}_r\ge 0}\left(\prod_{s=1}^r \tplus{0}{M_s}\right)} \times\left(\prod_{\ell=1}^r |\pi_{L_\ell,+}(v_\ell)|\right)\ttt{0}{n-\vec{M}_r -\vec{L}_r-r }\indic{n\ge \vec{M}_r +\vec{L}_r+r}
\nn
\\
\nn&\quad +\sum_{\vec{M}_r\ge 0}\sum_{\vec{L}_r\ge 0}\left(\prod_{s=1}^r \tplus{0}{M_s}\right)\sum_{\vec{v}_r} \left(\prod_{k_1=1}^r \prod_{k_2=1}^r (v_{k_1}\cdot v_{k_2})^{a_{k_1,k_2}}\right)\\
&\phantom{\le \sum_{\vec{M}_r\ge 0}\sum_{\vec{L}_r\ge 0}\left(\prod_{s=1}^r \tplus{0}{M_s}\right)}\times \left(\prod_{\ell=2}^{r} |\pi_{L_\ell,+}(v_\ell)|\right)  |\pi_{L_1}(v_1)|\indic{n=\vec{M}_r +\vec{L}_r+(r-1)}.\nn
\end{align}
From \eqref{b1} and using Cauchy-Schwarz ($|u\cdot v|\le |u||v|$) this is bounded above by a constant times 
\begin{align}
&\sum_{\vec{M}_r\ge 0}\sum_{\vec{L}_r\ge 0}\sum_{\vec{v}_r} \left(\prod_{k_1=1}^r \prod_{k_2=1}^r (|v_{k_1}||v_{k_2}|)^{a_{k_1,k_2}}\right) \left(\prod_{\ell=1}^r |\pi_{L_\ell,+}(v_\ell)|\right)\indic{n\ge \vec{M}_r +\vec{L}_r+r}
\nn
\\
&\quad +\sum_{\vec{M}_r\ge 0}\sum_{\vec{L}_r\ge 0}\sum_{\vec{v}_r} \left(\prod_{k_1=1}^r \prod_{k_2=1}^r (|v_{k_1}| |v_{k_2}|)^{a_{k_1,k_2}}\right) \left(\prod_{\ell=2}^{r} |\pi_{L_\ell,+}(v_\ell)|\right)  |\pi_{L_1}(v_1)|\indic{n=\vec{M}_r +\vec{L}_r+(r-1)}.\nn%
\end{align}

It follows that 
\begin{align}
\nn\Xi^{[1]}_n[(3)]&\le C\sum_{M_1\ge 0}\sum_{L_1\ge 0}\indic{n\ge M_1 +L_1+1}\sum_{v_1} (|v_{1}|^2)^3|\pi_{L_1,+}(v_1)|\\
\label{Xi3andpi}&\quad +C\sum_{M_1\ge 0}\sum_{L_1\ge 0}\indic{n=M_1 +L_1}\sum_{v_1}(|v_{1}|^2)^3|\pi_{L_1}(v_1)|.
\end{align}
By \eqref{b4} and \eqref{b5} this is at most 
\begin{align}\label{Xi3andpib}
 CL^6\sum_{M_1\ge 0}\sum_{L_1\ge 0}\indic{n\ge M_1 +L_1+1} (L_1\vee 1)^{(12-d)/2},
\end{align}
which is at most $CL^6 n^3$ when $d\ge 10$ (recall that $n\ge 1$ now).

Next,
\begin{align*}
&\Xi^{[2]}_n\left[\begin{pmatrix}0& 2\\0 & 1\end{pmatrix}\right]\\
&\le C\sum_{M_1,M_2\ge 0}\sum_{L_1,L_2\ge 0}\indic{n\ge \vec{M}_2 +\vec{L}_2+2}\sum_{v_1,v_2}(|v_1||v_2|)^2|v_2|^2
|\pi_{L_1,+}(v_1)||\pi_{L_2,+}(v_2)|\\
&\quad + C\sum_{M_1,M_2\ge 0}\sum_{L_1,L_2\ge 0}\indic{n=\vec{M}_2 +\vec{L}_2+1}\sum_{v_1,v_2}(|v_1||v_2|)^2|v_2|^2|\pi_{L_1}(v_1)||\pi_{L_2,+}(v_2)|.
\end{align*}
Note that the powers here are $|v_1|^2|v_2|^4$.  Using the bounds  \eqref{b2},\eqref{b3} (with $r=2$ for the 4th power and with $r=1$ for the 2nd power) this is at most 
\begin{align*}
CL^2 CL^4\sum_{M_1,M_2\ge 0}\sum_{L_1,L_2\ge 0}\indic{n\ge \vec{M}_2 +\vec{L}_2+1}(L_1\vee 1)^{-(d-6)/2}(L_2\vee 1)^{-(d-8)/2},
\end{align*}
which is less than $CL^6 n^3$ when $d>8$.
The terms $\Xi^{[2]}_n\left[\begin{pmatrix}1& 2\\0 & 0\end{pmatrix}\right]$ and $\Xi^{[2]}_n\left[\begin{pmatrix}1& 0\\0 & 2\end{pmatrix}\right]$ and 
$\Xi^{[2]}_n\left[\begin{pmatrix}2& 0\\0 & 1\end{pmatrix}\right]$ are bounded in the same way.

Finally, the non-null terms $\Xi^{[3]}_n[\bullet]$ in the lemma each give
\[\left(\prod_{k_1=1}^3 \prod_{k_2=1}^3 (|v_{k_1}||v_{k_2}|)^{a_{k_1,k_2}}\right)=|v_1|^2|v_2|^2|v_3|^2.\]
So all of these terms each contribute at most a constant times 
\begin{align*}
&\sum_{M_1,M_2,M_3\ge 0}\sum_{L_1,L_2,L_3\ge 0}\indic{n\ge \vec{M}_3 +\vec{L}_3+3}\sum_{v_1,v_2,v_3}(|v_1||v_2|)^2|v_3|^2|\pi_{L_1,+}(v_1)||\pi_{L_2,+}(v_2)||\pi_{L_3,+}(v_3)|\\
&+\sum_{M_1,M_2,M_3\ge 0}\sum_{L_1,L_2,L_3\ge 0}\indic{n=\vec{M}_3 +\vec{L}_3+2}\sum_{v_1,v_2,v_3}(|v_1||v_2|)^2|v_3|^2|\pi_{L_1}(v_1)||\pi_{L_2,+}(v_2)||\pi_{L_3,+}(v_3)|.
\end{align*}
Using \eqref{b3} with $r=1$ three times we get that this is at most 
\begin{align*}
&C(L^2)^3  \sum_{M_1,M_2,M_3\ge 0}\sum_{L_1,L_2,L_3\ge 0}\indic{n\ge \vec{M}_3 +\vec{L}_3+2}(L_1\vee 1)^{-(d-6)/2}\\
&\phantom{C(L^2)^3  \sum_{M_1,M_2,M_3\ge 0}\sum_{L_1,L_2,L_3\ge 0}\indic{n\ge \vec{M}_3 }}\times(L_2\vee 1)^{-(d-6)/2}(L_3\vee 1)^{-(d-6)/2},
 \end{align*}
 which is at most $CL^6 n^3$ when $d>8$.
\end{proof}

\subsection{Proof of Lemma \ref{lem:third}}
In this section we prove Lemma \ref{lem:third} by using \eqref{tn2}.

Note first that the case $N=1$, corresponding to the $\pi_n(x)$ term in \eqref{tn2},  contributes $\sum_x |x|^6 \pi_n(x)$, which is the part of $\Xi_n^{[1]}[(3)]$ (in \eqref {Xi2}) corresponding to the case $M_1=0$, $L_1=n$.

For fixed $N>1$ and with $y_N=x$ and $y_0=o$, and $z_i=y_i-y_{i-1}$  our starting point is
\eqref{6_1}-\eqref{6_4}.  As noted after \eqref{6_4} we split the various sums over indices into sums over {\em distinct} indices.
Doing this split into sums over distinct indices we can write \eqref{6_1} as
\begin{align}
\nn&\sum_{i_1=1}^N |z_{i_1}|^6+3 \sum_{i_1=1}^N\sum_{i_2=1}^N \indic{i_2\ne i_1}|z_{i_1}|^4|z_{i_2}|^2\\
&\phantom{\sum_{i_1=1}^N |z_{i_1}|^6}+\sum_{i_1=1}^N\sum_{i_2=1}^N\sum_{i_3=1}^N \indic{i_1,i_2,i_3 \text{ distinct}}|z_{i_1}|^2|z_{i_2}|^2|z_{i_3}|^2.\label{611}
\end{align}

Until further notice we consider the contribution from the first term of \eqref{611} with $i=i_1$, i.e. the contribution from $\sum_{i=1}^N |y_i-y_{i-1}|^6$, where $N>1$.  This contribution is equal to
\begin{align*}
&\sum_{N=2}^{n+1}\sum_{y_N}\sum_{\substack{\vec{m}_{N}\ge 0:\\  \vec{m}_N+N-1=n}} 
\sum_{i=1}^N\sum_{\vec{y}_{N-1}}|y_i-y_{i-1}|^6\\
&\qquad \sum_{\vec{\omega}_n:o \overset{\vec{\ms}_N,\vec{y}_N}{\ra} y_N}W(\vec{\omega}_n)\sum_{\vec{R}_n \ni \vec{\omega}_n}W(\vec{R}_{n})  \prod_{\ell=1}^N J_{[\ms_{\ell-1}+{\ell-1},\ms_\ell+{\ell-1}]}.
\end{align*}

Consider the contribution from $i=N>1$.  The sum over $\vec{\omega}_n$ can be expressed as a sum over 
$\vec{\omega}^{\sss\{1\}}_{\ms_{N-1}+N-1}:o \overset{\vec{\ms}_{N-1},\vec{y}_{N-1}}{\ra} y_{N-1}$ and a sum over $\vec{\omega}^{\sss\{1'\}}_{n-\vec{\ms}_{N-1}-(N-1)}: y_{N-1} \ra y_N$,
and note that the weight $W(\vec{\omega})$ factors as $W(\vec{\omega}^{\sss\{1\}})W(\vec{\omega}^{\sss\{1'\}})$.  Now let $L_1=m_N$ and note that $n-\ms_{N-1}-(N-1)=L_1$ so the sum over $\vec{\omega}^{\sss\{1'\}}$ can be written as
\[ \sum_{\vec{\omega}^{\sss\{1'\}}_{L_1}: y_{N-1} \ra y_N}W(\vec{\omega}^{\sss\{1'\}}_{L_1})\sum_{\vec{R}^{\sss\{1'\}}_{L_1} \ni \vec{\omega}^{\sss\{1'\}}_{L_1}}W(\vec{R}^{\sss\{1'\}}_{L_1})J_{[0,L_1]}(\vec{R}^{\sss\{1'\}}_{L_1})=\pi_{L_1}(y_N-y_{N-1}),\]
(where we have used \eqref{pindefn} and translation invariance of the weights and $J_{[0,L_1]}$) so that
\begin{align}\label{omegamatch}
&\sum_{y_N}|y_N-y_{N-1}|^6  \sum_{\vec{\omega}^{\sss\{1'\}}_{L_1}: y_{N-1} \ra y_N}W(\vec{\omega}_{L_1}^{\sss\{1'\}})\sum_{\vec{R}_{L_1}^{\sss\{1'\}}\ni \vec{\omega}^{\sss\{1'\}}}W(\vec{R}^{\sss\{1'\}}_{L_1}) J_{[0,L_1]}(\vec{R}^{\sss\{1'\}}_{L_1})\nn\\
&=\sum_y |y|^6\pi_{L_1}(y),
\end{align}
where in the last line we have changed the sum over $y_N$ into a sum over $y=y_N-y_{N-1}$.
Let $N_1=N-1$. Then using the fact (see e.g.~\eqref{KJ1}) that 
\[\sum_{N_1=1}^{n-L_1}\sum_{\substack{\vec{m}_{N_1}\ge 0:\\  \vec{m}_{N_1}+N_1-1=n-(L_1+1)}}\prod_{\ell=1}^{N_1} J_{[\ms_{\ell-1}+{\ell-1},\ms_\ell+{\ell-1}]} =K_{[0,n-(L_1+1)]},\]
the contribution from $i=N$ can be expressed as 
\begin{align*}
\sum_{L_1=0}^{n-1}\sum_{y_{N_1}}  \sum_{\vec{\omega}^{\sss\{1\}}_{n-L_1}:o \ra  y_{N_1}}   W(\vec{\omega}_{n-L_1}^{\{1\}}) &\sum_{\vec{R}^{\sss\{1\}}_{n-(L_1+1)} \ni \vec{\omega}^{\sss\{1\}}_{n-(L_1+1)}}W(\vec{R}^{\sss\{1\}}_{n-(L_1+1)})\\
\nn&\phantom{n-(L_1-1)}\times  K_{[0,n-(L_1+1)]}  \sum_y |y|^6\pi_{L_1}(y).
\end{align*}
Now the sum over $\vec{R}_{n-(L_1+1)}^{\sss\{1\}}$ contains $n-L_1$ trees $R_0\ni o,\dots R_{n-(L_1+1)}\ni \omega_{n-(L_1+1)}$ whose interaction is encoded by $K_{[0,n-(L_1+1)]}$, however the sum over $\vec{\omega}^{\sss\{1\}}$ is over $n-L_1$ step walks (which have $n-L_1+1$ vertices), with the last (or ``extra'') step being from say  $u$ to $y_{N_1}$.  Of course $W(\vec{\omega}^{\sss\{1\}})$ is the product of the weights of the first $n-(L_1+1)$ steps and the last one (which has weight $z_{\sss D}D(y_{N'}-u)$).  Letting $M_1=n-L_1$, it follows that the contribution with $i=N$ can be rewritten as 
\begin{align}
\nn&\sum_{M_1\ge 1}\sum_{L_1=0}^{n-1}\indic{M_1+L_1=n}\sum_u \sum_{\vec{\omega}'_{M_1-1}:o \ra  u}W(\vec{\omega}'_{M_1-1})    \sum_{\vec{R}_{M_1-1}' \ni \vec{\omega}'}W(\vec{R}_{M_1-1}')  K_{[0,M_1-1]}\\
\nn&\qquad \qquad \qquad \st  \sum_{y_{N'}}z_{\sss D}D(y_{N'}-u) \sum_y |y|^6\pi_{L_1}(y)\\
\nn&=\sum_{M_1\ge 1}\sum_{L_1\ge 0}\indic{M_1+L_1=n}\sum_u \rho t_{M_1-1}(u)\st \sum_{y'}z_{\sss D}D(y'-u) \st \sum_y |y|^6\pi_{L_1}(y)\\
\nn&\phantom{=\sum_{M_1\ge 1}\sum_{L_1\ge 0}\indic{M_1+L_1=n}\sum_u \rho t_{M_1-1}(u)}\text{(by \eqref{tnformula})}\\
\label{extrastep}&=\sum_{M_1\ge 1}\sum_{L_1\ge 0}\indic{M_1+L_1=n}\tplus{0}{M_1}\st \ppp{3}{L_1},
\end{align}
which is equal to the $M_1\ge 1$ contribution on the right hand side of \eqref{Xi2} with $r=1$ and $A=(3)$.

Next we show that the contribution from $i<N$ (where $N>1$) gives the right hand side of \eqref{Xi1} with $r=1$ and $A=(3)$.  Consider first the contribution from $i=1$.  With a similar derivation as above, this contribution can be written as
\begin{align*}
&\sum_{L_1=0}^{n-1}\sum_y |y|^6 (\pi_{L_1}*z_{\sss D}D)(y)\st \sum_x \rho t_{n-(L_1+1)}(x)\\
&=\sum_{L_1=0}^{n-1}\pplus{3}{L_1}\st \ttt{0}{n-(L_1+1)}.
\end{align*}
This is the $M_1=0$ case of the right hand side of \eqref{Xi1} with $r=1$ and $(A)=3$.

Finally, consider the contribution where $i\ne 1,N$ (so in particular $N>2$, so this term only appears if $n\ge 2$).  In this case we wish to find a simple expression for
\begin{align*}
&\sum_{N=3}^{n+1}\sum_{\substack{\vec{m}_{N}\ge 0:\\  \vec{m}_N+N-1=n}} \sum_{i=2}^{N-1}\sum_{\vec{y}_{N}}|y_i-y_{i-1}|^6\\
&\qquad \sum_{\vec{\omega}_n:o \overset{\vec{\ms}_N,\vec{y}_N}{\ra} y_N}W(\vec{\omega}_n)\sum_{\vec{R}_n \ni \vec{\omega}_n}W(\vec{R}_{n})  \prod_{\ell=1}^N J_{[\ms_{\ell-1}+{\ell-1},\ms_\ell+{\ell-1}]}.
\end{align*}
Now let $M_1=\ms_{i-1}+{i-1}$ and $L_1=m_i$, and $M_0=n-(M_1+L_1+1)$, and 
$N_1=i-1$ and $N_0=N-i$.  Then $N=N_1+1+N_0$ and the above expression can be written as 
\begin{align}
\nn&\sum_{M_1\ge 1}\sum_{L_1\ge 0}\indic{n\ge M_1+L_1+1}\sum_{N_1=1}^{M_1}\sum_{N_0=1}^{M_0+1}
\sum_{\substack{\vec{m}^{\sss\{1\}}_{N_1}\ge 0:\\  \vec{m}^{\sss\{1\}}_{N_1}+N_1-1=M_1}}
\sum_{\substack{\vec{m}^{\sss\{0\}}_{N_0}\ge 0:\\  \vec{m}^{\sss\{0\}}_{N_0}+N_0=M_0}}\sum_{\vec{y}^{\sss\{1\}}_{N_1}}\sum_{y}\sum_{\vec{y}^{\sss\{0\}}_{N_0}}\\
&\qquad \sum_{\vec{\omega}^{\sss\{1\}}_{M_1}:o\overset{\vec{\ms}^{\sss\{1\}}_{N_1},\vec{y}^{\sss\{1\}}_{N_1}}{\ra} y^{\sss\{1\}}_{N_1}}W(\vec{\omega}^{\sss\{1\}}_{M_1})              \sum_{\vec{R}^{\sss\{1\}}_{M_1-1} \ni \vec{\omega}^{\sss\{1\}}_{M_1-1}}W(\vec{R}^{\sss\{1\}}_{M_1-1})  \prod_{\ell_1=1}^{N_1} J_{[\ms^{\sss\{1\}}_{\ell_1-1}+{\ell_1-1},\ms^{\sss\{1\}}_{\ell_1}+{\ell_1-1}]}\label{2part1}\\
&\qquad |y-y^{\sss\{1\}}_{N_1}|^6\sum_{\vec{\omega}^{\sss\{1'\}}_{L_1+1}:y^{\sss\{1\}}_{N_1} \ra y}W(\vec{\omega}^{\sss\{1'\}}_{L_1+1} )        \sum_{\vec{R}^{\sss\{1'\}}_{L_1} \ni \vec{\omega}^{\sss\{1'\}}_{L_1}}W(\vec{R}^{\sss\{1'\}}_{L_1} )  J_{[0,L_1]}(\vec{R}^{\sss\{1'\}}_{L_1} ) \label{2part2}\\
&\qquad \sum_{\vec{\omega}^{\sss\{0\}} _{M_0}:y\overset{\vec{\ms}^{\sss\{0\}} _{N_0},\vec{y}^{\sss\{0\}} _{N_0}}{\ra} y^{\sss\{0\}} _{N_0}}W(\vec{\omega}^{\sss\{0\}}_{M_0} )            \sum_{\vec{R}^{\sss\{0\}}_{M_0}  \ni \vec{\omega}^{\sss\{0\}}_{M_0}}W(\vec{R}^{\sss\{0\}}_{M_0} )  \prod_{\ell_0=1}^{N_0} J_{[\ms^{\sss\{0\}}_{\ell_0-1}+{\ell_0-1},\ms^{\sss\{0\}}_{\ell_0}+{\ell_0-1}]}\label{2part3}.
\end{align}

Now note that from \eqref{KJ1} we have 
\begin{align*}
 K_{[0,M_0]}=\sum_{N_0=1}^{M_0+1}\sum_{\substack{\vec{m}^{\sss\{0\}}_{N_0}\ge 0:\\  \vec{m}^{\sss\{0\}}_{N_0}+N_0=M_0}}\prod_{\ell_0=1}^{N_0} J_{[\ms^{\sss\{0\}}_{\ell_0-1}+{\ell_0-1},\ms^{\sss\{0\}}_{\ell_0}+{\ell_0-1}]}
\end{align*}
to see that the contribution of \eqref{2part3}, including the sums over $\vec{y}^{\sss\{0\}} _{N_0}$, $\vec m_{N_0}^{\{0\}}$, and $N_0$,  is equal to (use \eqref{tnformula} in the first equality)
\begin{align}
\nn\sum_u \sum_{\vec{\omega}^{\sss\{0\}}_{M_0}:y\ra y+u}&W(\vec{\omega}^{\sss\{0\}}_{M_0})            \sum_{\vec{R}^{\sss\{0\}}_{M_0} \ni \vec{\omega}^{\sss\{0\}}_{M_0}}W(\vec{R}^{\sss\{0\}}_{M_0}) K_{[0,M_0]}\\
\label{Te1}&=\sum_u \rho t_{M_0}(u)=\tilde t^{\{0\}}_{M_0}=\tilde t^{\{0\}}_{n-(M_1+L_1+1)}.
\end{align}
The contribution of \eqref{2part2} summed over $y$ is equal to 
\begin{align*}
\sum_{y}
|y-y^{\sss\{1\}}_{N_1}|^6\sum_{\vec{\omega}^{\sss\{1'\}}_{L_1+1}:y^{\sss\{1\}}_{N_1} \ra y}W(\vec{\omega}^{\sss\{1'\}}_{L_1+1} )        \sum_{\vec{R}^{\sss\{1'\}}_{L_1} \ni \vec{\omega}^{\sss\{1'\}}_{L_1}}W(\vec{R}^{\sss\{1'\}}_{L_1})  J_{[0,L_1]}(\vec{R}^{\sss\{1'\}}_{L_1} ) 
\end{align*}
which looks similar to \eqref{omegamatch}, but now the $\vec{\omega}^{\{1'\}}$ has an extra step leading
to an additional convolution as in the discussion preceding \eqref{extrastep}, and so the above equals
\begin{align}\sum_z |z|^6 (\pi_{L_1}*z_{\sss D}D)(z)=\tilde \pi^{(3)}_{L_1,+}.\label{Te2}
\end{align}
Finally, again using \eqref{KJ1} we have 
\begin{align*}
K_{[0,M_1-1]}=\sum_{N_1=1}^{M_1}\sum_{\substack{\vec{m}^{\sss\{1\}}_{N_1}\ge 0:\\  \vec{m}^{\sss\{1\}}_{N_1}+N_1-1=M_1}}\prod_{\ell_1=1}^{N_1} J_{[\ms^{\sss\{1\}}_{\ell_1-1}+{\ell_1-1},\ms^{\sss\{1\}}_{\ell_1}+{\ell_1-1}]},
\end{align*}
so the contribution from \eqref{2part1}, summed over $\vec{y}_{N_1}^{\{1\}}$, $\vec{m}_{N_1}^{\{1\}}$ and $N_1$, is equal to ($\vec{\omega}$ again has an extra step)
\begin{align}
\nn\sum_{v}\sum_{\vec{\omega}^{\sss\{1\}}_{M_1}:o\ra v}&W(\vec{\omega}^{\sss\{1\}}_{M_1})\sum_{\vec{R}^{\sss\{1\}}_{M_1-1} \ni \vec{\omega}^{\sss\{1\}}_{M_1-1}}W(\vec{R}^{\sss\{1\}}_{M_1-1})  
K_{[0,M_1-1]}\\
&=\sum_v (t_{M_1-1}*z_{\sss D}D)(v)=\tilde t^{\{0\}}_{M_1,+}.\label{Te3}
\end{align}
We conclude from \eqref{Te1}, \eqref{Te2} and \eqref{Te3} that the total contribution from $i\ne 1,N$ gives 
\begin{align*}
\sum_{M_1\ge 1}\sum_{L_1\ge 0}\indic{n\ge M_1+L_1+1}\tplus{0}{M_1}\st \pplus{3}{L_1}\st \ttt{0}{n-(M_1+L_1+1)},
\end{align*}
which is the contribution to the right hand side of \eqref{Xi1} coming from the sum over $M_1\ge 1$ with $r=1$ and $A=(3)$. 

The sum of all terms considered thus far gives $\Xi^{[1]}_n[(3)]$, which is the first term appearing in Lemma  \ref{lem:third}.  

Let us now consider the second term appearing in \eqref{611}.  Ignoring constants this is equal to
\begin{align}
\sum_{i_1=1}^N\sum_{i_2<i_1}|y_{i_1}-y_{i_1-1}|^4|y_{i_2}-y_{i_2-1}|^2+\sum_{i_1=1}^N\sum_{i_2<i_1}|y_{i_2}-y_{i_2-1}|^4|y_{i_1}-y_{i_1-1}|^2.\label{61111}
\end{align}
Until further notice we consider the first term of \eqref{61111}.   If $N>4$ then consider the contribution from $1<i_2<i_1-1$ and $i_1<N$, i.e from $\sum_{i_1=4}^{N-1}\sum_{i_2=2}^{i_1-2}$.  Recalling \eqref{tn2}, we seek a ``simple'' expression for
\begin{align*}
&\sum_{N=5}^{n+1}\sum_{\substack{\vec{m}_{N}\ge 0:\\  \vec{m}_N+N-1=n}} \sum_{i_1=4}^{N-1}\sum_{i_2=2}^{i_1-2}\sum_{\vec{y}_{N}}|y_{i_1}-y_{i_1-1}|^4|y_{i_2}-y_{i_2-1}|^2\\
&\qquad \sum_{\vec{\omega}_n:o \overset{\vec{\ms}_N,\vec{y}_N}{\ra} y_N}W(\vec{\omega}_n)\sum_{\vec{R}_n \ni \vec{\omega}_n}W(\vec{R}_n)  \prod_{\ell=1}^N J_{[\ms_{\ell-1}+{\ell-1},\ms_\ell+{\ell-1}]}.
\end{align*}
Letting $L_1=m_{i_1}$, $L_2=m_{i_2}$, $M_2=\ms_{i_2-1}$,  and $M_1=\ms_{i_1-1}-(M_2+L_2+1)$ and $M_0=n-(\vec{M}_2+\vec{L}_2+2)$, and $N_0=N-i_1$ and $N_1=i_1-i_2-1$ and $N_2=i_2-1$ we can write the above as
\begin{align*}
&\sum_{N=5}^{n+1}\sum_{i_1=4}^{N-1}\sum_{i_2=2}^{i_1-2}\sum_{M_2,M_1\ge 1}\sum_{L_2,L_1\ge 0}\indic{\vec{M}_2+\vec{L}_2+2\le n}\\
&\qquad \sum_{\substack{\vec{m}^{\sss \{2\}}_{N_2}\ge 0:\\  \vec{m}^{\sss \{2\}}_{N_2}+N_2=M_2}}\qquad \sum_{\substack{\vec{m}^{\sss \{1\}}_{N_1}\ge 0:\\  \vec{m}^{\sss \{1\}}_{N_1}+N_1=M_1}}\qquad \sum_{\substack{\vec{m}^{\sss \{0\}}_{N_0}\ge 0:\\  \vec{m}^{\sss \{0\}}_{N_0}+N_0=M_0}}\\
&\sum_{\vec{y}^{\sss\{2\}}_{N_2}}\sum_{y_{i_2}}\sum_{\vec{y}^{\sss\{1\}}_{N_1}}\sum_{y_{i_1}}\sum_{\vec{y}^{\sss\{0\}}_{N_0}}|y_{i_1}-y^{\sss\{1\}}_{N_1}|^4|y_{i_2}-y^{\sss\{2\}}_{N_2}|^2\\
&\prod_{s=1}^2 \Bigg(\sum_{\vec{\omega}^{\sss \{s\}}_{M_s}:y^{\sss(s)}_0\overset{\vec{\ms}^{\sss \{s\}}_{N_s},\vec{y}^{\sss(s)}_{N_s}}{\ra} y^{\sss(s)}_{N_s}}W(\vec{\omega}^{\sss \{s\}}_{M_s})              \sum_{\vec{R}_{M_s-1}^{\sss \{s\}} \ni \vec{\omega}^{\sss \{s\}}_{M_s-1}}W(\vec{R}_{M_s-1}^{\sss \{s\}})  \prod_{\ell_s=1}^{N_s} J_{[\ms^{\sss \{s\}}_{\ell_s-1}+{\ell_s-1},\ms^{\sss \{s\}}_{\ell_s}+{\ell_s-1}]}\\
&\qquad \sum_{\vec{\omega}^{\sss \{s'\}}_{L_s+1}:y^{\sss(s)}_{N_s} \ra y_{i_s}}W(\vec{\omega}^{\sss \{s'\}}_{L_s+1})           \sum_{\vec{R}_{L_s}^{\sss \{s'\}} \ni \vec{\omega}^{\sss \{s'\}}_{L_s}}W(\vec{R}_{L_s}^{\sss \{s'\}})  J_{[0,L_s]}(\vec{R}_{L_s}^{\sss \{s'\}})\Bigg)\\
&\sum_{\vec{\omega}^{\sss \{0\}}_{M_0}:y^{\sss(0)}_0\overset{\vec{\ms}^{\sss \{0\}}_{N_0},\vec{y}^{\sss(0)}_{N_0}}{\ra} y^{\sss(0)}_{N_0}}W(\vec{\omega}^{\sss \{0\}}_{M_0})              \sum_{\vec{R}_{M_0}^{\sss \{0\}} \ni \vec{\omega}^{\sss \{0\}}_{M_0}}W(\vec{R}_{M_0}^{\sss \{0\}})  \prod_{\ell_0=1}^{N_0} J_{[\ms^{\sss \{0\}}_{\ell_0-1}+{\ell_0-1},\ms^{\sss \{0\}}_{\ell_0}+{\ell_0-1}]}.
\end{align*}
Instead of summing over $N$ and $i_2<i_1$ we may now sum over $N_0,N_1,N_2$ with $1\le N_s\le M_s$, for each $s=0,1,2$.  Then arguing as above using \eqref{KJ1} (and \eqref{tnexpanded}) and the definition of $\pi_n$, we see that with $u_3\equiv 0$ in the formula below, this is equal to 
\begin{align*}
&\sum_{M_2,M_1\ge 1}\sum_{L_2,L_1\ge 0}\indic{\vec{M}_2+\vec{L}_2+2\le n}\\
&\quad \sum_{u_2,u_1,z_{2},z_{1},z_0}|u_{1}-z_1|^4|u_2-z_2|^2\\
&\phantom{\quad \sum_{u_1,z_{1}}}\times \left(\prod_{s=1}^2(t_{M_s-1}*z_{\sss D}D)(z_s-u_{s+1})(\pi_{L_s}*z_{\sss D}D)(u_s-z_s)\right)t_{M_0}(z_0-u_1).
\end{align*}
After a change of variables this is equal to
\begin{align}
\sum_{M_2,M_1\ge 1}\sum_{L_2,L_1\ge 0}\indic{\vec{M}_2+\vec{L}_2+2\le n}\left(\prod_{s=1}^2\tplus{0}{M_s}\right)\sum_{v_1,v_2}|v_1|^4 |v_2|^2 \left(\prod_{\ell=1}^2 \pi_{L_\ell,+}(v_\ell)\right)\ttt{0}{M_0}.\label{phew1}
\end{align}
Recalling that $M_0=n-(\vec{M}_2+\vec{L}_2+2)$, we see that \eqref{phew1} is equal to the contribution to the right hand side of \eqref{Xi1} with $r=2$ and $A= \begin{pmatrix}2& 0\\0 & 1\end{pmatrix}$ from $M_1,M_2\ge 1$.  This completes our analysis of the cases where $1<i_2<i_1-1$ and $i_1<N$.  We can proceed similarly for the remaining cases.  The cases with $i_1=N$ are those that contribute to the right hand side of \eqref{Xi2} (with this $r$ and $A$).  The cases where $i_2=1$ provide the contribution to the right hand side of \eqref{Xi1} and \eqref{Xi2} from $M_2=0$ with this $r$ and $A$.  The cases with $i_2=i_1-1$ provide the contribution to the right hand side of \eqref{Xi1} and \eqref{Xi2} from $M_1=0$ with this $r$ and $A$.  Thus, the first term in \eqref{61111} gives the quantity 
\[\Xi^{[2]}_n\left[\begin{pmatrix}2& 0\\0 & 1\end{pmatrix}\right].\]

The second term of \eqref{61111} can be handled in exactly the same way, except for a switch of the powers on the $|y_{i_k}-y_{i_k-1}|$ terms, giving 
\[\Xi^{[2]}_n\left[\begin{pmatrix}1& 0\\0 & 2\end{pmatrix}\right].\]

More generally the derivation above applies whenever we have a term with just two distinct indices of summation.

All subsequent terms can be handled similarly:  if $i_r<i_{r-1}<\dots <i_1$ are the distinct indices of summation then the expansion yields $r$ $\pi$ terms (one for each distinct $i_k$), and we use \eqref{KJ1} to recombine the connected graphs not corresponding to any $i_k$, giving $\tilde{t}$ terms.  Terms with some  $\tilde{t}$ ``missing", or having 0 length, arise when $i_{k}=i_{k-1}-1$ or $i_1=N$, or $i_r=1$.

Consider now the third term in \eqref{611}.  This can be written as
\begin{align*}
6\sum_{i_1=1}^N\sum_{i_2<i_1}\sum_{i_3<i_2}|y_{i_1}-y_{i_1-1}|^2|y_{i_2}-y_{i_2-1}|^2|y_{i_3}-y_{i_3-1}|^2.
\end{align*}
This has $r=3$ distinct indices of summation.  Applying the expansion and resummation to this gives the contribution 
\begin{equation*}
c \Xi^{[3]}_n\left[\begin{pmatrix}1& 0 & 0\\0 & 1& 0\\0&0&1\end{pmatrix}\right].
\end{equation*}
This gives all of the contributions from \eqref{6_1}.

Consider now the contribution from \eqref{6_2}.  When we split the sums into distinct indices of summation we always get at least one of the $z_{i_k}$ appearing an odd number of times, due to the restriction that $i_3\ne i_4$.  Therefore the expansion and resummation applied to these terms gives a sum of terms of the form $\Xi^{[r]}_n[A]$ for $2\le r\le 4$ and null $A$.  

Consider now the contribution from \eqref{6_3}.  When we split the sums into distinct indices of summation we will always get at least one of the $z_{i_k}$ appearing an odd number of times except in the following cases (including relabelling of indices of summation when necessary)
\begin{align}
&\sum_{i_1=1}^N	\sum_{i_2=1}^N \indic{i_2\ne i_1}|z_{i_1}|^2(z_{i_1}\cdot z_{i_2})^2\label{6111}\\
&\sum_{i_1=1}^N	\sum_{i_2=1}^N\sum_{i_3=1}^N \indic{i_1,i_2,i_3 \text{ distinct}}|z_{i_1}|^2(z_{i_2}\cdot z_{i_3})^2.\label{6112}
\end{align}
The first setting above arises from \eqref{6_3} when exactly one of $i_2,i_3$ is equal to $i_1$, exactly one of $i_4,i_5$ is equal to $i_1$ and the remaining two indices are equal to each other but not $i_1$.  The second setting arises from \eqref{6_3} when none of the other indices are equal to $i_1$ but either $i_2=i_4$ and $i_3=i_5$ or $i_2=i_5$ and $i_3=i_4$.  
The term \eqref{6111} can be written as
\begin{align*}
\sum_{i_1=1}^N	\sum_{i_2<i_1}^N |z_{i_1}|^2(z_{i_1}\cdot z_{i_2})^2+\sum_{i_1=1}^N	\sum_{i_2<i_1}^N |z_{i_2}|^2(z_{i_1}\cdot z_{i_2})^2,
\end{align*}
and the expansion and resummation for these terms gives (respectively)
\begin{align*}
\Xi^{[2]}_n\left[\begin{pmatrix}1& 2\\0 & 0\end{pmatrix}\right], \qquad  \Xi^{[2]}_n\left[\begin{pmatrix}0& 2\\0 & 1\end{pmatrix}\right].
\end{align*}
Similarly the term \eqref{6112} can be written (up to combinatorial constants) as 
\begin{align*}
&\sum_{i_1=1}^N	\sum_{i_2<i_1}\sum_{i_3<i_2} |z_{i_1}|^2(z_{i_2}\cdot z_{i_3})^2+\sum_{i_1=1}^N	\sum_{i_2<i_1}\sum_{i_3<i_2} |z_{i_2}|^2(z_{i_1}\cdot z_{i_3})^2\\
&\quad+\sum_{i_1=1}^N\sum_{i_2<i_1}\sum_{i_3<i_2} |z_{i_3}|^2(z_{i_1}\cdot z_{i_2})^2,
\end{align*}
and the expansion and resummation for these terms gives (respectively)
\begin{align*}
\Xi^{[3]}_n\left[\begin{pmatrix}1& 0 & 0\\0 & 0& 2\\0&0&0\end{pmatrix}\right], \qquad  \Xi^{[3]}_n\left[\begin{pmatrix}0& 0 & 2\\0 & 1& 0\\0&0&0\end{pmatrix}\right], \qquad \Xi^{[3]}_n\left[\begin{pmatrix}0& 2 & 0\\0 & 0& 0\\0&0&1\end{pmatrix}\right].
\end{align*}
The latter two are identical by Lemma \ref{lem:swap}.

It remains to consider \eqref{6_4}.  The restrictions on the indices therein imply that each $z_j$ can appear either 1, 2, or 3 times depending on other relations between the indices.  Of these possibilities, the only situation for which each term appears an even number of times is that when each term appears exactly twice.  Thus, up to combinatorial constants, the term \eqref{6_4} can be written as 
\begin{align*}
\sum_{i_1=1}^N	\sum_{i_2=1}^N\sum_{i_3=1}^N \indic{i_1, i_2, i_3 \text{ distinct}}(z_{i_1}\cdot z_{i_2})(z_{i_3}\cdot z_{i_1})(z_{i_2}\cdot z_{i_3}).
\end{align*}
This can be written (up to combinatorial constants) as 
\begin{align*}
\sum_{i_1=1}^N	\sum_{i_2<i_1}\sum_{i_3<i_2}(z_{i_1}\cdot z_{i_2})(z_{i_3}\cdot z_{i_1})(z_{i_2}\cdot z_{i_3}),
\end{align*}
and the expansion and resummation for this term gives 
\begin{align*}
\Xi^{[3]}_n\left[\begin{pmatrix}0& 1 & 1\\0 & 0& 1\\0&0&0\end{pmatrix}\right].
\end{align*}
Now check that the list of all of the non-null matrices appearing above coincides with the list in the statement of the Lemma to complete the proof.

\ARXIV{\subsection{Proof of Lemma~\ref{lem:LT6} for $8<d<10$.}\label{sec:upgrade}}
\SUBMIT{\subsection{Outline of the proof of Lemma~\ref{lem:LT6} for $8<d<10$.}\label{sec:upgrade}}
\SUBMIT{As the proof requires only minor modifications to proofs in \cite[Section 5]{H08} and our proof for $d\ge 10$ above, we give only a brief sketch here.  Further details of this proof appear in \cite{Arxiv_version}.}
In the above proof of Lemma \ref{lem:LT6} for $d\ge 10$, the only place where we required $d\ge 10$ instead of $d>8$ was in bounding $\Xi^{[1]}_n[(3)]$.  There (recall \eqref{Xi3andpi} and \eqref{Xi3andpib}) we used 
\[\sum_{x}|v_1|^6|\pi_{L_1}(v_1)|\le CL^6 (L_1\vee 1)^{(12-d)/2},\]
and 
\[ \sum_v|v|^6|(\pi_m*z_DD)(v)|\le CL^6 (L_1\vee 1)^{(12-d)/2},\]
to get that
\[\Xi^{[1]}_n[(3)]\le CL^6 \sumtwo{M_1,L_1\ge 0:}{M_1+L_1\le n}(L_1\vee 1)^{(12-d)/2}\le CL^6 (n\vee 1)^{(12-d)/2+2}.\]
Therefore to prove that the bound $\sum_x|x|^6 t_n(x)\le C L^6 n^3$ holds for all $n$ (note that the left hand side is 0 if $n=0$), it is sufficient to show that in fact
\begin{align}
\label{LT6upg}
\sum_v |v|^6 |\pi_{L_1}(v)|&\le CL^6 (L_1\vee 1)^{(10-d)/2}, 
\end{align}
and
\begin{align}\label{LT6upgb}
\sum_v|v|^6|(\pi_{L_1}*z_DD)(v)|&\le CL^6 (L_1\vee 1)^{(10-d)/2}.
\end{align}
The second bound is an easy consequence of the first one, together with the bounds $\sum_x|x|^{2r}D(x)\le C_rL^{2r}$ (with $r=0,3$), \eqref{b2} (with $r=0$), and the fact that 
\[\sum_v |v|^6|(\pi_{L_1}*z_DD)(v)|\le \sum_u \sum_{v-u}C(|u|^6+|v-u|^6)|\pi_{L_1}(u)|z_DD(v-u).\]

To achieve \eqref{LT6upg}, we first use \eqref{b2}-\eqref{b3} (with $r=1,2$) together with Lemma \ref{lem:second}, as in the proof of Lemma~\ref{lem:LT6} for $d\ge 10$, to get (for $d>8$)
\begin{equation}
\sum_x |x|^4 t_n(x)\le CL^4 n^2,\qquad \text{ for all }n\ge 0.\label{t4bound}
\end{equation}
One can now proceed as in \cite[Section 5]{H08}, armed with the bound \eqref{t4bound}.

\SUBMIT{First, an application of the lace expansion allows us to extract from each connected graph on $[0,m]$ a minimally connected graph (called a lace), and as in \cite{H08} we can write 
\begin{align}
\pi_m(x)&=\sum_{N=1}^{\infty}(-1)^N\pi^{\sss(N)}_m(x),\nn
\end{align}
where $\pi^{\sss(N)}_m(x)\ge0$ is the contribution from laces containing exactly $N$ bonds, and only finitely many terms in the above sum are non-zero (in particular we get 0 if $N>m$).
Hence
\begin{equation}\label{piNbnd}
\sum_x|x|^6|\pi_m(x)|\le \sum_{N=1}^{\infty}\sum_x|x|^6\pi^{\sss(N)}_m(x).
\end{equation}
The following (in which $\vep_L$ denotes a positive quantity which approaches zero as $L$ becomes large) is a straightforward extension of Proposition~5.1 of \cite{H08} to include the $q=3$  case.A brief description of the proof of this extension, admittedly aimed at experts, is given below; further details are provided in \cite{Arxiv_version}.
\begin{LEM}
\label{lem:piNbound}
For $d>8$ and all $L$ sufficiently large the following holds:  For each $m,N \in \N$ and $q\in \{0,1,2,3\}$,   
\begin{align}
\sum_x |x|^{2q}\pi_m^{\sss(N)}(x)\le \frac{L^{2q}(C_{\ref{lem:piNbound}}\vep_L)^N}{m^{\frac{d-4}{2}-q}}.\label{ghij}
\end{align}
\end{LEM}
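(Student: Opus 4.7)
My plan is to extend the proof of \cite[Proposition 5.1]{H08}, which already establishes \eqref{ghij} for $q \in \{0,1,2\}$. The sole additional input required to reach $q = 3$ is the fourth-moment bound \eqref{t4bound}, $\sum_x|x|^4 t_n(x)\le CL^4 n^2$, which has already been derived above from the $q\le 2$ cases and Lemma~\ref{lem:second}; this avoids circularity.

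First I would recall the diagrammatic estimate furnished by the lace expansion: $\pi_m^{(N)}(x)$ is dominated by a sum of diagrams, each a product of convolutions of two-point functions $t_{n_e}(v_e)$ along a backbone with $\sum_e n_e = m$ and terminal displacement $\sum_e v_e = x$, multiplied by closed ``bubble'' factors whose total contribution yields $(\vep_L)^N$ when $d>8$ and $L$ is large. The weight $|x|^{2q}$ is absorbed in \cite{H08} by iterating the elementary estimate $|x|^2 \le K\sum_e |v_e|^2$, where $K$ is the number of backbone edges in the diagram (a quantity linear in $N$). A single $|v_e|^2$-insertion on an edge of length $n$ contributes $\sum_v |v|^2 t_n(v)\le CL^2 n$ to the integrand, and after full diagrammatic integration this multiplies the overall bound by $L^2 m$; this is essentially the $q=1$ case.

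For $q = 3$, I would write $|x|^6 = |x|^2 \cdot |x|^4$, apply the $q=2$ case of \cite[Proposition 5.1]{H08} to the $|x|^4$ factor, and use $|x|^2 \le K\sum_e |v_e|^2$ to place the remaining weight as a single additional $|v|^2$-insertion on some backbone edge. This adds one further factor of $L^2 m$ to the $q=2$ bound, producing the target $L^6(C'\vep_L)^N m^{-(d-10)/2}$; the polynomial-in-$N$ prefactor arising from $K$ is absorbed by a slight shrinkage of $\vep_L$. Whenever the extra $|v|^2$-insertion lands on an edge already carrying $|v|^2$-weight from the internal $q=2$ bookkeeping, the combined $|v|^4$-weight on that edge is controlled precisely by \eqref{t4bound}: $\sum_v |v|^4 t_n(v)\le CL^4 n^2$.

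The hard part will be the bookkeeping required to ensure that no single backbone edge ever accumulates $|v|^6$-weight, since a usable sixth-moment bound on $t_n$ is not available---deriving it is the very purpose of Lemma~\ref{lem:LT6}. This is arranged by routing the $|v|^2$-factors so that every edge carries at most $|v|^4$-weight: if the added $|v|^2$-insertion would collide with an edge already bearing $|v|^4$-weight, the underlying $q=2$ decomposition $|x|^4 \le K^2\sum_{e_1,e_2}|v_{e_1}|^2|v_{e_2}|^2$ is rerouted so that the two $|v|^2$-insertions land on distinct edges. With this routing in place, the rest of the argument is a mechanical rewriting of the estimates in \cite[Section 5]{H08}, with \eqref{t4bound} invoked precisely at each step where a single edge collects $|v|^4$-weight. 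Full details are deferred to \cite{Arxiv_version}.
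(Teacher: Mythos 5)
Your overall strategy is the right one and coincides with the paper's: distribute three factors of $|x|^2$ over the diagram segments, arrange matters so that no single segment ever picks up a sixth power, and invoke the fourth-moment bound \eqref{t4bound} wherever a fourth power accumulates. You also correctly identify that \eqref{t4bound} is available without circularity from the $q\le 2$ cases and Lemma~\ref{lem:second}. However, as written, the proposal has two related gaps.

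First, applying ``the $q=2$ case of \cite[Proposition 5.1]{H08} to the $|x|^4$ factor'' as a black box is not a valid move: that proposition is a statement about $\sum_x |x|^4\pi^{(N)}_m(x)$ after summing over $x$, whereas you need to leave $x$ free so the residual $|x|^2$ can still be attached. You must reopen the diagrammatic estimate; the paper does this by extending the auxiliary bounds of \cite[Lemmas 5.4, 5.8, 5.10]{H08} to allow $q_i\in\{0,1,2\}$ on backbone pieces (Lemmas~\ref{lem:generalpiece} and \ref{lem:sconvbounds}), which is precisely where \eqref{t4bound} enters.

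Second, and more importantly, the ``routing'' argument is asserted but not justified. You claim that whenever the new $|v|^2$ would land on a segment already carrying $|v|^4$ from the $q=2$ bookkeeping, the $q=2$ decomposition can be rerouted to avoid the collision, but you give no reason why such a reroute always exists. The paper replaces this ad hoc device with a structural observation that makes routing unnecessary: write $|x|^6 \le \sup_{a,b,y}|\overline{x}|^2\,|x-a|^2\,|\underline{x}|^2\,M^{(N)}_{\vec m}(a,b,x,y)$, i.e.\ assign one factor of $|x|^2$ each to the top path, the backbone path, and the bottom path of the diagram, and then observe that a segment can belong to at most two of these three paths (a backbone segment cannot be simultaneously in the top and the bottom path; a non-backbone segment is in at most one of them). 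This immediately caps the accumulated weight at $|v|^4$ on backbone segments (controlled by $q_i\le 2$ and \eqref{t4bound}) and $|v|^2$ on non-backbone $\rho$-segments (controlled by $r_i\le 1$), with no case analysis or rerouting. Without this topological fact about the diagrams, your argument for ``no $|v|^6$ ever arises'' is a claim, not a proof; with it, the rest of the verification is indeed, as you say, a mechanical pass through the estimates of \cite[Section 5]{H08}, carried out in the paper by induction on $N$ with three binary weights $\overline{j},j,\underline{j}\in\{0,1\}$.
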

Now choose $L$ sufficiently large so that $C_{\ref{lem:piNbound}}\vep_L<1$ in \eqref{ghij}, and use the above bound  for $q=3$ in \eqref{piNbnd}  to get \eqref{LT6upg}, and hence complete the proof of Lemma~\ref{lem:LT6}.

Consider briefly the proof of Lemma~\ref{lem:piNbound} for $q=3$. One bounds $\sum_x|x|^6\pi^{\sss(N)}_m(x)$ in terms of lace expansion diagrams as in \cite[(5.42)]{H08}.  Figure \ref{fig:diagram} gives an example of such a diagram in the case $N=5$.
\begin{figure}
\begin{center}
\begin{tikzpicture}
\foreach \x in {0,2,4,6,8,10} {
            \foreach \y in {0,2} {
               \node at (\x,\y) [circle,inner sep=0pt,minimum size=2mm,fill=black] {};
                }} 
                
\node at (-.2,-.2) {$o$}   ; 
\draw (0,0)--(10,0);       
\draw (0,0)--(0,2);  
\draw (0,2)--(10,2);  
\draw (2,0)--(2,2);  
\draw (4,0)--(4,2);  
\draw (6,0)--(6,2);  
\draw (8,0)--(8,2);  
\draw (10,0)--(10,2);  
\draw[line width=.8mm] (0,0)--(4,0);
\draw[line width=.8mm] (4,0)--(4,2);
\draw[line width=.8mm] (4,2)--(6,2);
\draw[line width=.8mm] (6,2)--(6,0);
\draw[line width=.8mm] (6,0)--(8,0);
\draw[line width=.8mm] (8,0)--(8,2);
\draw[line width=.8mm] (8,2)--(10,2);
\node at (10.2,2.2) {$x$}   ; 
\node at (3,2) [circle,inner sep=0pt,minimum size=2mm,fill=black] {};
\node at (5,0) [circle,inner sep=0pt,minimum size=2mm,fill=black] {};
\node at (7,2) [circle,inner sep=0pt,minimum size=2mm,fill=black] {};
\node at (1,-.3) {$m_1$} ;
\node at (3,-.3) {$m_3$} ;
\node at (4-.3,1) {$m_4$} ;
\node at (5,2.3) {$m_5$} ;
\node at (6-.3,1) {$m_6$} ;
\node at (7,-.3) {$m_7$} ;
\node at (8-.3,1) {$m_8$} ;
\node at (9,2.3) {$m_9$} ;
\end{tikzpicture}
\caption[A typical diagram]{An example of a diagram arising from the lace expansion.  The bold path from $o$ to $x$ represents the backbone, which has components of fixed lengths.  The thin lines correspond to two point functions $\rho(\cdot)$ of unrestricted length.  The vertices (other than $o$ and $x$) indicate spatial locations, which, when summed over, produce $M^{(5)}_{\vec m}$.  Note that $m_2=0$ in this diagram.}
\label{fig:diagram}
\end{center}
\end{figure}
Now one assigns to each of the top path, bottom path, and backbone path in the diagram from $o$ to $x$ one factor of $|x|^2$ each.  For each of these paths we express $x$ as a sum of displacements of line segments contributing to that path, and distribute the factor $|x|^2$ to these segments accordingly.  This was also done in \cite[Proof of Proposition 5.1]{H08} for the case $q=2$ using only the top and bottom path.  When doing this one can get at most a 4th power on a segment that is part of the backbone (a segment of the backbone cannot be simultaneously in the top path and the bottom path of the diagram), and at most a 2nd power on a segment that is not part of the backbone.  This observation is key since it means that we only need bounds on 4th powers as in \eqref{t4bound} to use the proofs in \cite[Section 5]{H08} to prove Lemma \ref{lem:piNbound} for $q=3$.\qed
}
\ARXIV{
An application of the lace expansion allows us to extract from each connected graph on $[0,m]$ (where $m\in \N$) a minimally connected graph (called a lace), and as in \cite{H08} we can write 
\begin{align*}
\pi_m(x)&=\sum_{N=1}^{\infty}(-1)^N\pi^{\sss(N)}_m(x),
\end{align*}
where $\pi^{\sss(N)}_m(x)\ge0$ is the contribution from laces containing exactly $N$ bonds, and only finitely many terms in the above sum are non-zero (in particular we get 0 if $N>m$).
Hence
\begin{equation}\label{piNsum}
|\pi_m(x)|\le \sum_{N=1}^{\infty}\pi^{\sss(N)}_m(x).
\end{equation}
We use $\vep_L$ to denote a positive quantity which approaches zero as $L$ becomes large.  
The following result is an extension (to $q=3$) of \cite[Proposition 5.1]{H08}.

\begin{LEM}
\label{lem:piNbound}
For all $L$ sufficiently large the following holds:  For each $m,N \in \N$ and $q\in \{0,1,2,3\}$,   
\begin{align}
\sum_x |x|^{2q}\pi_m^{\sss(N)}(x)\le \frac{L^{2q}(C_{\ref{lem:piNbound}}\vep_L)^N}{m^{\frac{d-4}{2}-q}}.
\nn
\end{align}
\end{LEM}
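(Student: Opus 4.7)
The plan is to adapt the argument for Proposition~5.1 of \cite{H08}, which gives the bound for $q\in\{0,1,2\}$, to cover the missing case $q=3$; the other three cases are contained in the already-cited proposition. First I would recall from \cite[Section 5]{H08} the diagrammatic upper bound on $\pi_m^{\sss(N)}(x)$: it is bounded by a sum of diagrams consisting of a backbone path of total length $m$ from $o$ to $x$, built out of convolution factors $(t_{m_i}*z_D D)$, together with top and bottom rib subpaths that together with the backbone form two further paths from $o$ to $x$ made up of two-point functions $\rho(\cdot)$, all interior vertices being summed over $\Z^d$.

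The central new idea, relative to the $q\le 2$ argument in \cite{H08}, is to write $|x|^6 = |x|^2\cdot|x|^2\cdot|x|^2$ and attach one factor of $|x|^2$ to each of the three paths from $o$ to $x$ in the diagram: the top path, the bottom path, and the backbone. In \cite[Proof of Proposition~5.1]{H08}, the analogous technique for $q=2$ uses only the top and bottom paths. Along each such path, $x$ equals the sum $\sum_i y_i$ of spatial displacements of the path's segments, so
\begin{equation*}
|x|^2 = \sum_i |y_i|^2 + 2\sum_{i<j} y_i\cdot y_j,
\end{equation*}
and expanding the product of three such factors and bounding dot products by Cauchy--Schwarz $|y_i\cdot y_j|\le |y_i||y_j|$ reduces the task to bounding finitely many diagrams in which every segment $i$ carries a pointwise weight $|y_i|^{p_i}$ with $p_i\in\Z_+$.

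The key combinatorial observation, which makes three paths exactly the correct number to use, is that a backbone segment belongs to the backbone and to at most one of the two ribs (a single backbone segment cannot lie simultaneously in both the top and the bottom rib path of a lace-expansion diagram), so it receives $p_i\le 4$; a non-backbone segment belongs to exactly one rib and not to the backbone, so $p_i\le 2$. The $2$nd moment estimates on $\rho$ and on $(t_\cdot*z_D D)$ already used in \cite[Proposition~5.1]{H08} continue to handle all rib segments, while the new $4$th moment bound \eqref{t4bound}, which was itself obtained by applying Lemma~\ref{lem:second} together with the $r=1,2$ cases of \eqref{b2}--\eqref{b3}, handles the backbone segments. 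With these per-segment moment bounds in place, the remaining convolution-type estimation of each diagram proceeds identically to \cite[Proof of Proposition~5.1]{H08}: the $L^6$ prefactor arises from the bounded $L$-range of the walk steps producing the extra moments, the $(C_{\ref{lem:piNbound}}\vep_L)^N$ decay comes from the product of $N$ rib-bubble factors, each small in $L$ by standard random-walk bubble-diagram bounds, and the factor $m^{-(d-4)/2+3}$ emerges from the convolution of two-point functions along a backbone of total length $m$ in dimension $d>8$.

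The main technical obstacle is the power-counting: one must verify, for every term that arises after expanding the three $|x|^2$ factors along their respective paths, that no segment ends up with a weight beyond the $4$th-moment regime controlled by \eqref{t4bound}. This reduces to the structural statement that no single backbone segment appears in both ribs simultaneously, which is immediate from the definition of a lace. Once this is established, everything else is mechanical bookkeeping following the template of \cite[Section 5]{H08}, yielding the asserted bound $L^{2q}(C_{\ref{lem:piNbound}}\vep_L)^N m^{-((d-4)/2-q)}$ for $q=3$.
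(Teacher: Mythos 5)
Your proposal is correct and follows essentially the same route as the paper's proof: both attach one factor of $|x|^2$ to each of the top, bottom and backbone paths of the lace-expansion diagram, exploit the structural fact that a backbone segment cannot lie in both ribs (so it sees at most a 4th power, controlled by \eqref{t4bound}, while non-backbone segments see at most a 2nd power), and then re-run the bookkeeping of \cite[Section~5]{H08}. The only cosmetic difference is that you expand $|x|^2$ via Cauchy--Schwarz on cross terms while the paper uses the squares inequality $|a+b|^2\le 2(|a|^2+|b|^2)$ directly; either works, though after Cauchy--Schwarz you still need one more AM--GM step to turn $|y_i||y_j|$ into the even powers that the moment bounds of Lemma~\ref{lem:generalpiece} actually control.
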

The required result \eqref{LT6upg} (and hence Lemma \ref{lem:LT6}) is a trivial consequence of Lemma \ref{lem:piNbound} as we now show.
\begin{proof}[Proof of Lemma \ref{lem:LT6}]
Choose $L$ sufficiently large so that $C_{\ref{lem:piNbound}}\vep_L\le 1/2$.  Then by Lemma \ref{lem:piNbound} with $q=3$ and \eqref{piNsum} we have for $m\in \N$
\[\sum_x|x|^6\pi_m(x)\le \sum_{N=1}^\infty L^6(C_{\ref{lem:piNbound}}\vep_L)^Nm^{3-(d-4)/2}\le L^6 m^{(10-d)/2},\]
thus proving \eqref{LT6upg}, as required.
\end{proof}
In order to state a version of  \cite[Lemma 5.4]{H08} (that will be used frequently below) we
 introduce further notation as in \cite[Section 5]{H08}.   Define $h_{m}(u)$ by
\begin{align*}
h_{m}(u)=\begin{cases}
z_{\sss D}^2(D*t_{m-2}*D)(u), & \text{ if }m\ge 2\\
z_{\sss D} D(u), & \text{ if }m =1\\
\1_{\{u=o\}}, & \text{ if }m =0,
\end{cases}
\end{align*}
where $z_{\sss D}$ is the critical value of $z$ (note that this is written as $p_c$ in \cite{H08}).
Let $\rho(x)=\sum_{T\in\T_L(o):x\in T}W(T)$ (note that our constant $\rho=\rho(o)$) and for $k=2,3,4$, let $\rho^{(*k)}(\cdot)$ denote the $k$-fold convolution of $\rho(\cdot)$ with itself.

 For $q,q_i \in \{0,1,2\}$, $m,m_i \in \Z_+$ we define $s_{m,q}(x)=|x|^{2q}h_{m}(x)$.  For $l \ge 1$ we define $s^{(*l)}_{\vec{m}_l, \vec{q}_l}(x)$ to be the $l$-fold spatial convolution of the $s_{m_i,q_i}$.  Also for $r,r_i \in \{0,1\}$, let $\phi_{r}(x)=|x|^{2r}\rho(x)$.  For $l \in \{1,2,3,4\}$, let $\phi^{(*l)}_{\vec{r}_l}(x)$ denote the $l$-fold spatial convolution of the $\phi_{r_i}$ (whenever this exists for all $x$), and define $\phi^{(*0)}(x)=\1_{\{x=o\}}$. 
\begin{LEM}
\label{lem:generalpiece}
For $L$ sufficiently large the following holds:  For $l\ge 1$, $\vec{m}_l \in \Z_+^l$ (such that $m:=\sum_i m_i>0$),   $\vec{q}_l \in \{0,1,2\}^l$, and $r_i \in \{0,1\}$,  $k\in \Z_+$ such that $2(k+\sum_{i=1}^k r_i)\le 8$,
\begin{align}
\label{gpb1}
\|s_{\vec{m}_l,\vec{q}_l}^{(*l)}*\phi^{(*k)}_{\vec{r}_{k}}\|_{\infty}
&\le (mL^2)^{\sum_{i=1}^\ell q_i+\sum_{j=1}^k r_j}\sm{C_l\vep_L}{m}{\frac{d-2k}{2}},\\
\text{and }\  \|s_{\vec{m}_l,\vec{q}_l}^{(*l)}\|_1
&\le C_l (mL^2)^{\sum_{i=1}^\ell q_i}.\label{gpb2}
\end{align}
\end{LEM}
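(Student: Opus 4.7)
The plan is to follow closely the proof of \cite[Lemma~5.4]{H08}, whose statement and argument establish bounds of the same form but for $q_i, r_j$ in the smaller range that sufficed for the $q\le 2$ goal of that paper. The single new analytic ingredient we need here is the fourth moment bound \eqref{t4bound}, which upgrades the strongest available moment on the two-point function $t_m$ from order $2$ (the input used in \cite{H08}) to order $4$.

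First I would establish the $L^1$ bound \eqref{gpb2}. Writing $x=y_1+\cdots+y_l$ in convolution variables and applying iteratively the elementary inequality $|x|^{2q}\le C_q l^{2q-1}\sum_i|y_i|^{2q}$, the norm $\|s^{(*l)}_{\vec m,\vec q}\|_1$ decomposes into a finite sum of products $\prod_i\|s_{m_i,q'_i}\|_1$ in which each $q'_i\in\{0,1,2\}$ and $\sum_i q'_i=\sum_i q_i$. Each factor with $q'_i\le 1$ is bounded by $C(m_i L^2)^{q'_i}$ by the estimate used in \cite{H08}; the new case $q'_i=2$ reduces, after absorbing the two $D$-convolutions hidden in $h_m$ via $\sum_x|x|^{2q'_i}D(x)\le CL^{2q'_i}$, to the fourth moment bound $\sum_x|x|^4 t_m(x)\le CL^4 m^2$ from \eqref{t4bound}. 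Summing over the decomposition gives \eqref{gpb2}.

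For \eqref{gpb1} I would distribute the full weight $|x|^{2(\sum q_i+\sum r_j)}$ across all $l+k$ factors of $s^{(*l)}_{\vec m,\vec q}*\phi^{(*k)}_{\vec r}$ by the same iterated inequality, producing a finite sum of convolutions of modified factors $s_{m_i,q'_i}$ and $\phi_{r'_j}$ whose total extra polynomial weight equals $\sum_iq'_i+\sum_jr'_j$. For each such summand we bound all but one factor in $L^1$, using \eqref{gpb2} for the $s$-factors and the standard moment bound $\sum_x|x|^{2r'}\rho(x)<\infty$ (valid for $r'\le 1$ when $d>8$) for the $\phi$-factors, and we bound the remaining distinguished factor in $L^\infty$. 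The distinguished factor is chosen to absorb the $k$ copies of $\rho$; the required pointwise estimate $\|\phi^{(*k)}_{\vec r\,'}\|_\infty\le C\vep_L\, m^{-(d-2k)/2}(mL^2)^{\sum r'_j}$ is the standard lace expansion bound for convolutions of two-point functions, valid when $2k<d$, and is one of the inputs already used in \cite{H08}. The hypothesis $2(k+\sum_j r_j)\le 8$, combined with $d>8$, ensures every individual moment demand remains within the range where the required bound is available.

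The main obstacle is the bookkeeping: one must verify that in every distribution of the weights $|y_i|^{2q'_i}$ and $|z_j|^{2r'_j}$ produced by the iterated splitting inequality, no individual factor is pushed past a moment we do not control, i.e.\ no $t_m$-factor needs a moment above order $4$ and no $\rho$-factor above order $2$. This is a routine combinatorial enumeration which, given the hypotheses $q_i\in\{0,1,2\}$, $r_j\in\{0,1\}$ and $2(k+\sum r_j)\le 8$, proceeds unchanged from \cite{H08}. Once \eqref{t4bound} is available as the missing ingredient, no new analytic difficulty arises and the argument of \cite[Section~5]{H08} yields both bounds with the stated constants $C_l$ and the $\vep_L$ factor in \eqref{gpb1} coming, as usual, from a single small-parameter step involving the distinguished $L^\infty$-factor.
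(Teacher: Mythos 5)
Your outline correctly identifies \eqref{t4bound} as the new analytic input, and the $L^1$ bound \eqref{gpb2} is essentially fine (modulo a small confusion: $s^{(*l)}_{\vec m,\vec q}$ is already defined as the convolution of the individually weighted pieces $s_{m_i,q_i}(y)=|y|^{2q_i}h_{m_i}(y)$, so there is no leftover weight $|x|^{2\sum q_i}$ to distribute; the $L^1$ norm factors directly as $\prod_i\|s_{m_i,q_i}\|_1$, and the $q_i=2$ factors are handled by pushing the $D*D$ inside $h_{m_i}$ through and invoking \eqref{t4bound}). But your argument for the $L^\infty$ bound \eqref{gpb1} fails on two counts. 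First, $\phi^{(*k)}_{\vec r}$ is a fixed function independent of $m$, so the estimate $\|\phi^{(*k)}_{\vec r'}\|_\infty\le C\vep_L\, m^{-(d-2k)/2}(mL^2)^{\sum r'_j}$ you invoke does not exist; its supremum is an $m$-free constant attained near $0$, and the $m^{-(d-2k)/2}$ decay in \eqref{gpb1} has to come from the $s$-factors, not from $\phi$. Second, $\phi_{r'}$ is not summable at criticality: already $\|\phi_0\|_1=\sum_x\rho(x)=\rho\sum_n\E[|\mT_n|]=\infty$, and attaching $|x|^{2r'}$ only worsens this, so the Young-type factorization you propose has no $L^1$ half to place the $\rho$-factors in.

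The missing idea is the small/large-argument split, which is the heart of \cite[Proof of Lemma~5.4]{H08} and what the paper cites. One writes $(s^{(*l)}*\phi^{(*k)}_{\vec r})(x)=\sum_{|z|\le\sqrt{m}L}s^{(*l)}(x-z)\phi^{(*k)}_{\vec r}(z)+\sum_{|z|>\sqrt{m}L}(\cdots)$. The first sum is bounded by $\|s^{(*l)}\|_\infty$, which carries the heat-kernel decay $L^{2\sum q_i-d}m^{\sum q_i-d/2}$ (this is where $q_i=2$ requires \eqref{t4bound}), times the restricted sum $\sum_{|z|\le\sqrt{m}L}\phi^{(*k)}_{\vec r}(z)\lesssim m^{k+\sum r_j}L^{k\nu+2\sum r_j}$; the second is bounded by $\|s^{(*l)}\|_1\lesssim(mL^2)^{\sum q_i}$ times $\sup_{|z|>\sqrt{m}L}\phi^{(*k)}_{\vec r}(z)\lesssim \vep_L L^{2\sum r_j}m^{-(d-2k-2\sum r_j)/2}$. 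Both pieces give the right-hand side of \eqref{gpb1}. These four estimates are exactly the content of Lemmas~\ref{lem:phiboundsused} and \ref{lem:sconvbounds} of the paper; the former is \cite[Lemma~5.8]{H08} unchanged, and the only genuinely new work is upgrading the latter to $q_i\in\{0,1,2\}$, which is done via \eqref{t4bound} for $l=1$ (including the pointwise step $|x|^4t_{2n}(x)\le \rho\sum_u(|u|^4+|x-u|^4)t_n(u)t_n(x-u)$) and the standard induction on $l$ splitting on whether $m_l$ is larger or smaller than $m/2$. As written, your proof cannot produce the $m$-decay in \eqref{gpb1}.
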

\begin{REM}
Lemma \ref{lem:generalpiece} is an upgrade of \cite[Lemma 5.4]{H08} to include $q_i=2$.  The latter  includes assumptions on $z$ and bounds holding for $m\le n$, but the result of the inductive (on $n$) approach to the lace expansion applied in \cite{H08} is that the relevant bounds and the conclusion of the Lemma hold at the critical point $z_c$ for every $m$ (when $L$ is sufficiently large).  This is the setting in which we are working.
\end{REM}

We will return to the proof of Lemma~\ref{lem:generalpiece} at the end of this section but let us now describe how Lemma \ref{lem:piNbound} is proved, given this result.  From \cite[(5.42)]{H08}, for $q \in \{0,1,2,3\}$ and $N\ge 1$,
\begin{align}
\label{piMrelation}
\sum_x |x|^{2q}\pi_m^{\sss(N)}(x)\le \sum_{\vec{m} \in \mc{H}_m^{\sss N}}\sum_x |x|^{2q}M^{\sss(N)}_{\vec{m}}(o,o,x,o),
\end{align}
where
\[\mc{H}_m^{\sss N}=\left\{\vec{m}\in \Z_+^{2N-1}:\sum_{i=1}^{2N-1} m_i=m, \; m_{2j}\ge 0, m_{2j-1}>0\right\},\]
and $M_{\vec{m}}^{\sss(N)}(a,b,x,y)$ is defined recursively by 
\begin{align}
\nn
&M^{\sss(1)}_m(a,b,x,y)\equiv h_m(x-a)\rho^{(*2)}(x+y-b),
\end{align}
and
\begin{align}
\label{MN1}
M_{\vec{m}}^{\sss(N)}(a,b,x,y)&\equiv\sum_{u,v}A_{m_1,m_2}(a,b,u,v)M_{(m_3, \dots, m_{2N-1})}^{\sss(N-1)}(u,v,x,y).
\end{align}
where
\begin{align*}
&A_{m_1,m_2}(a,b,u,v)\equiv \begin{cases}
h_{m_1}(v-a)h_{m_2}(u-v)\rho^{(*2)}(b-u), & m_2\ne 0,\\
h_{m_1}(u-a)\rho(v-u)\rho^{(*2)}(b-v), & m_2= 0.\\
\end{cases}
\end{align*}
The expression \eqref{piMrelation} is therefore a bound on $\pi$ in terms of diagrams which graphically represent the $M_{\vec m}^{\sss(N)}$ terms (see e.g.~Figures \ref{fig:diagram} and \ref{fig:diagram2}).  In fact up to constants which are of no concern, 
\eqref{piMrelation} holds if $h_m$ is replaced with $t_m$ everywhere, but as \cite{H08} works with the former, so shall we.

\begin{figure}
\begin{center}
\begin{tikzpicture}
\foreach \x in {0,2,4,6,8,10} {
            \foreach \y in {0,2} {
               \node at (\x,\y) [circle,inner sep=0pt,minimum size=2mm,fill=black] {};
                }} 
                
\node at (-.2,-.2) {$o$}   ; 
\draw (0,0)--(10,0);       
\draw (0,0)--(0,2);  
\draw (0,2)--(10,2);  
\draw (2,0)--(2,2);  
\draw (4,0)--(4,2);  
\draw (6,0)--(6,2);  
\draw (8,0)--(8,2);  
\draw (10,0)--(10,2);  
\draw[line width=.8mm] (0,0)--(4,0);
\draw[line width=.8mm] (4,0)--(4,2);
\draw[line width=.8mm] (4,2)--(6,2);
\draw[line width=.8mm] (6,2)--(6,0);
\draw[line width=.8mm] (6,0)--(8,0);
\draw[line width=.8mm] (8,0)--(8,2);
\draw[line width=.8mm] (8,2)--(10,2);
\node at (10.2,2.2) {$x$}   ; 
\node at (3,2) [circle,inner sep=0pt,minimum size=2mm,fill=black] {};
\node at (5,0) [circle,inner sep=0pt,minimum size=2mm,fill=black] {};
\node at (7,2) [circle,inner sep=0pt,minimum size=2mm,fill=black] {};
\node at (1,-.3) {$m_1$} ;
\node at (3,-.3) {$m_3$} ;
\node at (4-.3,1) {$m_4$} ;
\node at (5,2.3) {$m_5$} ;
\node at (6-.3,1) {$m_6$} ;
\node at (7,-.3) {$m_7$} ;
\node at (8-.3,1) {$m_8$} ;
\node at (9,2.3) {$m_9$} ;
\end{tikzpicture}
\caption[A typical diagram]{An example of a diagram, $M_{\vec{m}}^{\sss(5)}(o,o,x,o)$ arising from the lace expansion.  The bold path from $o$ to $x$ represents the backbone, which has components of fixed lengths.  The thin lines correspond to two point functions $\rho(\cdot)$ of unrestricted length.  The vertices (other than $o$ and $x$) indicate spatial locations which,  when summed over, produce $M^{\sss(5)}_{\vec m}$.  Note that $m_2=0$ in this diagram.}
\label{fig:diagram}
\end{center}
\end{figure}

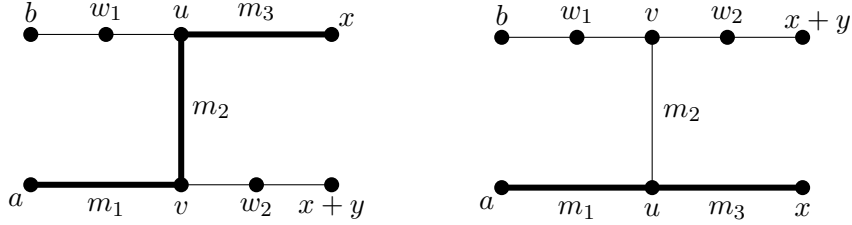
\begin{figure}
\begin{center}
\begin{tikzpicture}
\foreach \x in {0,2,4} {
            \foreach \y in {0,2} {
               \node at (\x,\y) [circle,inner sep=0pt,minimum size=2mm,fill=black] {};
                }} 
\node at (1,2)  [circle,inner sep=0pt,minimum size=2mm,fill=black] {};
\node at (3,0)  [circle,inner sep=0pt,minimum size=2mm,fill=black] {};      
\node at (-.2,-.2) {$a$}   ; 
\draw (2,0)--(2,2);  
\draw (2,0)--(4,0);  
\draw (0,2)--(2,2);  
\draw[line width=.8mm] (0,0)--(2,0);  
\draw[line width=.8mm] (2,0)--(2,2);  
\draw[line width=.8mm] (2,2)--(4,2);  
\node at (4.2,2.2) {$x$}   ; 
\node at (2,-.3) {$v$} ;
\node at (2,2.3) {$u$}   ;
\node at (1,2.3) {$w_1$}   ;
\node at (3,-.3) {$w_2$}   ;
\node at (4,-.3) {$x+y$}   ;
\node at (0,2.3) {$b$}   ;
\node at (1,-.3) {$m_1$} ;
\node at (2.4,1) {$m_2$};
\node at (3,2.3) {$m_3$} ;
\end{tikzpicture}
\hspace{1cm}
\begin{tikzpicture}
\foreach \x in {0,2,4} {
            \foreach \y in {0,2} {
               \node at (\x,\y) [circle,inner sep=0pt,minimum size=2mm,fill=black] {};
                }} 
\node at (1,2)  [circle,inner sep=0pt,minimum size=2mm,fill=black] {};
\node at (3,2)  [circle,inner sep=0pt,minimum size=2mm,fill=black] {};      
\node at (-.2,-.2) {$a$}   ; 
\draw (2,0)--(2,2);  
\draw (2,0)--(4,0);  
\draw (0,2)--(2,2);  
\draw[line width=.8mm] (0,0)--(4,0);  
\draw (2,2)--(4,2);  
\node at (4.2,2.2) {$x+y$}   ; 
\node at (2,-.3) {$u$} ;
\node at (2,2.3) {$v$}   ;
\node at (1,2.3) {$w_1$}   ;
\node at (3,2.3) {$w_2$}   ;
\node at (4,-.3) {$x$}   ;
\node at (0,2.3) {$b$}   ;
\node at (1,-.3) {$m_1$} ;
\node at (2.4,1) {$m_2$};
\node at (3,-0.3) {$m_3$} ;
\end{tikzpicture}
\caption{The diagrams $M_{\vec{m}}^{\sss(2)}(a,b,x,y)$ when $m_2>0$ (on the left) and $m_2=0$ (on the right).  In both diagrams $u,v,w_1,w_2$ are summed over.  When $m_2>0$ we have $k(\vec{m})=1$  (i.e.~there is one vertical thick line) which is odd, and the ``top path'' goes from $b$ to $x$.  When $m_2=0$ we have $k(\vec{m})=0$ (even) and the ``top path" goes from $b$ to $x+y$.}
\label{fig:diagram2}
\end{center}
\end{figure}

It is also shown in \cite[Lemma 5.6]{H08} that
\begin{align}
M_{\vec{m}}^{\sss(N)}(a,b,x,y)&=\sum_{u,v}M_{(m_1, \dots, m_{2N-3})}^{\sss(N-1)}(a,b,u,v)A_{m_{2N-1},m_{2N-2}}(x,y,u,v).\label{MN2}
\end{align}
Let $k(\vec{m})=\#\{i<N:m_{2i}>0\}$.  Given $a,b,y,x\in \Z^d$, define
\[\overline{x}=\begin{cases}
x-b, &\text{ if $k(\vec{m})$ is odd}\\
x+y-b, &\text{ if $k(\vec{m})$ is even,}
\end{cases} 
\quad \text{ and }
\quad 
\underline{x}=\begin{cases}
x-a, &\text{ if $k(\vec{m})$ is even}\\
x+y-a, &\text{ if $k(\vec{m})$ is odd.}
\end{cases}
\]
The quantities $\overline{x}$ and $\underline{x}$ represent the displacements along the top path and bottom path of a diagram respectively (see e.g.~Figure \ref{fig:diagram2}).
Then 
\begin{align}
&\sum_x |x|^6 M_{\vec{m}}^{\sss(N)}(o,o,x,o)\nn\\
&\le \sup_{a,b,y\in \Z^d}\sum_x |\overline{x}|^2 |x-a|^2|\underline{x}|^2 M_{\vec{m}}^{\sss(N)}(a,b,x,y).\label{sigh0}
\end{align}
Note that one of the supremums above is redundant by translation invariance, but it will be convenient to keep if for the purposes of discussing the inductive argument.

Let $m=\sum_{i=1}^{2N-1}m_i$.  We prove by induction on $N$ that for $\overline{j},j,\underline{j}\in \{0,1\}$
\begin{align}
\sum_{\vec{m}\in \mc{H}^N_{m}}\sup_{a,b,y\in \Z^d}\sum_x |\overline{x}|^{2\overline{j}} |x-a|^{2j}|\underline{x}|^{2\underline{j}} M_{\vec{m}}^{\sss(N)}(a,b,x,y)\le \frac{(mL^2)^{\overline{j}+j+\underline{j}}}{m^{(d-4)/2}}(C\vep_L)^N.\nn
\end{align}
This, \eqref{piMrelation}, and \eqref{sigh0} imply Lemma \ref{lem:piNbound}.  
The situation of interest in the present setting (recall \eqref{sigh0}) is where $\overline{j}=j=\underline{j}=1$, so we will focus on this one.

First let us consider the setting where $N>2$.  First break the sum over $\vec{m}$ into two sums,  depending on whether $m_1+m_2<m/2$ (so $\sum_{i=3}^{2N-1}m_i>m/2$) or not.  For the first sum  we can use 
\eqref{MN1} to decompose the diagram, and for the second we must have $m_{2N-1}+m_{2N-2}<m/2$ and we can use the same arguments but with \eqref{MN2} instead.  So let us consider only the former, i.e. that $m_{1}+m_{2}<m/2$.  Next, we split the sum based on whether $k(\vec{m})$ is even or odd (as this determines the form of $|\overline{x}|$ etc.).  Both contributions are similar, so we will consider the contribution where $k(\vec{m})$ is odd.  Now there are two cases depending on whether or not $m_2>0$.  We will consider only the contribution from $m_2>0$ as this is slightly more difficult. 

For this setting, we have that 
\[|\overline{x}|^2 |x-a|^2|\underline{x}|^2 =|x-b|^2|x-a|^2|x+y-a|^2.\]
Now using the bounds $|x-b|^2\le 2(|x-u|^2+|b-u|^2)$,  $|x-a|^2\le 2(|x-u|^2+|u-a|^2)$ and $|x+y-a|^2\le 2(|x+y-v|^2+|v-a|^2)$ (let us henceforth refer to such bounds as {\em squares inequalities}) we have that
\begin{align}
&|x-b|^2|x-a|^2|x+y-a|^2\nn\\
&\le 8(|x-u|^2+|b-u|^2)(|x-u|^2+|u-a|^2)(|x+y-v|^2+|v-a|^2).\label{8terms}
\end{align}
Expanding gives 8 terms, one of which is $8|x-u|^2|x-u|^2|x+y-v|^2$.  Note that for such $\vec{m}$ we have 
\begin{align}
&\sup_{a,b,y\in \Z^d}\sum_{u,v}\sum_x |x-u|^2|x-u|^2|x+y-v|^2\nn\\
&\phantom{\sup_{a,b,y\in \Z^d}\sum_{u,v}\sum_x} A_{m_1,m_2}(a,b,u,v)M_{(m_3, \dots, m_{2N-1})}^{\sss(N-1)}(u,v,x,y)\nn\\
&\le \sup_{a,b}\sum_{u,v}   A_{m_1,m_2}(a,b,u,v)\nn\\
& \phantom{\le \sup_{a,b}\sum_{u,v} } \sup_{y,u',v'} \sum_x|x-u'|^2|x-u'|^2|x+y-v'|^2M_{(m_3, \dots, m_{2N-1})}^{\sss(N-1)}(u',v',x,y).\label{cry1}
\end{align}
Summing \eqref{cry1}  over $\vec{m}'=(m_3,\dots, m_{2N-1})\in \mc{H}^{N-1}_{m-m_1-m_2}$ for which $k(\vec{m}')$ is even (because $m_2>0$) and using the induction hypothesis  and the fact that $m-m_1-m_2>m/2$ we get at most $c\frac{(mL^2)^3}{m^{(d-4)/2}}(C\vep_L)^{N-1}$.  What remains (summed over $m_1,m_2$) is at most 
\begin{align}
\sum_{m_1,m_2>0}\sup_{a,b}\sum_{u,v}   A_{m_1,m_2}(a,b,u,v)=\sum_{m_1,m_2>0}\sup_{a,b}(h_{m_1}*h_{m_2}*\rho^{(*2)})(b-a).\nn
\end{align}
By Lemma \ref{lem:generalpiece} this last quantity is at most
\begin{align}
\sum_{m_1,m_2>0}\frac{\vep_L}{(m_1+m_2)^{(d-4)/2}}\le c \vep_L.\nn
\end{align}
Thus the contribution from $|x-u|^2|x-u|^2|x+y-v|^2$ that we are looking at (sum over $m_1+m_2<m/2$ with $m_2>0$ and with $k(\vec{m})$ odd) satisfies the required bound.  

Consider instead the contribution from $8|b-u|^2|u-a|^2|v-a|^2$ (and $\vec{m}$ as above) in  \eqref{8terms}.  We have 
\begin{align}
&\sup_{a,b,y\in \Z^d}\sum_{u,v}\sum_x |b-u|^2|u-a|^2|v-a|^2\nn\\
&\phantom{\sup_{a,b,y\in \Z^d}\sum_{u,v}\sum_x} A_{m_1,m_2}(a,b,u,v)M_{(m_3, \dots, m_{2N-1})}^{\sss(N-1)}(u,v,x,y)\nn\\
&\le \sup_{a,b}\sum_{u,v}   |b-u|^2|u-a|^2|v-a|^2A_{m_1,m_2}(a,b,u,v)\nn\\
& \phantom{\le \sup_{a,b}\sum_{u,v} } \sup_{y,u',v'} \sum_x M_{(m_3, \dots, m_{2N-1})}^{\sss(N-1)}(u',v',x,y).\label{cry2}
\end{align}
Again, we can apply the induction hypothesis, now with $j=\underline j=\overline j=0$, to see that \eqref{cry2} summed over $\vec{m}'\in \mc{H}^{N-1}_{m-m_1-m_2}$ as above is at most $c\frac{(mL^2)^0}{m^{(d-4)/2}}(C\vep_L)^{N-1}$.  What remains is at most 
\begin{align}
\sum_{m_1,m_2>0}\sup_{a,b}\sum_{u,v}   |b-u|^2|u-a|^2|v-a|^2A_{m_1,m_2}(a,b,u,v).\nn
\end{align}
Note from the form of that the quantity $A_{m_1,m_2}(a,b,u,v)$ that the backbone (i.e.~the convolution of $h_m$ terms) goes from $a$ to $v$ to $u$.  Thus the quantity $|b-u|^2$ does not correspond to a backbone displacement, so we only get a 4th power ($4=2+2$) on the backbone.  Lemma \ref{lem:generalpiece} (together with further applications of squares inequalities) then says that this quantity is at most
\begin{align}
\sum_{\substack{m_1,m_2>0:\\
m_1+m_2<m/2}}\frac{c((m_1+m_2)L^2)^3\vep_L}{(m_1+m_2)^{(d-4)/2}}\le c m^3 L^6 \vep_L.\nn
\end{align}
Thus the contribution from $|b-u|^2|u-a|^2|v-a|^2$ that we are looking at (sum over $m_1+m_2<m/2$ with $m_2>0$ and with $k(\vec{m})$ odd) satisfies the required bound.  

The 6 other terms arising from \eqref{8terms} can be handled similarly - some of the factors are attached to $M_{(m_3, \dots, m_{2N-1})}^{\sss(N-1)}(u,v,x,y)$ and some are attached to $A_{m_1,m_2}(a,b,u,v)$ and we use the induction hypothesis on the former and Lemma \ref{lem:generalpiece} on the latter.

As noted earlier we can handle all other cases with fixed $N>2$ similarly.  The constant $C$ must be chosen sufficiently large to incorporate the constants accumulated on the way (factors of 8, bounded number of distinct cases etc.).  

The above argument also applies in the case $N=2$ if either $m_1+m_2<2m/3$ or $m_2+m_3<2m/3$.  This however can fail if $m_2$ is larger than $m_1$ and $m_3$.  In this special case $k(\vec{m})=1$ is odd since $m_2>0$, and we use \eqref{MN2} if $m_3\le m_1$ (and $m_1<m_2$) and otherwise we use \eqref{MN2} to decompose the diagram, and then we use Lemma \ref{lem:generalpiece} on each piece.  The case $N=1$ is a straightforward application of Lemma \ref{lem:generalpiece} (no decomposition is required).  Thus the proof of Lemma \ref{lem:piNbound} has been reduced to proving Lemma \ref{lem:generalpiece}, which we now address.

The bounds \eqref{gpb1}-\eqref{gpb2} are shown in \cite{H08} (see Lemmas 5.4, 5.8, 5.10 therein) to hold assuming the following two Lemmas. We introduce a parameter $\nu>0$ appearing in \cite{H08} that can be taken to be $\min\{d-8,2\}/6$ here.
\begin{LEM}
\label{lem:phiboundsused}
For all $L$ sufficiently large the following holds:  
Let $k \in \{1,2,3,4\}$ and $\vec{r}_k\in \{0,1\}^k$ satisfy $k+\sum_{i=1}^k r_i\le 4$.  Then there exists $C>0$, 
such that for every $m\ge 1$,
\begin{align*}
&\sum_{0\le |x|\le \sqrt{m}L}\phi^{(k)}_{\vec{r}^{(k)}}(x) \le Cm^{k+\sum r_j} L^{k\nu+2\sum_{j=1}^k r_j},\, \text{ and } \\
& \sup_{|x|>\sqrt{m}L}\phi^{(k)}_{\vec{r}^{(k)}}(x) \le \frac{CL^{2\sum_{j=1}^k r_j}\vep_L}{m^{\frac{d-2k-2\sum_{j=1}^k r_j}{2}}}.
\end{align*}
\end{LEM}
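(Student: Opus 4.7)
The plan is to reduce Lemma~\ref{lem:phiboundsused} to pointwise infrared-type bounds on the two-point function $\rho(x)$ of critical sufficiently spread-out lattice trees, established by the inductive approach to the lace expansion in \cite{HofSla02,H08,HH13}. Specifically, for $L$ sufficiently large and $d>8$, the lace expansion at criticality yields
\[
\rho(x) \le \frac{C\vep_L}{(|x|\vee L)^{d-2}}, \qquad |x|^2\rho(x) \le \frac{CL^2\vep_L}{(|x|\vee L)^{d-4}},
\]
and hence $\phi_{r_j}(x) \le CL^{2r_j}\vep_L\,(|x|\vee L)^{-(d-2-2r_j)}$ uniformly for $r_j\in\{0,1\}$.

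With these inputs in hand, I would iterate the standard bubble estimate: for positive exponents $a,b$ with $a<d$, $b<d$, $a+b>d$,
\[
\sum_{y} \frac{1}{(|y|\vee L)^a (|x-y|\vee L)^b} \le \frac{C}{(|x|\vee L)^{a+b-d}}.
\]
Because $d>8$ and $k+\sum_j r_j\le 4$, the intermediate exponents $d-2j-2\sum_{i\le j}r_i$ stay in the valid range $(0,d)$ through each of the $k-1$ iterations, so the iteration produces
\[
\phi^{(*k)}_{\vec r_k}(x) \le \frac{CL^{2\sum r_j}\vep_L^{\,k}}{(|x|\vee L)^{d-2k-2\sum r_j}}.
\]

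From here the two claims are routine. For the supremum assertion, if $|x|>\sqrt m L$ then $(|x|\vee L)^{-(d-2k-2\sum r_j)} \le m^{-(d-2k-2\sum r_j)/2}L^{-(d-2k-2\sum r_j)}$, and absorbing the resulting surplus powers of $L$ into the factor $\vep_L^{\,k-1}$ (which beats any fixed polynomial in $L$ as $L\to\infty$, since $\vep_L\to 0$) gives the second inequality. For the integrated assertion, I would split $\{|x|\le\sqrt m L\}$ into $\{|x|\le L\}$ (volume $\le CL^d$, contributing $CL^{2k+2\sum r_j}\vep_L^{\,k}$ after multiplying by the uniform pointwise bound) and $\{L<|x|\le\sqrt m L\}$ (summed via $\sum_{|x|\le R}|x|^{-\alpha}\le CR^{d-\alpha}$ for $\alpha<d$, contributing $Cm^{k+\sum r_j}L^{2k+2\sum r_j}\vep_L^{\,k}$). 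Finally, since $\vep_L$ is polynomial in $L^{-1}$, for any $k\le 4$ we have $\vep_L^{\,k}L^{2k-k\nu}\le C$ with $\nu=\min\{d-8,2\}/6$, which trades $\vep_L^{\,k}L^{2k}$ for $L^{k\nu}$ and yields the claimed first bound.

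The hard part is not this final assembly but the upstream lace-expansion input -- the pointwise two-point function estimates with explicit $\vep_L$-prefactor and the correct decay in $|x|\vee L$. These are established (by now classically) in \cite[Section~5]{H08} and related references, and we invoke them as a black box; the only point requiring care for us is to verify that the same $L$-dependent estimates survive a $k$-fold convolution without degrading the small parameter $\vep_L$ except in the controlled way delivered by the bubble estimate above. Once this is verified, the ranges of exponents dictated by $d>8$ and $k+\sum r_j\le 4$ automatically place us in the regime where every application of the bubble estimate is valid.
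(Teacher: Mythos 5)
The paper offers no proof of this lemma: immediately after the statement it simply observes that it is exactly Lemma~5.8 of~\cite{H08}, i.e.\ it is imported verbatim as a black box. Your proposal instead sketches a from-scratch derivation. The overall shape of your sketch — pointwise infrared bounds on $\rho$, iterated bubble estimates, then sum/sup — is indeed how such estimates are proved in~\cite{H08}, so the strategy is on target.

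There are, however, two concrete problems in the sketch. First, the stated starting bound $\rho(x)\le C\vep_L/(|x|\vee L)^{d-2}$ is false at $x=o$ (and throughout $|x|\lesssim L$): $\rho(o)=\rho\ge 1$, whereas the claimed right-hand side at $x=o$ is $C\vep_L L^{-(d-2)}\ll 1$. The correct formulation separates a $\delta_{o,x}$ (or an $O(1)$ short-range piece) from the long-range tail; tracking how those short-range pieces propagate through a $k$-fold convolution is exactly where the bookkeeping in \cite[Section~5]{H08} lives, and your sketch elides it. Second, the power accounting in the ``integrated assertion'' drops a factor of $L^{2\sum r_j}$: after iteration your pointwise bound carries a prefactor $L^{2\sum r_j}\vep_L^k$ in front of $(|x|\vee L)^{-(d-2k-2\sum r_j)}$, and summing the latter over $|x|\le\sqrt m L$ contributes another $L^{2k+2\sum r_j}m^{k+\sum r_j}$; the product has $L^{2k+4\sum r_j}$, not $L^{2k+2\sum r_j}$ as written. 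You then need $\vep_L^k$ to absorb $L^{2k+2\sum r_j-k\nu}$ for every admissible $k,\vec r_k$ and every $d>8$, which is not automatic and hinges on the precise definition of $\vep_L$ in~\cite{H08} that you treat as a black box. Since the paper takes the lemma as given, the cleaner move here is probably just to cite \cite[Lemma~5.8]{H08} as the authors do; if you want to reprove it, the two issues above must be repaired.
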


Note that the above is exactly \cite[Lemma 5.8]{H08}.
\begin{LEM}
\label{lem:sconvbounds}
For all $L$ sufficiently large the following holds:  
For all $l\ge 1$, $\vec{q}_l\in \{0,1,2\}^l$ and $\vec{m}_l\in \mathbb{Z}_+^l$ (such that $m:=\sum_i m_i>0$), there exists $C_l>0$ 
such that 
\begin{align*}
&\|s^{(l)}_{\vec{m}_l, \vec{q}_l}\|_\infty \le \sm{C_lL^{2\sum_{i=1}^l q_i}L^{-d} m^{\sum_{i=1}^l q_i}}{m}{\frac{d}{2}}, \quad\text{ and }\\
& \|s^{(l)}_{\vec{m}_l, \vec{q}_l}\|_1 \le C_lL^{2\sum_{i=1}^l q_i} m^{\sum_{i=1}^l q_i}.
\end{align*}
\end{LEM}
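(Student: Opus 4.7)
The plan is to reduce everything to moment estimates and a pointwise ``Gaussian-like'' bound on the building block $h_m$, and then combine them via the convolution identity for $L^1$ and Young's inequality for $L^\infty$. The $q_i\in\{0,1\}$ cases of both inequalities already appear in \cite[Lemmas~5.4, 5.10]{H08}, so the work is entirely in upgrading to allow $q_i=2$.

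First, I would collect the pointwise and moment bounds on $h_m$ that will serve as input. For $q\in\{0,1,2\}$ I claim
\begin{equation}\label{plan:mom}
\sum_x |x|^{2q} h_m(x) \le C\, L^{2q}\, (m\vee 1)^{q},
\end{equation}
and
\begin{equation}\label{plan:pt}
\sup_x |x|^{2q} h_m(x) \le \frac{C\, L^{2q-d}}{(m\vee 1)^{d/2-q}}.
\end{equation}
For \eqref{plan:mom}, the case $q=0$ is Condition~\ref{cond:L1bound}, $q=1$ is the variance bound built into the inductive lace expansion analysis of \cite{HH13}, and the new case $q=2$ is exactly \eqref{t4bound}, transferred from $t_m$ to $h_m=z_D^2 D\ast t_{m-2}\ast D$ using $\sum_x|x|^{2r}D(x)\le C_r L^{2r}$ and the binomial-type inequality $|x+y+z|^4\le C(|x|^4+|y|^4+|z|^4)$.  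For \eqref{plan:pt}, the $q=0$ case is the standard infrared/Gaussian bound $\|h_m\|_\infty\le CL^{-d}m^{-d/2}$ from the lace expansion. The $q=1,2$ cases are the spatial fractional-moment strengthenings of this Gaussian bound; combining \eqref{plan:mom} with the $q=0$ infrared bound in the usual truncation style (split $|x|\le L\sqrt m$ vs $|x|>L\sqrt m$, and on the latter use the tail obtained from \eqref{plan:mom} together with the lace-expansion displayed estimate) gives \eqref{plan:pt}. For $q=2$ this truncation uses the just-established \eqref{t4bound}, which is exactly the reason Lemma~\ref{lem:sconvbounds} needs to be re-proved here.

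Second, the $L^1$ bound is immediate once \eqref{plan:mom} is in hand. Since all factors are non-negative and convolution respects $L^1$,
\begin{equation*}
\|s^{(*l)}_{\vec m_l,\vec q_l}\|_1 \;=\; \prod_{i=1}^l \|s_{m_i,q_i}\|_1 \;=\; \prod_{i=1}^l \sum_{x_i}|x_i|^{2q_i}h_{m_i}(x_i) \;\le\; \prod_{i=1}^l C\,L^{2q_i}(m_i\vee 1)^{q_i},
\end{equation*}
and bounding each $(m_i\vee 1)^{q_i}\le m^{q_i}$ (using $m=\sum m_i>0$) yields the claim.

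Third, for the $L^\infty$ bound I would apply Young's inequality after picking out the factor with the largest time index. Let $i^\ast\in\{1,\dots,l\}$ be chosen so that $m_{i^\ast}=\max_i m_i$, which forces $m_{i^\ast}\ge m/l\ge 1$. Then
\begin{equation*}
\|s^{(*l)}_{\vec m_l,\vec q_l}\|_\infty \;\le\; \|s_{m_{i^\ast},q_{i^\ast}}\|_\infty\,\prod_{i\ne i^\ast}\|s_{m_i,q_i}\|_1.
\end{equation*}
The distinguished factor is handled by \eqref{plan:pt}, giving $CL^{2q_{i^\ast}-d}(m/l)^{q_{i^\ast}-d/2}$, and the remaining factors by \eqref{plan:mom}. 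Combining, and again bounding every $m_i\le m$, gives a final factor of $C_l L^{2\sum q_i-d} m^{\sum q_i-d/2}$, which is exactly the claimed $\|\cdot\|_\infty$ bound.

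The main obstacle is the pointwise estimate \eqref{plan:pt} for $q=2$: this is the only place the argument genuinely needs something new beyond what is available in \cite{H08}, and it is the reason Lemma~\ref{lem:LT6} had to be established first. Once \eqref{plan:pt} with $q=2$ is in hand, the $L^1$ and $L^\infty$ bounds are a routine convolution argument exactly as in \cite{H08}.
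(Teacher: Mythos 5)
Your overall skeleton — moment bounds (your \eqref{plan:mom}) to handle the $L^1$ norm, a pointwise bound (your \eqref{plan:pt}) on a single factor to handle the $L^\infty$ norm via Young's inequality, and then induction/peeling to pass to general $l$ — matches the structure of the proof in the paper (and in \cite[Lemma 5.10]{H08}). The $L^1$ part and the ``peel off the factor with the largest time'' step for $L^\infty$ are fine and essentially what the paper does. The difficulty, and the gap in your plan, is entirely in your \eqref{plan:pt} for $q=1,2$.

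Your proposed derivation of \eqref{plan:pt} by truncation does not close. On $|x|\le L\sqrt m$ the $q=0$ infrared bound $\|h_m\|_\infty\le CL^{-d}m^{-d/2}$ does the job, exactly as you say. But on $|x|>L\sqrt m$ you need $h_m(x)\le C L^{2q-d} m^{q-d/2}/|x|^{2q}$, and the moment bound \eqref{plan:mom} is an $\ell^1$-type statement that gives no pointwise control at a single large $x$. What you would need is a spatial tail bound on $h_m$ (Gaussian-type decay or an $x$-space lace expansion), which is exactly the sort of estimate that is not available here for the new exponent $q=2$ — invoking a ``lace-expansion displayed estimate'' is circular, because the only displayed estimates from \cite{H08} stop at $q_i\in\{0,1\}$.

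The paper's proof sidesteps this by a lattice-tree-specific convolution inequality rather than truncation: writing a tree through $x$ at generation $2n$ as two halves glued at the midpoint $u=\omega_n$ and discarding the mutual-avoidance constraint between halves gives
\[
t_{2n}(x)\le \rho\sum_u t_n(u)\,t_n(x-u),
\qquad
t_{2n+1}(x)\le \rho\sum_{u,v} t_n(u)\,D(v-u)\,t_n(x-v).
\]
Then $|x|^4 t_{2n}(x)\le C\sum_u |u|^4 t_n(u)t_n(x-u)+C\sum_u t_n(u)|x-u|^4 t_n(x-u)$, and bounding one factor by $\sup_y t_n(y)\le CL^{-d}n^{-d/2}$ and summing the other using $\sum_u|u|^4 t_n(u)\le CL^4 n^2$ (i.e.\ \eqref{t4bound}) yields $\sup_x |x|^4 t_{2n}(x)\le CL^{4-d}n^{2-d/2}$ \emph{uniformly in $x$}, with no large-$x$ tail estimate needed. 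This uniform pointwise bound is then transferred from $t_m$ to $h_m=z_D^2 D*t_{m-2}*D$ as you indicate. So the missing ingredient in your plan is precisely the midpoint-splitting identity $t_{2n}\le\rho(t_n*t_n)$, and replacing your truncation step by this self-convolution argument is what makes the proof go through.
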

Note that here we have upgraded Lemma \ref{lem:sconvbounds} to $\vec{q}_l\in \{0,1,2\}^l$, rather than $\vec{q}_l\in \{0,1\}^l$ as appears in \cite[Lemma 5.10]{H08}.  The derivation of \eqref{gpb1}-\eqref{gpb2} (for $q_i \in \{0,1,2\}$) from these  two lemmas is the same as that for $q_i\in\{0,1\}$ in \cite[Proof of Lemma 5.4]{H08}, despite the extra power allowed here.
It therefore remains to prove Lemma \ref{lem:sconvbounds} with one or more $q_i$ equal to 2.
\begin{proof}[Proof of Lemma \ref{lem:sconvbounds}]
The proof of Lemma \ref{lem:sconvbounds} here is the same as in \cite[Proof of Lemma 5.10]{H08}, except that the extra power is handled by our bound \eqref{t4bound} which we now restate
\begin{equation}\label{4tnbound}\sum_x |x|^4 t_n(x)\le C L^4 n^2, \text{ for all }n\in\Z_+.
\end{equation}
This implies that $\sum_x |x|^4 h_n(x)\le C L^4 n^2$ holds for all $n\in\Z_+$, just as for \eqref{LT6upgb},
  which gives the $l=1$ case of the required $\|\cdot \|_1$ bound. 

Turning to the $\|\cdot \|_\infty$ bound (with $l=1$), recall from \eqref{2point0} that for $n\in\N$,
\begin{equation}
\nn
t_{2n}(x)=\rho^{-1}\sum_{\vec{\omega}_{2n}: o \ra x}W(\vec{\omega}_{2n})\sum_{\vec{R}_{2n}\ni \vec{\omega}_{2n}}W(\vec{R}_{2n})\indic{R_0,\dots, R_{2n} \text{ avoid each other}}.
\end{equation}
Let $u=\omega_n$.  Recall that a single vertex $u$ is a tree with weight $W(\{u\})=1$, and neglect the interaction between some of the $R_i$ to see that this is bounded above by
\begin{align*}
&\rho^{-1}\sum_u\sum_{\vec{\omega}_{n}: o \ra u}W(\vec{\omega}_{n})\sum_{\vec{R}_n\ni \vec{\omega}_{n}}W(\vec{R}_n)\indic{R_0,\dots, R_{n} \text{ avoid each other}}\\
&\phantom{\rho^{-1}\sum_u}
\sum_{\vec{\omega}'_{n}: u \ra x}W(\vec{\omega}'_{n})\sum_{\vec{R}'_n\ni \vec{\omega}'_{n}}\indic{R'_0=\{u\}}W(\vec{R}'_n)\indic{R'_0,\dots, R'_{n} \text{ avoid each other}}\\
&\le \rho \sum_u t_n(u)t_n(x-u),
\end{align*}
where in the last we have again used \eqref{2point0} and translation invariance.  
Similarly we can obtain for $n\in\Z_+$
\begin{equation}\label{t2n+1bound}t_{2n+1}(x)\le \rho \sum_{u,v} t_n(u)D(v-u)t_n(x-v).
\end{equation}

Thus we have for $n\in\N$
\begin{align*}
 |x|^4 t_{2n}(x)&\le \rho|x|^4 \sum_u t_n(u)t_n(x-u)\\
 &\le C\sum_u |u|^4 t_n(u)t_n(x-u)+C\sum_u  t_n(u)|x-u|^4t_n(x-u)\\
 &\le \frac{C L^{-d}}{n^{d/2}}\sum_u |u|^4 t_n(u)\\
 &\le \frac{C L^4 L^{-d}n^2}{n^{d/2}},
 \end{align*}
where we have used \eqref{4tnbound} and $\sup_x t_n(x)\le CL^{-d}n^{-d/2}$ (which holds for all $n$ by \cite[(5.68) and subsequent discussion]{H08}).  Similarly, using \eqref{t2n+1bound} we have for $n\in\Z_+$,
\begin{align*}
 |x|^4 t_{2n+1}(x)\le \rho|x|^4 \sum_{u,v} t_n(u)D(v-u)t_n(x-v)\le \frac{CL^4 L^{-d}n^2}{n^{d/2}}.
 \end{align*}
An elementary argument shows that the same bounds (up to constants) hold for $h_{m}$ ($m\in\N$) as well.  This verifies the required bounds for $l=1$.

For general $l$ we use the (induction on $l$) argument in \cite[Proof of Lemma 5.10]{H08} (but including $q_i=2$) to prove Lemma \ref{lem:sconvbounds}, and therefore complete the proof.   This approach first uses the $l=1$ case and
\begin{align*}
\|s^{(l)}_{\vec{m}_l, \vec{q}_l}\|_1\le \|s^{(1)}_{m_l, q_l}\|_1 \|s^{(l-1)}_{\vec{m}_{l-1},\vec{q}_{l-1}}\|_1,
\end{align*}
to obtain the second bound of Lemma \ref{lem:sconvbounds} (if $m_l=0$ then the above $l=1$ case doesn't apply, but we can then  use the trivial bound $\|s^{(1)}_{0, q}\|_1=\indic{q=0}$ in the above).   
To obtain the first bound, note that if $m_l>m/2$ then we use 
\begin{align*}
\|s^{(l)}_{\vec{m}_l, \vec{q}_l}\|_\infty\le \|s^{(1)}_{m_l, q_l}\|_\infty \|s^{(l-1)}_{\vec{m}_{l-1},\vec{q}_{l-1}}\|_1,
\end{align*}
and, if not, then we use
\begin{align*}
\|s^{(l)}_{\vec{m}_l, \vec{q}_l}\|_\infty\le \|s^{(1)}_{m_l, q_l}\|_1 \|s^{(l-1)}_{\vec{m}_{l-1},\vec{q}_{l-1}}\|_\infty.
\end{align*}
The case $m_\ell=0$ is again easily handled as above.  The induction hypothesis now completes the proof.
\end{proof}
The fact that we can only ever get a 4th (or lower) power on a backbone displacement is what allows the argument to work using only \eqref{t4bound} and lower powers (this corresponds to $q_i\in \{0,1,2\}$ in Lemma \ref{lem:generalpiece}). 
}

\ARXIV{
\section{Oriented Percolation}
\label{sec:OP}
Recall the definition of the model in Section \ref{sec:discussion}.  It is easy to see that $\ara$ is an ancestral relation (i.e.~(AR) holds).

In this section we verify all of the conditions for this model except Condition \ref{cond:6moment}, which is verified subject to a conjectured bound on the 6th moment of the two-point function:
\begin{CON}
\label{lem:OP6}
For $d>4$ and $L$ sufficiently large there exists $C_L$ such that for all $n\in \Z_+$,
\begin{align}
\nn
\sum_x |x|^6 \P(x \in \mc{T}_n)\le C_L n^3.
\end{align}
\end{CON}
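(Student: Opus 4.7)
The plan is to mimic the strategy used for lattice trees in Sections \ref{sec:LT}--\ref{sec:tree6}, replacing the lattice tree lace expansion by the lace expansion for oriented percolation (as developed by Nguyen--Yang and Hofstad--Slade). First I would use the inductive approach to the lace expansion (as in \cite{HofSla03b}) to write
\[\P(x \in \mT_n) = \sum_{m=0}^{n}\sum_y \pi_m^{\sss OP}(y)(p_c D * \P(\cdot \in \mT_{n-m-1}))(x-y) + \pi_n^{\sss OP}(x),\]
where $\pi_m^{\sss OP}$ are the oriented percolation lace expansion coefficients, together with the known inductive bounds $\sum_x|x|^{2q}|\pi_m^{\sss OP}(x)| \le C L^{2q}(m\vee 1)^{-(d-4)/2+q}$ for $q\in\{0,1,2\}$ and $d>4$. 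These are the direct analogues of \eqref{b2}--\eqref{b3} for lattice trees, and, together with the standard $n^{q}$ bounds on moments of the two-point function for $q\le 2$, they would give Condition~\ref{cond:6moment} immediately for $d>4+2=6$ via a near-verbatim transcription of the $d\ge 10$ argument for lattice trees.

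For the remaining range $4<d\le 6$, the plan is to bootstrap as in Section~\ref{sec:upgrade}. First, the $q\le 2$ bounds together with the oriented percolation analogue of Lemma~\ref{lem:second} (obtained by the same sum-over-distinct-indices/$N$-decomposition expansion driven by \eqref{KJ1}, using the oriented percolation expansion formula rather than the lattice tree one) would give the 4th moment bound $\sum_x |x|^4 \P(x\in\mT_n) \le C_L n^2$. Next one lifts this to a bound
\[\sum_x |x|^6 |\pi_m^{\sss OP,(N)}(x)| \le \frac{L^6 (C\vep_L)^N}{m^{(d-4)/2 - 3}},\]
summable in $N$, by distributing three factors of $|x|^2$ across top path, bottom path and backbone in the lace expansion diagrams exactly as in the proof sketch of Lemma~\ref{lem:piNbound}: the backbone factor is controlled by the freshly obtained 4th moment bound on the two-point function, while the non-backbone factors are controlled by second-moment bounds on $\rho$, all combined through a squares inequality $|x|^2 \le 2(|x-u|^2+|u|^2)$. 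Feeding this back into the expansion for the two-point function then yields the 6th moment bound, completing Condition~\ref{cond:6moment} for all $d>4$.

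The main obstacle is establishing the 6th-moment diagrammatic bound on $\pi_m^{\sss OP}$. The difficulty is twofold. First, one must verify that the inductive approach to the lace expansion for oriented percolation, which has been run primarily for $q\in\{0,1\}$ and in some places $q=2$, can be extended to $q=3$; this requires tracking an extra power of $|x|^2$ through every inductive step and closing the bootstrap, which is conceptually straightforward but combinatorially involved and relies critically on $d>4$. Second, the oriented percolation diagrams are topologically more intricate than the lattice tree diagrams of Figure~\ref{fig:diagram}, because the Markov property is replaced by BK-type inequalities, so one must re-verify that each squares-inequality expansion always leaves at most a 4th power on any single backbone segment—anything worse would require a pre-existing 6th-moment bound on the two-point function and collapse the bootstrap. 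I would expect that, modulo these technical verifications, the argument goes through, so the conjecture should be accessible with serious but routine lace expansion work; this is consistent with the corresponding conjectured Theorem~\ref{thm:OP} being the only missing piece for the unconditional convergence of the rescaled range of critical sufficiently spread-out oriented percolation above $4$ spatial dimensions.
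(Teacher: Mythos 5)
This statement is not proved in the paper: it is explicitly a conjecture. The discussion in Section~\ref{sec:discussion} says outright that the authors ``conjecture that \eqref{pboundbasic} holds for oriented percolation (and the contact process) with $p=6$,'' and Theorem~\ref{thm:OP} is a \emph{conditional} result with this sixth-moment bound as the single unverified hypothesis. Section~\ref{sec:OP} verifies every other condition for oriented percolation and reduces Condition~\ref{cond:6moment} to Conjecture~\ref{lem:OP6} via Lemma~\ref{lem:reductionOP}, but it does not attempt to prove the conjecture itself. So there is no paper proof to compare your proposal against; what you have written is a program for closing the paper's explicitly stated open problem.

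On the substance, your plan of transcribing the lattice-tree argument of Sections~\ref{sec:LT}--\ref{sec:tree6} is exactly the route the authors have in mind, but there is an error in the input bound you state. You write $\sum_x|x|^{2q}|\pi_m^{\sss OP}(x)|\le CL^{2q}(m\vee1)^{-(d-4)/2+q}$ as the ``direct analogue'' of \eqref{b2}--\eqref{b3}, but the exponent $(d-4)/2$ is specific to lattice trees (it arises from the two $\rho$-type ribs in each lattice-tree lace diagram shifting the effective dimension by $4$). The oriented percolation bounds of \cite{HofSla03b,HofSla02,HHS08} have the form $\sum_x|x|^{2q}|\pi_m^{\sss OP}(x)|\le CL^{2q}\beta\,(m\vee1)^{-d/2+q}$ with $\beta=O(L^{-d})$; with your formula the $q=0$ decay at $d=5$ would be $m^{-1/2}$, not even summable, and the expansion would not converge at all. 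Once you correct the exponent, the arithmetic you sketch actually works out: using $\pi_m^{\sss OP}(x)=0$ for $|x|>mL$ and the $q=2$ bound gives $\sum_x|x|^6|\pi_m^{\sss OP}(x)|\le CL^6 m^{-d/2+4}$, and feeding this into the analogue of \eqref{Xi3andpi}--\eqref{Xi3andpib} yields $\Xi^{[1]}_n[(3)]\lesssim L^6 n^{-d/2+6}$ for $4<d<10$, which is $\le Cn^3$ precisely when $d\ge 6$, matching your threshold. So your direct regime $d\ge 6$ and bootstrap regime $4<d<6$ are consistent with the correct exponent, not the one you wrote. The two genuinely open technical steps remain as you identify them: (i) extending the inductive/diagrammatic $\pi_m$ bounds to $q=3$ in the OP setting, and (ii) verifying that when three factors of $|x|^2$ are distributed across top, bottom, and backbone of each OP diagram (as in the proof of Lemma~\ref{lem:piNbound}), no single backbone segment ever accumulates more than a fourth power, so that the bootstrap from the fourth-moment bound closes. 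This second point is delicate for oriented percolation because the rib/backbone decomposition of Remark~\ref{remark:ribs} is replaced by BK-inequality estimates rather than an exact factorisation, and the paper does not claim to have carried this out.
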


\medskip

\noindent{\bf Condition \ref{cond:surv}:} This is immediate from \cite[Theorem 1.4]{HH13} with\\
 $m(t)=A^2V(t \vee 1)$ and $s_D=2A$ (this result was first proved in \cite{HofHolSla07a}-\cite{HofHolSla07b}).\qed

\medskip

\noindent{\bf Condition \ref{cond:L1bound}:}  This is immediate from \cite[Theorem 1.11(a)]{HofSla03b} with $k=0$.\qed

\medskip

\noindent{\bf Condition \ref{cond:self-repel}:} This is a trivial consequence of Condition \ref{cond:surv} for oriented percolation, since the event that there is an occupied path from $(s,y)$ to $(s+t,z)$ is independent of $\mc{F}_s$, and has probability $\theta(t)=\P(\mT_t \ne \varnothing)$, by the translation invariance of the model.

\medskip

We will verify Condition \ref{cond:6moment} assuming Lemma \ref{lem:OP6}, using the following Lemma (in which we again use independence of bond occupation status).
\begin{LEM}
\label{lem:reductionOP}
Let $f:\Z^d \ra \R_+$.  Then for oriented percolation (critical, spread out, in dimensions $d>4$), and $m<n\in \N$
\begin{equation}
\E\Bigl[\sum_{x\in\mT_n}\sum_{y\in \mT_{m}}\1((m,y)\to(n,x))f(x-y)\Bigr]\le \E[\mc{T}_{m}] \sum_{z\in \Z^d}f(z)\P(z \in \mc{T}_{n-m}).\label{OP1}
\end{equation}
\end{LEM}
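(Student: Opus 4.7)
The plan is to show that the inequality \eqref{OP1} is in fact an equality, obtained by a direct application of two structural features of oriented percolation: independence of bond status across disjoint time slabs, and joint translation invariance in space and time.

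First I would observe that the indicator $\1(x\in\mT_n)$ on the left-hand side is redundant. Indeed, if $y\in\mT_m$ (equivalently, $(0,o)\to(m,y)$) and $(m,y)\to(n,x)$, then by concatenation of occupied paths we have $(0,o)\to(n,x)$, so $x\in\mT_n$ automatically. Thus the left-hand side equals
\[
\sum_{x\in\Z^d}\sum_{y\in\Z^d}\P\big(y\in\mT_m,\ (m,y)\to(n,x)\big)\,f(x-y).
\]

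Next, the event $\{y\in\mT_m\}$ is determined by the occupation status of bonds of the form $((k-1,\cdot),(k,\cdot))$ with $1\le k\le m$, and so is $\mc{F}_m$-measurable. On the other hand, $\{(m,y)\to(n,x)\}$ is determined by the bonds with $m+1\le k\le n$, which are jointly independent of $\mc{F}_m$. Hence the two events are independent and the joint probability factors:
\[
\P\big(y\in\mT_m,\ (m,y)\to(n,x)\big)=\P(y\in\mT_m)\,\P\big((m,y)\to(n,x)\big).
\]
Joint translation invariance of the i.i.d. bond occupation variables (in both the spatial and temporal coordinates) then gives
\[
\P\big((m,y)\to(n,x)\big)=\P\big((0,o)\to(n-m,x-y)\big)=\P(x-y\in\mT_{n-m}).
\]

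Substituting back and changing variables $z=x-y$, the double sum factorises as
\[
\Big(\sum_{y\in\Z^d}\P(y\in\mT_m)\Big)\Big(\sum_{z\in\Z^d}f(z)\,\P(z\in\mT_{n-m})\Big)=\E[|\mT_m|]\sum_{z\in\Z^d}f(z)\,\P(z\in\mT_{n-m}),
\]
which is exactly the right-hand side of \eqref{OP1} (with equality). There is no real obstacle here — the entire argument relies only on the two defining features of sufficiently spread out oriented percolation (product structure of the bond field and translation invariance). The only minor care needed is in the first step, to confirm that the indicator $\1(x\in\mT_n)$ can be dropped so that the sum decouples cleanly into two marginal expectations.
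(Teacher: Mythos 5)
Your proof is correct and follows essentially the same path as the paper's: factor the joint probability using independence of bond status in the disjoint time slabs $[0,m]$ and $[m,n]$, apply space-time translation invariance, and then change variables to split the double sum. The one small refinement you make explicit — that the indicator $\1(x\in\mT_n)$ can be dropped because $(0,o)\to(m,y)$ and $(m,y)\to(n,x)$ concatenate to give $(0,o)\to(n,x)$ — is done silently in the paper, and you are also right that the claimed inequality is in fact an equality.
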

\proof 
Let $\mc{C}(n,(m,z))=\{x:(m,z)\to (n+m,x)\}$.  Then the left hand side is equal to
\begin{align*}
&\sum_{x,y\in \Z^d}f(x-y)\P(y \in \mT_m,x \in \mc{C}(n-m,(m,y))) \\
&=\sum_{x,y\in \Z^d}f(x-y)\P\big(y\in  \mT_m)\P\big(x \in \mc{C}(n-m,(m,y))\big)\\
&=\sum_{x,y\in \Z^d}f(x-y)\P\big(y\in \mT_m)\P\big(x-y \in \mc{C}(n-m,(0,o))\big)\\
&=\sum_y \P\big(y\in  \mT_m\big) \sum_z f(z) \P\big(z \in \mc{C}(n-m,(0,o))\big)\\
&=\E[\mc{T}_{m}]\sum_z f(z) \P\big(z \in \mc{T}_{n-m}\big),
\end{align*}
as claimed.\qed

\medskip

\noindent{\bf Condition \ref{cond:6moment}:} Let $f(x)=|x|^6$, $p=6$, $n=t$, and $m=s$.  Then the left hand sides of \eqref{cond6momentv1} and \eqref{OP1} are identical.  By Lemma \ref{lem:reductionOP} the left hand side of \eqref{cond6momentv1} is at most 
\[\E[\mc{T}_{m}] \sum_{z\in \Z^d}|z|^6\P(z \in \mc{T}_{n-m}).\]
By Condition \ref{cond:L1bound} and Lemma \ref{lem:OP6} this is at most $cL^6 n^{6/2}$  and so Condition~\ref{cond:6moment} is verified for $p=6$.\qed

\medskip

\noindent{\bf Condition  \ref{cond:smallinc}:} This is immediate for any $\kappa>4$ when our steps are within a box of size $L$ (see Remark \ref{smallincdisc}). Note that, more generally, the left hand side of \eqref{smallincbnd} is (by independence and translation invariance),
\begin{align*}
\P(\exists (t,x)\text{ s.t. }t\in [0,2], x \in \mT_{t},|x|\ge N)
&\le \sum_{|x|\ge N}\P(x \in \mT_1)+\sum_{|x|\ge N}\P(x' \in \mT_2)\\
&\le p_c  \sum_{|x|\ge N} D(x)+ p_c^2  \sum_{|x|\ge N} D^{(*2)}(x)\\
&\le C  \sum_{|x|\ge N/2}D(x)\\
&\le C \frac{\sum_x |x|^{4+\vep}D(x)}{N^{4+\vep}},
\end{align*}
which satisfies the required bound with $\kappa=4+\vep$ provided that $D$ has $4+\vep$ finite moments for some $\vep>0$.\qed

\medskip

\noindent{\bf Condition \ref{cond:finite_int}:}  We verify the conditions of Lemma \ref{lem:int_fdd} with $(\gamma,\sigma_0^2)=(1,v)$.  The first condition holds by \cite[Theorem 1.2]{HofSla03b} together with the survival asymptotics \cite[Theorem 1.5]{HH13} and \cite[Proposition 2.4]{HolPer07}.  The second condition holds since
\[\E_n^s[X_t^{\sss(n)}(1)^p]=C_s\frac{n}{n^p}\sum_{x_1,\dots, x_p}\P\left(\cap_{i=1}^p\{x _i \in \mT_{\floor{nt}}\}\right),\]
and by \cite[Theorem 1.2]{HofSla03b} (with $\vec{k}=\vec{0}$) the sum is at most $C_{s,t^*,p}n^{p-1}$.\qed

\medskip

\noindent{\bf  Condition \ref{cond:self-avoiding}:} We use Lemma \ref{disccondsa}.  By independence of bond occupation status before time $\ell$ and after time $\ell$, and translation invariance, the left hand side of \eqref{dtsa} is equal to
\begin{align*}
&\sum_x \P\big(x \in \mT_\ell, \exists x'\in \Z^d\text{ s.t. }(\ell,x)\to (\ell+m,x'),\\
&\phantom{\sum_x \P\big(} |\{(i,y):(\ell,x)\to(i,y), m+2\le i-\ell\le 2m-1\}|\le M\big)\\
&=\sum_x \P\big(x \in \mT_\ell\big)\\
&\phantom{=\sum_x }\times\P(\exists z\in \mT_m,|\{(i,y):(0,o)\to (i,y), m+2\le i\le 2m-1\}|\le M\big)\\
&=\left[\sum_x \P\big(x \in \mT_\ell\big)\right]\P\left(S^{(1)}>m,\ \sum_{i=m+2}^{2m-1}|\mT_i|\le M\right).
\end{align*}
From Condition \ref{cond:L1bound} we see that the first term is bounded by a constant as required.\qed

Having verified Conditions~\ref{cond:surv}-\ref{cond:self-avoiding}, Theorem~\ref{thm:OP} now follows from Theorems~\ref{thm:mod_con_disc}, \ref{thm:range} and \ref{thm:one-arm}, and the same arithmetic used to verify Theorem~\ref{thm:LT}.
}

\paragraph{Acknowledgements.}
The work of MH was supported by Future Fellowship FT160100166, from the Australian Research Council.  
The work of EP was supported by an NSERC Discovery grant. 
MH thanks Mathieu Merle for a helpful conversation. EP thanks Ted Cox for helpful comments on the voter model.
Both authors thank Remco van der Hofstad for suggesting one of the main ingredients for the proof of the crucial Lemma \ref{lem:LT6}.
  
\bibliographystyle{plain}

\begin{thebibliography}{10}

\bibitem{BCLG01}
M.~Bramson, J.T.~Cox, and J.-F.~Le~Gall.
\newblock Super-Brownian Limits of Voter Model Clusters.
\newblock {\em Ann. Probab.}, {\bf 29}:1001--1032, (2001).


\bibitem{BG80}
M.~Bramson, and D.~Griffeath. 
\newblock Asymptotics for interacting particle systems on $\Z^d$.
\newblock {\em Z. Wahrsch Verw. Gebiete}, {\bf 53}: 183--196, (1980).

\bibitem{B}
D.L.~Burkholder.
\newblock Distribution function inequalities for martingales.
\newblock {\em Ann. Probab.}, {\bf 1} (1): 19-42, (1973).

\bibitem{CF18}
M.~Cabezas, and A.~Fribergh.
\newblock Scaling limit for the ant in lattice trees.
\newblock {\em Work in progress}, (2018).


\bibitem{CFHP18}
M.~Cabezas, A.~Fribergh, M.~Holmes, and E. Perkins.
\newblock Historical Lattice trees.
\newblock {\em Work in progress}, (2018).



\bibitem{CS73}
P.~Clifford and A.W. Sudbury. 
\newblock A Model for Spatial Conflict. 
\newblock {\em Biometrika}, 60: 581--588, (1973).

 \bibitem{CDP00}
J.T.~Cox, R.~Durrett, and E.A.~Perkins.
\newblock Rescaled Voter Models Converge to Super-Brownian Motion.
\newblock {\em Ann. Probab.}, {\bf 28}(1):185--234, (2000).


\bibitem{DP91}
D.~Dawson and E.A.~Perkins.
\newblock Historical processes.
\newblock {\em Mem. Amer. Math. Soc.}, {\bf 93} (1991).

\bibitem{DIP89}
D.~Dawson, I. Iscoe and E.A.~Perkins.
\newblock Super-Brownian motion: Path properties and hitting probabilities.
\newblock{\em Probab. Theory Related Fields}, {\bf 83}(1):135--205, (1989).

\bibitem{Du10}
R. Durrett.
\newblock Probability: Theory and Examples.
\newblock {\em Cambridge Series in Statistical and Probabilistic Mathematics.}
\newblock Cambridge University Press, New York, (2010).

\bibitem{Du95}
R. Durrett.
\newblock Ten Lectures on Particle Systems.
\newblock {\em Lectures on Probability Theory and Statistics, no.\
1608, Ecole d'Et\'e de Probabilit\'es de Saint Flour 1993.}
\newblock Springer, Berlin (1995).


\bibitem{Feller2}
W. Feller.
\newblock An Introduction to Probability Theory and Its Applications Vol. 2.
\newblock Wiley Series in Probability and Mathematical Statistics
\newblock John Wiley and Sons, New York, (1971)

\bibitem{GS}
G. Grimmett and D. Stirzaker.
\newblock Probability and Random Processes, 3rd Edition.
\newblock Oxford University Press, Oxford, (2001).


\bibitem{HS00}
T. Hara and G. Slade.
\newblock  The scaling limit of the incipient infinite cluster in high-dimensional percolation. II. Integrated super-Brownian excursion.
\newblock{\em J. Math. Phys.}, {\bf 44}:1244-1293, (2000).

\bibitem{HeyHofBook}
M.~Heydenreich and R.~v.d.~Hofstad.
\newblock  {\em Progress in high-dimensional percolation and random graphs.}
\newblock Lecture notes for the Lecture notes for the CRM-PIMS Summer School in Probability 2015.
CRM Short Courses Series with Springer Volume 1. ISBN 978-3-319-62473, (2017).

\bibitem{HHHM17}
M.~Heydenreich, R.~van der Hofstad, T.~Hulshof and G.~Miermont. 
\newblock Backbone scaling limit of the high-dimensional IIC. 
\newblock {\em Preprint} (2017).

\ARXIV{\bibitem{HofHolSla07a}
R.~v.d. Hofstad, F.~den Hollander, and G.~Slade.
\newblock The survival probability for critical spread-out oriented percolation
  above 4+1 dimensions. {I.} {I}nduction.
\newblock {\em Probab. Theory Related Fields}, {\bf 138}(3-4):363--389, (2007).

\bibitem{HofHolSla07b}
R.~v.d. Hofstad, F.~den Hollander, and G.~Slade.
\newblock The survival probability for critical spread-out oriented percolation
  above 4+1 dimensions. {II.} {E}xpansion.
\newblock {\em Ann. Inst. H. Poincar\'e Probab. Statist.}, {\bf 5}(5):509--570,
  (2007).}

\bibitem{HH13}
R.~v.d. Hofstad, R. and M.~Holmes.    
\newblock The survival probability and r-point functions in high dimensions. 
\newblock {\em Ann. Math.}, {\bf 178}(2):665--685, (2013).

\bibitem{HHP17}
R.~v.d. Hofstad, M.~Holmes, and E.A.~Perkins.
\newblock   A criterion for convergence to super-Brownian motion on path space. 
\newblock {\em Ann. Prob.}, {\bf 45}:278--376, (2017).

\bibitem{HHS08}
R.~v.d.~Hofstad, M.~Holmes, and  G.~Slade. 
\newblock An extension of the inductive approach to the lace expansion. 
\newblock {\em Electron. Comm.  Probab.}, {\bf13}:291--301, (2008).






\bibitem{HofSak10}
R.~v.d. Hofstad and A.~Sakai.
\newblock Convergence of the critical finite-range contact process to
  super-{B}rownian motion above the upper critical dimension: the higher-point
  functions.
\newblock {\em Electron. J. Probab.}, {\bf 15}:801--894, (2010).

\bibitem{HofSla02}
R.~v.d.~Hofstad and G.~Slade.  
\newblock A generalised inductive approach to the lace expansion. 
\newblock {\em Probab. Th. Rel. Fields.}, {\bf 122}:389--430, (2002).



\bibitem{HofSla03b}
R.~v.d. Hofstad and G.~Slade.
\newblock Convergence of critical oriented percolation to super-{B}rownian
  motion above {$4+1$} dimensions.
\newblock {\em Ann. Inst. H. Poincar{\'e} Probab. Statist.}, {\bf
  39}(3):413--485, (2003).

\bibitem{HL75}
R.~Holley and T.~Liggett.
\newblock Ergodic theorems for weakly interacting infinite systems and the voter model.
\newblock{\em Ann. Probab.}, {\bf 3}(4):643--663, (1975).

\bibitem{H08}	
M.~Holmes.
\newblock Convergence of lattice trees to super-Brownian motion above the critical dimension. 
\newblock {\em Electronic J. Prob.}, {\bf 13}:671--755, (2008).	
	
\bibitem{H16}	
M.~Holmes.
\newblock     Backbone scaling for critical lattice trees in high dimensions. 
\newblock {\em J.~Phys.~A: Mathematical and Theoretical}, {\bf 49}:  314001, (2016).	
	
\bibitem{HolPer07}
M.~Holmes and E.~Perkins.
\newblock Weak convergence of measure-valued processes and {$r$}-point
  functions.
\newblock {\em Ann. Probab.}, {\bf 35}(5):1769--1782, (2007).	

\SUBMIT{\bibitem{Arxiv_version}
M.~Holmes and E.~Perkins.
\newblock On the range of lattice models in high dimensions - extended version
\newblock {\em Arxiv} }

\ARXIV{\bibitem{submitted_version}
M.~Holmes and E.~Perkins.
\newblock On the range of lattice models in high dimensions.
\newblock {\em Submitted} }

\ARXIV{
\bibitem{HP18}
T.~Hughes and E.~Perkins.
\newblock	 On the boundary of the zero set of super-Brownian motion and its local time.
\newblock Math ArXiv no. 1802.03681
}
\bibitem{Hu15}
T.~Hulshof.
\newblock The one-arm exponent for mean-field long-range percolation.
\newblock {\em Electronic J. Prob.}, {\bf 20}:1--26, (2015).





\bibitem{Isc88}
I. Iscoe.
\newblock On the supports of measure-valued critical branching Brownian motion
\newblock {\em Ann. Probab.}, {\bf 16} (1):200--221, (1988).

\bibitem{KozNac11}
G.~Kozma and A.~Nachmias
\newblock Arm exponents in high dimensional percolation.
\newblock J.~American Math.~Soc., {\bf 24}, 375--409, (2011).




\bibitem{Legall}
J.F.~Le Gall.
\newblock Spatial Branching Processes, Random Snakes and Partial Differential Equations.
\newblock Lectures in Mathematics, ETH, Zurich.
\newblock Birkh\"auser, Basel (1999).

\bibitem{Li85}
T.M.~Liggett.
\newblock Interacting Particle Systems.
\newblock {\em Grundlehren der mathematischen Wissenschaften 276.}
\newblock Springer-Verlag, New York (1985).

\bibitem{Mer08}
M.~Merle.
\newblock Hitting probability of a distant point for the voter model started with a single 1.
\newblock {\em Ann. Prob.} {\bf 36}:807--861, (2008).	
	
	
\bibitem{Per02}
E.A.~Perkins.
\newblock Dawson-Watanabe Superprocesses and Measure-valued Diffusions.
\newblock {\em Lectures on Probability Theory and Statistics, no.\
1781, Ecole d'Et\'e de Probabilit\'es de Saint Flour 1999}
\newblock Springer, Berlin (2002).

\bibitem{Sla06}	
G.~Slade. 
\newblock The Lace Expansion and its Applications.
\newblock {\em Lecture Notes in Mathematics } no.~1879. 
\newblock Springer, Berlin, (2006). 

\end{thebibliography}
\def\cprime{$'$}

\end{document}